\newtheorem{theorem}{Theorem}
\newtheorem{lemma}[theorem]{Lemma}
\newtheorem{proposition}[theorem]{Proposition}
\theoremstyle{definition}
\newtheorem{assumption}{Assumption}
\theoremstyle{remark}
\newcommand{\N}{\mathbb{N}}
\newcommand{\R}{\mathbb{R}}
\newcommand{\PP}{\mathbb{P}}
\newcommand{\E}{\mathbb{E}}
\newcommand{\argmin}{{\arg\hspace*{-0.5mm}\min}}
\renewcommand{\t}{\top}
\newcommand{\one}{\mathbbm{1}}
\newcommand{\given}{\,|\,}
\def\lo{\ell}
\def\var{\mathrm{Var}}
\newcommand{\Cov}{\mathrm{Cov}}
\newcommand{\cov}{\mathrm{Cov}}
\newcommand{\op}{\mathrm{op}}
\newcommand{\Var}{\mathrm{Var}}
\newcommand{\abs}[1]{\left| #1\right|}
\newcommand{\norm}[1]{\left\lVert #1 \right\rVert}
\newcommand{\ind}{\mathbbm{1}}
\newcommand{\pr}{\mathbb{P}}
\newcommand{\sgn}{\mathrm{sgn}}
\def\bX{\mathbf{X}}
\def\bU{\mathbf{U}}
\def\bZ{\mathbf{Z}}
\def\bY{\mathbf{Y}}
\def\be{\boldsymbol{\varepsilon}}
\def\cF{\mathcal{F}}
\def\cB{\mathcal{B}}
\def\cC{\mathcal{C}}
\def\cL{\mathcal{L}}
\def\cT{\mathcal{T}}
\def\rem{\mathcal{E}}
\newcommand{\bs}{\boldsymbol}
\newcommand{\mb}{\mathbf}
\newcommand{\iid}{\stackrel{\mathrm{i.i.d.}}{\sim}} 
\newcommand\independent{\protect\mathpalette{\protect\independenT}{\perp}}
\def\independenT#1#2{\mathrel{\rlap{$#1#2$}\mkern2mu{#1#2}}}
\newcommand{\vertiii}[1]{{\left\vert\kern-0.25ex\left\vert\kern-0.25ex\left\vert #1 
    \right\vert\kern-0.25ex\right\vert\kern-0.25ex\right\vert}}
\newcommand{\customlabel}[2]{%
\protected@write \@auxout {}{\string \newlabel {#1}{{#2}{}}}}
\title{Latent confounding in high-dimensional nonlinear models}
\author{Yuhao Wang\thanks{Part of this work was done while YW was at the University of Cambridge, UK.
	}\\
	IIIS, Tsinghua University, Shanghai Qi Zhi Institute\\
    and Shanghai AI Lab, China\\
	\url{yuhaow@tsinghua.edu.cn}
    \and
	Rajen D. Shah\\
	Statistical Laboratory,\\ University of Cambridge, UK\\
	\url{r.shah@statslab.cam.ac.uk}}
\begin{document}

\maketitle

%
%
%
%
%
%
%
%
%

\begin{abstract}
We consider the the problem of identifying causal effects given a high-dimensional treatment vector in the presence of low-dimensional latent confounders. We assume a parametric structural causal model in which the outcome is permitted to depend on a sparse linear combination of the treatment vector and confounders nonlinearly. We consider a generalisation of the LAVA estimator of \citet{CHL17} for estimating the treatment effects and show that under the so-called `dense confounding' assumption that each confounder can affect a wide range of observed treatment variables, one can estimate the causal parameters at the same rate as possible without confounding.
Notably, the results permit a form of weak confounding in that the minimum non-zero singular value of the loading matrix of the confounders can grow more slowly than the $\sqrt{p}$, where $p$ is the dimension of the treatment vector.
We further use our generalised LAVA procedure within a generalised covariance measure-based test for edges in a causal DAG in the presence of latent confounding.
\end{abstract}

\section{Introduction}
A regression analysis of response $Y \in \R$ against covariates $X \in \R^p$ can in general only reveal aspects of the association between these. In some cases however, the ultimate goal of such an analysis is deriving causal conclusions. Such inferences can be justified, but one needs to make the potentially strong assumption of no unmeasured confounding. In this work we consider the problem of causal effect estimation in the presence of latent confounding. Specifically, we consider the following structural causal model
\begin{equation}\label{eq:model}
	Y = f(X^\t \beta^0 + U^\t \delta^0) + \varepsilon \quad \textrm{and} \quad X= \Gamma^\t U +Z  ,
\end{equation}
and suppose our observed data $\{(Y_i, X_i)\}_{i=1}^n$ are i.i.d.\ copies of $(Y,X)$.
Here, $f$ is a known increasing link function, errors $\varepsilon$ satisfy $\E(\varepsilon \given X, U) = 0$, $U \in \R^q$ represent latent confounders affecting $X$ and $Y$ through the unknown coefficient matrix and vector $\Gamma \in \R^{q \times p}$ and $\delta^0 \in \R^q$ respectively, and $\beta^0 \in \R^p$ captures the causal effect of interest. We assume that the errors $Z$ in the $X$-model are mean-zero and independent of the latent confounders $U$.

 
 While in general estimation of the target $\beta^0$ is not possible due to lack of identifiability, in certain cases where $q \ll p$, $\beta^0$ is sparse and each of the small number of latent confounders affects many the components of $X$, a flurry of recent work has shown that $\beta^0$ can in fact be estimated in a linear model setting, that is when $f$ is the identity function. This is sometimes referred to as `dense confounding' \citep{CBM20}. Such assumptions may be plausible in several settings: a small number of batch effects in genetic studies may simultaneously affect a large proportion of the measurements \citep{gagnon2013removing}; the returns of stocks in may be confounded by general market movements driven by a small number of factors \citep{menchero2010global}; or for example in studies on cell biology the activities of proteins and mRNA may be confounded by a few main environmental factors \citep{leek2007capturing,stegle2012using}.
 
 To see why dense confounding may permit estimation of $\beta^0$, consider a simple setting where we have a single latent factor so $q=1$ and $\Gamma \in \R^{1 \times p}$ is a row vector determining the affect of $U \in \R$ on each component of $X$. Further assume that $\Cov(Z) = I$ and without loss of generality, take $\Var(U) =1$. Writing $\gamma := \Gamma^{\top}$, we have that the population level linear regression coefficient
 \begin{align*}
 \{\E[XX^{\top}]\}^{-1} \E [XY] &= \beta^0 + (\gamma\gamma^{\top} + I)^{-1} \gamma \delta_0 \\
 &= \beta^0 + \frac{\gamma \delta_0}{1 + \|\gamma\|_2^2} =: \beta^0 + b^0,
 \end{align*}
 using the Sherman--Morrison formula for the penultimate equality. We thus see that the regression coefficient is a sum of a sparse vector $\beta^0$ and a dense vector $b^0$ that in this simple setting is proportional to $\gamma$ and crucially in the case where the $p$-vector $\gamma$ is such that its $\ell_2$ norm is of the order $\sqrt{p}$, we should have $\|b^0\|_2 = O(1/\sqrt{p})$. Based on this insight, \citet{CBM20} showed that one can use the LAVA estimator~\citep{CHL17} to estimate the sparse causal parameter $\beta^0$, which involves solving
 \[
(\hat{\beta}, \hat{b}) =  \argmin_{\beta, b \in \R^p} \left\{\frac{1}{n} \|\bY - \bX(\beta + b)\|_2^2 + \lambda_1 \|\beta\|_1 + \lambda_2 \|b\|_2^2 \right\}.
 \]
 Here $\bY := (Y_1,\ldots,Y_n)^{\top}$ and $\mb X \in \R^{n \times p}$ is the matrix with $i$th row given by $X_i$. The idea is that $\hat{b}$ should estimate $b^0$, and as the latter may be expected to have a small $\ell_2$ norm, a ridge penalty is appropriate. Since for any given $\beta$, there is a closed form expression for the minimising $b$ in the above, one can show that
 \begin{equation} \label{eq:Lasso}
 \hat{\beta} = \argmin_{\beta \in \R^p} \left\{ \frac{1}{n} \| A \mb Y - A \mb X\|_2^2 + \lambda_1 \|\beta\|_1\right\}
 \end{equation}
 for some matrix $A \in \R^{n \times n}$ depending on $ \mb X$. \citet{CBM20} study procedures of this form, where $A \mb X$ has the same singular vectors as $\mb X$, but shrinks its singular values, with the Lava transform being a special case. The analysis in \citet{CBM20}, developed further in  \citet{GCB22}, hinges on the representation of Lava as a spectral transformation, which in turn rests critically on a linear regression model. While the linear regression model has the attraction of theoretical tractability, in many cases it may be a poor approximation to the underlying data generating process; for instance in the case where $Y$ is binary, a logistic regression model may be more natural.
 
%
 \subsection{Our contributions}
 In this work, we study estimation of $\beta^0$ in nonlinear models based on a penalised M-estimation approach with a Lava penalty.
 While our approach cannot be reduced to a standard Lasso program as in~\eqref{eq:Lasso}, we show nevertheless in Section~\ref{sec:optimization} that an alternative representation can facilitate fast computation. In Section~\ref{sec:theory} we present finite sample guarantees on the estimation of $\beta^0$ and both in- and out-of-sample prediction errors. A noteworthy feature of our results is that our theory allows for settings where the minimum eigenvalue $\lambda_{\min}(\Gamma \Gamma^{\top}) = o(p)$. In contrast, most earlier work particularly in the context of nonlinear models such as ours, requires this minimum eigenvalue to be of order $p$, a condition known as `pervasiveness' in the factor modelling literature \citep{FWZZ21}. Intuitively, when the confounding is strong, the column space of the latent confounding matrix $\mb U \in \R^{n \times q}$ with $i$th row $U_i$ will approximately coincide with the space spanned by the first $q$ principal components of $\mb X$. The confounding can thus be learnt from the data and controlled for to estimate $\beta^0$, and several approaches for handling latent confounding take this approach (see below). An issue is that pervasiveness, which is necessary for estimation of the factors \citep{Onatski12}, may not hold in practice where there may be some weaker confounders that result in the smaller eigenvalues of $\Gamma \Gamma^{\top}$.
 Our result relies on an adapted form of restricted eigenvalue condition \citep{tsybakov2009simultaneous,van2009conditions} which we show in Section~\ref{sec:re} holds with high probability under relatively mild conditions.
 
 In Section~\ref{sec:testing} we use our results on the in-sample prediction error of our estimator to show that a form of generalised covariance measure \citep{shah2020hardness} may be used for testing the presence of an edge in a causal DAG in the presence of latent confounding. In Section~\ref{sec:numerical} we present the results of some numerical experiments on simulated data that illustrate the encouraging performance of our method.
 Supplementary material contains all the proofs of results that appear in the main text and additional numerical results.
 Before introducing our method, we discuss below some additional related literature.
 
 \subsection{Related literature}
 A notable early work in the study of high-dimensional latent variable models such as ours here is \citet{CPW12}, which looks at Gaussian graphical model estimation in the presence of latent confounders. \citet{FNM19} builds on this for learning causal structure in this setting. \citet{SFTM20} introduces an approach for high-dimensional correlation matrix estimation based on projection onto the right singular vectors of the data matrix. \citet{scheidegger2025spectral} uses a spectral transformation approach \citep{CBM20} to estimate the causal effect in an additive regression model; a key difference with our setting here is that the contribution of the latent factors is additive rather rather than nonlinear.

An alternative to the spectral transformation approach to handle latent confounding involves attempting to estimate the latent confounders and then controlling for them \citep{wang2019blessings}. As such, the problem studied here has some close connections to high-dimensional factor analysis, see for example \citet{FLM13} and the review paper \citet{FWZZ21} as well as references therein. \citet{fan2024latent} use the approach of adjusting for estimated latent factors in the context of linear models. \citet{ouyang2023high} and extend this to generalised linear models. This setting is closely related to ours here, but that under study here is slightly more general in that no parametric distributional assumptions are placed on the error $\varepsilon$ in \eqref{eq:model}. 

A distinct approach to estimating causal effects in the presence of latent confounding involves using instrumental variables; some recent works that have considered this in the context of high-dimensional data include \citet{gautier2011high,fan2014endogeneity,belloni2017program,gold2020inference,10.1093/jrsssb/qkad049}. Particularly in such settings however, the construction of instrumental variables can be challenging, requiring extensive domain knowledge, and valid instrumental variables may not be available.

More broadly, our work connects to a rich literature on inference and estimation in high-dimensional settings, particularly in generalised linear models \citep{van2008high}; see also \citet{buhlmann2011statistics} for a book-length treatment. In particular, our work bears some relation to work on the debiased Lasso \citep{ZZ14,Geer14,javanmard2014confidence} and related approaches such as the decorrelated score test of \citet{NL17}; see \citet{SB19} for a review. \citet{GCB22, ouyang2023high,fan2024latent} build on these to provide inference under latent confounding. Our analysis of the restricted eigenvalue condition we introduce is related to analyses of the classical restricted eigenvalue condition \citep{rudelson2013reconstruction,RWY10} and restricted strong convexity \citep{negahban2012unified}, though the modifications we introduce to handle latent confounding complicate matters.

\section{Generalised LAVA}\label{sec:optimization}
Recall that we assume our data follow the model \eqref{eq:model} with the link function $f : \R \to \R$; we further assume throughout that $f$ is strictly increasing and continuously differentiable with $\sup_{\eta \in \R} |f'(\eta)| < \infty$.
This for example incorporates a linear regression setting with $f$ as the identity, or a logistic regression setting with $f(\eta) = 1 / (1 + e^{-\eta})$.

To estimate $\beta^0$, we use an M-estimation strategy with a LAVA penalty. Let $F:\R \to \R$ be an antiderivative of $f$ and  set $\ell(y,\eta) := -y\eta + F(\eta)$. Writing
\[
L(\theta ) := \frac{1}{n} \sum_{i=1}^n \ell(Y_i, X_i^{\top}\theta),
\]
our estimator \emph{generalised LAVA} estimator of $\beta^0$ is given by $\hat{\beta}$ where for tuning parameters $\lambda_1,\lambda_2 \geq 0$,
\begin{equation} \label{eq:gen_LAVA}
(\hat{\beta}, \hat{b}) := \argmin_{\beta,b \in \R^p} \left\{ L(\beta+ b) + \lambda_1 \|\beta\|_1 + \lambda_2 \|b\|_2^2\right\}.
\end{equation}
Note that when no counfounding is present, under mild integrability conditions we have that $\E \nabla L(\beta^0) = \E \{-Y X + X f(X^{\top} \beta^0)\} = 0$, so $\beta^0$ is a stationary point of the population version of the cost function. \citet{HGC21} considered a similar M-estimation strategy (omitting $b$) in a context with no confounding.
The rationale for the inclusion of $\hat{b}$ is that it serves to capture the disturbance to the ideal model $\E(Y \given X) = f(X^{\top} \beta^0)$ induced by the confounding. Note however that unlike in the case of the linear model discussed in the introduction, it is not clear what the population level target of $\hat{b}$ is, in that under confounding, we do not necessarily have $\E(Y \given X) = f(X^{\top}(\beta^0  + b^0))$ for some $b^0 \in \R^p$. (Indeed, this is one of the difficulties in moving from the linear model to the nonlinear setting we study here.)
An alternative interpretation of the parameter $b$ in \eqref{eq:gen_LAVA} is that it induces a Huber loss-like penalty function on $\theta:= \beta + b$, as the following result shows.
\begin{proposition} \label{prop:theta_optim}
We have that $(\hat{\beta}, \hat{b})$  are minimisers of the generalised LAVA objective \eqref{eq:gen_LAVA} only if $\hat{\theta} := \hat{\beta} + \hat{b}$ minimises
\begin{equation} \label{eq:theta_optim}
L(\theta) + \sum_{j=1}^p \rho_{\lambda_1,\lambda_2} (\theta_j)
\end{equation}
over $\theta \in \R^p$, where
\begin{align*}
	\rho_{\lambda_1, \lambda_2}(t) := \left\lbrace\begin{array}{lr}
		\lambda_2 t^2 & |t| \leq \lambda_1 / (2 \lambda_2),\\
		\lambda_1 |t| - \frac{\lambda_1^2}{4\lambda_2} & |t| > \lambda_1 / (2 \lambda_2).
	\end{array}\right.
\end{align*}
Moreover, if $\hat{\theta}$ minimises \eqref{eq:theta_optim} over $\theta \in \R^p$, then the pair $(\hat{\beta}, \hat{b})$ given by
\begin{align*}
\hat{\beta}_j := \left\{\hat{\theta}_j - \frac{\mathrm{sgn}(\hat{\theta}_j)\lambda_1}{2\lambda_2} \right\} \ind_{\{|\hat{\theta}_j| > \lambda_1 / (2\lambda_2)\}}
\end{align*}
and $\hat{b} := \hat{\theta} - \hat{\beta}$ minimise the generalised LAVA objective.
\end{proposition}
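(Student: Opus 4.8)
The plan is to reduce the bivariate optimisation over $(\beta,b)$ to a univariate one over $\theta := \beta + b$ by partial minimisation, exploiting that the loss $L$ depends on $(\beta,b)$ only through their sum, while the penalty $\lambda_1\|\beta\|_1 + \lambda_2\|b\|_2^2$ is separable across coordinates. Concretely, for a fixed $\theta \in \R^p$ I would first analyse
\[
\min_{\beta,b\,:\,\beta+b=\theta}\left\{\lambda_1\|\beta\|_1 + \lambda_2\|b\|_2^2\right\} = \sum_{j=1}^p \min_{u\in\R}\left\{\lambda_1|u| + \lambda_2(\theta_j - u)^2\right\},
\]
the equality following by substituting $b=\theta-\beta$ and noting the objective decouples coordinatewise. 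Each one-dimensional subproblem is strictly convex (here $\lambda_2>0$ is implicit from the form of $\rho$), so has a unique minimiser; a short subgradient computation, splitting on the sign of $u$, yields $u^\star = \sgn(\theta_j)\,(|\theta_j| - \lambda_1/(2\lambda_2))_+$ and shows the optimal value of the $j$th subproblem equals exactly $\rho_{\lambda_1,\lambda_2}(\theta_j)$. This one computation simultaneously delivers the Huber-type penalty in \eqref{eq:theta_optim} and the claimed reconstruction formulae, with $\hat b_j = \hat\theta_j - \hat\beta_j$.

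Given this, partial minimisation yields the identity
\[
\min_{\beta,b}\left\{L(\beta+b) + \lambda_1\|\beta\|_1 + \lambda_2\|b\|_2^2\right\} = \min_{\theta}\left\{L(\theta) + \sum_{j=1}^p \rho_{\lambda_1,\lambda_2}(\theta_j)\right\} =: m^\star,
\]
and both directions of the proposition then drop out. For the ``only if'' direction, if $(\hat\beta,\hat b)$ attains $m^\star$ then, writing $\hat\theta=\hat\beta+\hat b$, the coordinatewise bound gives $\lambda_1\|\hat\beta\|_1 + \lambda_2\|\hat b\|_2^2 \ge \sum_j \rho_{\lambda_1,\lambda_2}(\hat\theta_j)$, hence $L(\hat\theta) + \sum_j \rho_{\lambda_1,\lambda_2}(\hat\theta_j) \le m^\star$, forcing equality and showing $\hat\theta$ minimises \eqref{eq:theta_optim}. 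Conversely, if $\hat\theta$ minimises \eqref{eq:theta_optim}, the reconstructed pair has generalised LAVA objective value exactly $L(\hat\theta) + \sum_j \rho_{\lambda_1,\lambda_2}(\hat\theta_j) = m^\star$, so it is a global minimiser.

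There is no substantial obstacle here: the content is essentially the observation that the infimal convolution of a ridge penalty with an $\ell_1$ penalty is the Huber function. The only points needing a little care are the boundary case $|\theta_j| = \lambda_1/(2\lambda_2)$, where the two branches of $\rho_{\lambda_1,\lambda_2}$ agree and the soft-threshold map is continuous, so nothing singular occurs; and being explicit that strict convexity of each one-dimensional subproblem makes the optimal split of a given $\theta$ into $(\beta,b)$ unique, which is exactly what legitimises carrying out the partial minimisation coordinate by coordinate.
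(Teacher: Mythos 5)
Your proposal is correct and is essentially the paper's argument in a slightly different packaging: the paper derives the same optimal split via the subgradient condition $\lambda_1 t = 2\lambda_2 \hat{b}_j$ for the joint problem and then checks that the penalty of any pair satisfying this relation equals $\rho_{\lambda_1,\lambda_2}(\hat{\beta}_j+\hat{b}_j)$, which is exactly your coordinatewise infimal-convolution computation. Both directions then follow from the resulting identity between the two objective values, just as you argue.
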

We use the representation of the solution in the above, which has the advantage of reducing a convex program involving $2p$ variables, to one involving only $p$ variables, in our implementation used for the numerical results in Section~\ref{sec:numerical}. 

\section{Estimation and prediction guarantees}\label{sec:theory}
In this section, we present guarantees on the estimation accuracy of $\hat{\beta}$ as well as results on the quality of the fitted regression function $x \mapsto f(x^{\top}(\hat{\beta} + \hat{b}))$. The latter will prove useful for the significance test we present in Section~\ref{sec:testing}.

For a vector $u \in \R^p$ and a non-empty set $A \subseteq [p]$, we denote by $u_A$ the subvector of $u$ with components indexed by $A$ (in increasing order).
For $a, b, \in \R$, we use $a \lesssim b$ to denote that there exists a constant $C > 0$, which may depend on other quantities designated as constants, such that $a \leq C b$. Throughout, we assume the model~\eqref{eq:model} and consider the link function $f$ as fixed (so constants can depend on properties of $f$). We write $S:= \{ j : \beta^0_j \neq 0\}$ and $s := |S|$. Furthermore, we write $r_{\ell} := \lambda_{\min}(\Gamma \Gamma^{\top})$. We require the following assumptions on the scaling of the various parameters $n,p,s,q,r_{\ell}$:
\begin{assumption}\label{cd:pns}
We have
    \begin{enumerate}
        \item[(i)] $p \gtrsim n$ and $1 \leq q \lesssim 1$;
        \item[(ii)] there exists some sequence $a_n \to 0$ such that $\log p \leq a_n n^{1/7}$;
        \item[(iii)] $r_\lo \gtrsim \max \left\{p \left( \frac{\log p}{n}\right)^{\frac{1}{6}}, p^{5/6} \right\}$;
        \item[(iv)] $s \geq 1$ and there exists some constant sequence $b_n \to 0$ such that
\[
s \max\left\{\frac{1}{\sqrt{n}},\, \frac{\sqrt{p}}{r_\lo} \right\} \log p \leq b_n.
\]
    \end{enumerate}
\end{assumption}
Notably, the scaling required for $r_{\ell}$ permits asymptotic regimes where $r_{\ell} / p \to 0$, which as discussed in the introduction, precludes consistent estimation of the latent factors $U_i$. Condition (iv) on the scaling of $s$ is a little stronger than for example necessary for $\ell_1$ estimation consistency in standard high-dimensional linear regression.
We note that Assumptions~\ref{cd:pns}(i), (ii),  and also $s \geq 1$ in (iv) are used primarily for simplicity of the analysis.


We also make use of the following sub-Gaussianty conditions on the random variables involved in model \eqref{eq:model}. Note that in the below, $X_j$ refers to the $j$th component of the random vector $X \in \R^p$.
\begin{assumption}
    \label{cd:subg}
	We assume $Z$ and $U$ are mean-zero sub-Gaussian random vectors and $\varepsilon$ and $X_{j}$ for $j=1,\ldots,p$ are mean zero sub-Gaussian random variables. That is, there exist $\sigma_z, \sigma_u, \sigma_\varepsilon, \sigma_x, \sigma_\eta > 0$ such that for each unit vector $u \in \R^p, v \in \R^q$, and for all $t \in \R$, 
	\begin{gather*}
			\E[\exp(t  Z^\top u)] \leq \exp(t^2 \sigma_z^2 / 2), \quad \E[\exp(t  U^\top v)] \leq \exp(t^2 \sigma_u^2 / 2), \\
			\E[\exp(t  \varepsilon) \,|\, X, U] \leq \exp(t^2 \sigma_\varepsilon^2 / 2) \quad \text{almost surely,} \\
			\forall j \in [p],\; \E[\exp(t X_{j})] \leq \exp(t^2 \sigma_x^2 / 2) \\
				\E\left[\exp\left(t \left(X^\top \beta^0 + U^\top \delta^0\right)\right)\right] \leq \exp(t^2 \sigma_\eta^2 / 2).
	\end{gather*}
Without loss of generality, we assume $\Var(U) = I$, and we further assume $\|\delta^0\|_2 \lesssim 1$.
%
\end{assumption}
The final assumption on the strength of the confounding is required to ensure $\var(U^\top \delta^0) \lesssim 1$.


Our theoretical results below rely on a certain `good' event 
$\Omega(\tau, \kappa, c_p)$, for $\tau,\kappa, c_p > 0$, occurring with high probability, which is defined as follows:
\begin{align}\label{eq:rscevent}
	& \Omega(\tau, \kappa, c_p) := \bigg\{\forall \Delta_1 \in C(S), \Delta_2 \in \R^p, \nonumber\\
	&\quad \frac{1}{n} \sum_{i=1}^n \left(X_i^\top (\Delta_1 + \Delta_2)\right)^2 \one_{\{|X_i^\top \beta^0 + U_i^\top \delta^0 
		| \leq \tau \}} \geq \kappa \|\Delta_1 \|_2^2 - c_p r_\lo \sqrt{\frac{\log p}{n}} \|\Delta_2\|_2^2 \bigg\},
\end{align}
where 
\begin{equation} \label{eq:cone}
C(S) := \{\Delta \in \R^p: \|\Delta_{S^c}\|_1 \leq 3 \|\Delta_S\|_1\}.
\end{equation}
Intuitively, the event asks for a modified version of the restrictive eigenvalue condition \citep{tsybakov2009simultaneous,van2009conditions} to hold, except that here we consider a truncated $X_i$ and that we have added a new term $\Delta_2$ which does not belong to the cone $C(S)$. In Section~\ref{sec:re}, we discuss this further and present some sufficient conditions under which $\Omega(\tau, \kappa, c_p)$ holds with high probability.

We have the following result on the estimation error of the target $\beta^0$:
\begin{theorem}\label{thm:main}
Assume Assumptions~\ref{cd:pns} and \ref{cd:subg}. Given $m \in \N, \kappa, c_p, \tau > 0$ (treated as constants), we have that there exist constants $c_{\lambda_1}, c_{\lambda_2}, c > 0$
depending on $m, \kappa, c_p$ and $\tau$ such that by choosing $\lambda_1 = c_{\lambda_1} \sqrt{\frac{\log p}{n}}$ and $\lambda_2 = c_{\lambda_2} r_{\lo} \sqrt{\frac{\log p}{n}}$, with probability at least $\PP(\Omega(\tau, \kappa, c_p)) - c n^{-m}$,
\[
\|\hat{\beta} - \beta^0\|_2 \lesssim \sqrt{s \frac{\log p}{n}} \quad\text{and} \quad \|\hat{\beta} - \beta^0\|_1 \lesssim s \sqrt{\frac{\log p}{n}}.
\]
\end{theorem}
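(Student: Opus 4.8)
The plan is to follow the standard basic-inequality argument for penalised M-estimators, but carefully tracking the role of the ridge term on $\hat b$ and the confounding. Write $\hat\theta = \hat\beta + \hat b$, $\Delta_\beta := \hat\beta - \beta^0$, and decompose $\hat\theta = \beta^0 + \Delta_\beta + \hat b$. First I would establish a deterministic bound on the empirical process term $\langle \nabla L(\beta^0), \Delta_\beta + \hat b\rangle$: because $\nabla L(\beta^0) = \frac1n\sum_i \{-Y_i + f(X_i^\top\beta^0)\}X_i$ and $Y_i = f(X_i^\top\beta^0 + U_i^\top\delta^0) + \varepsilon_i$, this gradient is \emph{not} mean zero under confounding — it has a part driven by $\varepsilon_i$ (which is mean-zero, sub-Gaussian, handled by a union bound giving $\|\cdot\|_\infty \lesssim \sqrt{\log p / n}$) and a confounding bias part $\frac1n\sum_i\{f(X_i^\top\beta^0) - f(X_i^\top\beta^0 + U_i^\top\delta^0)\}X_i$. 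The latter is the crux: using $\sup|f'| < \infty$ and $\|\delta^0\|_2 \lesssim 1$, $\Var(U^\top\delta^0)\lesssim 1$, one bounds its $\ell_\infty$ norm by something like $\sqrt{\log p/n}$ as well, but its interaction with the dense direction $\hat b$ must be controlled differently — here I would pair it against $\hat b$ using Cauchy--Schwarz in a way that produces a term absorbable by $\lambda_2\|\hat b\|_2^2$, exploiting that $r_\ell \gtrsim p^{5/6}$ makes $\lambda_2$ large enough, while against $\Delta_\beta \in C(S)$ one uses the $\ell_\infty/\ell_1$ bound and the cone structure.

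Next I would run the basic inequality: optimality of $(\hat\beta,\hat b)$ gives $L(\hat\theta) + \lambda_1\|\hat\beta\|_1 + \lambda_2\|\hat b\|_2^2 \le L(\beta^0) + \lambda_1\|\beta^0\|_1$ (taking $b = 0$ as the competitor), and convexity/Taylor expansion of $L$ around $\beta^0$ gives $L(\hat\theta) - L(\beta^0) \ge \langle\nabla L(\beta^0), \Delta_\beta + \hat b\rangle + (\text{curvature term})$. The curvature term is a second-order remainder $\frac1n\sum_i f'(\xi_i)(X_i^\top(\Delta_\beta + \hat b))^2$ for intermediate points $\xi_i$; since $f'$ may vanish, I would lower bound it by restricting to indices where $|X_i^\top\beta^0 + U_i^\top\delta^0|$ is small (so $\xi_i$ is near a region where $f' > 0$), which is exactly the truncated quadratic form appearing in the event $\Omega(\tau,\kappa,c_p)$ — this is why that event is stated with the indicator and the $\Delta_1,\Delta_2$ split. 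I would need a preliminary argument that $\Delta_\beta \in C(S)$ (the standard consequence of the basic inequality once the empirical process term is dominated by $\lambda_1/2$ times $\ell_1$ norms), possibly after first showing $\hat b$ is not too large, and then apply $\Omega$ with $\Delta_1 = \Delta_\beta$, $\Delta_2 = \hat b$ to get a lower bound $\kappa\|\Delta_\beta\|_2^2 - c_p r_\ell\sqrt{\log p/n}\,\|\hat b\|_2^2$.

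Combining these, the $-c_p r_\ell\sqrt{\log p/n}\,\|\hat b\|_2^2$ term from the restricted-eigenvalue lower bound is precisely of the order of $\lambda_2\|\hat b\|_2^2$ from the penalty (by the choice $\lambda_2 = c_{\lambda_2} r_\ell\sqrt{\log p/n}$), so choosing $c_{\lambda_2}$ large enough makes the net coefficient of $\|\hat b\|_2^2$ nonnegative — the ridge penalty ``pays for'' the negative term, and the $\hat b$ contribution drops out. What remains is the textbook inequality $\kappa\|\Delta_\beta\|_2^2 \lesssim \lambda_1\|\Delta_\beta\|_1 \lesssim \lambda_1\sqrt{s}\|\Delta_\beta\|_2$ on the cone, yielding $\|\Delta_\beta\|_2 \lesssim \sqrt{s}\,\lambda_1 \asymp \sqrt{s\log p/n}$ and then $\|\Delta_\beta\|_1 \lesssim s\lambda_1 \asymp s\sqrt{\log p/n}$ via the cone constraint. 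The probability statement collects: $\Omega(\tau,\kappa,c_p)$, plus the sub-Gaussian concentration events for $\nabla L(\beta^0)$ and for the confounding bias term, each failing with probability $\lesssim n^{-m}$ by appropriate choice of the constants $c_{\lambda_1}$, using Assumption~\ref{cd:pns}(ii) to absorb $\log p$ factors.

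The main obstacle I anticipate is controlling the confounding-induced bias in $\nabla L(\beta^0)$ when paired against the dense component $\hat b$: unlike the $\varepsilon$-term, this bias is a genuine nonzero drift of magnitude potentially growing with $p$ in $\ell_1$, and it is only the combination of the dense-confounding scaling (Assumption~\ref{cd:pns}(iii), $r_\ell \gtrsim p^{5/6}$ and $\gtrsim p(\log p/n)^{1/6}$), the smoothness $\sup|f'|<\infty$, and the boundedness $\|\delta^0\|_2 \lesssim 1$ that lets this term be absorbed into $\lambda_2\|\hat b\|_2^2$ rather than blowing up the bound; getting the bookkeeping right there — in particular bounding $\frac1n\sum_i (U_i^\top\delta^0)(X_i^\top \hat b)$-type cross terms and the nonlinearity remainder uniformly — is where the nonlinear setting genuinely departs from the spectral-transformation analysis of \citet{CBM20}, and where Assumption~\ref{cd:pns}(ii)'s $n^{1/7}$ rate (rather than the usual $\log p \lesssim n$) presumably enters through a higher-order Taylor remainder.
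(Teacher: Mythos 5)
There is a genuine gap, and it sits exactly at the point you flag as the ``main obstacle'': the confounding bias in $\nabla L(\beta^0)$ cannot be handled the way you propose. Writing the bias as $\frac{1}{n}\sum_i\{f(X_i^\top\beta^0)-f(X_i^\top\beta^0+U_i^\top\delta^0)\}X_{ij} = -\frac{1}{n}\sum_i f'(\xi_i)(U_i^\top\delta^0)X_{ij}$, note that $X_{ij}=\Gamma_j^\top U_i+Z_{ij}$ is \emph{correlated} with $U_i^\top\delta^0$, so each component concentrates around a nonzero mean of order $\|\Gamma_j\|_2\|\delta^0\|_2\asymp 1$. The $\ell_\infty$ norm of the bias is therefore of constant order, not $\sqrt{\log p/n}$ as you claim, and it cannot be dominated by $\lambda_1\asymp\sqrt{\log p/n}$ when paired against $\|\Delta_\beta\|_1$. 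This breaks both your cone-membership argument for $\Delta_\beta$ and the final rate calculation. Pairing the bias against $\hat b$ via Cauchy--Schwarz and absorbing into $\lambda_2\|\hat b\|_2^2$ does not rescue the part paired against $\Delta_\beta$, which is where the $\sqrt{s\log p/n}$ rate has to come from. A related difficulty is that your curvature lower bound requires the intermediate points of the Taylor expansion around $\beta^0$ to line up with the truncation $\ind\{|X_i^\top\beta^0+U_i^\top\delta^0|\le\tau\}$ in $\Omega$, which is not automatic since $X_i^\top\beta^0$ and $X_i^\top\beta^0+U_i^\top\delta^0$ differ by an $O(1)$ amount.

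The paper's resolution is to change the comparator and the expansion point: it introduces the random partial minimiser $b^0:=\argmin_b\{L(\beta^0+b)+\lambda_2\|b\|_2^2\}$, compares $(\hat\beta,\hat b)$ with $(\beta^0,b^0)$, and Taylor-expands $L$ about $\beta^0+b^0$. The stationarity condition $-\nabla L(\beta^0+b^0)=2\lambda_2 b^0$ then kills the problematic gradient: after completing the square in the ridge penalty, the entire empirical-process contribution reduces to $2\lambda_2(b^0)^\top(\hat\beta-\beta^0)\le 2\lambda_2\|b^0\|_\infty\|\hat\beta-\beta^0\|_1$, and the whole technical burden shifts to proving $\lambda_2\|b^0\|_\infty\lesssim\sqrt{\log p/n}$ (Lemma~\ref{lem:beta}, via an explicit ridge-type representation of $b^0$ and the events $\cT_1$--$\cT_4$). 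The same device makes the truncation work, because $\bX b^0\approx\bU\delta^0$ so the expansion point $X_i^\top(\beta^0+b^0)$ is close to $X_i^\top\beta^0+U_i^\top\delta^0$ for all but a controllably small set of indices, and the restricted-eigenvalue event is applied with $\Delta_2=b(\beta)-b^0$ (absorbed by $\lambda_2\|b(\beta)-b^0\|_2^2$), not with $\Delta_2=\hat b$. Your outline correctly identifies all the places where the confounding causes trouble, but without the $b^0$ device (or an equivalent recentering) the basic inequality anchored at $(\beta^0,0)$ does not close.
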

We see in particular that the $\ell_2$ and $\ell_1$ estimation errors coincide with those expected in high-dimensional generalised linear models without confounding present \citep{Geer14}.
We next provide results for in-sample and out-of-sample prediction risk. As we shall see in Section~\ref{sec:testing}, the in-sample prediction risk can be used to justify a procedure for testing the existence of an edge in a causal DAG in the presence of latent confounding.

\begin{theorem}[In sample prediction risk]\label{thm:inpred}
	Under the same conditions and with $c_{\lambda_1}, c_{\lambda_2}$ as in Theorem~\ref{thm:main}, we have that there exists constant $c> 0$ such that with probability at least $\PP(\Omega(\tau, \kappa, c_p)) - c n^{-m}$, 
	\[
	\frac{1}{n} \sum_{i = 1}^n \left\{f\left(X_i^\top (\hat{\beta} + \hat{b}) \right)- f(X_i^\top \beta^0 + U_i^\top \delta^0)\right\}^2  \lesssim s \frac{\log p}{n} + \frac{p \log p}{r_{\lo} n} + \frac{p^2}{r_{\lo}^2 n}.
	\]
\end{theorem}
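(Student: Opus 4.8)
The plan is to deduce Theorem~\ref{thm:inpred} from the machinery behind Theorem~\ref{thm:main}, using the Lipschitz continuity of $f$ to pass from the linear‑predictor scale to the mean‑function scale. Write $\eta^0_i := X_i^\t\beta^0 + U_i^\t\delta^0$ for the true linear predictor and introduce the \emph{linear oracle}
\[
b^0 := \bigl(\Gamma^\t\Gamma + \Cov(Z)\bigr)^{-1}\Gamma^\t\delta^0 \in \R^p,
\]
i.e.\ the coefficient vector of the $L^2$-projection of $U^\t\delta^0$ onto the linear span of $X$, the nonlinear‑model analogue of the dense vector in the Sherman--Morrison calculation of the introduction. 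Set $\theta^0 := \beta^0 + b^0$ and $W_i := \eta^0_i - X_i^\t\theta^0$. A first step is to record, via Sherman--Morrison--Woodbury together with $\|\delta^0\|_2 \lesssim 1$, $\lambda_{\min}(\Gamma\Gamma^\t) = r_{\lo}$ and the per‑coordinate bound $\|\Gamma_{\cdot j}\|_2 \lesssim 1$ implied by Assumption~\ref{cd:subg} (since $\Var(X_j) = \|\Gamma_{\cdot j}\|_2^2 + \Var(Z_j) \lesssim 1$ because $\Var(U) = I$ and $Z \perp U$), that $\|b^0\|_2 \lesssim r_{\lo}^{-1/2}$ and in fact $\|b^0\|_\infty \lesssim r_{\lo}^{-1}$, and that $W_1,\dots,W_n$ are i.i.d.\ mean‑zero sub‑Gaussian, uncorrelated with $X_i$, with $\Var(W) \lesssim r_{\lo}^{-1}$. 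Since $f$ is Lipschitz with $\|f'\|_\infty < \infty$, writing $\hat\theta := \hat\beta + \hat b$,
\[
\frac{1}{n}\sum_{i=1}^n\bigl\{f(X_i^\t\hat\theta) - f(\eta^0_i)\bigr\}^2 \;\le\; 2\|f'\|_\infty^2\Bigl(\underbrace{\tfrac{1}{n}\|\bX(\hat\theta-\theta^0)\|_2^2}_{=:\,T_{\mathrm{est}}} \;+\; \underbrace{\tfrac{1}{n}\textstyle\sum_{i=1}^n W_i^2}_{=:\,T_{\mathrm{bias}}}\Bigr),
\]
so it suffices to bound $T_{\mathrm{est}}$ and $T_{\mathrm{bias}}$ separately.

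The term $T_{\mathrm{bias}}$ is easy: Bernstein's inequality for the sub‑exponential variables $W_i^2$ gives $T_{\mathrm{bias}} \lesssim \Var(W) \lesssim r_{\lo}^{-1} \le \tfrac{p\log p}{r_{\lo} n}$ with probability at least $1 - c n^{-m}$, using $p \gtrsim n$ from Assumption~\ref{cd:pns}(i); this accounts for the middle term of the claimed bound. For $T_{\mathrm{est}}$ I would reopen the basic inequality for \eqref{eq:gen_LAVA}, but compared against the competitor $(\beta^0, \tilde b)$ with $\tilde b := \argmin_b\{L(\beta^0+b) + \lambda_2\|b\|_2^2\}$ rather than $b^0$: the minimality of $\tilde b$ cancels the otherwise‑troublesome term $\lambda_2\|b^0\|_2^2$, which can be of order $\sqrt{(\log p)/n}$ and is not in general dominated by the target, while $\tilde b$ stays close to $b^0$ because $\lambda_2 = o(r_{\lo})$ is negligible against the eigenvalues $\gtrsim r_{\lo}$ of $\Gamma^\t\Gamma$ in the confounding directions where $\Gamma^\t\delta^0$ lies. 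Feeding this into the argument of Theorem~\ref{thm:main} --- using the Karush--Kuhn--Tucker relations $\nabla L(\beta^0+\tilde b) = -2\lambda_2\tilde b$ and $\|\nabla L(\hat\theta)\|_\infty \le \lambda_1$, the score bound $\|\nabla L(\theta^0)\|_\infty \lesssim \sqrt{(\log p)/n}$ obtained from $|f(X_i^\t\theta^0) - f(\eta^0_i)| \le \|f'\|_\infty|W_i|$ and sub‑Gaussian concentration of $\tfrac1n\sum_i X_{ij}\varepsilon_i$, and $\|\tilde b\|_\infty \lesssim r_{\lo}^{-1}$ so that $\lambda_2\|\tilde b\|_\infty \lesssim \lambda_1$ --- one recovers, on $\Omega(\tau,\kappa,c_p)$, that $\hat\Delta_\beta := \hat\beta - \beta^0 \in C(S)$ with the estimation bounds of Theorem~\ref{thm:main}, plus control of the empirical Bregman divergence $\delta L := L(\hat\theta) - L(\theta^0) - \nabla L(\theta^0)^\t(\hat\theta - \theta^0)$ and of $\lambda_2\|\hat b - \tilde b\|_2^2$, both of order $s(\log p)/n$.

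Converting this into a bound on $T_{\mathrm{est}}$ is where the spectral structure enters. Write $\bX = \bU\Gamma + \bZ$ and split $\R^p = \mathrm{Row}(\Gamma) \oplus \mathrm{Row}(\Gamma)^\perp$ with orthogonal projections $P$ and $Q = I - P$. Since $\bX Q = \bZ Q$ and $\|\bZ\|_{\op} \lesssim \sqrt p$ with high probability (using $p \gtrsim n$), the $Q$-component contributes $\lesssim \tfrac pn\|Q(\hat\theta-\theta^0)\|_2^2$; on $\hat\Delta_\beta \in C(S)$ a restricted‑eigenvalue \emph{upper} bound for $\bZ$, valid in the cone because $s\sqrt{(\log p)/n} \to 0$ by Assumption~\ref{cd:pns}(iv), bounds its contribution by $s(\log p)/n$, with $Q(\hat b - b^0)$ of lower order. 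For the $P$-component --- the $q$ confounding directions, along which the singular values of $\bX$ can be of order $\sqrt{n\,\lambda_{\max}(\Gamma\Gamma^\t)}$, possibly as large as $\sqrt{np}$ --- a naive operator‑norm bound against $\|\hat b - b^0\|_2 = O(r_{\lo}^{-1/2})$ gives only $O(p/r_{\lo})$, which is too crude. Instead, using $\bX\theta^0 - \eta^0 = -W$ and the identity $\hat b = -(2\lambda_2)^{-1}\nabla L(\hat\theta)$, the part of $\bX(\hat\theta-\theta^0)$ in the top‑$q$ left‑singular subspace of $\bX$ is governed by the projection of the fitted residual vector $\{Y_i - f(X_i^\t\hat\theta)\}_i$ --- of size $O_p(\|\be\|_2)$ but with only $O_p(1)$ mass in the span of the confounders since $\E(\varepsilon \given X, U) = 0$ --- rescaled by $\lambda_2^{-1} \asymp r_{\lo}^{-1}(\log p/n)^{-1/2}$, which yields $\lesssim \tfrac{p\log p}{r_{\lo} n} + \tfrac{p^2}{r_{\lo}^2 n}$. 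Summing gives $T_{\mathrm{est}} \lesssim s\tfrac{\log p}{n} + \tfrac{p\log p}{r_{\lo} n} + \tfrac{p^2}{r_{\lo}^2 n}$, completing the proof.

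The hard part is exactly this last step --- controlling $T_{\mathrm{est}}$ along the confounding directions. Because $\bX$ is strongly anisotropic there, one cannot bound $\tfrac1n\|\bX(\hat b - b^0)\|_2^2$ through its $\ell_2$ norm; one must establish that $\bX(\hat\theta - \theta^0)$ carries little mass in the leading singular directions of $\bX$, a cancellation forced jointly by the generalised LAVA objective --- the ridge penalty drives $\hat b$ to absorb the confounding, leaving the fitted residual nearly orthogonal to the confounders --- and by the smallness of $\|b^0\|_\infty$. Making this rigorous requires sufficiently sharp spectral concentration for $\bU\Gamma + \bZ$ in the weakly confounded regime $\lambda_{\min}(\Gamma\Gamma^\t) = o(p)$, where the top singular subspace of $\bX$ is only approximately --- not exactly --- spanned by the latent confounders, and careful bookkeeping of the interplay between $\lambda_2 \asymp r_{\lo}\sqrt{(\log p)/n}$ and the spectrum of $\bX$; this is the technical crux.
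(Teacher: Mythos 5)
There is a genuine gap. Your decomposition into a bias term $T_{\mathrm{bias}}$ and an estimation term $T_{\mathrm{est}}$ is reasonable, and the bound $T_{\mathrm{bias}} \lesssim 1/r_{\lo} \leq p\log p/(r_{\lo} n)$ is fine, but the heart of the theorem --- controlling $\tfrac1n\|\bX(\hat\theta-\theta^0)\|_2^2$ along the confounding directions, where the singular values of $\bX$ are of order $\sqrt{np}$ --- is never actually proved. You correctly identify this as ``the technical crux'' and then leave it as a description of what a proof would need to do. Worse, the heuristic you offer for it is circular: you argue that the component of $\bX(\hat\theta-\theta^0)$ in the top-$q$ singular subspace is small because the fitted residuals $Y_i - f(X_i^\top\hat\theta)$ have only $O_p(1)$ mass in the span of the confounders, invoking $\E(\varepsilon\given X,U)=0$. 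But the fitted residuals equal $\varepsilon_i + \{f(X_i^\top\beta^0+U_i^\top\delta^0) - f(X_i^\top\hat\theta)\}$, and the second summand is exactly the quantity the theorem is trying to bound; the orthogonality of $\varepsilon$ to the confounders says nothing about it. The $Q$-part of your spectral split also has an unaddressed problem: $Q(\hat\beta-\beta^0)$ need not lie in the cone $C(S)$ (projection destroys approximate sparsity), so the restricted-eigenvalue upper bound you invoke does not apply to it.

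The missing ingredient is supplied in the paper by taking $b^0$ to be the \emph{data-dependent} minimiser $b^0 := \argmin_b\{L(\beta^0+b)+\lambda_2\|b\|_2^2\}$ rather than the population projection coefficient you use. Subtracting the stationarity conditions $2\lambda_2\hat b = -\nabla L(\hat\beta+\hat b)$ and $2\lambda_2 b^0 = -\nabla L(\beta^0+b^0)$ and applying the mean value theorem yields the exact identity
\[
\hat b - b^0 = -\bigl(2\lambda_2 I + \bX^\top\Lambda^{(1)}\bX\bigr)^{-1}\bX^\top\Lambda^{(1)}\bX(\hat\beta-\beta^0),
\]
so that the in-sample prediction error relative to $\beta^0+b^0$ collapses to $\lambda_2\|(2\lambda_2(\Lambda^{(1)})^{-1}+\bX\bX^\top)^{-1}\bX(\hat\beta-\beta^0)\|_2$ (after the push-through identity). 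The resolvent then delivers, non-circularly, precisely the cancellation you are hoping for: its restriction to the confounding directions has norm $\lesssim 1/r_{\lo}$ (Lemma~\ref{lem:T_2_first}), so the $\lambda_2 \asymp r_{\lo}\sqrt{\log p/n}$ prefactor reduces that contribution to $\sqrt{\log p/n}\,\|\Gamma(\hat\beta-\beta^0)\|_2 \lesssim \sqrt{\log p/n}\,\|\hat\beta-\beta^0\|_1$, while the $\bZ$-contribution is handled coordinatewise via $\|\bZ^\top\bZ-\Sigma\|_\infty$ and the $\ell_1$ bound of Theorem~\ref{thm:main}. The remaining term $\tfrac1n\sum_i\{f(X_i^\top(\beta^0+b^0))-f(X_i^\top\beta^0+U_i^\top\delta^0)\}^2$, which plays the role of your $T_{\mathrm{bias}}$, is controlled by $\|\bX b^0-\bU\delta^0\|_2^2$ on the event $\cT_2$. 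Without this identity (or an equivalent), your argument does not close.
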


One shortcoming of the results above is that choice of $\lambda_2$ depends on the unknown minimum eigenvalue $r_{\ell}$. The following result however suggests that the required choices of tuning parameters should deliver good generalisation performance. This motivates the use of cross-validation for choosing appropriate tuning parameters and we demonstrate empirically in Section~\ref{sec:numerical} that this can work well in practice.
\begin{theorem}[Out sample prediction risk]\label{thm:prediction}
	Let $(X_{\star}, Z_{\star}, U_{\star})$ be independent of $(X_i, Z_i, U_i)_{i=1}^n$ and have the same distribution as $(X, Z, U)$.
	Under the same conditions and with $c_{\lambda_1}, c_{\lambda_2}$ as in Theorem~\ref{thm:main}, we have that there exists constant $c> 0$ such that with probability at least $\PP(\Omega(\tau, \kappa, c_p)) -  n^{-m}$, 
	\[
	\E\left[\left\{f\left( X_{\star}^\top (\hat{\beta} + \hat{b})\right) - f\left(X_{\star}^\top \beta^0 + U_{\star}^\top \delta^0\right)\right\}^2 \,\Big|\, \hat{\beta}, \hat{b}\right] \lesssim s^2 \frac{\log p }{n} + \frac{p \log p}{r_\ell n} 
	+ \frac{p^2}{r_{\lo}^2 n}.
	\]
\end{theorem}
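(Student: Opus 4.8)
The plan is to bound the left-hand side directly by a population second-moment computation, exploiting that $(X_{\star}, Z_{\star}, U_{\star})$ is independent of $(\hat\beta, \hat b)$: the conditional expectation is then an exact moment, so no empirical-process control is needed, and the only quantities requiring a probabilistic treatment are the estimation errors of $\hat\beta$ and $\hat b$, which are supplied by Theorem~\ref{thm:main} and the intermediate bounds established in its proof. Since $f$ is Lipschitz with constant $\sup_{\eta}|f'(\eta)| < \infty$ and $\hat\theta := \hat\beta + \hat b$, we have $\{f(X_{\star}^{\top}\hat\theta) - f(X_{\star}^{\top}\beta^0 + U_{\star}^{\top}\delta^0)\}^2 \lesssim D_{\star}^2$ with $D_{\star} := X_{\star}^{\top}(\hat\beta - \beta^0) + X_{\star}^{\top}\hat b - U_{\star}^{\top}\delta^0$. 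Taking expectations conditional on $(\hat\beta,\hat b)$, using $(u+v)^2 \le 2u^2 + 2v^2$, and using that $X_{\star}$ is independent of $(\hat\beta,\hat b)$, the left-hand side is $\lesssim T_1 + T_2$ where, writing $a := \hat\beta - \beta^0$ and $\Sigma_X := \E[XX^{\top}] = \Gamma^{\top}\Gamma + \Cov(Z)$,
\[
T_1 := a^{\top}\Sigma_X a, \qquad T_2 := \E\big[(X_{\star}^{\top}\hat b - U_{\star}^{\top}\delta^0)^2 \,\big|\, \hat b\big].
\]

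For $T_1$, split $T_1 = \norm{\Gamma a}_2^2 + a^{\top}\Cov(Z) a$. Sub-Gaussianity of $Z$ gives $\norm{\Cov(Z)}_{\op} \lesssim 1$, so the second term is $\lesssim \norm{a}_2^2 \lesssim s\log p / n$ by Theorem~\ref{thm:main}. For the first, Assumption~\ref{cd:subg} with $\Var(U) = I$ yields $\Var(X_j) = \norm{\Gamma_{\cdot j}}_2^2 + \Var(Z_j)$ for the $j$th column $\Gamma_{\cdot j}$ of $\Gamma$, so $\norm{\Gamma_{\cdot j}}_2 \le \{\Var(X_j)\}^{1/2} \lesssim 1$; hence $\norm{\Gamma a}_2 \le (\max_j \norm{\Gamma_{\cdot j}}_2)\norm{a}_1 \lesssim \norm{a}_1 \lesssim s\sqrt{\log p / n}$, again by Theorem~\ref{thm:main}. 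Thus $T_1 \lesssim s^2 \log p / n$, which is the first term of the claimed bound.

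For $T_2$, introduce the oracle $b^{\star} := \Gamma^{\top}(\Gamma\Gamma^{\top})^{-1}\delta^0$, which is well-defined since $\lambda_{\min}(\Gamma\Gamma^{\top}) = r_{\ell} > 0$, and satisfies $\Gamma b^{\star} = \delta^0$ and $\norm{b^{\star}}_2^2 = (\delta^0)^{\top}(\Gamma\Gamma^{\top})^{-1}\delta^0 \le \norm{\delta^0}_2^2 / r_{\ell} \lesssim 1/r_{\ell}$. Because $X_{\star} = \Gamma^{\top}U_{\star} + Z_{\star}$, we get $X_{\star}^{\top}b^{\star} - U_{\star}^{\top}\delta^0 = Z_{\star}^{\top}b^{\star}$, so $\E[(X_{\star}^{\top}b^{\star} - U_{\star}^{\top}\delta^0)^2] = (b^{\star})^{\top}\Cov(Z)b^{\star} \lesssim 1/r_{\ell}$. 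By Minkowski's inequality,
\[
\sqrt{T_2} \;\lesssim\; \frac{1}{\sqrt{r_{\ell}}} + \sqrt{\E\big[(X_{\star}^{\top}(\hat b - b^{\star}))^2 \,\big|\, \hat b\big]} \;\le\; \frac{1}{\sqrt{r_{\ell}}} + \norm{\Sigma_X}_{\op}^{1/2}\,\norm{\hat b - b^{\star}}_2,
\]
and $\norm{\Sigma_X}_{\op} \le \norm{\Gamma}_{\op}^2 + \norm{\Cov(Z)}_{\op} \le \operatorname{tr}(\Gamma\Gamma^{\top}) + O(1) = \sum_j \norm{\Gamma_{\cdot j}}_2^2 + O(1) \lesssim p$. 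It then remains to insert the $\ell_2$ bound $\norm{\hat b - b^{\star}}_2^2 \lesssim \frac{\log p}{r_{\ell} n} + \frac{p}{r_{\ell}^2 n}$, which is precisely what the analysis behind Theorems~\ref{thm:main} and \ref{thm:inpred} produces (it is the origin of the two confounding terms appearing there); this gives $T_2 \lesssim \frac{1}{r_{\ell}} + \frac{p\log p}{r_{\ell} n} + \frac{p^2}{r_{\ell}^2 n} \lesssim \frac{p\log p}{r_{\ell} n} + \frac{p^2}{r_{\ell}^2 n}$, the last step using $p \gtrsim n$ from Assumption~\ref{cd:pns}(i). Combining the bounds on $T_1$ and $T_2$ yields the result on the event of Theorem~\ref{thm:main}.

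The genuinely nontrivial ingredient is the $\ell_2$ control of $\hat b$ around the oracle $b^{\star}$; everything else is deterministic moment bookkeeping under Assumption~\ref{cd:subg}. Obtaining $\norm{\hat b - b^{\star}}_2$ requires pairing the KKT conditions for \eqref{eq:gen_LAVA} (comparing against the feasible pair $(\beta^0, b^{\star})$) with the restricted-eigenvalue event $\Omega(\tau, \kappa, c_p)$, where the truncation in \eqref{eq:rscevent} is used to pass from the nonlinear prediction error back to the quadratic form in $X_i^{\top}(\hat\beta - \beta^0) + X_i^{\top}(\hat b - b^{\star})$, up to a negligible $Z_i^{\top}b^{\star}$ contribution; this is carried out as part of the proofs of Theorems~\ref{thm:main} and \ref{thm:inpred}.
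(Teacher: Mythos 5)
Your treatment of $T_1$ is correct and coincides with the paper's handling of the corresponding term (bounding $\|\Gamma a\|_2$ via $\max_j\|\Gamma_j\|_2\lesssim 1$ and $\|a\|_1$, giving the $s^2\log p/n$ term). The gap is in $T_2$. You reduce everything to the claim $\|\hat b - b^\star\|_2^2 \lesssim \frac{\log p}{r_\ell n} + \frac{p}{r_\ell^2 n}$ for the oracle $b^\star = \Gamma^\top(\Gamma\Gamma^\top)^{-1}\delta^0$, asserting that this is ``precisely what the analysis behind Theorems~\ref{thm:main} and~\ref{thm:inpred} produces.'' It is not. That analysis controls only (i) the \emph{in-sample} quantity $\|\bX b^0 - \bU\delta^0\|_2$ (event $\cT_2$), which does not transfer to a population $\ell_2$ bound because $\bX^\top\bX$ is rank-deficient when $p\gtrsim n$ and $\hat b$ lives in the $n$-dimensional row space of $\bX$; (ii) the \emph{projected} quantities $\|\Gamma b^0-\delta^0\|_2$ and $\|\Gamma(\hat b - b^0)\|_2$; and (iii) the crude KKT bound $\|\hat b\|_2\le \sqrt{p}\lambda_1/(2\lambda_2)$. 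None of these yields $\ell_2$ consistency of $\hat b$ toward a fixed target at the rate you need — indeed the paper explicitly disclaims having a population-level target for $\hat b$. Your fallback route also fails structurally: the event $\Omega(\tau,\kappa,c_p)$ only gives a lower bound of the form $\kappa\|\Delta_1\|_2^2 - c_p r_\ell\sqrt{\log p/n}\,\|\Delta_2\|_2^2$, with a \emph{negative} coefficient on the $\Delta_2$ (i.e.\ $\hat b$-error) part, so it cannot produce positive $\ell_2$ control of $\hat b - b^\star$. The only positive control available is $\lambda_2\|\hat b - b^0\|_2^2\lesssim s\log p/n$ from \eqref{eq:opt}, which gives $\|\hat b - b^0\|_2^2\lesssim \frac{s}{r_\ell}\sqrt{\log p/n}$; multiplied by $\|\Sigma_X\|_\op\lesssim p$ this is only $O(1)$ under Assumption~\ref{cd:omesparsity} and is useless for the theorem.

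The step $\sqrt{T_2}\le 1/\sqrt{r_\ell} + \|\Sigma_X\|_\op^{1/2}\|\hat b - b^\star\|_2$ is exactly the bound the paper's proof is engineered to avoid, because $\Sigma_X$ is highly anisotropic: eigenvalues up to order $p$ along the rows of $\Gamma$, but $O(1)$ on $\Cov(Z)$. The paper instead splits $X_\star^\top\hat b - U_\star^\top\delta^0 = Z_\star^\top\hat b + U_\star^\top(\Gamma\hat b - \delta^0)$ and treats the two directions with different tools: the $Z$-direction uses $\|\Cov(Z)\|_\op\lesssim1$ together with the KKT bound $\|\hat b\|_2^2\lesssim p/r_\ell^2$ (acceptable only because $p\gtrsim n$ makes $p/r_\ell^2\lesssim p^2/(r_\ell^2 n)$); the $\Gamma$-direction uses $\|\Gamma b^0-\delta^0\|_2$ (Lemma~\ref{lem:Gamma_b^0}) plus the identity $\hat b - b^0 = -(2\lambda_2 I+\bX^\top\Lambda^{(1)}\bX)^{-1}\bX^\top\Lambda^{(1)}\bX(\hat\beta-\beta^0)$ and Lemma~\ref{lem:term1_var} to get $\|\Gamma(\hat b - b^0)\|_2\lesssim\|\hat\beta-\beta^0\|_1$, which is far sharper than $\|\Gamma\|_\op\|\hat b - b^0\|_2$. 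To repair your argument you would need to either prove the $\ell_2$ bound on $\hat b - b^\star$ from scratch (which I do not believe holds at the stated rate, since the component of $b^\star$ outside the row space of $\bX$ is not estimable), or adopt the paper's direction-dependent decomposition.
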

Compared to the bound in Theorem~\ref{thm:inpred}, we see that we have a term of the form $s^2 \log(p)/n$ rather than $s \log(p)/n$. This comes as a consequence of the fact that the maximum eigenvalue of $\E (X X^{\top})$ is not bounded in our setting an indeed can be as large as $O(p)$.

\section{Testing for edges in a causal DAG}\label{sec:testing}
In this section, we consider the setting where in addition to $X$ and $Y$, we observe i.i.d.\ copies of an additional  random variable $W \in \R$, so our observed  data consists of $(X_i, Y_i, W_i)_{i=1}^n \iid (X, Y, W)$. Our goal is to test for the presence of an edge between $Y$ and $W$ in the causal directed acyclic graph (DAG) shown in Figure~\ref{fig:DAG}.
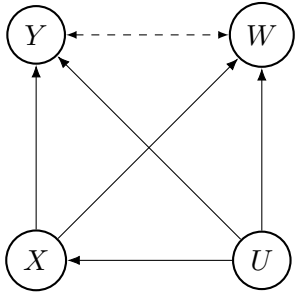
\begin{figure}
\begin{center}
		\begin{tikzpicture}
		[rv/.style={circle, draw, thick, minimum size=4mm}, rvc/.style={rectangle, draw, thick, minimum size=4mm}, node distance=30mm]
		\pgfsetarrows{latex-latex};
		\begin{scope}
			\node[rv]  (1)              {$Y$};
			\node[rv, right of=1] (2) {$W$};
			\node[rv, below of=1]  (3) {$X$};
			\node[rv, right of=3] (4) {$U$};
			\draw[-Latex] (3) -- (1);
			\draw[-Latex] (3) -- (2);
			\draw[-Latex] (4) -- (3);
			\draw[-Latex] (4) -- (1);
			\draw[-Latex] (4) -- (2);
			\draw[dashed] (1) -- (2);
		\end{scope}
	\end{tikzpicture}
\end{center}
\caption{Causal DAG depicting the hypothesis testing problem: under the null, there is no edge between $Y$ and $W$, while under an alternative, we have an edge here.\label{fig:DAG}}
\end{figure}
Here, under the null that no edge is present, $Y$ follows \eqref{eq:model} and $W$ satisfies the related regression model
\begin{align*}
	W = f^W(X^\top \beta^W + U^\top \delta^W) + \varepsilon^W.
\end{align*}
Similarly to \eqref{eq:model}, we require that the error $\varepsilon^W$ satisfies $\E(\varepsilon^W \given X, U) = 0$, and $f^W$ is a known link function (which may be different from $f$). Note that the null hypothesis we wish to test can equivalently be expressed as the conditional independence
\begin{equation} \label{eq:null}
	Y \independent W \given X, U,
\end{equation}
though unlike more conventional conditional independence testing problems, the conditioning variable $U$ is unobserved. A further interpretation of the the hypothesis testing problem at hand is as a variable significance test. To see this, consider $(W, X)$ as playing the role of $X$ in the model \eqref{eq:model}, so $W$ refers to a special predictor of interest within the vector of predictors $X$. The null hypothesis is then that the coefficient corresponding to $W$ is zero.

Our proposed testing procedure is based on the generalised covariance measure (GCM) \citep{shah2020hardness}, which is based on the observation that under the null \eqref{eq:null}, we have that $\E [\Cov(Y, W \given X, U)] = 0$. The GCM would involve regressing each of $\mb Y$ and $\mb W := (W_i)_{i=1}^n$ onto the conditioning variables, and the computing a test statistic based on an appropriately normalised empirical covariance between these residuals. A key feature of this approach is that while the mean squared error of the regression estimates may only converge at slower than parametric rates, the empirical covariance can nevertheless converge at the $1/\sqrt{n}$ rate to zero under the null since its bias can be controlled by a product of squared biases from each of the individual regressions. Thus if each of these individual mean squared errors are of order $o(n^{1/2})$, then this is sufficient to ensure that the bias in the final test statistic is negligible.

 While this approach cannot be used directly as $U$ is unobserved, since the method only relies on predictions of $Y$ and $W$, we can use our generalised LAVA procedure~\eqref{eq:gen_LAVA} to deliver these and rely on Theorem~\ref{thm:inpred} to guarantee the accuracy of the predictions. Specifically, our procedure takes the following steps:
\begin{enumerate}
	\item Form $(\hat{\beta}, \hat{b})$ and $(\hat{\beta}^W, \hat{b}^W)$ via minimising the generalised LAVA objective function \eqref{eq:gen_LAVA} taking $\bX \in \R^{n\times p}$ as our matrix of predictors and using response vectors $\mb Y$ and $\mb W$, and taking $\ell$ based on link functions $f$ and  $f_W$ respectively;
	\item Calculate the test statistic
	\[
	T := \frac{\frac{1}{\sqrt{n}} \sum_{i = 1}^n \hat{\varepsilon}_i \hat{\varepsilon}_i^W}{\sqrt{\frac{1}{n} \sum_{i = 1}^n \hat{\varepsilon}_i^2 (\hat{\varepsilon}_i^W)^2 - \left(\frac{1}{n} \sum_{i = 1}^n \hat{\varepsilon}_i \hat{\varepsilon}_i^W \right)^2}},
	\]
	where $\hat{\varepsilon}_i := Y_i - f(X_i^\top (\hat{\beta} + \hat{b}))$ and $\hat{\varepsilon}_i^W := W_i - f^W(X_i^\top (\hat{\beta}^W + \hat{b}^W))$ are the residuals from each of the regressions;
	\item Reject the null at the $\alpha$-level when $|T|> z_{\alpha/2}$ where $z_{\alpha}$ is the upper $\alpha$ point of a standard Gaussian distribution.
\end{enumerate}

The following result shows that the Type I error of the testing procedure is well-controlled.

\begin{theorem}\label{thm:inference}
	Consider the null setting \eqref{eq:null}.
Suppose Assumptions~\ref{cd:pns}--\ref{cd:subg} are satisfied as  well as versions of these conditions with $(\beta^0, f, \varepsilon)$ replaced by $(\beta^W, f^W, \varepsilon^W)$. Further assume that $\var(\varepsilon \varepsilon^W) \gtrsim 1$. Let constants $\tau, \kappa, c_p$ be given. There exist $c_{\lambda_1}, c_{\lambda_2}$ such that choosing $\lambda_1 = c_{\lambda_1} \sqrt{\frac{\log p}{n}}$ and $\lambda_2 = c_{\lambda_2} r_{\lo} \sqrt{\frac{\log p}{n}}$, we have that for any $\epsilon>0$ there exists a constant $c > 0$ such that for any $\alpha \in (0,1)$,
	\begin{align*}
	\PP(|T| \leq z_{\alpha/2}) \geq \; 1 - \alpha - \PP(\Omega^c(\tau, \kappa, c_p))  - c \left(s \frac{\log p}{\sqrt{n}} + \frac{p \log p}{r_\ell \sqrt{n}} + \frac{p^2}{r_\ell^2 \sqrt{n}} + n^{-1/2+\epsilon}\right).
\end{align*}
\end{theorem}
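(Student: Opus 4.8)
The plan is to follow the standard GCM argument of \citet{shah2020hardness}, adapted to the present setting where the ``conditioning'' is done via the generalised LAVA fits rather than by a consistent regression onto $(X,U)$. Write $\varepsilon_i := Y_i - f(X_i^\top\beta^0 + U_i^\top\delta^0)$ and $\varepsilon_i^W := W_i - f^W(X_i^\top\beta^W + U_i^\top\delta^W)$ for the population residuals, and $\mu_i := f(X_i^\top\beta^0 + U_i^\top\delta^0)$, $\mu_i^W := f^W(X_i^\top\beta^W + U_i^\top\delta^W)$ for the true regression functions; let $\hat\mu_i := f(X_i^\top(\hat\beta+\hat b))$ and $\hat\mu_i^W$ analogously. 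Then $\hat\varepsilon_i = \varepsilon_i + (\mu_i - \hat\mu_i)$ and similarly for $W$. First I would expand the numerator $n^{-1/2}\sum_i \hat\varepsilon_i\hat\varepsilon_i^W$ into four terms: the ``oracle'' term $n^{-1/2}\sum_i \varepsilon_i\varepsilon_i^W$, two cross terms $n^{-1/2}\sum_i \varepsilon_i(\mu_i^W - \hat\mu_i^W)$ and $n^{-1/2}\sum_i \varepsilon_i^W(\mu_i - \hat\mu_i)$, and the product term $n^{-1/2}\sum_i (\mu_i-\hat\mu_i)(\mu_i^W-\hat\mu_i^W)$. Under the null \eqref{eq:null}, $\E[\varepsilon\varepsilon^W \given X, U] = 0$, so conditionally on $(X_i,U_i)_i$ the oracle term is a sum of independent mean-zero variables; with $\var(\varepsilon\varepsilon^W)\gtrsim 1$ and the sub-Gaussian (hence sub-exponential product) assumptions, a Berry--Esseen bound gives that it is approximately $N(0,\var(\varepsilon\varepsilon^W))$ up to an $O(n^{-1/2+\epsilon})$ error, and the empirical variance in the denominator concentrates on $\var(\varepsilon\varepsilon^W)$ at rate $n^{-1/2+\epsilon}$ as well (using $\var(\varepsilon\varepsilon^W)\gtrsim 1$ to control the ratio).

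The product term is the easy one: by Cauchy--Schwarz it is at most $\sqrt{n}\cdot\sqrt{\frac1n\sum_i(\mu_i-\hat\mu_i)^2}\cdot\sqrt{\frac1n\sum_i(\mu_i^W-\hat\mu_i^W)^2}$, and Theorem~\ref{thm:inpred} applied to each regression bounds each in-sample MSE by $\mathrm{rate}_n := s\frac{\log p}{n} + \frac{p\log p}{r_\ell n} + \frac{p^2}{r_\ell^2 n}$, so this term is $O(\sqrt{n}\cdot\mathrm{rate}_n) = O\big(s\frac{\log p}{\sqrt n} + \frac{p\log p}{r_\ell\sqrt n} + \frac{p^2}{r_\ell^2\sqrt n}\big)$ on $\Omega(\tau,\kappa,c_p)$ (intersected with the probability-$1-cn^{-m}$ event of that theorem), which matches the claimed remainder. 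The main obstacle will be controlling the two cross terms, e.g.\ $n^{-1/2}\sum_i \varepsilon_i(\mu_i^W - \hat\mu_i^W)$. A naive Cauchy--Schwarz here gives only $\sqrt{n}\cdot O(\sqrt{\mathrm{rate}_n})$, which is far too big. The standard GCM trick is to condition on the data used to form $\hat\mu^W$ so that $n^{-1/2}\sum_i\varepsilon_i(\mu_i^W-\hat\mu_i^W)$ has conditional mean zero and conditional variance $\frac1n\sum_i \var(\varepsilon_i\given\cdot)(\mu_i^W-\hat\mu_i^W)^2 \le (\sup\var)\cdot\frac1n\sum_i(\mu_i^W-\hat\mu_i^W)^2 \lesssim \mathrm{rate}_n = o(1)$, so the cross term is $o_p(1)$ --- but this requires sample splitting, which the stated procedure does not do. Without splitting one must instead exploit structure: write $\mu_i^W - \hat\mu_i^W = f^W(X_i^\top\theta^W_\star) - f^W(X_i^\top\hat\theta^W)$ where $\theta^W_\star$ is a suitable population surrogate, linearise via the mean value theorem to get $(f^W)'(\cdot)X_i^\top(\theta^W_\star - \hat\theta^W)$, and then bound $n^{-1/2}\big|\sum_i \varepsilon_i (f^W)'(\xi_i) X_i^\top(\theta^W_\star-\hat\theta^W)\big|$ by $\|\theta^W_\star - \hat\theta^W\|_1 \cdot \max_j \big|n^{-1/2}\sum_i \varepsilon_i (f^W)'(\xi_i) X_{ij}\big|$ plus a remainder. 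The $\ell_1$ estimation bound from Theorem~\ref{thm:main} (applied to the decomposition $\hat\theta^W = \hat\beta^W + \hat b^W$, with $\|\hat b^W\|$ controlled separately by the analysis underlying that theorem) gives $\|\hat\beta^W - \beta^W\|_1 \lesssim s\sqrt{\log p/n}$; the max over $j$ of the empirical process is $O(\sqrt{\log p})$ by a union bound and sub-exponential tail (noting $(f^W)'$ is bounded), and the part involving $\hat b^W$ is handled by its $\ell_2$ bound together with the truncation/restricted-eigenvalue machinery. This yields a bound of order $s\frac{\log p}{\sqrt n}$ for each cross term, plus the confounding-induced pieces $\frac{p\log p}{r_\ell\sqrt n} + \frac{p^2}{r_\ell^2\sqrt n}$ coming from the surrogate-bias terms, again matching the stated remainder.

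Assembling: on the event $\Omega(\tau,\kappa,c_p)$ intersected with the high-probability events from Theorems~\ref{thm:main} and~\ref{thm:inpred}, the numerator of $T$ equals the oracle term plus an $O\big(s\frac{\log p}{\sqrt n} + \frac{p\log p}{r_\ell\sqrt n} + \frac{p^2}{r_\ell^2\sqrt n}\big)$ error, and the denominator equals $\sqrt{\var(\varepsilon\varepsilon^W)}(1 + o_p(1))$ with an $n^{-1/2+\epsilon}$ control; combining with the Berry--Esseen approximation of the oracle term to a Gaussian gives $\PP(|T|\le z_{\alpha/2}) \ge 1 - \alpha - \PP(\Omega^c) - c\big(s\frac{\log p}{\sqrt n} + \frac{p\log p}{r_\ell\sqrt n} + \frac{p^2}{r_\ell^2\sqrt n} + n^{-1/2+\epsilon}\big)$ as claimed, after absorbing the $cn^{-m}$ probabilities into the $n^{-1/2+\epsilon}$ term (valid for $m$ large enough, which we are free to choose). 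The one genuinely delicate point to get right in the write-up is the definition of the population surrogate $\theta^W_\star$ and showing its ``bias'' $\frac1n\sum_i(f^W(X_i^\top\theta^W_\star) - \mu_i^W)^2$ is of the required order --- this is exactly where the weak-confounding terms $\frac{p\log p}{r_\ell n}$ and $\frac{p^2}{r_\ell^2 n}$ enter, and it should be extractable from the internal steps of the proof of Theorem~\ref{thm:inpred} rather than re-derived from scratch.
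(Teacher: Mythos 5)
Your overall architecture (decompose the numerator into the oracle term, two cross terms and a product term; bound the product term by Cauchy--Schwarz and Theorem~\ref{thm:inpred}; Berry--Esseen for the oracle term; concentration for the denominator) matches the paper's proof. The genuine problem is your treatment of the cross terms. You assert that the GCM conditioning trick ``requires sample splitting, which the stated procedure does not do,'' and on that basis you abandon it in favour of a linearisation argument. This is a wrong turn: no sample splitting is needed here. The estimate $(\hat{\beta},\hat{b})$ is a deterministic function of $(\boldsymbol{\varepsilon},\mathbf{X},\mathbf{U})$, and under the null \eqref{eq:null} the error $\varepsilon^W$ is conditionally independent of $\varepsilon$ given $(X,U)$, so conditionally on $(\boldsymbol{\varepsilon},\mathbf{X},\mathbf{U})$ the weights $f(X_i^\top\beta^0+U_i^\top\delta^0)-f(X_i^\top(\hat{\beta}+\hat{b}))$ are fixed while the $\varepsilon_i^W$ remain mean-zero sub-Gaussian. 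A conditional sub-Gaussian tail bound then gives that the cross term is at most a constant times $\bigl(\tfrac1n\sum_i(\mu_i-\hat\mu_i)^2\bigr)^{1/2}\sqrt{\log n/n}$, which is $\lesssim d_n := s\tfrac{\log p}{n}+\tfrac{p\log p}{r_\ell n}+\tfrac{p^2}{r_\ell^2 n}$ since $d_n\gtrsim \log n/n$ by Assumption~\ref{cd:pns}(i). This is exactly how the paper proceeds (Lemma~\ref{lem:nulldecomp}), symmetrically for the other cross term by conditioning on $(\boldsymbol{\varepsilon}^W,\mathbf{X},\mathbf{U})$.

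Your substitute argument, as sketched, does not close. Splitting $\hat\theta^W-\theta^W_\star$ via H\"older into $\|\cdot\|_1\cdot\max_j|n^{-1/2}\sum_i\varepsilon_i (f^W)'(\xi_i)X_{ij}|$ only controls the sparse component $\hat\beta^W-\beta^W$; the dense component $\hat b^W-b^{W,0}$ has no useful $\ell_1$ bound (only an $\ell_2$ bound, and $\|\hat b^W - b^{W,0}\|_1$ can be of order $\sqrt{p}\,\|\hat b^W-b^{W,0}\|_2$), and the phrase ``handled by its $\ell_2$ bound together with the truncation/restricted-eigenvalue machinery'' is precisely the step that would need a new argument --- an $\ell_2$--$\ell_2$ pairing with $n^{-1/2}\sum_i\varepsilon_i(f^W)'(\xi_i)X_i$, whose $\ell_2$ norm is of order $\sqrt{p\log p}$, which is too large. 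Moreover the mean-value points $\xi_i$ depend on $\hat\theta^W$ and hence on $\mathbf{W}$, so the ``empirical process'' maximum you invoke is not a maximum over a fixed finite family; justifying it again forces you back to conditioning on $(\boldsymbol{\varepsilon}^W,\mathbf{X},\mathbf{U})$ --- at which point the direct conditioning bound on the unlinearised cross term is both simpler and sufficient. The remaining components of your plan (denominator concentration, Berry--Esseen, absorbing the $n^{-m}$ probabilities) are consistent with the paper's Lemmas~\ref{lem:nullbnan} and the final assembly.
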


\section{On the restricted eigenvalue condition}\label{sec:re}
Our previous results have relied on an event $\Omega(\tau, \kappa, c_p)$, which asks for a modified restricted eigenvalue condition to hold, occurring with high probability. In this section, we discuss this further and provide some simple sufficient conditions for this. Recall that the regular restricted eigenvalue (RE)  condition \citep{tsybakov2009simultaneous} asks that for some $\kappa > 0$,
\begin{equation} \label{eq:RE}
\frac{1}{n} \sum_{i=1}^n (X_i^{\top} \Delta)^2 \geq \kappa \|\Delta\|_2^2
\end{equation}
holds for all $\Delta \in C(S)$, where the cone $C(S)$ is given by \eqref{eq:cone}. One then typically argues that  estimation error $\hat{\beta} - \beta^0 \in C(S)$ and moreover that a multiple of the left-hand side of \eqref{eq:RE} lower bounds the difference in the objective functions at $\beta^0$ and $\hat{\beta}$ respectively. There are however several complications in our setting that prevent this sort of analysis from going through directly. Firstly, as explained in Section~\ref{sec:optimization}, in addition to $\hat{\beta}$, as necessitated by the presence of latent confounding, our procedure produces a further estimate $\hat{b}$, and so defining the analogue of `estimation error' is not straightforward. Our proof of Theorem~\ref{thm:main} in the supplementary material however reveals that this estimation error may be taken to be $\hat{\beta} + \hat{b} - \beta^0 - b^0$ with $b^0$ a random quantity given by
\[
b^0 := \argmin_{b \in \R^p} \{L(\beta^0 + b) + \lambda_2 \|b\|_2^2\}.
\]
To reflect this, our RE condition \eqref{eq:rscevent} decomposes $\Delta$ in \eqref{eq:RE} into a sum of the quantities $\Delta_1$ and $\Delta_2$. The first, corresponding to the error $\hat{\beta} - \beta^0$, can be shown to lie in $C(S)$. However the second, corresponding to $\hat{b}- b^0$ will not in general satisfy any such approximate sparsity but can be shown to have small $\ell_2$ norm; this is the reason for our inclusion of the term involving $\|\Delta_2\|_2^2$ on the right-hand side of \eqref{eq:rscevent}.

A further complication is that depending on the link function, we cannot expect there to exist a quadratic lower bound of the form given by the left-hand side in \eqref{eq:RE}, to the difference in object function values at $(\hat{\beta}, \hat{b})$ and $(\beta^0,b^0)$ respectively. We thus truncate each summand by multiplying by $\ind_{\{|X_i^{\top} \beta^9 + U_i^{\top} \delta^0| \leq \tau\}}$.
To show that the event $\Omega(\tau, \kappa, c_p)$ occurs with high probability, we 
%
%
require the following conditions:
\begin{assumption}\label{cd:omesparsity}
	There exists some sequence $c_n \to 0$ such that $s \sqrt{ \log p / n} \leq c_n$. Furthermore,
	\[
	\frac{sp}{r_\ell}\sqrt{\frac{\log p}{n}} \lesssim 1.
	\]
\end{assumption}
The first condition is implied by Assumption~\ref{cd:pns}(iv). When $r_\ell \gtrsim p / \sqrt{ \log p}$, this also implies the second condition above, but when $r_\ell$ is smaller, then this may represent a slightly stronger sparsity requirement. 


\begin{theorem}\label{thm:resubg}
	Suppose there exists a constant $c_z>0$ such that $\lambda_{\min}(\E(Z Z^{\top})) \geq c_z$. Then under Assumptions~\ref{cd:subg} and~\ref{cd:omesparsity}, we have that for any given $m \in \N$, there exist constants $\kappa, c_p, \tau, c > 0$ such that
	\[
	\PP(\Omega(\tau, \kappa, c_p)) \geq 1 - c(n^{-m} + p^{-m}).
	\]
\end{theorem}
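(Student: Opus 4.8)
The plan is to establish the modified restricted eigenvalue inequality in \eqref{eq:rscevent} uniformly over $\Delta_1 \in C(S)$ and $\Delta_2 \in \R^p$ by reducing to a population-level quantity and controlling the deviation. First I would note that because $\Delta_1 + \Delta_2$ enters quadratically, I can expand $(X_i^\top(\Delta_1+\Delta_2))^2 = (X_i^\top\Delta_1)^2 + 2(X_i^\top\Delta_1)(X_i^\top\Delta_2) + (X_i^\top\Delta_2)^2$; the last term is nonnegative (after truncation) and can be dropped, while the cross term must be absorbed. Writing $M_i := \ind_{\{|X_i^\top\beta^0 + U_i^\top\delta^0| \leq \tau\}}$, the key population object is $\E[M (X^\top\Delta_1)^2]$. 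Using $X = \Gamma^\top U + Z$ with $Z \perp U$ and $\lambda_{\min}(\E ZZ^\top) \geq c_z$, I would lower-bound $\E[M(X^\top\Delta_1)^2] \geq \E[M (Z^\top\Delta_1)^2 \mid U]$-type arguments: since $Z$ is independent of $U$ but $M$ depends on $X$ and hence on $Z$, I would instead condition cleverly — e.g. lower bound $M$ by a function of $U$ alone on a high-probability event, or split $X^\top\Delta_1 = U^\top\Gamma\Delta_1 + Z^\top\Delta_1$ and use that for $\tau$ large enough, $\PP(M=1 \mid U)$ is bounded below uniformly by Chebyshev/sub-Gaussian tail bounds on $Z^\top\Delta_1 + \varepsilon$-type terms. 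This yields $\E[M(X^\top\Delta_1)^2] \geq \kappa_0 \|\Delta_1\|_2^2$ for some constant $\kappa_0$ and suitable constant $\tau$.

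Next I would handle the cross term and the $\Delta_2$ contribution. The cross term satisfies $|2\,\frac1n\sum_i M_i (X_i^\top\Delta_1)(X_i^\top\Delta_2)| \leq \frac{\kappa_0}{2}\cdot\frac1n\sum_i M_i(X_i^\top\Delta_1)^2 + \frac{2}{\kappa_0}\cdot\frac1n\sum_i M_i(X_i^\top\Delta_2)^2$ by Young's inequality, so it suffices to show $\frac1n\sum_i M_i(X_i^\top\Delta_2)^2 \lesssim r_\ell\sqrt{\log p/n}\,\|\Delta_2\|_2^2$ uniformly in $\Delta_2$; equivalently, the operator norm of $\frac1n\sum_i M_i X_i X_i^\top$ is $\lesssim r_\ell\sqrt{\log p/n}$. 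Since $\E[XX^\top] = \Gamma^\top\Gamma + \E ZZ^\top$ has operator norm $O(\lambda_{\max}(\Gamma\Gamma^\top)) = O(p)$, this is where the scaling enters: I would need a matrix concentration bound (matrix Bernstein or a covering/$\varepsilon$-net argument combined with sub-Gaussian/sub-exponential tail bounds, truncating $X_i$ at a slowly growing level to control heavy tails) showing $\|\frac1n\sum_i M_i X_iX_i^\top - \E[M XX^\top]\|_{\op} \lesssim \sqrt{p\log p/n}$, and also bound $\|\E[MXX^\top]\|_{\op}$; combined with Assumption~\ref{cd:pns}(iii) giving $r_\ell \gtrsim p^{5/6}$ and $r_\ell \gtrsim p(\log p/n)^{1/6}$, one checks $p \lesssim r_\ell\sqrt{\log p/n}$ and $\sqrt{p\log p/n} \lesssim r_\ell\sqrt{\log p/n}$, delivering the required bound with a constant $c_p$. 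Finally, for the $\Delta_1$ term I would upgrade the population lower bound $\E[M(X^\top\Delta_1)^2]\geq\kappa_0\|\Delta_1\|_2^2$ to a uniform-over-$C(S)$ empirical lower bound via a standard peeling/Gaussian-width argument restricted to the cone, using that elements of $C(S)$ satisfy $\|\Delta_1\|_1 \leq 4\sqrt{s}\|\Delta_1\|_2$, so the relevant empirical process has complexity controlled by $s\log p/n \to 0$ (Assumption~\ref{cd:omesparsity}); this costs only a constant factor, so we can take $\kappa = \kappa_0/4$ say.

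The main obstacle I anticipate is the interplay between the truncation indicator $M_i$ and the independence structure: because $M_i$ depends on $X_i$ (through both $U_i$ and $Z_i$), the clean conditioning $Z \perp U$ is partially lost, and establishing $\E[M(X^\top\Delta_1)^2]\gtrsim\|\Delta_1\|_2^2$ uniformly requires care — one must show the event $\{M=1\}$ does not systematically exclude the directions where $X^\top\Delta_1$ is large. I would address this by choosing $\tau$ large relative to $\sigma_\eta$ (which is a constant by Assumption~\ref{cd:subg}, since $\var(U^\top\delta^0)\lesssim 1$ and $X^\top\beta^0 + U^\top\delta^0$ is sub-Gaussian with parameter $\sigma_\eta$), so that $\PP(M=0)$ is a small constant; then a second-moment argument (bounding $\E[(1-M)(X^\top\Delta_1)^2]$ via Cauchy--Schwarz, the sub-Gaussianity of $X^\top\Delta_1$, and the smallness of $\PP(M=0)^{1/2}$) shows the truncation removes at most a fraction of $\|\Delta_1\|_2^2$. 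A secondary technical nuisance is that individual coordinates $X_j$ are only assumed coordinatewise sub-Gaussian (not jointly), so $X^\top\Delta_2$ for general dense $\Delta_2$ need not be sub-Gaussian; this is precisely why I route the $\Delta_2$ bound through an operator-norm estimate on $\frac1n\sum_i M_i X_iX_i^\top$ rather than a direct uniform bound, using truncation of $X_i$ entrywise and a union bound over the net, which only loses logarithmic factors.
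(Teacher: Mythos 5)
Your plan breaks down at its central step: the reduction of the $\Delta_2$ contribution to the operator-norm bound $\|\tfrac1n\sum_i M_iX_iX_i^\top\|_{\op}\lesssim r_\ell\sqrt{\log p/n}$. This bound is false. Since $\Var(X_j)\lesssim 1$ forces $\mathrm{tr}(\Gamma\Gamma^\top)\lesssim p$ and hence $r_\ell\leq\lambda_{\max}(\Gamma\Gamma^\top)\lesssim p$, we have $r_\ell\sqrt{\log p/n}\lesssim p\sqrt{\log p/n}=o(p)$; meanwhile $\lambda_{\max}(\E[XX^\top])\geq\lambda_{\max}(\Gamma\Gamma^\top)$, which is generically of order $p$, and the truncation $M_i$ cannot repair this because it constrains only the single linear functional $X_i^\top\beta^0+U_i^\top\delta^0$, leaving the variance of $U_i^\top\Gamma\Delta_2$ essentially intact for $\Delta_2$ aligned with a top singular direction of $\Gamma$. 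Your check ``$p\lesssim r_\ell\sqrt{\log p/n}$'' from Assumption~\ref{cd:pns}(iii) is an arithmetic error: $r_\ell\gtrsim p(\log p/n)^{1/6}$ gives only $r_\ell\sqrt{\log p/n}\gtrsim p(\log p/n)^{2/3}$, which tends to zero relative to $p$. Consequently, once you drop the nonnegative $(X_i^\top\Delta_2)^2$ term and try to absorb the cross term by Young's inequality against the full quadratic form in $\Delta_2$, the resulting penalty is of order $p\|\Delta_2\|_2^2$, far exceeding the budget $c_pr_\ell\sqrt{\log p/n}\|\Delta_2\|_2^2$ allowed in $\Omega(\tau,\kappa,c_p)$.

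The paper's proof avoids this precisely by \emph{not} discarding the large positive term. It splits $X_i=\Gamma^\top U_i+Z_i$, replaces the indicator by the smaller $\one_{A_i\cap B_i}$ with $A_i=\{|Z_i^\top\beta^0|\leq\tau/2\}$ and $B_i=\{|U_i^\top(\Gamma\beta^0+\delta^0)|\leq\tau/2\}$ (which preserves the $Z\independent U$ structure), and then completes the square so that the positive $(\tilde U_i^\top\Gamma\Delta_2)^2$ term absorbs the dangerous cross terms, at the cost of $\tfrac{1-t}{t}(\tilde U_i^\top\Gamma\Delta_1)^2$ and $\tfrac{t}{1-t}(\tilde Z_i^\top\Delta_2)^2$. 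Choosing $(1-t)/t\asymp 1/s$ and using the entrywise bound $\|\tfrac1n\sum_i\Gamma^\top\tilde U_i\tilde U_i^\top\Gamma\|_\infty\lesssim1$ together with $\|\Delta_1\|_1\leq4\sqrt s\|\Delta_1\|_2$ makes the $\Delta_1$ loss a small constant fraction of $\|\Delta_1\|_2^2$, while the only $\Delta_2$ quadratic form that must be bounded \emph{above} is the $Z$-part, with $\|\tfrac1n\sum_i\tilde Z_i\tilde Z_i^\top\|_{\op}\lesssim p/n$, giving a penalty $\asymp sp/n\,\|\Delta_2\|_2^2\lesssim r_\ell\sqrt{\log p/n}\,\|\Delta_2\|_2^2$ by Assumption~\ref{cd:omesparsity}. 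This $s$-dependent balancing is the essential idea missing from your argument. A secondary quantitative issue: your second-moment argument for the truncated population lower bound uses sub-Gaussianity of $X^\top\Delta_1$, but $\Var(X^\top\Delta_1)$ can be as large as $s\|\Delta_1\|_2^2$ on the cone (via $\|\Gamma\Delta_1\|_2^2\lesssim s\|\Delta_1\|_2^2$), so with $\tau$ a constant the removed mass need not be a small fraction of $\|\Delta_1\|_2^2$ when $s\to\infty$; the paper sidesteps this by deriving the lower bound from the $\tilde Z$ component alone, whose covariance has smallest eigenvalue bounded below by $\lambda_{\min}(\E ZZ^\top)\geq c_z$ up to the truncation correction of Lemma~\ref{lem:subg_trunc}.
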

Note that the assumption on the minimum eigenvalue of $\cov(Z)$, in conjunction with 
 Assumption~\ref{cd:subg}, has implications for the size of $\|\beta^0\|_2$. Indeed, since
\[
\var(Z_i^\top \beta^0) = \var(X_i^\top \beta^0 + U_i^\top \delta^0) - \var( U_i^\top (\Gamma \beta^0 + \delta^0)) \leq  \var(X_i^\top \beta^0 + U_i^\top \delta^0) \lesssim 1.
\]
These together imply that $\|\beta^0\|_2 \lesssim 1$, i.e.\ $\|\beta^0\|_2$ is bounded from above by a constant. We compare our Theorem~\ref{thm:resubg} with the bound on the restricted eigenvalue of \citet[Prop.~1]{GCB22}, which studies estimation and inference in a linear regression setting with confounding, in Section~\ref{sec:bound_comments} of the Supplementary material.

\section{Numerical experiments} \label{sec:numerical}

In this section we report the performance of the generalised LAVA estimator in simulation settings with varying confounding and sparsity levels. In Section~\ref{sec:estimation} we present results on estimation, while in Section~\ref{sec:inf_exp} we present some results for the hypothesis testing procedure laid out in Section~\ref{sec:testing}.

\subsection{Estimation} \label{sec:estimation}
We generated datasets of sample size $n=2000$ and following the model \eqref{eq:model} with $p=500$ as follows. To generate the $X_i$, we first generated $Z_i \sim \mathcal{N}_p(0, \Sigma)$ and considered two possibilities for the covariance matrix $\Sigma$:
\begin{itemize}
	\item \emph{Toeplitz design:} $(\Sigma)_{i,j} = 0.9^{|i - j|}$;
	\item \emph{Exponential decaying design:} we first assign $(\Sigma^{-1})_{i,j} = 0.9^{|i - j|}$ and then scale the rows and columns such that all diagonal entries are $1$.
\end{itemize}
We set $U_i \sim \mathcal{N}_q(0, I)$ and varied 
$q \in \{5, 10, 20\}$. The loading matrix $\Gamma$ was generated anew for each dataset with each entry generated independently and $\Gamma_{jk} \sim \mathcal{N}(0, \nu^2 / k^2)$ with $\nu \in \{0.25, 0.5, 1, 2, 4, 8\}$.

We used a logistic regression model for the responses $Y_i$ with $\pr(Y_i = 1 \given X_i, U_i) = \{1 + \exp(-X_i^{\top} \beta^0 - U_i^{\top} \delta^0)\}^{-1}$ with $\delta^0$ a unit vector proportional to $(1,\ldots,1)^{\top}$. The parameter $\beta^0$ was generated anew in each dataset as follows: $s \in \{5, 10\}$ indices in $[p]$ were selected uniformly at random, and the corresponding entries of $\beta^0$ were generated as independent Rademacher random variables.
For each of the $2 \times 3 \times 6 \times 2=72$ $(\text{design type}, q, \nu, s)$ configurations, we generated $250$ datasets and applied our generalised LAVA procedure \eqref{eq:gen_LAVA}, the Lasso \citep{tibshirani1996regression} and the factor-based method of \citet{ouyang2023high}. For generalised LAVA, we first reparametrised the tuning parameters $(\lambda_1, \lambda_2) = (\lambda_1, \gamma \lambda_1 / 2)$ and considered $\lambda_1$ on the exponentially-spaced grid chosen for the tuning parameter $\lambda$ for the Lasso as implemented by the R package \text{glmnet} \citep{friedman2010regularization}; $\gamma$ was chosen from $\{p / 50, p / 40, \ldots, p / 10\}$. For both generalised LAVA and the Lasso, we selected their respective tuning parameters by 10-fold cross-validation (using the default options of \texttt{cv.glmnet} for the latter). For the method of \citet{ouyang2023high} we used code supplied by the authors: this computed an initial sparse estimator of $\beta$ and a non-sparse debiased version for a given individual component of $\beta$; for the purposes of estimation, we used this sparse estimator.



For each estimator $\hat{\beta}$ we considered as evaluation criterion, the squared prediction-type error
\begin{equation} \label{eq:err_metric}
\E[(X^\top (\hat{\beta} - \beta^0))^2 \given \hat{\beta}] = (\hat{\beta} - \beta^0)^\top (\Sigma + \Gamma^\top \Gamma) (\hat{\beta} - \beta^0),
\end{equation}
where in the above, $X \in \R^p$ is an independent data point following the same distribution as the training data.
\begin{figure}[t!]
	\subfigure[Toeplitz, $s = 5, q = 5$]{\includegraphics[width=0.33\textwidth]{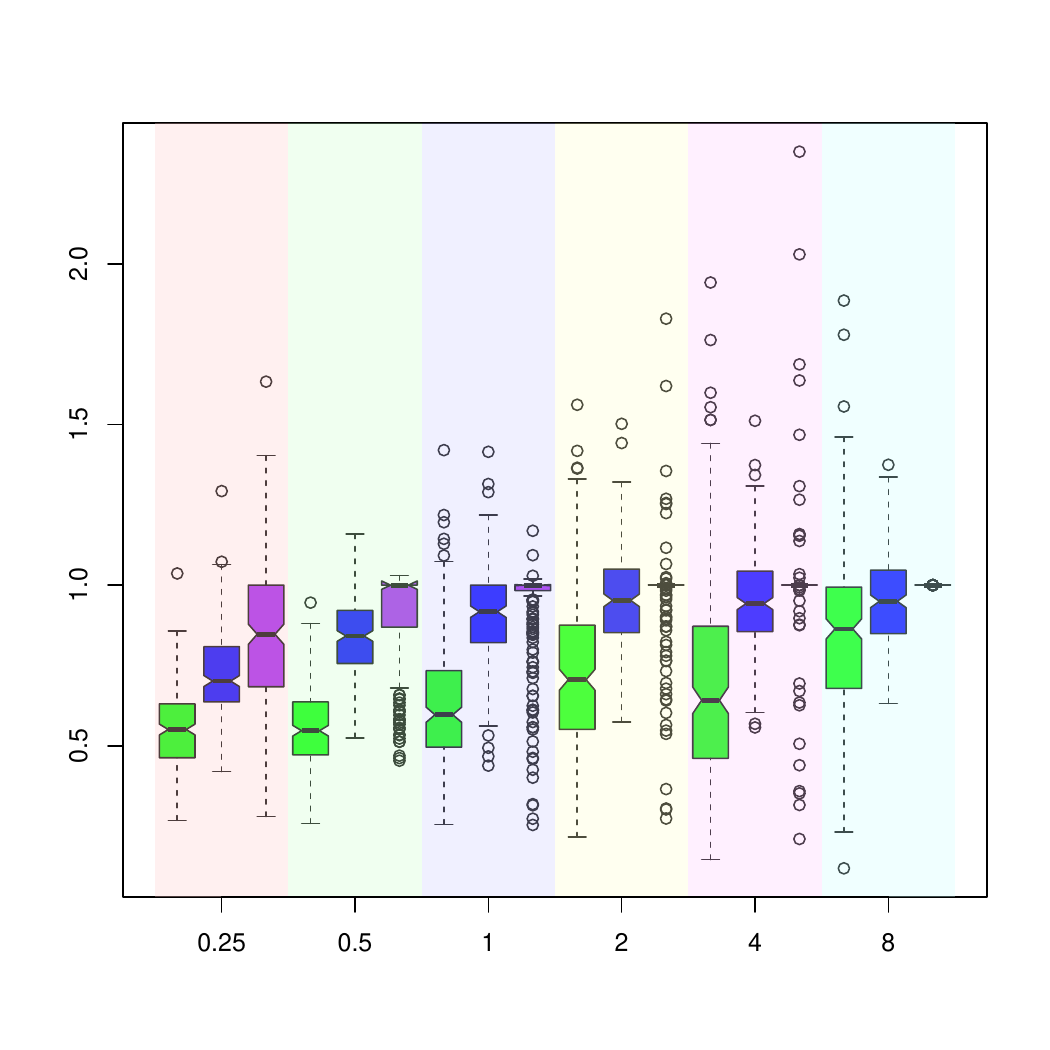}}
	\subfigure[Toeplitz, $s = 5, q = 10$]{\includegraphics[width=0.33\textwidth]{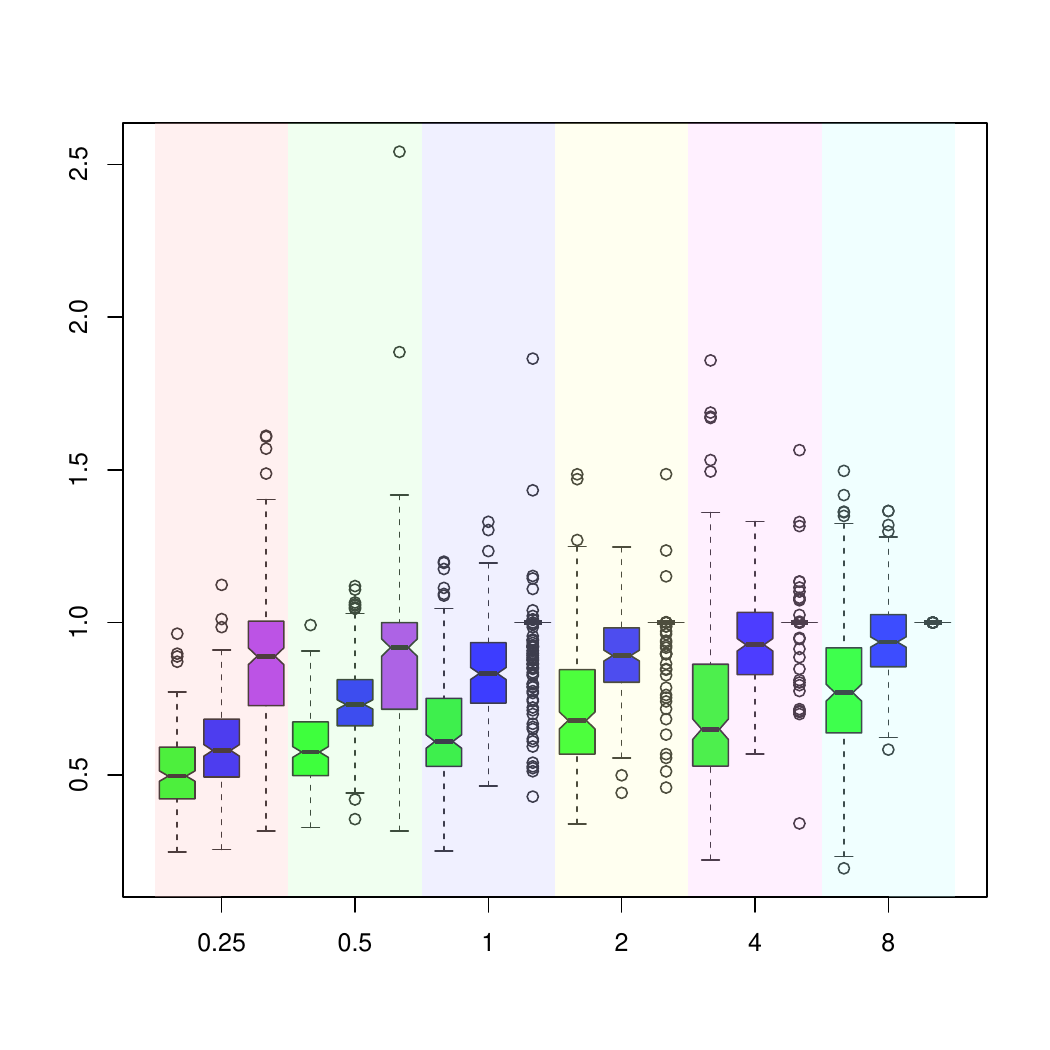}}
	\subfigure[Toeplitz, $s = 5, q = 20$]{\includegraphics[width=0.33\textwidth]{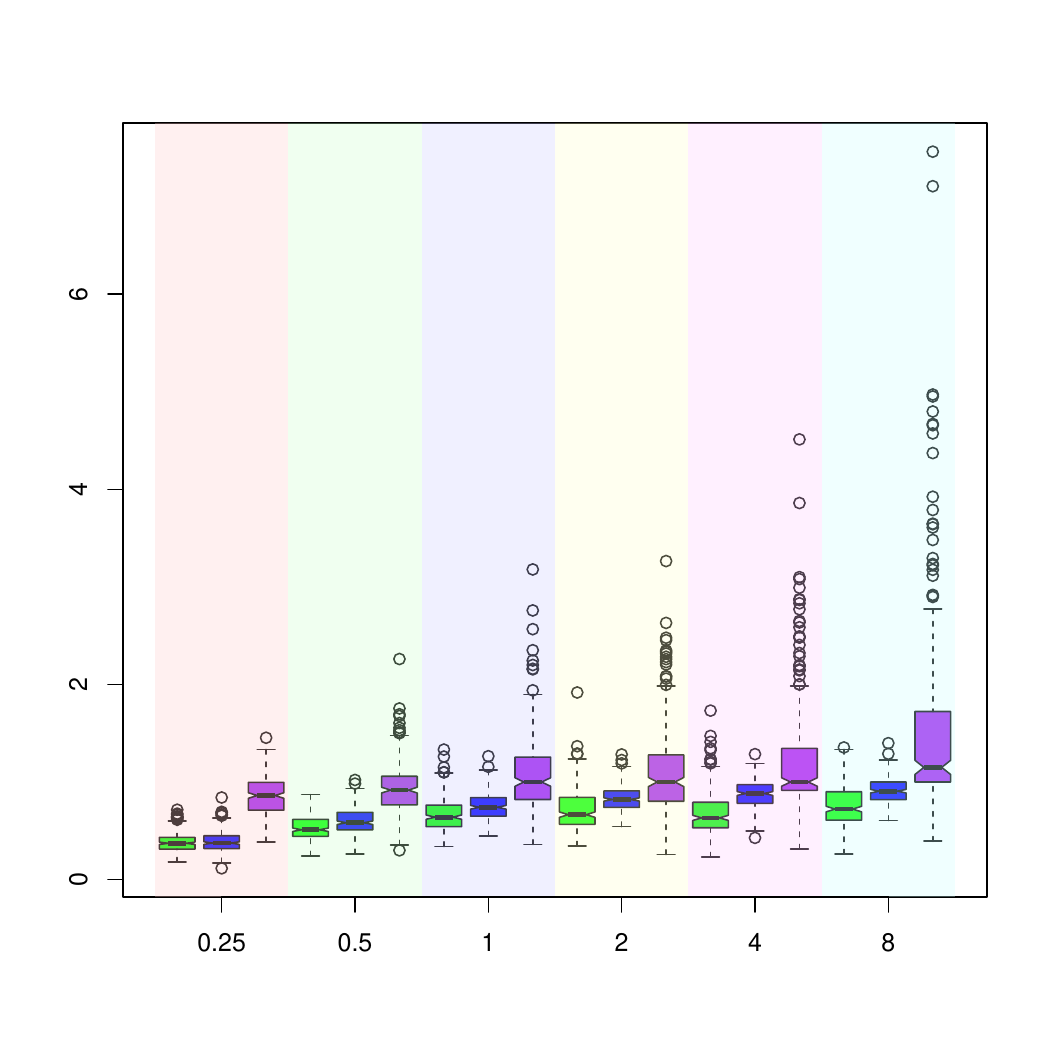}}
 \subfigure[Toeplitz, $s = 10, q = 5$]{\includegraphics[width=0.33\textwidth]{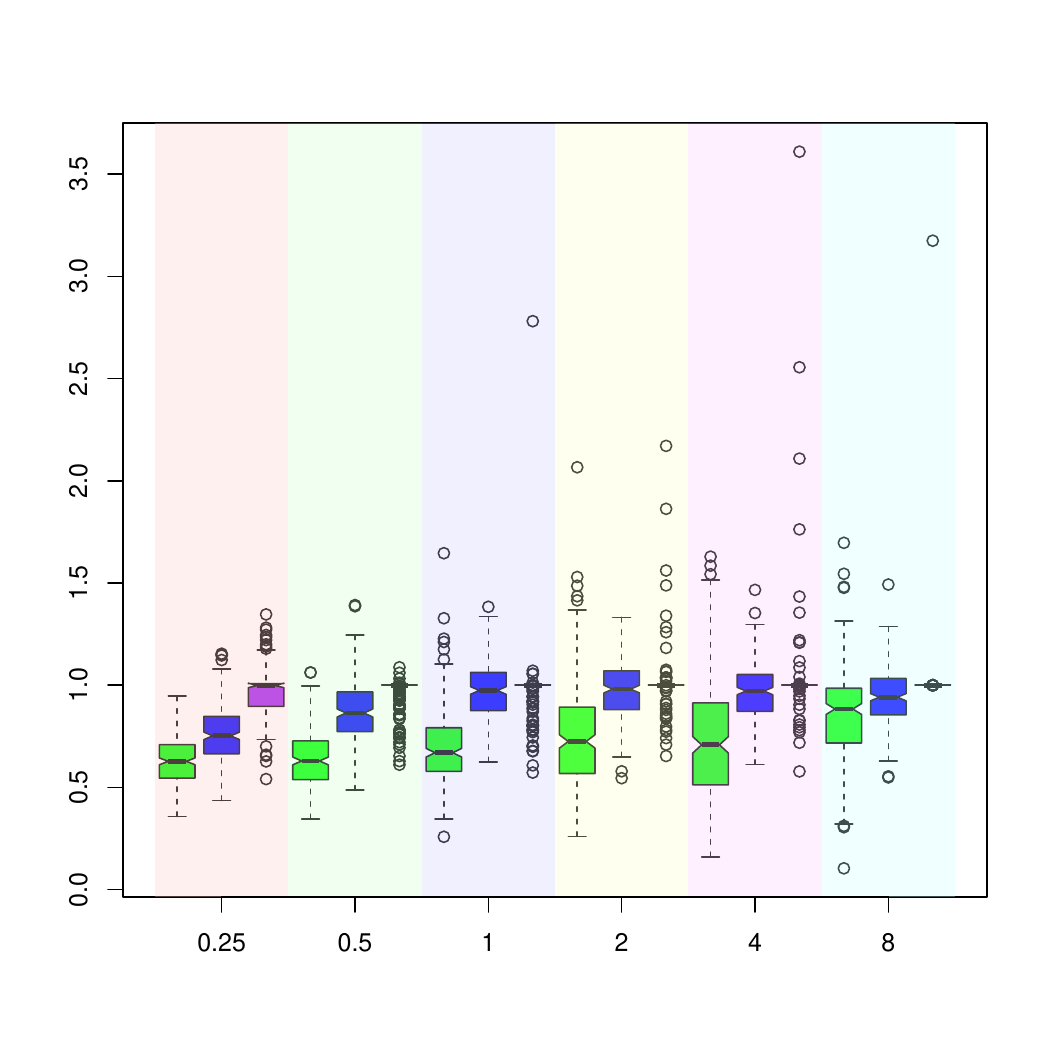}}
	\subfigure[Toeplitz, $s = 10, q = 10$]{\includegraphics[width=0.33\textwidth]{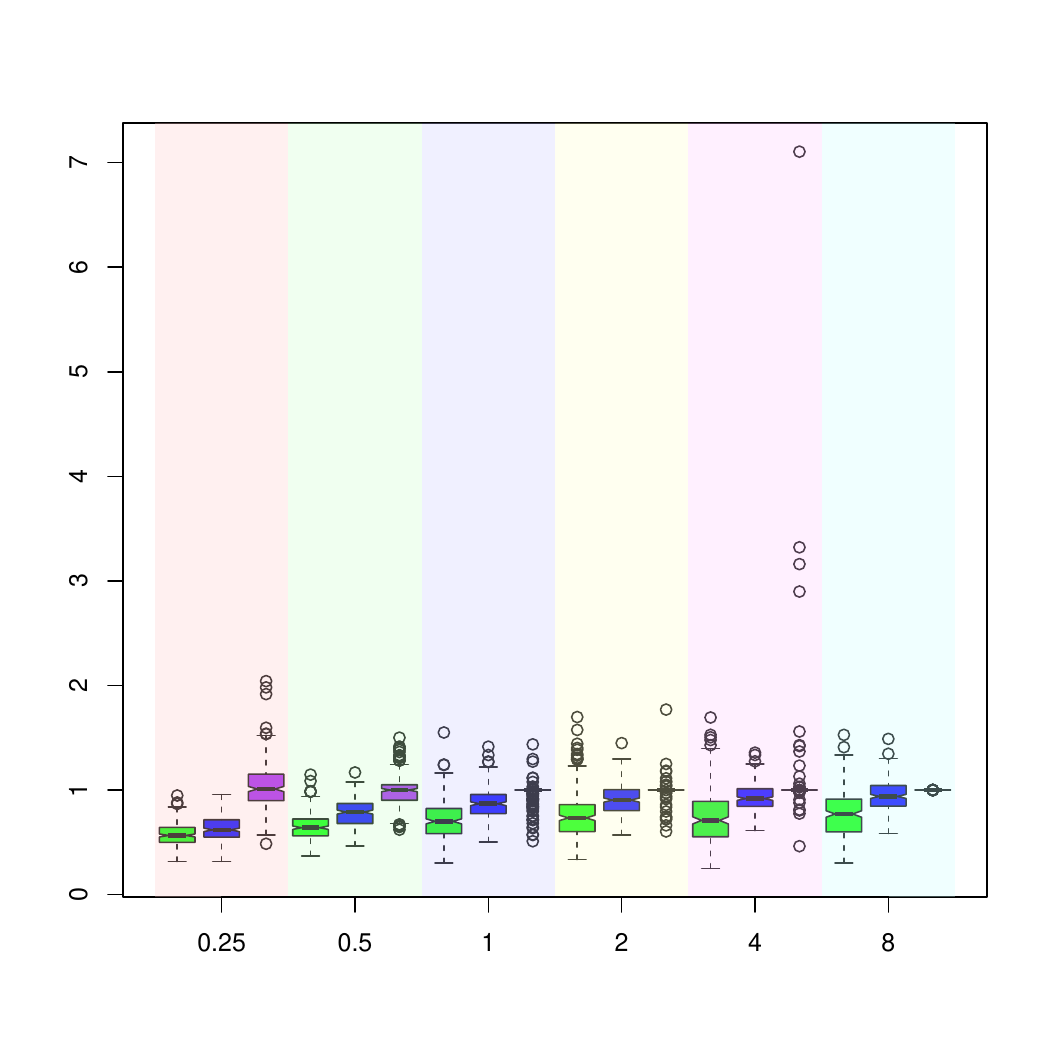}}
	\subfigure[Toeplitz, $s = 10, q = 20$]{\includegraphics[width=0.33\textwidth]{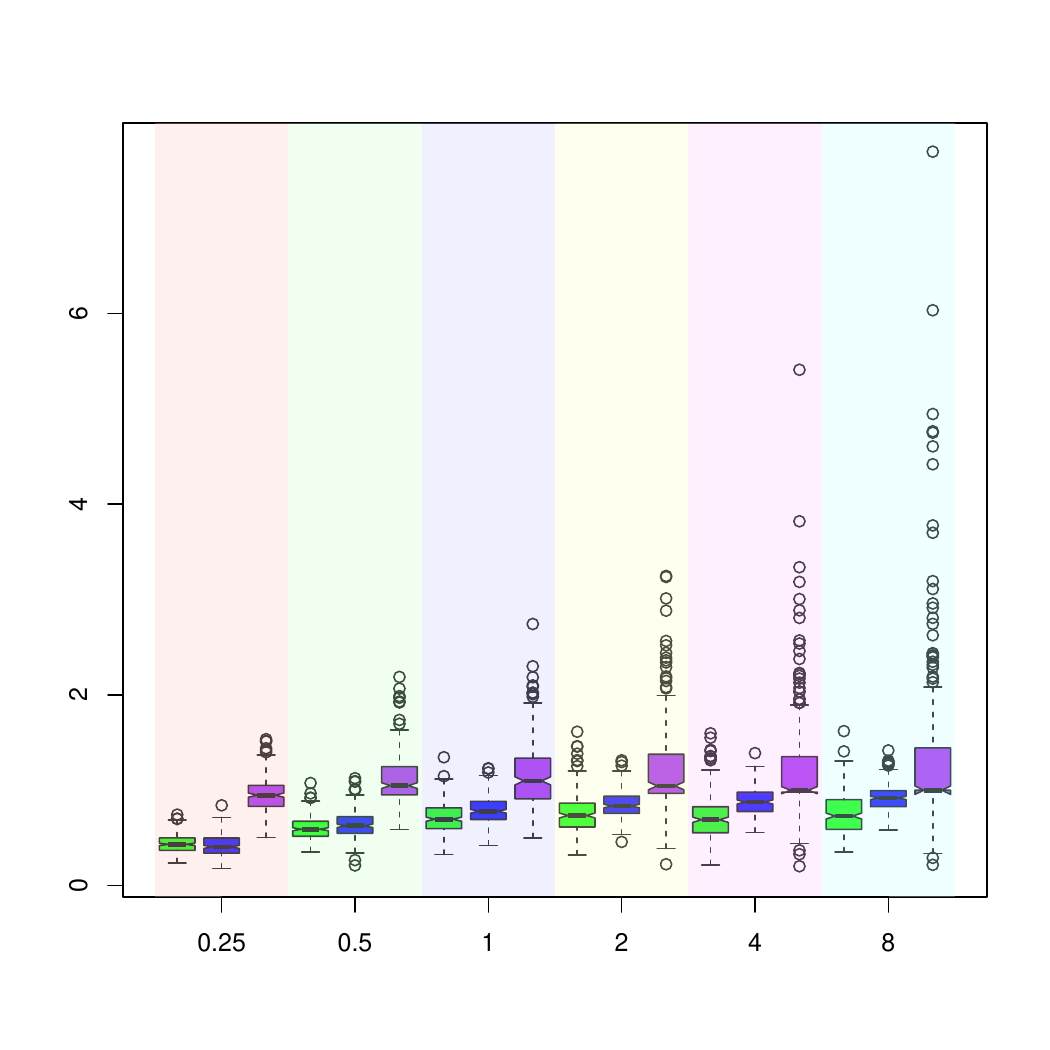}}
	\caption{Boxplots of errors \eqref{eq:err_metric} under Toeplitz designs for generalised LAVA (green), the Lasso (blue) and the factor-based method of \citet{ouyang2023high} (purple).}\label{fig:logitoep}
\end{figure}

\begin{figure}[t!]
	\subfigure[Exp. decaying, $s = 5, q = 5$]{\includegraphics[width=0.33\textwidth]{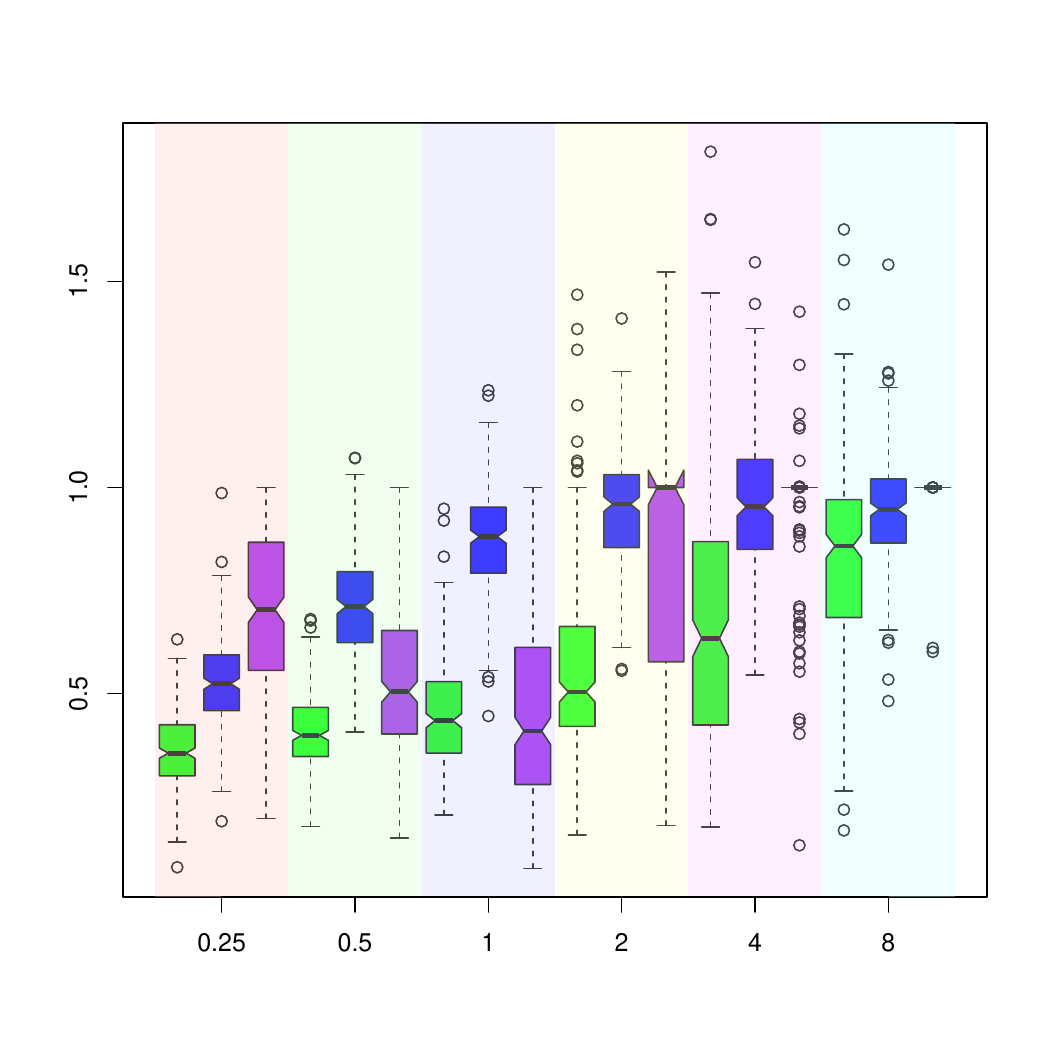}}
 \subfigure[Exp. decaying, $s = 5, q = 10$]{\includegraphics[width=0.33\textwidth]{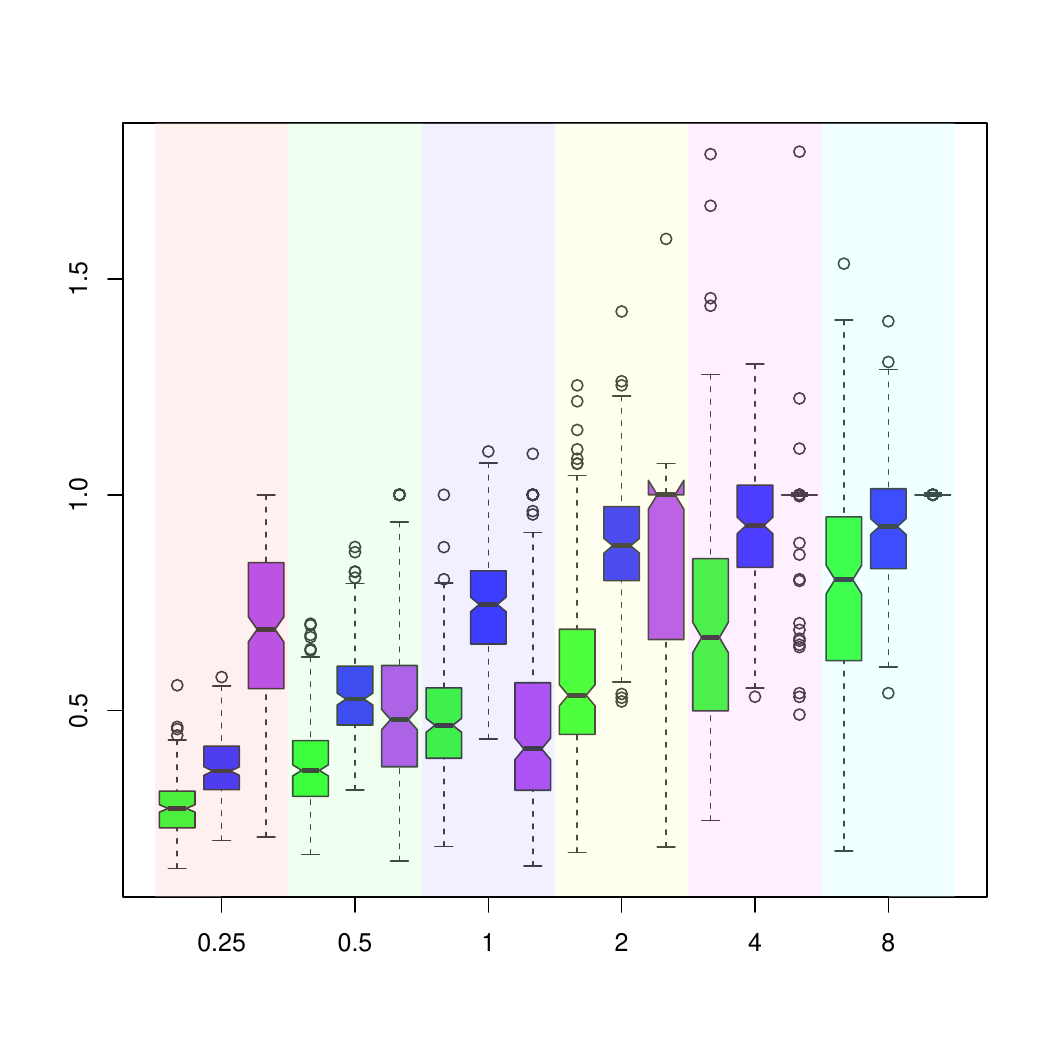}}
	\subfigure[Exp. decaying, $s = 5, q = 20$]{\includegraphics[width=0.33\textwidth]{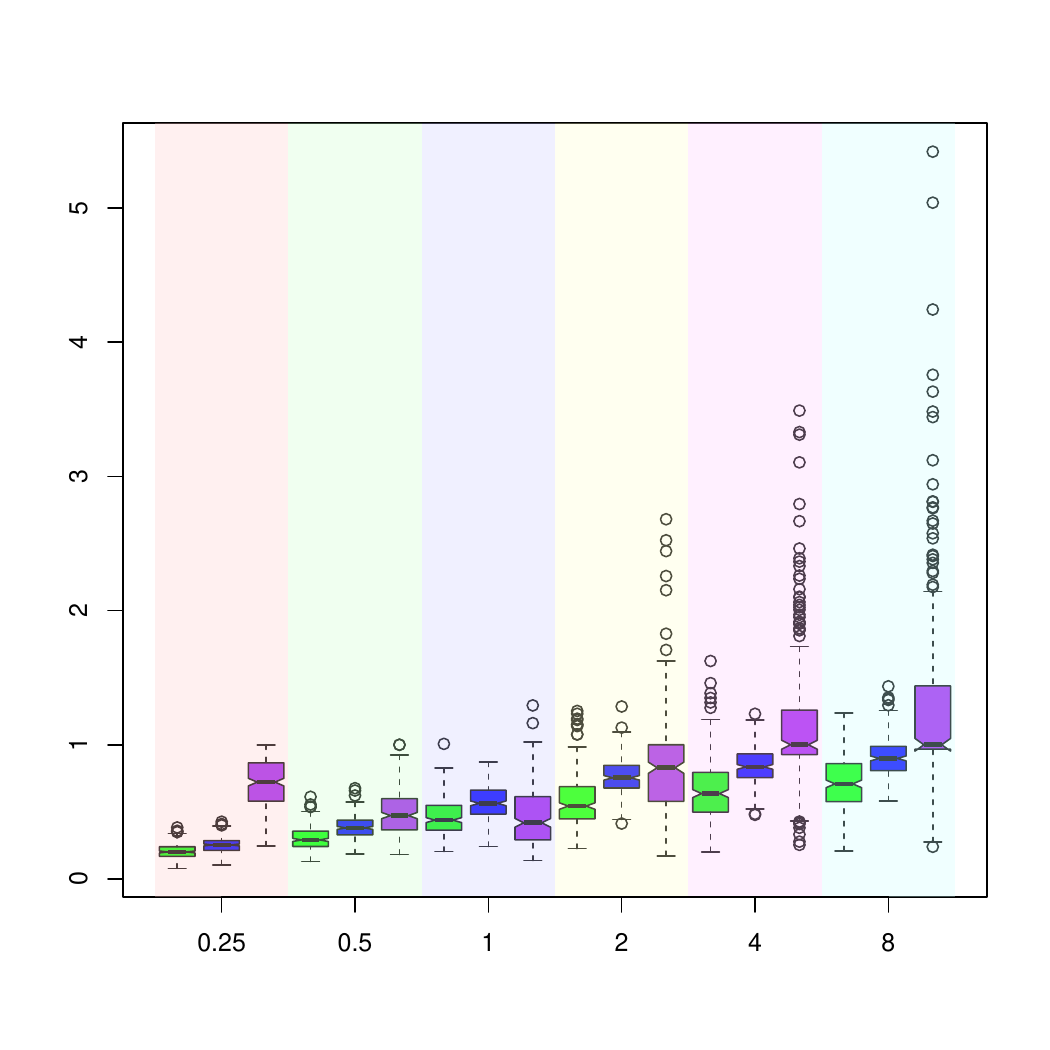}}
	\subfigure[Exp. decaying, $s = 10, q = 5$]{\includegraphics[width=0.33\textwidth]{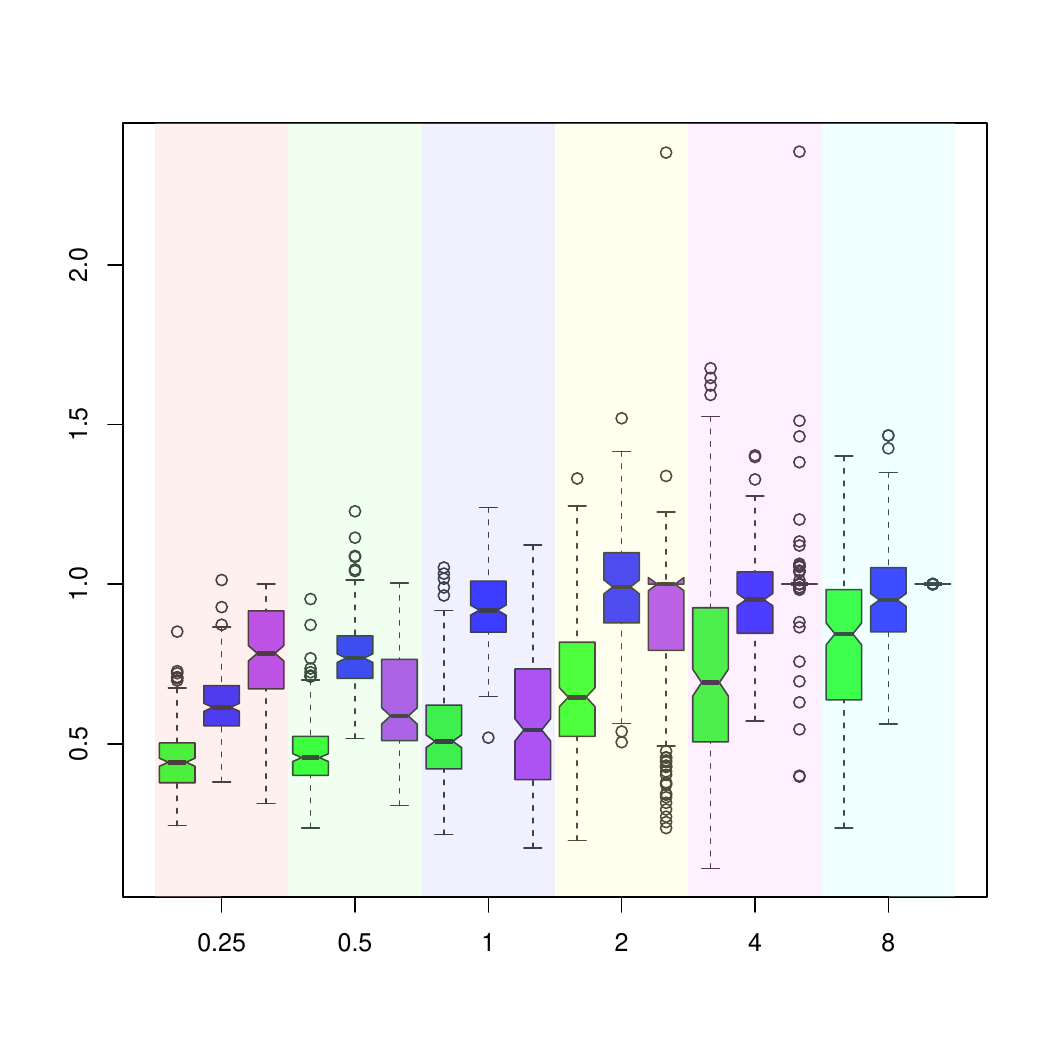}}
	\subfigure[Exp. decaying, $s = 10, q = 10$]{\includegraphics[width=0.33\textwidth]{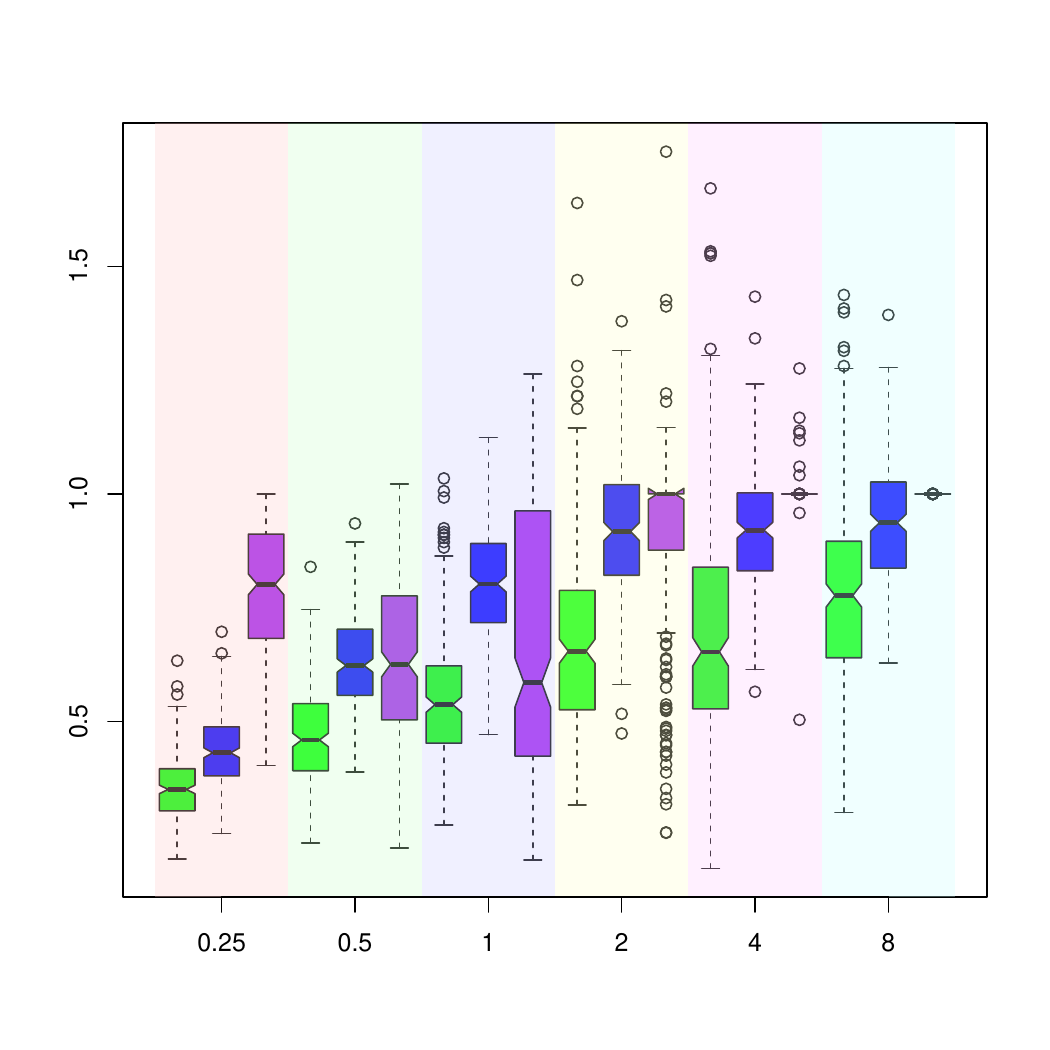}}
	\subfigure[Exp. decaying, $s = 10, q = 20$]{\includegraphics[width=0.33\textwidth]{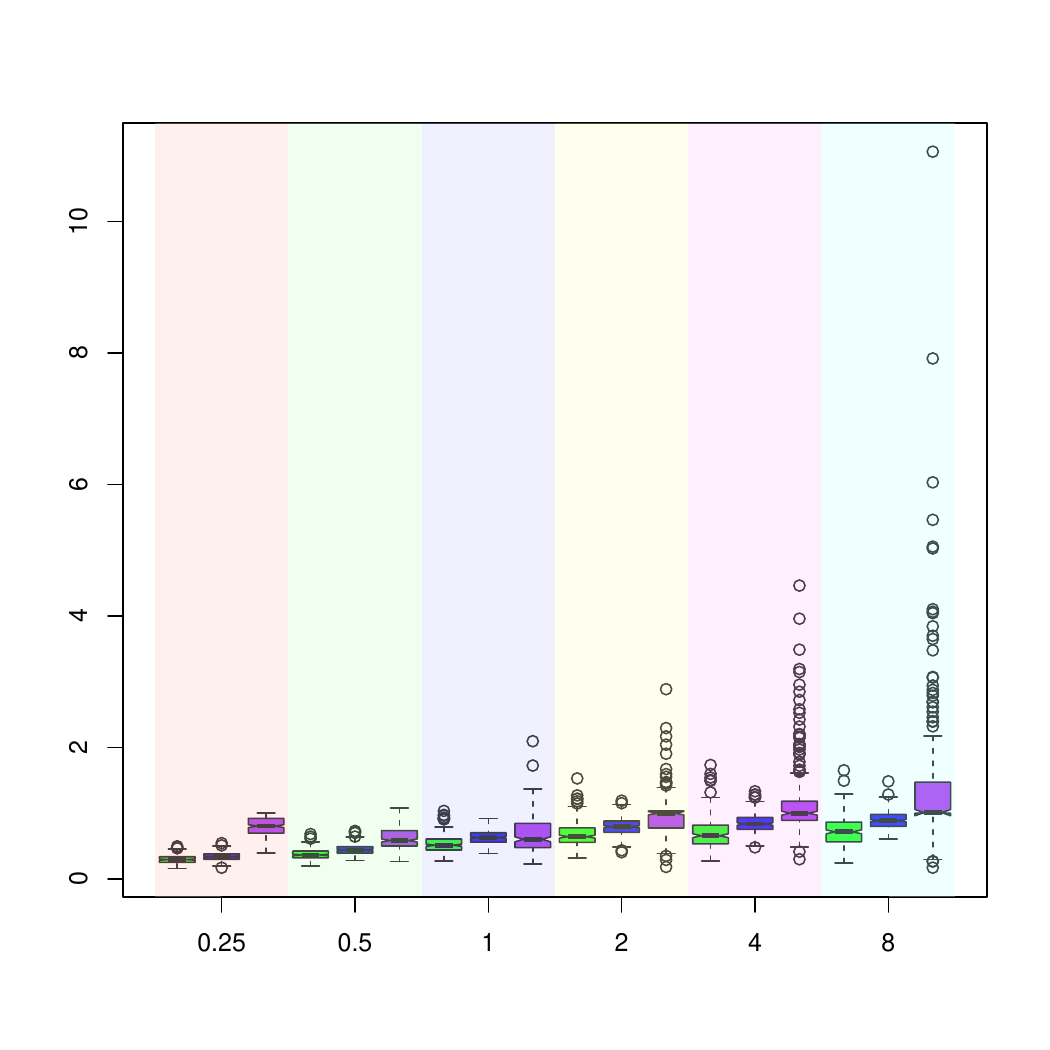}}
	\caption{Boxplots of errors \eqref{eq:err_metric} under exponential decay designs for generalised LAVA (green), the Lasso (blue) and the factor-based method of \citet{ouyang2023high} (purple).}\label{fig:logiexp}
\end{figure}

Figures~\ref{fig:logitoep} and \ref{fig:logiexp} show boxplots of the prediction errors in settings with Toeplitz and exponential decay designs respectively. 
From the result we can see that generalised LAVA on the whole tends to perform better than a naive application of the Lasso. The factor-based approach works well when the confounding strength $\nu = 1$, but appears to be the weakest of the three methods when the confounding strength is weak. We believe this is partly due to the difficulty in estimating the correct number of factors in these settings, which is a parameter required by the method; see Section~\ref{sec:fac_num} of the supplementary material. All the methods tend to perform worse for very strong confounding: this is due to the error metric \eqref{eq:err_metric} being less forgiving when $\nu$ is large.

\subsection{Inference} \label{sec:inf_exp}
Here we consider the performances of the testing approach of Section~\ref{sec:testing} and the factor-based approach of \citet{ouyang2023high}. We also compare to a version of our approach that sets $\lambda_2 = \infty$, and so effectively uses Lasso regressions instead of generalised LAVA. 

We generated datasets formed of i.i.d.\ copies of $(Y, X, W) \in \R \times \R^p \times \R$ where $X$ was generated as described in Section~\ref{sec:estimation} and we set $q=5, \nu=1$ and used the exponential design. Given the challenges of performing well-calibrated inference compared to estimation, here we took $n=4000$ and considered $p \in \{200,\, 400\}$. To generate $W$, we used a (confounded) linear model of the form
\[
W = X^{\top} \beta^W + U^{\top} \delta^W + \zeta
\]
where $\zeta \sim \mathcal{N}(0, 1)$ independently of $X$ and $U$. Similarly to the settings of Section~\ref{sec:estimation}, we set $\delta^W$ to be a unit vector proportional to $(1,\ldots,1)^{\top}$, and $\beta^W$ was generated (anew in each replication) using the same process as that of $\beta^0$. (In Section~\ref{sec:inf_exp2} of the supplementary material, we consider generating the components of $\delta^W$ as i.i.d.\ Rademacher random variables; the results are broadly in line with those here.) The response $Y$ was then generated according to a logistic regression model with
\[
\pr(Y=1 \given X, U, W) = \{1 + \exp(-X^\top \beta^0 - U^{\top} \delta^0 - b W)\}^{-1}
\]
where $b \in \{0, \,0.03,\, 0.06,\, 0.1,\, 0.13,\, 0.16,\, 0.2\}$; note that $b=0$ corresponds to the null $H_0: Y \independent W \given X, U$. We generated $\beta^0$ as in Section~\ref{sec:estimation} (independently of $\beta^W$), and considered settings where the sparsity level $s \in \{5, 10\}$ (for both $\beta^0$ and $\beta^W$).


\begin{table}[t!]
    \centering
    \begin{tabular}{c|c c c | c c c}
    \hline
    & \multicolumn{3}{c}{$p = 200$} & \multicolumn{3}{c}{$p = 400$} \\ \hline
    & Generalised LAVA & Lasso & Factor-based & Generalised LAVA & Lasso & Factor-based 
    \\ \hline
    $s = 5$ & 0.060 & 0.312 & 0.012 & 0.056 & 0.244 &  0.012 \\
    $s = 10$ & 0.052 & 0.244 & 0.028 & 0.084 & 0.188 & 0.056\\
    \hline
    \end{tabular}
    \caption{Average size of different methods estimated with $250$ replicates.}
    \label{tab:inference}
\end{table}


\begin{figure}[t!]
    \centering
    \subfigure[$p = 200, s = 5$]{\includegraphics[width=0.45\textwidth]{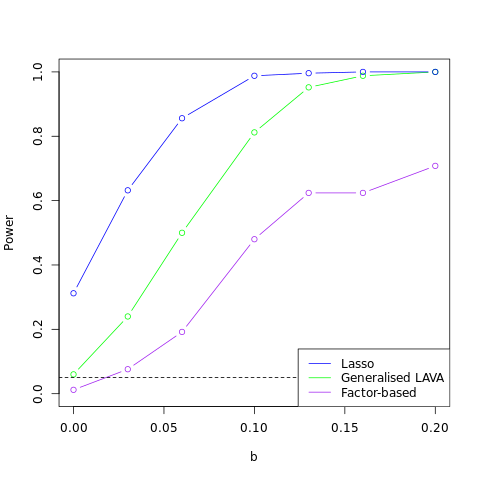}}
    \subfigure[$p = 200, s = 10$]{\includegraphics[width=0.45\textwidth]{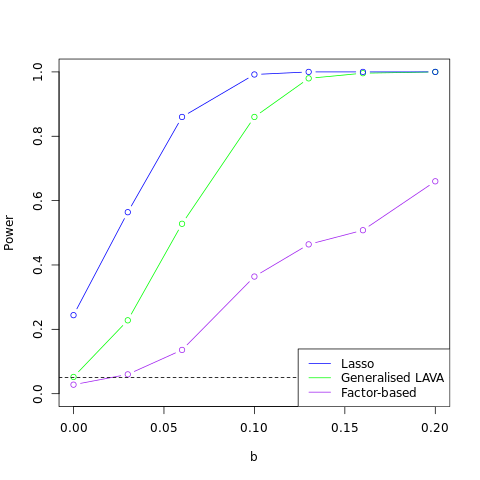}}
    \subfigure[$p = 400, s = 5$]{\includegraphics[width=0.45\textwidth]{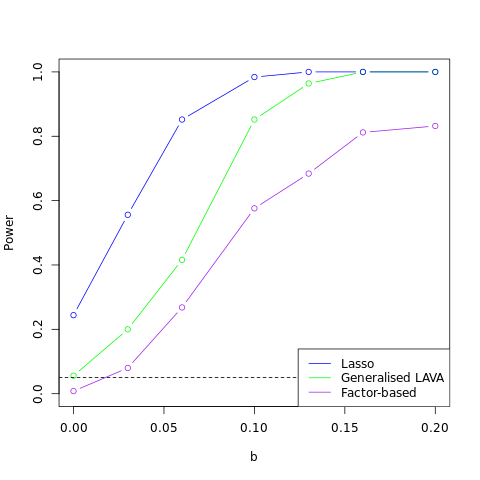}}
    \subfigure[$p = 400, s = 10$]{\includegraphics[width=0.45\textwidth]{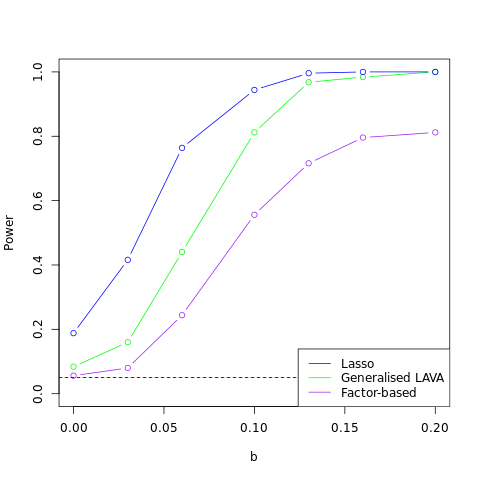}}
    \caption{Average power of the different methods for $b$ ranging from $0$ to $0.2$. The dashed line signifies $0.05$.}
    \label{fig:power}
\end{figure}

Table~\ref{tab:inference} shows the average sizes of different approaches across different regimes and average power curves are plotted in Figure~\ref{fig:power}. We see that the factor-based method maintains size control in most of the cases, but is usually conservative and as a result has lower power, while the naive Lasso approach is highly anti-conservative though powerful. The size of the test using generalised LAVA is also slightly above the $\alpha = 0.05$ nominal level with this largest in the $s = 10, p = 400$ case and perhaps here provides a reasonable trade-off in terms of power and size control, given the difficulties of the problem.


\subsection*{Acknowledgments}
Both authors were supported in part by an EPSRC `First Grant' EP/R013381/1 of the second author. The authors are grateful to Kaiyue Wen for his support with some aspects of the code.

\bibliographystyle{plainnat}

\bibliography{reference}
\newpage
\appendix

\counterwithin{figure}{section}
\section*{Supplementary material}

This supplementary material contains the proofs of Proposition~\ref{prop:theta_optim} (Section~\ref{sec:prop1}), Theorem~\ref{thm:main} (Section~\ref{sec:pfmain}), Theorems~\ref{thm:inpred} and~\ref{thm:prediction} (Section~\ref{sec:pfinf}), Theorem~\ref{thm:inference} (Section~\ref{sec:pfinference}) and Theorem~\ref{thm:resubg} (Section~\ref{sec:pfrsc}). Section~\ref{sec:subG} contains some basic results concerning sub-Gaussian random variables used in our proofs, Section~\ref{sec:bound_comments} provides some further discussion concerning Theorem~\ref{thm:resubg} and Section~\ref{sec:simu} presents some additional numerical results.

\section{Proof of Proposition~\ref{prop:theta_optim}} \label{sec:prop1}
		Writing $Q(\beta,b)$ for the LAVA objective, we have that
	\[
	0 \in \frac{\partial Q(\hat{\beta}, \hat{b})}{\partial \beta} -  \nabla_b Q(\hat{\beta},\hat{b}),
	\]
	so considering the $j$th component, we have
	\[
	\lambda_1 t = 2\lambda_2 \hat{b}_j
	\]
	where $t \in [-1,1]$ and $t = \sgn(\hat{\beta}_j)$ if $\hat{\beta}_j \neq 0$. Thus $\hat{\beta}_j = 0$ when $|\hat{b}_j| \leq 2\lambda_2 / \lambda_1$ and $\hat{b}_j = \sgn(\hat{\beta}_j) \lambda_1 / (2\lambda_2) $ when $\hat{\beta}_j \neq 0$. Now for any pair $\tilde{\beta}_j, \tilde{b}_j$ satisfying this property (with $\tilde{\beta}_j$ in place of $\hat{\beta}_j$ and $\tilde{b}_j$ in place of $\hat{b}_j$), which we call this property $A$ for future reference, we have
	\begin{align*}
		\rho_{\lambda_1,\lambda_2}(\tilde{\beta}_j + \tilde{b}_j) &= \lambda_1 \abs{\tilde{\beta}_j + \frac{\sgn(\tilde{\beta}_j) \lambda_1}{2\lambda_2}} - \frac{\lambda_1^2}{4\lambda_2} \\
		&= \lambda_1 |\tilde{\beta}_j| + \frac{\lambda_1^2}{4\lambda_2} =   \lambda_1 |\tilde{\beta}_j|  + \lambda_2 \tilde{b}_j^2.
	\end{align*}
	Therefore, writing $P(\theta)$ for the objective in \eqref{eq:theta_optim},
	\[
	Q(\tilde{\beta}, \tilde{b}) = P(\tilde{\beta} + \tilde{b}).
	\]
	Thus for any $\tilde{\theta} \in \R^p$, defining $(\tilde{\beta}, \tilde{b})$ by
	\begin{align*}
		\tilde{\beta}_j := \left\{\tilde{\theta}_j - \frac{\mathrm{sgn}(\tilde{\theta}_j)\lambda_1}{2\lambda_2} \right\} \ind_{\{|\tilde{\theta}_j| > \lambda_1 / (2\lambda_2)\}}
	\end{align*}
	and $\tilde{b} := \tilde{\theta} - \tilde{\beta}$,
	we have
	\[
	P(\tilde{\theta}) =Q(\tilde{\beta}, \tilde{b}) \geq Q(\hat{\beta}, \hat{b}) = P(\hat{\theta}),
	\]
	so $\hat{\theta}$ minimises $P$.
	
	Conversely, if $\hat{\theta}$ minimises $P$, then defining $(\hat{\beta}, \hat{b})$ as in the statement, we have for any $\tilde{\beta},\tilde{b}$ satisfying property $A$,
	\[
	Q(\hat{\beta}, \hat{b}) = P(\hat{\theta}) \leq P(\tilde{\beta}, \tilde{b}) = Q(\tilde{\beta},\tilde{b}),
	\]
	and we know any minimisers of the LAVA objective must satisfy property $A$.

\section{Proof of Theorem~\ref{thm:main}}\label{sec:pfmain}
We first introduce some notation used in the section. We write $e_j$ for the $j$th standard basis vector, and write $I$ for the identity matrix, where the dimensions of these will be clear from the context. For symmetric matrices $A, B$ of the same dimensions, we write $A \succeq B$ to indicate that $A-B$ is positive semi-definite. We use $\|A\|_{\op}$ and $\lambda_{\max}(A)$ interchangeably for the maximum singular value of an arbitrary matrix $A$, and write $\lambda_{\min}(A)$ for its minimum singular value.

For notational simplicity, we rewrite $\bX/\sqrt{n}$ as $\bX$, and analogously rescale all the other matrices and vectors $\bU$, $\bZ$, $\be$, $\bY$ relating to the observed data. We retain the original (unscaled) definitions of the random variables $X_i$ etc.\ forming these matrices, and take $X_{ij}$ to be the $j$th component of $X_i$ rather than the $ij$th entry of $\mb X$.

We also set
\[
c_d := \sup_t f'(t).
\]

Let $\hat{\beta}$ and $\hat{b}$ be the estimates from \eqref{eq:gen_LAVA}; note we have suppressed their dependence on $\lambda_1$ and $\lambda_2$. Let $b^0$ be 
the version of $\hat{b}$ obtained when fixing $\beta$ to be the true $\beta^0$ and only optimising over $b$:
\begin{align*}
b^0 := \argmin_{b\in \R^p}\{ L(b + \beta^0) + \lambda_2 \|b\|_2^2\}.
\end{align*}
Observe that then
\begin{align*}
L(\hat{b} + \hat{\beta}) + \lambda_1 \|\hat{\beta}\|_1 + \lambda_2 \|\hat{b}\|_2^2 \leq L(b^0 + \beta^0) + \lambda_1 \|\beta^0\|_1 + \lambda_2 \|b^0\|_2^2.
\end{align*}
By performing a Taylor expansion of $L$ about $b^0 + \beta^0$, we may rewrite the above inequality as
\begin{align}\label{eq:preopt}
\rem\big(\hat{\beta} + \hat{b}, \beta^0 + b^0\big) + \lambda_1 \|\hat{\beta}\|_1 +  &\lambda_2 \|\hat{b}\|_2^2 - \lambda_1 \|\beta^0\|_1 - \lambda_2 \|b^0\|_2^2 \notag\\
&\leq - (\hat{\beta} - \beta^0)^\top \nabla L(b^0 + \beta^0)  - (\hat{b} - b^0)^\top \nabla L(b^0 + \beta^0) ,
\end{align}
where $\rem$ is the remainder term.
Note that due to convexity of the loss function, $\rem$ is non-negative everywhere.

Next, using that
\begin{equation} \label{eq:b^0_deriv}
	-\nabla L(b^0 + \beta^0) = 2 \lambda_2 b^0
\end{equation}
and that
\[
\lambda_2 \|\hat{b}\|_2^2 = \lambda_2 \|\hat{b} - b^0\|_2^2 + 2 \lambda_2 (b^0)^\top (\hat{b} - b^0) + \lambda_2 \|b^0\|_2^2,
\]
we may rewrite inequality \eqref{eq:preopt} to obtain
\begin{equation}\label{eq:opt}
	\begin{split}
\rem\big(\hat{\beta} + \hat{b}, \beta^0 + b^0\big) + \lambda_1 \|\hat{\beta}\|_1 - \lambda_1 \|\beta^0\|_1 + \lambda_2 \|\hat{b} - b^0\|_2^2 &\leq 2 \lambda_2 (b^0)^\top (\hat{\beta} - \beta^0) \\
&\leq 2 \lambda_2 \| b^0 \|_\infty \| \hat{\beta} - \beta^0 \|_1,
\end{split}
\end{equation}
applying H\"older's inequality in the final line.
Equipped with this inequality, the rest of proof is organised as follows. We first state two key technical lemmas, Lemmas~\ref{lem:beta} and~\ref{lem:rsc}, of which the proofs are deferred to
Section~\ref{sec:lem_main}
as well as Lemmas~\ref{lem:subr} and \ref{lem:rscwain} relating to convex functions. We then use these to obtain our main result, which is proved in Section~\ref{sec:thm_main}. To introduce the results below, recall that
\[
C(S) := \{\Delta \in \R^p: \|\Delta_{S^c}\|_1 \leq 3 \|\Delta_{S}\|_1\}.
\]

\begin{lemma} \label{lem:beta}
Assume Assumptions~\ref{cd:pns}--\ref{cd:subg}.
Given $m \in \N$ and $c_{\lambda_2} > 0$, there exists $c, c' > 0$ such that on an event $\Omega_1$ with probability at least $1-c'n^{-m}$,
\[
\lambda_2 \|b^0 \|_\infty \leq c_b \sqrt{\frac{\log p}{n}}.
\]
\end{lemma}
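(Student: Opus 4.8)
The plan is to bound $\|b^0\|_\infty$ by controlling each coordinate $b^0_j$ via the stationarity condition $-\nabla L(b^0 + \beta^0) = 2\lambda_2 b^0$ from \eqref{eq:b^0_deriv}. Writing this out coordinatewise, $2\lambda_2 b^0_j = e_j^\top\{\mathbf{Y} - \mathbf{X} f(\mathbf{X}(b^0+\beta^0))\}$ (in the rescaled notation, with $f$ applied entrywise), so $\lambda_2 b^0_j$ is half the $j$th coordinate of the negative gradient of $L$ at the point $b^0+\beta^0$. The strategy is: (a) show $\|\mathbf{X}(b^0+\beta^0)\|_2$, or more precisely the relevant fitted values, are controlled so that $f$ evaluated there is close to $f$ at the "oracle" linear predictor $\mathbf{X}\beta^0 + \mathbf{U}\delta^0$; and (b) split $\nabla L(b^0+\beta^0)$ into the "noise" part $\frac{1}{n}\sum_i X_i\varepsilon_i$ (rescaled), a "confounding bias" part coming from the discrepancy between $f(X_i^\top\beta^0 + U_i^\top\delta^0)$ and $f(X_i^\top(b^0+\beta^0))$, and lower-order terms.

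First I would establish a crude a priori bound on $b^0$: since $b^0$ minimises $L(\beta^0+b)+\lambda_2\|b\|_2^2$ and $L(\beta^0+0)$ provides a reference value, a standard basic inequality together with the sub-Gaussianity in Assumption~\ref{cd:subg} and the boundedness of $f$ (so $|f|\le$ const, $c_d=\sup f'<\infty$) gives $\lambda_2\|b^0\|_2^2 \lesssim \|b^0\|_2 \cdot \|\tfrac{1}{n}\sum_i X_i\{\text{stuff}\}\|_2 + \ldots$; combined with $\lambda_2 = c_{\lambda_2} r_\ell\sqrt{\log p / n}$ this yields $\|b^0\|_2 \lesssim$ something like $\sqrt{p/r_\ell}\cdot(\log p/n)^{1/4}$ — small under Assumption~\ref{cd:pns}(iii). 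Then I would feed this back: on the event that $\|\mathbf{X}\|_{\op}^2 \lesssim p$ (which holds with high probability since $\lambda_{\max}(\mathbb{E} XX^\top) \lesssim p$ given $r_\ell \lesssim \lambda_{\max}(\Gamma\Gamma^\top) \lesssim p$ and the sub-Gaussian concentration of sample covariance), one controls $\|\mathbf{X}(b^0 - 0)\|_2 \lesssim \sqrt{p}\|b^0\|_2$, and hence by Lipschitzness of $f$ the fitted-value perturbation is small; this in turn lets one replace $f(X_i^\top(b^0+\beta^0))$ by $f(X_i^\top\beta^0 + U_i^\top\delta^0)$ up to controllable error in the coordinatewise gradient expression.

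With that in hand, for each fixed $j$ I would bound $|e_j^\top \nabla L(b^0+\beta^0)|$ by: (i) $|e_j^\top \tfrac{1}{n}\sum_i X_i \varepsilon_i| \lesssim \sqrt{\log p / n}$ uniformly over $j$ by a maximal inequality for sub-Gaussian products (using $\mathbb{E}(\varepsilon\mid X,U)=0$ and $\sigma_\varepsilon,\sigma_x<\infty$); (ii) the confounding term $e_j^\top \tfrac{1}{n}\sum_i X_i\{f(X_i^\top\beta^0 + U_i^\top\delta^0) - f(X_i^\top\beta^0)\}$ whose conditional mean given $U$ involves $\mathbb{E}[X_j\cdot(\cdot)]$ — here the structure $X = \Gamma^\top U + Z$ with $Z\perp U$ is essential: the $Z$-part integrates against a mean-zero factor and the $\Gamma^\top U$-part produces a term of size $O(\|\Gamma_{\cdot j}\|\cdot\|\delta^0\|\cdot c_d)$ which, crucially, is at most of order $\sqrt{r_\ell/p}\cdot\sqrt{p}$-type but multiplied against $\lambda_2^{-1}$ — I would need to check this contributes $\lesssim r_\ell \sqrt{\log p / n}$ so that after multiplying by $\lambda_2^{-1} \asymp (r_\ell\sqrt{\log p/n})^{-1}$ it is $O(1)$, actually $o(1)$ after the a priori bound shrinks $b^0$; (iii) a remainder from the Lipschitz replacement step, controlled by $\|\mathbf{X}\|_{\op}\|b^0\|_2$ times $c_d$, which is lower order. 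Assembling, $\lambda_2|b^0_j| \lesssim \sqrt{\log p / n}$ uniformly in $j$ on the intersection of these high-probability events, which is the claim with $c_b$ absorbing the constants.

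The main obstacle I anticipate is step (ii): carefully showing the confounding-induced bias term in the coordinatewise gradient is $O(r_\ell\sqrt{\log p/n})$ rather than something larger. The naive bound $\|\Gamma_{\cdot j}\|_2 \le \lambda_{\max}(\Gamma)^{1/2}$ could be as large as $\sqrt{p}$, and one needs the scaling in Assumption~\ref{cd:pns}(iii), the boundedness of $\|\delta^0\|_2$ and of $c_d = \sup f'$, and possibly a bootstrapping argument (plug the crude bound on $\|b^0\|_2$ back into the Lipschitz term to get a refined bound on $\|b^0\|_\infty$) to close the loop. The interplay between the a priori $\ell_2$-control of $b^0$, the operator-norm control of $\mathbf{X}$, and the choice $\lambda_2 \asymp r_\ell\sqrt{\log p/n}$ is what makes the $\|b^0\|_\infty$ bound come out at the $\sqrt{\log p / n}$ scale independent of $p$ and $r_\ell$.
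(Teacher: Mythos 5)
Your overall starting point (the stationarity condition $2\lambda_2 b^0 = -\nabla L(\beta^0+b^0)$, so that it suffices to bound $\|\nabla L(\beta^0+b^0)\|_\infty$ by a constant times $\sqrt{\log p/n}$) matches the paper's, but the way you propose to bound the gradient has a gap that I do not think can be closed along the route you describe. Writing $Y_i = f(X_i^\top\beta^0+U_i^\top\delta^0)+\varepsilon_i$ and applying the mean value theorem, the $j$th coordinate of $-\nabla L(\beta^0+b^0)$ is $\frac1n\sum_i\varepsilon_iX_{ij} - \frac1n\sum_i \bar\Lambda_{ii}\,(X_i^\top b^0 - U_i^\top\delta^0)\,X_{ij}$. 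You split the second piece into a "confounding term" involving $U_i^\top\delta^0$ (your (ii)) and a "Lipschitz replacement remainder" involving $X_i^\top b^0$ (your (iii)), and claim (iii) is lower order. Neither claim holds: the term (ii) has conditional mean of order $\Gamma_j^\top\E[f'(\cdot)U_iU_i^\top]\delta^0$, which is generically a nonzero constant (since $\|\Gamma_j\|_2\asymp 1$ and $\|\delta^0\|_2\asymp 1$), and term (iii) is also $O(1)$ because $b^0$ is constructed precisely so that $\mb X b^0\approx \mb U\delta^0$, i.e.\ $\|\mb X b^0\|_2\asymp\|\mb U\delta^0\|_2\asymp 1$; your proposed control $\|\mb X\|_{\op}\|b^0\|_2$ is likewise not small (it can be of order $p/r_\ell$, which under Assumption~1(iii) may grow like $p^{1/6}$). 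Each of (ii) and (iii) individually contributes $O(1)$ to $\lambda_2 b^0_j$, which only yields $\lambda_2\|b^0\|_\infty=O(1)$, far weaker than the claimed $O(\sqrt{\log p/n})$. The two terms must be kept together as $\bar\Lambda(\mb X b^0-\mb U\delta^0)$ so that their near-cancellation can be exploited.

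The missing idea is that the stationarity condition is a linear fixed-point equation in $b^0$ which must be solved explicitly: $2\lambda_2 b^0 = -L' - \mb X^\top\bar\Lambda\mb X b^0 - \mb X^\top\bar\Lambda\mb U\delta^0$ gives the resolvent representation $\lambda_2 b^0 = -\lambda_2(2\lambda_2 I+\mb X^\top\bar\Lambda\mb X)^{-1}L' + \lambda_2(2\lambda_2 I+\mb X^\top\bar\Lambda\mb X)^{-1}\mb X^\top\bar\Lambda\mb U\delta^0$. The paper bounds the $\ell_\infty$ norms of these two pieces separately, using kernel-ridge identities of the form $(A^\top A+\lambda I)^{-1}A^\top = A^\top(AA^\top+\lambda I)^{-1}$, a Loewner-monotonicity lemma, and a spectral comparison of $\mb X\mb X^\top$ with $\bU\Gamma\Gamma^\top\bU^\top$ to show $\|(2\lambda_2\bar\Lambda^{-1}+\mb X\mb X^\top)^{-1}\bU\delta^0\|_2\lesssim 1/r_\ell$; it is this resolvent, not a crude a priori $\ell_2$ bound on $b^0$ plus term-by-term gradient estimates, that converts the $O(1)$ confounding contribution into something of order $\lambda_2/r_\ell\asymp\sqrt{\log p/n}$ coordinatewise. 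A bootstrapping argument of the kind you sketch does not substitute for this, because the quantity you need to be small ($\|\mb X b^0-\mb U\delta^0\|_2$) is itself only established via the same resolvent representation.
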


\begin{lemma} \label{lem:rsc}
Assume Assumptions~\ref{cd:pns}--\ref{cd:subg}. Given $m \in \N, \tau, \kappa, c_p > 0$. Define
\[
b(\beta) = \underset{b}{\argmin}\; L(b + \beta) + \lambda_2 \|b\|_2^2.
\]
Then there exist constants $r, c', c_{\lambda_2} > 0$ depending on $m, \tau, \kappa, c_p$ such that by choosing $\lambda_2 = c_{\lambda_2} r_{\ell} \sqrt{\frac{\log p}{n}}$, on an event $\Omega_2$ with probability at least $1 - \PP(\Omega^c(\tau, \kappa, c_p)) - c' n^{-m}$, for any $\beta \in \R^p$ with $\beta - \beta^0 \in C(S)$ and 
\begin{align}\label{eq:ballsize}
	\|\beta-\beta^0\|_2 \leq r \left( \sqrt{s \log p} + \frac{\sqrt{spn \log p}}{r_{\lo}}\right)^{-1},
\end{align}
we have that
\begin{align*}
\rem(\beta + b(\beta), \beta^0 + b^0) + \lambda_2 \|b(\beta) - b^0\|_2^2 \geq \kappa \|\beta - \beta^0\|_2^2 / 2.
\end{align*}
\end{lemma}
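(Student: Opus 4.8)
The starting point is the integral form of the Taylor remainder. Write $\Delta_1 := \beta - \beta^0 \in C(S)$, $\Delta_2 := b(\beta) - b^0$, $\theta_0 := \beta^0 + b^0$, $\theta_1 := \beta + b(\beta)$, $\theta_t := (1-t)\theta_0 + t\theta_1$, and $\eta_i^0 := X_i^\top\beta^0 + U_i^\top\delta^0$. Then
\[
\rem(\theta_1, \theta_0) = \frac{1}{n}\sum_{i=1}^n \left\{\int_0^1 (1-t)\, f'\!\big(X_i^\top \theta_t\big)\,\du t\right\}\big(X_i^\top(\Delta_1 + \Delta_2)\big)^2 \ge 0 ,
\]
and the goal is to extract from the right-hand side, after adding $\lambda_2\|\Delta_2\|_2^2$, a multiple of $\|\Delta_1\|_2^2$. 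The plan has three stages.

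First, I would show that the reference point $\theta_0$ yields bounded linear predictors. Since $X_i^\top\theta_0 - \eta_i^0 = U_i^\top(\Gamma b^0 - \delta^0) + Z_i^\top b^0$, refining Lemma~\ref{lem:beta} to obtain that $\|b^0\|_2$ and $\|\Gamma b^0 - \delta^0\|_2$ vanish (via the first-order condition $-\nabla L(\theta_0) = 2\lambda_2 b^0$ and the eigenstructure of $\nabla^2 L$ described below -- a nonlinear analogue of the Sherman--Morrison computation of the Introduction) and combining with the sub-Gaussian maximal inequalities of Section~\ref{sec:subG} gives $\max_i|X_i^\top\theta_0 - \eta_i^0| \le 1$ on a high-probability event. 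Hence for every $i$ with $|\eta_i^0| \le \tau$ we have $|X_i^\top\theta_0| \le \tau+1$ and $f'(X_i^\top\theta_0) \ge c_\ast := \inf_{|\eta|\le\tau+2} f'(\eta) > 0$.

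Second, I would handle the drift along the segment and pass to the event $\Omega$. For such $i$, $X_i^\top\theta_t$ stays in $[-(\tau+2),\tau+2]$ for $t \le t_i^\star := \min\{1, |X_i^\top(\Delta_1+\Delta_2)|^{-1}\}$; performing the $(1-t)$-weighted integral over $[0, t_i^\star]$ and discarding the remaining non-negative summands gives $\rem(\theta_1,\theta_0) \gtrsim \frac1n\sum_{i:|\eta_i^0|\le\tau}\min\{(X_i^\top(\Delta_1+\Delta_2))^2, |X_i^\top(\Delta_1+\Delta_2)|\}$. On $\Omega(\tau,\kappa,c_p)$ the un-clipped sum $\frac1n\sum_{i:|\eta_i^0|\le\tau}(X_i^\top(\Delta_1+\Delta_2))^2$ is at least $\kappa\|\Delta_1\|_2^2 - c_p r_\ell\sqrt{\log p/n}\|\Delta_2\|_2^2$, so $\rem + \lambda_2\|\Delta_2\|_2^2 \gtrsim c_\ast\kappa\|\Delta_1\|_2^2 + (\lambda_2 - c_\ast c_p r_\ell\sqrt{\log p/n})\|\Delta_2\|_2^2 - (\text{clipping defect})$; the middle term is non-negative once $c_{\lambda_2} \ge c_\ast c_p$, and taking $c_{\lambda_2}$ larger and $r$ in \eqref{eq:ballsize} small enough that the clipping defect is below a small fraction of $\|\Delta_1\|_2^2$ yields $\rem + \lambda_2\|\Delta_2\|_2^2 \ge \tfrac12\kappa\|\Delta_1\|_2^2$ after rescaling the RE constant to absorb $c_\ast$. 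The probability accounting is $\PP(\Omega(\tau,\kappa,c_p))$ together with the complements of the finitely many sub-Gaussian concentration events, each of probability $1 - O(n^{-m}+p^{-m})$.

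The main obstacle is the uniform (over $\beta$ satisfying \eqref{eq:ballsize}) control of the clipping defect $\frac1n\sum_{i:|\eta_i^0|\le\tau,\,|X_i^\top(\Delta_1+\Delta_2)|>1}(X_i^\top(\Delta_1+\Delta_2))^2$ -- equivalently, showing $X_i^\top(\Delta_1+\Delta_2)$ is typically of order $\|\Delta_1\|_2$ and not $\sqrt p\,\|\Delta_1\|_2$. The key structural fact is the identity expressing $\Delta_1 + \Delta_2$ as an average over the segment from $\beta^0$ to $\beta$ of the operator $2\lambda_2(\nabla^2 L + 2\lambda_2 I)^{-1}$ applied to $\Delta_1$, obtained from the implicit-function relation $b'(\beta) = -(\nabla^2 L + 2\lambda_2 I)^{-1}\nabla^2 L$ for the map $\beta\mapsto b(\beta)$: since $\nabla^2 L$ has its $q$ large eigenvalues (of order at least $r_\ell$) lying in $\mathrm{colspace}(\Gamma^\top)$ and its remaining eigenvalues of constant order -- itself a claim requiring the concentration machinery and the $\tau$-truncation to pin down $f'$ on the bulk samples -- the operator $2\lambda_2(\nabla^2 L + 2\lambda_2 I)^{-1}$ is heavily damped in the confounder directions, so $\Delta_1 + \Delta_2$ lies, up to a controlled remainder, in the orthogonal complement of $\mathrm{colspace}(\Gamma^\top)$, where $X_i^\top(\cdot) = Z_i^\top(\cdot)$ has no $\sqrt p$ blow-up. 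Making this quantitative -- bounding the remainder, the eigenspace alignment, and the resulting fourth-moment sums uniformly over the low-dimensional cone $C(S)$, and in parallel establishing the Lipschitz bound $\|\Delta_2\|_2 \lesssim \|\Delta_1\|_2$ from $\|(\nabla^2 L + 2\lambda_2 I)^{-1}\nabla^2 L\|_{\op} \le 1$ -- is the technical heart of the proof, and is where Assumption~\ref{cd:pns}(iii)--(iv) and Assumption~\ref{cd:omesparsity} (the control of $\tfrac{sp}{r_\ell}\sqrt{\log p/n}$ and the precise radius in \eqref{eq:ballsize}) enter.
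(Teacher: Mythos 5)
Your architecture matches the paper's in outline: Taylor-expand $L$ about $\beta^0+b^0$, lower-bound $f'$ on the truncated indices, invoke $\Omega(\tau,\kappa,c_p)$ with $\Delta_1=\beta-\beta^0$ and $\Delta_2=b(\beta)-b^0$, and take $c_{\lambda_2}$ large so that $\lambda_2\|\Delta_2\|_2^2$ absorbs the $-c_p r_\ell\sqrt{\log p/n}\,\|\Delta_2\|_2^2$ penalty from the RE event. However, two load-bearing steps are asserted rather than proved, and the first is asserted on grounds that do not hold. Your Stage~1 claim $\max_i|X_i^\top\theta_0-\eta_i^0|\le 1$ does not follow from ``sub-Gaussian maximal inequalities'': $b^0$ is a function of the entire sample, so $Z_i^\top b^0$ and $U_i^\top(\Gamma b^0-\delta^0)$ are not sub-Gaussian linear forms with fixed coefficient vectors, and the control that is actually provable (the event $\cT_2$, i.e.\ $\frac1n\sum_i(X_i^\top b^0-U_i^\top\delta^0)^2\lesssim p\log p/(r_\ell n)+p^2/(r_\ell^2 n)+\log p/n$) allows $\sum_i(X_i^\top b^0-U_i^\top\delta^0)^2$ to be of order $p^{1/3}$ when $r_\ell\asymp p^{5/6}$, so individual terms need not be $O(1)$. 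The paper never proves or needs such a max bound: it retains the indicator $\one_{\{|\tilde{\Delta}_i|\le\tau'\}}$ inside the quadratic form and bounds the discarded indices in aggregate, via $\frac1n\sum_i\one_{\{|\tilde{\Delta}_i|>\tau'\}}\le\tau'^{-2}\|\bX b^0-\bU\delta^0\|_2^2$ combined with H\"older and $\max_{i,j}|X_{ij}|\lesssim\sqrt{\log p}$, which is why terms like $p^2\log p/(r_\ell^2 n)\cdot\|\Delta\|_1^2$ appear in its error budget. You would need to adopt that device or supply a genuinely pointwise argument from the explicit formula for $b^0$, which is substantial extra work.

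The second gap is the step you yourself flag as the main obstacle: uniform control of $X_i^\top(\Delta_1+\Delta_2)$ over all $i$ and all $\beta$ in the ball \eqref{eq:ballsize}. Your eigenstructure heuristic for $\nabla^2 L$ is the correct intuition, but it is only a plan. The paper's execution uses the exact identity $b(\beta)-b^0=-(2\lambda_2 I+\bX^\top\Lambda^{(1)}\bX)^{-1}\bX^\top\Lambda^{(1)}\bX(\beta-\beta^0)$, rewrites $X_i^\top(\Delta_1+\Delta_2)=X_i^\top\bigl(I-\bX^\top(2\lambda_2(\Lambda^{(1)})^{-1}+\bX\bX^\top)^{-1}\bX\bigr)(\beta-\beta^0)$, and applies H\"older against $\|\beta-\beta^0\|_1\le 4\sqrt{s}\|\beta-\beta^0\|_2$ together with the entrywise bound of the event $\cT_1$ on $|X_i^\top\bX^\top(\lambda_2\Lambda+\bX\bX^\top)^{-1}\bX_j|$, uniformly over a class of diagonal $\Lambda$; establishing $\cT_1$ (Lemmas~\ref{lem:term1} and~\ref{lem:const}) is where the dense-confounding structure and Assumption~\ref{cd:pns}(iii) actually enter, and it is the bulk of the proof. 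It also fixes the radius $r$ in \eqref{eq:ballsize}. Without a quantitative version of this step your ``clipping defect'' is uncontrolled, so the lemma is not yet proved.
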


\begin{lemma}\label{lem:subr}
	Let $\psi : \R^{l} \times \R^m \to \R$ be a convex function. Then $\phi:\R^{l} \to \R$ given by
	\[
	\phi(u) = \inf_{w \in \R^{m}} \psi(u,w)
	\]
	is also a convex function.
\end{lemma}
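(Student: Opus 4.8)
The plan is to use the standard $\epsilon$-relaxation of the infimum; this is a classical fact in convex analysis (a partial infimum of a jointly convex function is convex), and the only point requiring a little care is that the infimum need not be attained. Fix $u_1, u_2 \in \R^l$ and $t \in [0,1]$, and suppose first that $\phi(u_1)$ and $\phi(u_2)$ are both finite. Given $\epsilon > 0$, by definition of the infimum there exist $w_1, w_2 \in \R^m$ with $\psi(u_i, w_i) \le \phi(u_i) + \epsilon$ for $i = 1, 2$. Since $t w_1 + (1-t) w_2$ is a feasible choice of $w$ in the infimum defining $\phi$ at $t u_1 + (1-t) u_2$, and using joint convexity of $\psi$ applied to the points $(u_1, w_1)$ and $(u_2, w_2)$, we get
\[
\phi\big(t u_1 + (1-t) u_2\big) \le \psi\big(t u_1 + (1-t) u_2,\; t w_1 + (1-t) w_2\big) \le t \psi(u_1, w_1) + (1-t)\psi(u_2, w_2) \le t\phi(u_1) + (1-t)\phi(u_2) + \epsilon.
\]
Letting $\epsilon \downarrow 0$ yields the convexity inequality, completing the argument in this case.

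It remains to dispose of the degenerate case in which $\phi(u_i) = -\infty$ for some $i$. If, say, $\phi(u_1) = -\infty$ and $t \in (0,1]$, then for any $M > 0$ there is $w_1$ with $\psi(u_1, w_1) \le -M$; combining as above with any fixed $w_2$ (noting $\psi(u_2, w_2) \in \R$ since $\psi$ is real-valued) gives $\phi(t u_1 + (1-t) u_2) \le -tM + (1-t)\psi(u_2, w_2) \to -\infty$, so the left-hand side equals $-\infty$ and the inequality holds with the usual conventions; the case $t = 0$ is trivial. In the application of this lemma in the proof of Lemma~\ref{lem:rsc}, the relevant $\psi$ is of the form $(\beta, b) \mapsto L(b + \beta) + \lambda_2 \|b\|_2^2$, which (since $L$ grows at most linearly because $f$ is bounded) is coercive in $b$ for each fixed $\beta$; hence the infimum $\phi(\beta) = \inf_b \psi(\beta,b) = L(b^0(\beta)+\beta) + \lambda_2\|b^0(\beta)\|_2^2$ is finite and attained, so this degenerate case does not in fact arise there.

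There is no substantive obstacle here: the entire content is the one-line convexity estimate above, and the only subtlety — the possible non-attainment of the infimum — is precisely what the $\epsilon$-relaxation circumvents, so that one never needs to exhibit exact minimisers.
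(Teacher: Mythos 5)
Your proof is correct and is essentially identical to the paper's: both use the $\epsilon$-relaxation of the infimum, apply joint convexity of $\psi$ at $(u_1,w_1)$ and $(u_2,w_2)$, and let $\epsilon \downarrow 0$. The additional handling of the case $\phi(u_i)=-\infty$ is harmless but unnecessary, since the lemma's hypothesis that $\phi$ maps into $\R$ already excludes it.
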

\begin{proof}
	Fix $t \in [0,1]$ and $u,v \in \R^l$. Given $\epsilon > 0$, let $w_u \in \R^m$ and $w_v\in \R^m$ be such that
	\[
	\phi(u) > \psi(u, w_u) - \epsilon \qquad \text{and} \qquad \phi(v) > \psi(v, w_v) - \epsilon.
	\]
	Then
	\begin{align*}
		\phi\big(tu + (1-t)v\big) &= \inf_{w \in \R^{m}} \psi\big(tu + (1-t)v,w\big) \\
		&\leq \psi\big(tu + (1-t)v,tw_u + (1-t) w_v\big) \\
		&\leq t\psi(u, w_u) + (1-t)\psi(v,w_v) \\
		&< t\phi(u) + (1-t)\phi(v) - \epsilon,
	\end{align*}
using the convexity of $\psi$ in the penultimate line.
	As $\epsilon > 0$ was arbitrary, we have that $\phi\big(tu + (1-t)v\big) \leq t\phi(u) + (1-t)\phi(v)$, and as $t \in [0,1]$ and $u$ and $v$ were arbitrary, we conclude that $\phi$ is convex as required.
\end{proof}

\begin{lemma}\label{lem:rscwain}
	Let $\psi:\R^p \to \R$ be a convex function and suppose $\Delta^*$ is a minimiser of $\psi$. Let cone $C \subseteq \R^p$ be such that $\Delta^* \in C$. If there exists $r>0$ such that
		\[
		\text{for all $\Delta \in C$ with $\|\Delta\|_2=r$ we have $\psi(\Delta)-\psi(0) > 0$},
		\]
	then $\|\Delta^*\|_2 \leq r$.
\end{lemma}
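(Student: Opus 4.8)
\textbf{Proof proposal for Lemma~\ref{lem:rscwain}.}

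The plan is to argue by contrapositive, exploiting convexity of $\psi$ to ``pull back'' any far-away minimiser along the segment joining it to $0$. Suppose for contradiction that $\|\Delta^*\|_2 > r$. Since $\Delta^* \in C$ and $C$ is a cone containing $0$, the whole segment $\{t\Delta^* : t \in [0,1]\}$ lies in $C$. The point $\tilde{\Delta} := (r/\|\Delta^*\|_2)\,\Delta^*$ therefore lies in $C$ and satisfies $\|\tilde{\Delta}\|_2 = r$, so by hypothesis $\psi(\tilde{\Delta}) - \psi(0) > 0$, i.e.\ $\psi(\tilde{\Delta}) > \psi(0)$.

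On the other hand, write $\tilde{\Delta} = (1-t)\cdot 0 + t\cdot \Delta^*$ with $t = r/\|\Delta^*\|_2 \in (0,1)$. Convexity of $\psi$ gives
\[
\psi(\tilde{\Delta}) \leq (1-t)\psi(0) + t\,\psi(\Delta^*).
\]
Since $\Delta^*$ is a minimiser of $\psi$ (globally, over $\R^p$), we have $\psi(\Delta^*) \leq \psi(0)$, and hence $\psi(\tilde{\Delta}) \leq (1-t)\psi(0) + t\,\psi(0) = \psi(0)$. This contradicts $\psi(\tilde{\Delta}) > \psi(0)$. Therefore $\|\Delta^*\|_2 \leq r$, as required.

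There is essentially no obstacle here; the only points requiring a word of care are that $C$ being a cone guarantees $\tilde{\Delta} \in C$ (so the hypothesis applies to it), and that the minimiser is taken over all of $\R^p$ so that $\psi(\Delta^*) \leq \psi(0)$. One could also phrase the argument directly (without contradiction): for any $\Delta^* \in C$ with $\|\Delta^*\|_2 > r$, the convexity inequality above shows $\psi$ is non-increasing from $0$ towards $\Delta^*$ is impossible to reconcile with a strict increase at radius $r$, but the contrapositive formulation is cleanest. This lemma will be applied with $\psi$ taken to be (a shifted version of) the generalised LAVA objective as a function of the error $\Delta = \hat\beta + \hat b - \beta^0 - b^0$ restricted appropriately, and $C$ the relevant cone, combining with Lemmas~\ref{lem:beta} and~\ref{lem:rsc} to control $\|\hat\beta - \beta^0\|_2$.
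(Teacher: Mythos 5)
Your proof is correct and uses essentially the same argument as the paper: both scale $\Delta^*$ back to the sphere of radius $r$, apply convexity along the segment to $0$, and derive a contradiction with the strict increase at radius $r$ (the paper phrases the contradiction via $\psi(\alpha\Delta^*) < \psi(\Delta^*)$ rather than via $\psi(\Delta^*) \leq \psi(0)$, but this is only a cosmetic rearrangement). No issues.
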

\begin{proof}
	The proof proceeds similarly to \citet[Lem.~9.21]{Wain19}. If $\Delta^*=0$ we are done, so we we may assume $\|\Delta^*\|_2 > 0$. Now let $\alpha = r / \|\Delta^*\|_2$. Then we have that $\alpha \Delta^* \in C$ and so $\psi(\alpha \Delta^*) - \psi(0) > 0$. Now if, for a contradiction, $\alpha < 1$, then by convexity of $R$,
	\[
	\psi(\alpha \Delta^*) \leq (1-\alpha)\psi(0) + \alpha \psi(\Delta^*) < (1-\alpha) \psi(\alpha \Delta^*) + \alpha \psi(\Delta^*).
	\]
This gives $\psi(\alpha \Delta^*) < \psi(\Delta^*)$, a contradiction, so we must have $\alpha \geq 1$, i.e.,  $\|\Delta^*\|_2 \leq r$ as required.
\end{proof}


\subsection{Proof of Theorem~\ref{thm:main}} \label{sec:thm_main}
\begin{proof}
	We work on $\Omega_1 \cap \Omega_2$, with these events given by Lemmas~\ref{lem:beta} and \ref{lem:rsc} respectively. 
Let function $\cF :\R^p \to \R$ be given by
\[
\cF(\Delta) := L(b(\beta^0 + \Delta) + \beta^0 + \Delta) - L(b^0 + \beta^0) + \lambda_2 (\| b(\beta^0 + \Delta)\|_2^2 - \| b^0\|_2^2) + \lambda_1 (\| \beta^0 + \Delta\|_1 - \| \beta^0\|_1).
\]
By performing Taylor expansion about $b^0 + \beta^0$, we have
\begin{align*}
	\cF(\Delta) = \;& \rem\big(\beta^0 + \Delta + b(\beta^0 + \Delta),  \beta^0 + b^0\big) + \lambda_1 \|\beta^0 + \Delta\|_1 + \lambda_2 \|b(\beta^0 + \Delta)\|_2^2 - \lambda_1 \|\beta^0\|_1 - \lambda_2 \|b^0\|_2^2 \\
	& + \nabla L (\beta^0 + b^0) \Delta + \nabla L ^\top (\beta^0 + b^0) (b(\beta^0 + \Delta) - b^0).
\end{align*}
It then follows from the same analysis as from~\eqref{eq:preopt} to~\eqref{eq:opt} that
\begin{align*}
\cF(\Delta) \geq \;& \rem\big(\beta^0 + \Delta + b(\beta^0 + \Delta), \beta^0 + b^0\big) + \lambda_1 \|\beta^0 + \Delta\|_1 - \lambda_1 \|\beta^0\|_1 \\
& + \lambda_2 \|b(\beta^0 + \Delta) - b^0\|_2^2 - 2 \lambda_2 \|b^0\|_\infty \|\Delta\|_1 \\
\geq \;& \rem\big(\beta^0 + \Delta + b(\beta^0 + \Delta), \beta^0 + b^0\big) + \lambda_2 \|b(\beta^0 + \Delta) - b^0\|_2^2 - (2 \lambda_2 \|b^0\|_\infty + \lambda_1)  \|\Delta\|_1.
\end{align*}

Let $c_{\lambda_1}$ be a constant such that
\begin{equation} \label{eq:c_lambda_1}
	c_{\lambda_1} \sqrt{\frac{\log p}{n}} =: \lambda_1 \geq 4 c_b \sqrt{\frac{\log p}{ n}} \geq \lambda_2 \|b^0\|_\infty,
\end{equation}
with the last inequality a consequence of working on $\Omega_1$. 
Then we have that for a constant $c>0$,
\[
\cF(\Delta) \geq \rem\big(\beta^0 + \Delta + b(\beta^0 + \Delta), \beta^0 + b^0\big) + \lambda_2 \|b(\beta^0 + \Delta) - b^0\|_2^2 - c \sqrt{\frac{\log p}{n}} \|\Delta\|_1.
\]

Equipped with the above inequality, we next prove that $\hat{\beta} - \beta^0$ satisfies \eqref{eq:ballsize}. To achieve this goal, we now consider the $\Delta$ within the region 
\[
\mathcal{K} := \left\{\Delta: \left( \sqrt{s \log p} + \frac{\sqrt{spn \log p}}{r_{\lo}}\right) \|\Delta\|_2 = r\right\} \cap C(S),
\]
where $r$ is defined in Lemma~\ref{lem:rsc}.
As $\Delta \in \mathcal{K}$, it certainly satisfies \eqref{eq:ballsize}. Thus as we are on $\Omega_2$,
\[
\cF(\Delta) \geq \kappa' \|\Delta\|_2^2 - c \sqrt{\frac{\log p}{n}} \|\Delta\|_1,
\]
writing $\kappa' := \kappa/2$ for simplicity.

Notice further that as $\Delta \in C(S)$, we have that $\|\Delta\|_1 \leq 4\sqrt{s} \|\Delta\|_2$ (see \eqref{eq:sparse_ineq}), and therefore
\[
\cF(\Delta) \geq \kappa' \|\Delta\|_2^2 - c \sqrt{s \frac{\log p}{n}} \|\Delta\|_2 = \left(\frac{r \kappa'}{\sqrt{s \log p} + \sqrt{spn \log p} / r_{\lo}} - c \sqrt{s \frac{\log p}{n}}\right)\|\Delta\|_2> 0,
\]
where the last inequality holds for all $n$ sufficiently large due to Assumption~\ref{cd:pns}(iv). 

From~\eqref{eq:opt}, we have that
\[
\lambda_1 (\|\hat{\beta}\|_1 - \|\beta^0\|_1) \leq 2 \lambda_2 \|b^0\|_\infty \|\hat{\beta} - \beta^0\|_1,
\]
and so by \eqref{eq:c_lambda_1}, we have that
\[
2(\|\hat{\beta}\|_1 - \|\beta^0\|_1) \leq \|\hat{\beta} - \beta^0\|_1.
\]
Rearranging and then applying the triangle inequality, we see that
\[
\|\hat{\beta}_{S^c}\|_1 \leq \|\hat{\beta}_S - \beta^0_S\|_1 + 2 \|\beta^0_S\|_1 - 2\|\hat{\beta}_S\|_1  \leq \|\hat{\beta}_S - \beta^0_S\|_1,
\]
so $\hat{\beta} - \beta^0 \in C(S)$.

Now observe the as a Lemma~\ref{lem:subr}, $\cF$ is a convex function. Furthermore, it is minimised by $\hat{\beta} - \beta^0$.
By combining this with the fact that $\cF(\Delta) > 0$ for all $\Delta \in \mathcal{K}$, we are able to apply Lemma~\ref{lem:rscwain} to get that $\hat{\beta} - \beta^0$ satisfies \eqref{eq:ballsize}.

From~\eqref{eq:opt} and Lemma~\ref{lem:rsc}, we then have that
\[
\kappa' \|\hat{\beta} - \beta^0\|_2^2 + \lambda_1 \|\hat{\beta}\|_1 - \lambda_1 \|\beta^0\|_1 \leq 2 \lambda_2 \| b^0 \|_\infty \| \hat{\beta} - \beta^0 \|_1,
\]
which means that there exists some constant $c > 0$ such that
\[
\kappa'\|\hat{\beta} - \beta^0\|_2^2 \leq \left(\lambda_1 + 2 \lambda_2 \| b^0 \|_\infty\right) \|\hat{\beta} - \beta^0\|_1 \leq c \sqrt{\frac{\log p}{n}} \|\hat{\beta} - \beta^0\|_1 \leq c \sqrt{s \frac{\log p}{n}} \|\hat{\beta} - \beta^0\|_2.
\]
Dividing by $\|\hat{\beta} - \beta^0\|_2$ then gives the result.
\end{proof}

\subsection{Proofs of Lemmas~\ref{lem:beta} and~\ref{lem:rsc}}\label{sec:lem_main}

We begin by introducing the some events we will work on in the proofs of Lemmas~\ref{lem:beta} and~\ref{lem:rsc}. To introduce these events, it is helpful to first derive an explicit form for $b^0$, which we do below.

Recall \eqref{eq:b^0_deriv} that
\[
2 \lambda_2 b^0 = -\nabla L(\beta^0 + b^0) = \frac{1}{n} \sum_{i=1}^n \{Y_i -  f(X_i^\top (\beta^0 + b^0))\} X_i. 
\]
Let us write
\begin{equation} \label{eq:L'}
	L' := \frac{1}{n} \sum_{i=1}^n \{f(X_i^\top \beta^0 + U_i^\top \delta^0) - Y_i\} X_i = - \frac{1}{n} \sum_{i=1}^n \varepsilon_i X_i.
\end{equation}
Applying the mean value theorem, we have that for all $i = 1, \ldots, n$, there exists $t_i \in [0,1]$ such that
\[
2 \lambda_2 b^0 =-L' - \frac{1}{n} \sum_{i=1}^n f'(X_i^\top \beta^0 + t_i U_i^\top \delta^0 + (1 - t_i) X_i^\top b^0) \cdot (X_i^\top b^0 - U_i^\top \delta^0) X_i.
\]
Let $\Lambda^0 \in \R^{n \times n}$ and $\bar{\Lambda} \in \R^{n\times n}$ be diagonal matrices such that
\begin{align} \label{eq:Lambda_0_def}
	\Lambda^0_{ii} &:= f'(X_i^\top \beta^0 + U_i^\top \delta^0), \\
	\bar{\Lambda}_{ii} &:= f'(X_i^\top \beta^0 + t_i U_i^\top \delta^0 + (1 - t_i) X_i^\top b^0). \label{eq:Lambda_bar_def}
\end{align}
Then
\begin{equation}\label{eq:b^0_decomp_prelim}
 2 \lambda_2 b^0 = - L' - \frac{1}{n} \sum_{i=1}^n \bar{\Lambda}_{ii} X_i (X_i^\top b^0 - U_i^\top \delta^0) = - L' - \bX^\top \bar{\Lambda} \bX b^0  - \bX^\top \bar{\Lambda} \bU \delta^0.
\end{equation}
From~\eqref{eq:b^0_decomp_prelim} we may obtain
\begin{equation}\label{eq:b^0_decomp}
	\lambda_2 b^0 = - \underset{=: Q_1}{\underbrace{\lambda_2 \left( 2 \lambda_2 I + \bX^\top \bar{\Lambda} \bX\right)^{-1}L'}} + \underset{=: Q_2}{\underbrace{\lambda_2 \left( 2 \lambda_2 I + \bX^\top \bar{\Lambda} \bX\right)^{-1} \bX^\top \bar{\Lambda} \bU \delta^0}}.
\end{equation}
The proof of Lemma~\ref{lem:beta} will proceed by controlling $\lambda_2 \|b^0\|_\infty$ via bounding $\|Q_1\|_\infty$ and $\|Q_2\|_\infty$ separately.

We now introduce the following sets of diagonal matrices (note that the first set of matrices is random):
\begin{gather}
	\label{eq:L_tilde_def}
	\tilde{\cL} := \{ \text{diagonal } \Lambda \in \R^{n \times n}: \exists\; t_1, \cdots, t_n \in [0,1] \;\text{with}\; \Lambda_{ii} = t_i \bar{\Lambda}_{ii} + (1 - t_i) (\Lambda^0)_{ii} \; \forall i \in [n]\}, \\
	\label{eq:L_def}
	\mathcal{L} := \{ \text{diagonal } \Lambda \in \R^{n \times n} : \min_{i} \Lambda_{ii} \geq c_d^{-1}\}.
\end{gather}

We now introduce the following sets of events indexed by a parameter $c_t \geq 0$:
\begin{align*}
	\cT_1(c_t)  &:= \left\{
	\forall \Lambda \in \cL, i \in [n], j \in [p], \quad 
	\left|X_i^\top \bX^\top (\lambda_2 \Lambda + \bX \bX^\top)^{-1} \bX_j \right| \leq c_t (1 + c_{\lambda_2}^{-1}) \left(\sqrt{\log n} + \frac{\sqrt{pn \log p}}{r_{\lo}} \right)\right\}, \\
		\cT_2(c_t)  & := \left\{\left\| (2 \lambda_2 \bar{\Lambda}^{-1} + \bX \bX^\top )^{-1} \bU \delta^0 \right\|_2 \leq \frac{c_t}{r_\ell}, \right.\\ 
		& \qquad \qquad \left.\|\bX b^0 - \bU \delta^0 \|_2 \leq c_t (1 + c_{\lambda_2}^{-1}) \left(\sqrt{\frac{p \log p}{r_{\lo} n}} + \frac{p}{r_\lo \sqrt{n}}\right) + c_t c_{\lambda_2}\sqrt{\frac{\log p}{n}} \right\},\\
	\cT_3(c_t) &:= \left\{\forall \Lambda \in \tilde{\cL}, \; \left\|\bX \left(2 \lambda_2 I + \bX^\top \Lambda \bX \right)^{-1} L'\right\|_2 \leq c_t \frac{p}{r_{\lo} \sqrt{n}}  + c_t \sqrt{\frac{p \log p}{r_{\lo} n}}\right\}, \\
	\cT_4(c_t) &:= \left\{\|L'\|_\infty \leq c_t \sqrt{\frac{\log p}{n}}, \;\; \max_{i,j} |X_{ij}| \leq c_t \sqrt{\log p}, \;\; \forall j = 1, \ldots, p,\, \|\bX_j\|_2 \leq c_t \right. \\
	& \qquad \qquad \left.\textrm{and} \;\; \left\|\bX^\top \left(2 \lambda_2 (\Lambda^0)^{-1} + \bX\bX^\top\right)^{-1} \bX L' \right\|_\infty \leq c_t \sqrt{\frac{\log p}{n}}\right\}.
\end{align*}

The rest of proof is organised as follows. In Section~\ref{sec:prelim}, we provide some auxiliary lemmas necessary for the proof of Lemmas~\ref{lem:beta} and~\ref{lem:rsc}; in Sections~\ref{sec:T_1}--\ref{sec:T_4}, we control the probabilities of the events defined above.
Lemmas~\ref{lem:beta} and \ref{lem:rsc} are then proved in Sections~\ref{sec:beta} and \ref{sec:rsc} respectively.

\subsubsection{Preliminary lemmas}\label{sec:prelim}

\begin{lemma}\label{lem:Loewner}
	Let $A \in \R^{p \times p}$ be a symmetric positive definite matrix and let $B \in \R^{p \times p}$ be a symmetric positive semi-definite matrix. Then for any choice of vector $x \in \R^p$, we have
	\begin{align*}
		\|(A + B)^{-1} x\|_2 \leq \|A^{-1} x\|_2.
	\end{align*}
\end{lemma}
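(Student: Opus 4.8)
The plan is to recast the stated pointwise bound as a Loewner-order comparison: requiring $\|(A+B)^{-1}x\|_2 \le \|A^{-1}x\|_2$ for every $x \in \R^p$ is equivalent to the matrix inequality $(A+B)^{-1}(A+B)^{-1} \preceq A^{-1}A^{-1}$, that is $(A+B)^{-2} \preceq A^{-2}$. I would start from the elementary fact that $A + B \succeq A \succ 0$, so that, since inversion reverses the Loewner order on positive definite matrices, $0 \prec (A+B)^{-1} \preceq A^{-1}$. The caveat is that squaring does not preserve the Loewner order in general, so this does not at once upgrade to $(A+B)^{-2} \preceq A^{-2}$; one must exploit a little more structure.

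A particularly clean route, valid whenever $A$ commutes with $B$ --- in particular when $A$ is a scalar multiple of the identity, say $A = cI$ with $c > 0$ --- is a spectral decoupling. Diagonalise $B = \sum_k \mu_k v_k v_k^\top$ in an orthonormal eigenbasis $\{v_k\}$ with eigenvalues $\mu_k \ge 0$; then $A$, $B$ and hence $A + B$ are simultaneously diagonal in this basis, $(A+B)^{-1}$ has eigenvalues $1/(c+\mu_k)$ and $A^{-1} = c^{-1} I$. Expanding $x = \sum_k \langle x, v_k\rangle v_k$ reduces the claim to the scalar inequalities $\langle x, v_k\rangle^2 / (c+\mu_k)^2 \le \langle x, v_k\rangle^2 / c^2$, which hold term by term since $\mu_k \ge 0$. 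The same computation works for any commuting pair $A$, $B$: use a common eigenbasis and the bound $1/(a_k + \mu_k)^2 \le 1/a_k^2$, where the $a_k > 0$ are the eigenvalues of $A$.

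The step I expect to be the main obstacle is the genuinely non-commuting case. The natural device is the factorisation $A + B = A^{1/2}(I + \tilde B) A^{1/2}$ with $\tilde B := A^{-1/2} B A^{-1/2} \succeq 0$, so that $(A+B)^{-1} = A^{-1/2}(I + \tilde B)^{-1} A^{-1/2}$ and $(I + \tilde B)^{-1} \preceq I$. This already gives the operator-norm comparison $\|(A+B)^{-1}\|_{\op} \le \|A^{-1}\|_{\op}$, but converting it to the sharper pointwise bound $\|(A+B)^{-1}x\|_2 \le \|A^{-1}x\|_2$ is delicate, because the conjugated matrix $A^{1/2}(I + \tilde B)^{-1} A^{-1/2}$, while having all its eigenvalues in $[0,1]$, is generally non-symmetric and so need not be a contraction in operator norm. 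This is where I would invest the most care --- most plausibly by verifying that in each concrete invocation the perturbation can be arranged so that the commuting argument of the previous paragraph applies (typically with $A$ a multiple of the identity), rather than attempting the non-commuting case in full generality.
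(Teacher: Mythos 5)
Your caution about the non-commuting case is not excessive --- it is the whole story, because the lemma as stated is false. Take
\[
A = \begin{pmatrix} 2 & 1 \\ 1 & 2 \end{pmatrix}, \qquad B = \begin{pmatrix} 1 & 0 \\ 0 & 0 \end{pmatrix}, \qquad x = \begin{pmatrix} 2 \\ 3 \end{pmatrix}.
\]
Then $A^{-1}x = \tfrac{1}{3}(1,4)^\top$ with $\|A^{-1}x\|_2^2 = 17/9$, while $(A+B)^{-1}x = \tfrac{1}{5}(1,7)^\top$ with $\|(A+B)^{-1}x\|_2^2 = 2 > 17/9$. Indeed, as you note, the claim for all $x$ is the Loewner inequality $(A+B)^{-2} \preceq A^{-2}$, equivalently $(A+B)^2 \succeq A^2$, i.e.\ $AB + BA + B^2 \succeq 0$, and this fails for non-commuting $A,B$ (here $AB+BA+B^2$ has determinant $-1$); this is exactly the failure of operator monotonicity of $t \mapsto t^2$ that you flag. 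The paper's own proof breaks at the step $(A^{-1} - A^{-1}HA^{-1})^2 = A^{-1}(I - A^{-1/2}HA^{-1/2})^2A^{-1} + o(\|H\|)$: to first order the left-hand side is $A^{-2} - A^{-1}HA^{-2} - A^{-2}HA^{-1}$ while the right-hand side is $A^{-2} - 2A^{-3/2}HA^{-3/2}$, and these agree only when $A$ and $H$ commute. The true derivative of $T \mapsto \|T^{-1}x\|_2^2$ at $A$ in direction $H$ is $-2(A^{-1}x)^\top H (A^{-2}x)$, a cross term between two \emph{different} vectors, which need not be nonpositive for $H \succeq 0$.

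Your commuting-case argument (simultaneous diagonalisation and the termwise comparison of $1/(a_k+\mu_k)^2$ with $1/a_k^2$) is correct, and the statements that do survive in general are the quadratic-form version $x^\top (A+B)^{-1}x \leq x^\top A^{-1}x$ (immediate from $(A+B)^{-1} \preceq A^{-1}$) and the cruder bound $\|(A+B)^{-1}x\|_2 \leq \lambda_{\min}(A)^{-1}\|x\|_2$. Be aware, though, that your proposed fallback --- checking that each invocation has $A$ a scalar matrix --- would not rescue the paper's applications: for instance in the proof of \eqref{eq:subgx6}, in \eqref{eq:term1_I_bd} and in \eqref{eq:8b1}, the lemma is applied with $A$ of the form $\lambda_2 c_d^{-1} I + \bX\bX^\top$ or $\lambda_2 \Lambda - \bZ\bZ^\top + \tfrac12 \bU\Gamma\Gamma^\top\bU^\top$, which are not multiples of the identity. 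Those steps would need to be re-derived from one of the weaker (true) inequalities above.
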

\begin{proof}
	Observe that for a positive semi-definite matrix $H \in \R^{p \times p}$,
	\begin{align*}
		\|(A + H)^{-1} x\|_2^2 &= x^\top (A + H)^{-2} x \\
		& = x^\top (A^{-1} - A^{-1} H A^{-1})^2 x + o(\|H\|)\\
		&= x^\top (A^{-1} - A^{-1} H A^{-1})^2 x = x^\top A^{-1} (I - A^{-1 / 2} H A^{-1 / 2})^2 A^{-1} x + o(\|H\|)\\
		& = x^\top A^{-1} (I - 2 A^{-1 / 2} H A^{-1 / 2}) A^{-1} x + o(\|H\|)\\
		& = x^\top (A^{-2} - 2 A^{-3 / 2} H A^{-3 / 2}) x+ o(\|H\|).
	\end{align*}
	Thus viewing $H$ as a vector, the derivative of the function $T \mapsto \|T^{-1} x\|_2^2$ at $A$ is equal to $- 2 (x^\top A^{-3 / 2}) \otimes (A^{-3 / 2} x)$, where $\otimes$ denotes the Kronecker product.
	The mean value theorem then gives us that there exists some $0 \leq \eta \leq 1$ such that
	\begin{align*}
		\|(A + B)^{-1} x\|_2^2 - \|A^{-1} x\|_2^2 = -2 x^\top (A+\eta B)^{-3 / 2} B (A+\eta B)^{-3 / 2} x.
	\end{align*}
	As $(A+\eta B)^{-3 / 2} B (A+\eta B)^{-3 / 2}$ is a positive semi-definite matrix, the LHS of the above equality is always non-positive, and the desired result follows.
\end{proof}

\begin{lemma}\label{lem:subgx} 
Assume Assumptions~\ref{cd:pns}--\ref{cd:subg} and suppose $m \in \N$ is given. 
Given $c_t>0$, let  event $\mathcal{A}_1(c_t)$ be defined by the following conditions:
\begin{gather}
	\lambda_{\max} (\bZ) \leq c_t \sqrt{\frac{p}{n}},  \qquad\quad c_t^{-1} \leq \lambda_{\min} (\bU) \leq \lambda_{\max} (\bU) \leq c_t, \label{eq:subgx3} \\
	\max_{i,j} |X_{ij}| \leq c_t\sqrt{\log p} \label{eq:subgx2} \\
	\forall j = 1, \ldots, p, \quad \|\bX_j\|_2 \leq c_t, \label{eq:subgx1} \\
	\max_{i} \|\bU(\bU^\top \bU)^{-1} \bU^\top e_i\|_2 \leq c_t \sqrt{\frac{\log n}{n}} \label{eq:U_proj} \\
	\max_{i \neq j} |Z_i^\top X_j| \leq c_t \sqrt{p} \log p, \qquad\quad \max_i |Z_i^\top X_i| \leq c_t p \log p, 
	\label{eq:subgx4} \\
	\max_{j,k}|(\bZ^\top \bZ)_{jk} - \Sigma_{jk}| \leq c_t \sqrt{\frac{\log p}{n}}, \label{eq:subgx7} \\
	\|L'\|_2 \leq c_t \sqrt{\frac{p \log p}{n}}. \label{eq:subgx5} \\
		\forall \Lambda\; \in \mathcal{L} \text{ and } \forall \; c_{\lambda_2} > 0, \;\;
	\left\|\bX^\top \left(\lambda_2 \Lambda + \bX\bX^\top\right)^{-1}\right\|_{\op} \leq \frac{c_t (1 + c_{\lambda_2}^{-1})}{\sqrt{r_{\lo}}} \left(\sqrt{\frac{p}{r_\lo \log p}} \vee 1\right). \label{eq:subgx6}
\end{gather}
(Note that $\lambda_2$ depends on $c_{\lambda_2}$.)
Then for all $c_t$ sufficiently large, there exists $c' >0$ such that $\pr(\mathcal{A}_1(c_t)) \geq 1- c'n^{-m}$.
\end{lemma}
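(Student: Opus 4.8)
The plan is to establish each of the bounds \eqref{eq:subgx3}--\eqref{eq:subgx6} on its own high-probability event, take $c_t$ to be the largest of the constants that arise, and intersect; since $\mathcal{A}_1(c_t)$ is an increasing event in $c_t$, it then suffices to exhibit one suitable $c_t$. Each individual event will have probability at least $1 - c'n^{-m}$: the estimates below appeal to the sub-Gaussian/sub-exponential tail bounds collected in Section~\ref{sec:subG}, and we use $p \gtrsim n$ (Assumption~\ref{cd:pns}(i)) to turn $p^{-m}$-type tails into $n^{-m}$ tails, and $\log p \lesssim n^{1/7}$ (Assumption~\ref{cd:pns}(ii)) so that union bounds over $O(\mathrm{poly}(n,p))$ coordinates cost only a $\log p$-sized enlargement of the concentration radius.

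\textbf{Routine coordinatewise bounds.} Bound \eqref{eq:subgx2} is a maximal inequality over the $np$ sub-Gaussian variables $X_{ij}$; \eqref{eq:subgx1} follows from Bernstein's inequality applied to $\|\bX_j\|_2^2 = n^{-1}\sum_i X_{ij}^2$ (recall the $\sqrt n$ rescaling of $\bX$) and a union bound over $j$; \eqref{eq:subgx7} is entrywise concentration of $\bZ^\top\bZ$ about $\Sigma := \E(ZZ^\top)$; and for \eqref{eq:subgx5} one first bounds $\|L'\|_\infty \lesssim \sqrt{\log p/n}$ coordinatewise --- each $(L')_j = -n^{-1}\sum_i \varepsilon_i X_{ij}$ being an average of sub-exponential variables, using $\E[e^{t\varepsilon}\mid X,U]\le e^{t^2\sigma_\varepsilon^2/2}$ --- and then $\|L'\|_2 \le \sqrt p\,\|L'\|_\infty$. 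For \eqref{eq:subgx4} we write, using $X = \Gamma^\top U + Z$, $Z_i^\top X_j = Z_i^\top\Gamma^\top U_j + Z_i^\top Z_j$ when $i\ne j$ and $Z_i^\top X_i = Z_i^\top\Gamma^\top U_i + \|Z_i\|_2^2$, and condition on the relevant independent blocks; here one uses $\lambda_{\max}(\Gamma\Gamma^\top)\le \|\Gamma\|_F^2 = \sum_j\var((\Gamma^\top U)_j)\le\sum_j\var(X_j)\lesssim p$ together with $\max_j\|U_j\|_2^2\lesssim\log n$ and $\max_j\|Z_j\|_2^2\lesssim p$ on high-probability events.

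\textbf{Spectral and projection bounds.} For \eqref{eq:subgx3}, $\lambda_{\max}(\bZ)\le c_t\sqrt{p/n}$ follows from the standard deviation bound $\|n^{-1}\bZ_{\mathrm{unsc}}^\top\bZ_{\mathrm{unsc}} - \Sigma\|_{\op}\lesssim\sqrt{p/n}+p/n$ for sample covariances of sub-Gaussian rows together with $\|\Sigma\|_{\op}\le\sigma_z^2$ and $p\gtrsim n$; the two-sided bound on $\lambda_{\min}(\bU),\lambda_{\max}(\bU)$ is analogous, using $\|\bU^\top\bU - I\|_{\op}\lesssim\sqrt{\log n/n}<1/2$ for $n$ large (as $q\lesssim 1$). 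For \eqref{eq:U_proj}, since $\bU(\bU^\top\bU)^{-1}\bU^\top$ is an orthogonal projection, $\|\bU(\bU^\top\bU)^{-1}\bU^\top e_i\|_2^2 = U_i^\top(\bU^\top\bU)^{-1}U_i$, which is $\lesssim \log n/n$ by \eqref{eq:subgx3} and $\max_i\|U_i\|_2^2\lesssim\log n$.

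\textbf{The resolvent bound \eqref{eq:subgx6} --- the main obstacle.} Fix $\Lambda\in\mathcal L$ and write $M:=\bX\bX^\top$ and $D:=\lambda_2\Lambda\succeq(\lambda_2/c_d)I$. Weyl's inequality applied to $\bX=\bU\Gamma+\bZ$, using $\lambda_{\min}(\bU^\top\bU)\ge c_t^{-2}$, $\lambda_{\min}(\Gamma\Gamma^\top)=r_\lo$ and $\|\bZ\|_{\op}\le c_t\sqrt{p/n}$, gives $\sigma_q(\bX)\ge\sqrt{r_\lo}/c_t - c_t\sqrt{p/n}\gtrsim\sqrt{r_\lo}$ (here $r_\lo\gg p/n$ under Assumption~\ref{cd:pns}(iii)), while $\sigma_{q+1}(\bX)\le\sigma_1(\bZ)\lesssim\sqrt{p/n}$ because $\bU\Gamma$ has rank at most $q$. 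Decompose $M = M_1 + M_2$, where $M_1$ is the restriction of $M$ to the span $V$ of its top $q$ eigenvectors, so $\|M_2\|_{\op}=\sigma_{q+1}(\bX)^2\lesssim p/n$ and $M_1\succeq\sigma_q(\bX)^2 P_V$. Writing $P=(D+M)^{-1}$,
\[
\norm{\bX^\top P}_{\op}^2 = \norm{PMP}_{\op} \le \norm{PM_1 P}_{\op} + \norm{PM_2 P}_{\op}.
\]
The second term is $\le\|M_2\|_{\op}\lambda_{\min}(D)^{-2}\lesssim(p/n)(c_d/\lambda_2)^2$, which on substituting $\lambda_2=c_{\lambda_2}r_\lo\sqrt{\log p/n}$ contributes $\lesssim c_{\lambda_2}^{-1} r_\lo^{-1/2}\sqrt{p/(r_\lo\log p)}$ to $\norm{\bX^\top P}_{\op}$. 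For the first term, $\norm{PM_1 P}_{\op} = \norm{PM_1^{1/2}}_{\op}^2$; for a unit vector $v$ the vector $M_1^{1/2}v$ lies in $V$, and since $D+M\succeq \tfrac12\lambda_{\min}(D)I+\tfrac12 M_1 =: N$, Lemma~\ref{lem:Loewner} gives $\norm{PM_1^{1/2}v}_2\le\norm{N^{-1}M_1^{1/2}v}_2$, and a direct computation in the eigenbasis of $M$ restricted to $V$ bounds the right-hand side by $2/\sigma_q(\bX)\lesssim r_\lo^{-1/2}$. Combining, $\norm{\bX^\top(\lambda_2\Lambda+\bX\bX^\top)^{-1}}_{\op}\lesssim r_\lo^{-1/2}\big(1 + c_{\lambda_2}^{-1}\sqrt{p/(r_\lo\log p)}\,\big)$, which is of the claimed form, uniformly over $\Lambda\in\mathcal L$ and over $c_{\lambda_2}>0$ because the only dependence on these enters through $\lambda_{\min}(D)\ge\lambda_2/c_d$ and the explicit form of $\lambda_2$. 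I expect the bulk of the work here to be the two-scale singular-value estimate for $\bX$ and the combination of the Loewner-order step with the eigendecomposition of $M_1$, which is needed to avoid the crude bound $\norm{\bX^\top P}_{\op}\lesssim\|\bX\|_{\op}/\lambda_2$ that is too weak when $r_\lo$ is close to $p$. Finally, intersecting the finitely many events and taking $c_t$ large enough relative to $m$ and the sub-Gaussian parameters, each failure probability is $\lesssim n^{-m}$, giving $\pr(\mathcal A_1(c_t))\ge 1-c'n^{-m}$.
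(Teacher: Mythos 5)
Your proposal is correct, and for all items except \eqref{eq:subgx6} it follows essentially the paper's own route (maximal inequalities, Bernstein via Lemma~\ref{lem:subgbern}, the sample-covariance operator-norm bound for \eqref{eq:subgx3}, and the identity $\|\bU(\bU^\top\bU)^{-1}\bU^\top e_i\|_2^2 = U_i^\top(\bU^\top\bU)^{-1}U_i/n$ for \eqref{eq:U_proj}; for \eqref{eq:subgx4} you split $X_j=\Gamma^\top U_j+Z_j$ where the paper conditions on $X_j$ directly, an immaterial difference). The genuine divergence is in the resolvent bound \eqref{eq:subgx6}, which is indeed the crux. The paper never touches the spectrum of $\bX$ itself: it splits the \emph{operator} as $\bX^\top=\bZ^\top+\Gamma^\top\bU^\top$, kills the $\bZ^\top$ part crudely by $\|\bZ\|_{\op}/\lambda_2$, and for the $\Gamma^\top\bU^\top$ part uses the algebraic identity $\bX\bX^\top=-\bZ\bZ^\top+\tfrac12\bU\Gamma\Gamma^\top\bU^\top+(\sqrt2\bZ+\tfrac1{\sqrt2}\bU\Gamma)(\cdot)^\top$ together with two applications of Lemma~\ref{lem:Loewner} and the SVD of $\bU\Gamma$, which requires checking $\lambda_2\Lambda/2\succeq\bZ\bZ^\top$. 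You instead establish the two-scale singular-value structure of $\bX$ via Weyl ($\sigma_q(\bX)\gtrsim\sqrt{r_\lo}$, $\sigma_{q+1}(\bX)\lesssim\sqrt{p/n}$, valid since $r_\lo/(p/n)\gtrsim n^{5/6}(\log p)^{1/6}\to\infty$ under Assumption~\ref{cd:pns}(iii)) and split $M=\bX\bX^\top$ by its own top-$q$ eigenspace, handling the bulk term by $\|M_2\|_{\op}\lambda_{\min}(D)^{-2}$ and the spike term by Lemma~\ref{lem:Loewner} plus a diagonal computation giving $2/\sigma_q(\bX)$. Both yield exactly the stated $r_\lo^{-1/2}(1+c_{\lambda_2}^{-1}\sqrt{p/(r_\lo\log p)})$ bound; your version has the mild advantage that every step is manifestly monotone in $\lambda_{\min}(D)$, so uniformity over all $c_{\lambda_2}>0$ and all $\Lambda\in\mathcal L$ is immediate, whereas the paper's positive-semidefiniteness check $\lambda_2\Lambda/2\succeq\bZ\bZ^\top$ is where that uniformity has to be argued. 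The price is that you need the spiked-spectrum input, which the paper's purely algebraic decomposition avoids.
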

\begin{proof} Let $c_t > 0$ be given. We will use $a \lesssim b$ to denote that there exists constant $C >0$ potentially depending on $c_t$ and other constants such that $a \leq C b$. Note that it suffices to show a high probability bound relating to each of \eqref{eq:subgx3}--\eqref{eq:subgx6} as the intersection of the corresponding events will then have the required high probability bound after adjusting the constant $c'$. Below we treat each of the inequalities in turn.
	
	\eqref{eq:subgx3}: From \citet[Rem.~5.40]{vershynin_2012}, we have that there exists some constant $\sigma$ such that
	\begin{align*}
	\pr\left(\|\bZ^\top \bZ - \var(Z_i)\|_\op \geq \sigma \left(\sqrt{\frac{p + t}{n}} \vee \frac{p + t}{n}\right)\right) &\leq 2\exp(-t), \\
		\pr\left(\|\bU^\top \bU - \var(U_i)\|_\op \geq \sigma \left(\sqrt{\frac{q + t}{n}} \vee \frac{q + t}{n}\right)\right) &\leq 2\exp(-t).
	\end{align*}
	Recall that $q \lesssim 1$, setting $t = \log n$ yields the desired result.
	
	\eqref{eq:subgx2}: Using a sub-Gaussian tail bound, we have that for all $i = 1, \ldots, n$ and  $j = 1, \ldots, p$,
	\[
	\pr(|X_{ij}| \geq \sigma_x t) \leq 2 \exp(-t^2).
	\]
	Taking a union bound with $t = \sqrt{\log(p n)} \lesssim \sqrt{\log p}$ yields the desired result.

\eqref{eq:subgx1}: Using Lemma~\ref{lem:subgbern}, we have that there exists some constants $(\sigma, b)$ such that for each $j$,
\[
\pr(\|\bX_j\|_2^2 - \var(X_j) \geq t) \leq \exp\left( -\frac{nt^2}{2 (\sigma^2 + bt)}\right).
\]
Then by setting $t = 2 \sigma \sqrt{\log p / n}$ and taking the union bound over all $j=1,\ldots,p$, we obtain the desired result.

\eqref{eq:U_proj}: We have
\[
\|\bU(\bU^\top \bU)^{-1} \bU^\top e_i\|_2 = \sqrt{e_i^\top \bU(\bU^\top \bU)^{-1} \bU^\top e_i} = \frac{1}{\sqrt{n}} \sqrt{U_i^\top (\bU^\top \bU)^{-1} U_i}.
\]
Now similarly to the above, we have from Lemma~\ref{lem:subgbern} and Assumption~\ref{cd:pns}(i) that $\max_i \|U_i\|_2 \lesssim \sqrt{ \log n}$ with probability at least $1-c'n^{-m}$, for some constant $c'>0$. Thus, working also on the event given by \eqref{eq:subgx3}, we have that the above display is bounded above by a constant times $\sqrt{\log n / n}$.

\eqref{eq:subgx4}: We consider the first expression to begin with. Note that $X_i$ and $Z_j$ are independent for $i \neq j$. Thus by the sub-Gaussianity of $Z_i$, we have that there exists constant $c$ such that
\[
\pr(|Z_i^\top X_j| \geq t \|X_j\|_2 \given X_j) \leq 2\exp( -c t^2).
\]
Thus taking expectations, we obtain
\[
\pr(|Z_i^\top X_j| \geq t \|X_j\|_2) \leq 2\exp( -c t^2).
\]
Now on the event $\Omega$ relating to \eqref{eq:subgx2}, we have in particular that $\max_i \|X_i\|_2 \leq c' \sqrt{p \log p}$, so
\begin{align*}
 \pr\left(\cup_{i \neq j} \{|Z_i^\top X_j| \geq tc' \sqrt{p \log p}\} \right) &\leq \pr\left(\left(\cup_{i \neq j} \{|Z_i^\top X_j| \geq tc' \sqrt{p \log p} \}\right) \cap \Omega \right) + \pr(\Omega^c) \\
 &\leq p^2 \pr(|Z_i^\top X_j| \geq t \|X_j\|_2) + \pr(\Omega^c).
\end{align*}
Thus taking $t = c''\sqrt{\log p}$ for constant $c''$ sufficiently large gives the result.

The second expression in \eqref{eq:subgx4} follows from \eqref{eq:subgx2} and a version of \eqref{eq:subgx2} with $X_{ij}$ replaced with $Z_{ij}$.

\eqref{eq:subgx7}:
The proof of this is similar to that of  \eqref{eq:subgx1}.

\eqref{eq:subgx5}: Note that as $L' \in \R^p$, $\|L'\|_2 \leq \sqrt{p} \|L'\|_\infty$. Next, by Lemma~\ref{lem:subgbern}, we have that there exist constants $\sigma, b>0$ such that
\[
\pr\left(\abs{\frac{1}{n}\sum_{i=1}^n \varepsilon_i X_{ij}} \geq t \right) \leq 2 \exp\left(-\frac{nt^2}{2(\sigma^2+bt)}\right).
\]
Thus by a union bound, for constants $c, c' >0$ we have that with probability at least $1-c'n^{-m}$,
\[
\|L'\|_\infty \leq c \sqrt{\frac{\log p}{n}},
\]
which then gives the result.

	\eqref{eq:subgx6}: Observe that  
	\[
	\left\|\bX^\top \left(\lambda_2 \Lambda + \bX\bX^\top\right)^{-1} \right\|_{\op} \leq \left\|\bZ^\top \left(\lambda_2 \Lambda + \bX\bX^\top\right)^{-1} \right\|_{\op} + \left\|\Gamma^\top \bU^\top \left(\lambda_2 \Lambda + \bX\bX^\top\right)^{-1} \right\|_{\op}.
	\]
	We now work on the event on which \eqref{eq:subgx3} holds. Then considering the first term, we have
	\[
	\left\|\bZ^\top \left(\lambda_2 \Lambda + \bX\bX^\top\right)^{-1} \right\|_{\op} \leq c \sqrt{\frac{p}{n}} \lambda_2^{-1} c_d  \lesssim \frac{1}{c_{\lambda_2} r_{\lo}} \sqrt{\frac{p}{\log p}}.
	\]
	For the second term, we argue as follows:
	\begin{align*}
		\bX\bX^\top &= \bZ\bZ^\top + \bU \Gamma \Gamma^\top \bU^\top + \bZ \Gamma^\top \bU^\top + \bU \Gamma \bZ^\top \\
		&= - \bZ\bZ^\top + \frac{1}{2} \bU \Gamma \Gamma^\top \bU^\top +  \left(2\bZ\bZ^\top + \bZ \Gamma^\top \bU^\top + \bU \Gamma \bZ^\top + \frac{1}{2} \bU \Gamma \Gamma^\top \bU^\top \right) \\
		&= - \bZ\bZ^\top + \frac{1}{2}\bU \Gamma \Gamma^\top \bU^\top +  \left(\sqrt{2}\bZ + \frac{1}{\sqrt{2}} \bU \Gamma\right)\left(\sqrt{2}\bZ + \frac{1}{\sqrt{2}} \bU \Gamma\right)^\top.
	\end{align*}
	Thus by Lemma~\ref{lem:Loewner}
	\[
	\left\|  \Gamma^\top\bU^\top \left(\lambda_2 \Lambda + \bX\bX^\top\right)^{-1} \right\|_{\op}  \leq \left\|\Gamma^\top \bU^\top \left(\lambda_2 \Lambda - \bZ\bZ^\top + \frac{1}{2} \bU \Gamma \Gamma^\top \bU^\top \right)^{-1} \right\|_{\op}.
	\]
	Now using~\eqref{eq:subgx3} and recalling that $\Lambda \in \mathcal{L}$ , we have that $\lambda_2 \Lambda/2 - \bZ\bZ^\top$ is a positive semi-definite matrix for $n$ sufficiently large, which we may assume. Hence applying Lemma~\ref{lem:Loewner} once more, we have 
	\begin{align}
		\left\|\Gamma^\top \bU^\top \left(\lambda_2 \Lambda + \bX\bX^\top\right)^{-1} \right\|_{\op} &\leq 2\left\|\Gamma^\top \bU^\top \left(\lambda_2\Lambda +  \bU\Gamma\Gamma^\top\bU^\top\right)^{-1} \right\|_{\op} \notag \\
		& \leq 2\left\|\Gamma^\top \bU^\top \left(\lambda_2 c_d^{-1} I +  \bU\Gamma\Gamma^\top\bU^\top\right)^{-1} \right\|_{\op}. \label{eq:U_decomp}
	\end{align}
	We now consider the SVD decomposition
	\[
	\bU \Gamma = Q D P^\top,
	\]
	with $Q \in \R^{n \times q}$ having orthonormal columns, $D \in \R^{q \times q}$ a diagonal matrix and $P \in \R^{q \times q}$ an orthogonal matrix. Note that the smallest diagonal entry of $D$ is
	\begin{equation} \label{eq:smallest_diag}
		\lambda_{\min}(\bU)\lambda_{\min}(\Gamma) \gtrsim \sqrt{r_{\lo}}.
	\end{equation}
	Now using the SVD decomposition in \eqref{eq:U_decomp} we obtain
	\begin{align*}
		\left\|\Gamma^\top \bU^\top \left(\lambda_2 \Lambda + \bX\bX^\top\right)^{-1} \right\|_{\op} & \leq 2\left\|P D Q^\top \left(\lambda_2 c_d^{-1} I +  Q D^2 Q^\top\right)^{-1} \right\|_{\op} \\
		& = 2 \left\|D \left(\lambda_2 c_d^{-1} I + D^2 \right)^{-1} \right\|_{\op} \leq 2 \left\|D^{-1} \right\|_{\op} \lesssim 1/\sqrt{r_{\lo}},
	\end{align*} 
	where the first and penultimate inequalities follow from Lemma~\ref{lem:Loewner}, and the final inequality is due to \eqref{eq:smallest_diag}. Putting things together gives the desired result.
\end{proof}

\subsubsection{Bounds relating to \texorpdfstring{$\cT_1$}{T1}} \label{sec:T_1}

\begin{lemma}\label{lem:term1}
	Consider the setup of Lemma~\ref{lem:subgx}. Given $c_t > 0$, there exists $c>0$ such that on the event $\mathcal{A}_1(c_t)$,
	\[
	\forall \Lambda \in \mathcal{L}, i \in [n], j \in [p], \quad
	|U_i^\top \Gamma \bX^\top (\lambda_2 \Lambda + \bX \bX^\top)^{-1} \bX_j|  \leq c {(1 + c_{\lambda_2}^{-1})} \sqrt{\log n}.
	\]
\end{lemma}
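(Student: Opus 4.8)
The plan is to bound the quantity $|U_i^\top \Gamma \bX^\top (\lambda_2 \Lambda + \bX \bX^\top)^{-1} \bX_j|$ by factoring it through the operator-norm bound \eqref{eq:subgx6} already available on $\mathcal{A}_1(c_t)$. Write $v_i^\top := U_i^\top \Gamma \bX^\top = (e_i^\top \bU \Gamma) \bX^\top$, so that the target quantity is $|v_i^\top (\lambda_2 \Lambda + \bX\bX^\top)^{-1} \bX_j|$. Using $\bX = \bU\Gamma + \bZ$ one has $\bU\Gamma\bX^\top = \bU\Gamma\Gamma^\top\bU^\top + \bU\Gamma\bZ^\top$, and it will be cleaner to deal with the two pieces separately, or alternatively to write $e_i^\top \bU\Gamma\bX^\top(\lambda_2\Lambda + \bX\bX^\top)^{-1}\bX_j = (e_i^\top \bU\Gamma) \cdot \big[\bX^\top(\lambda_2\Lambda + \bX\bX^\top)^{-1}\big]^\top \bX_j$. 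The key observation is that $\|\bX^\top(\lambda_2 \Lambda + \bX\bX^\top)^{-1}\|_{\op}$ is controlled by \eqref{eq:subgx6}, so the whole expression is at most $\|U_i^\top\Gamma\|_2 \cdot \|\bX^\top(\lambda_2\Lambda+\bX\bX^\top)^{-1}\|_{\op} \cdot \|\bX_j\|_2$.

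The three factors are then handled as follows. First, $\|\bX_j\|_2 \leq c_t$ by \eqref{eq:subgx1}. Second, $\|\bX^\top(\lambda_2\Lambda+\bX\bX^\top)^{-1}\|_{\op} \lesssim \frac{(1+c_{\lambda_2}^{-1})}{\sqrt{r_\ell}}\big(\sqrt{p/(r_\ell \log p)} \vee 1\big)$ by \eqref{eq:subgx6}. Third, $\|U_i^\top\Gamma\|_2 = \|\Gamma^\top U_i\|_2 \leq \|\Gamma\|_{\op}\|U_i\|_2$; here $\|U_i\|_2 \lesssim \sqrt{\log n}$ (as established inside the proof of Lemma~\ref{lem:subgx}, via Lemma~\ref{lem:subgbern} and $q \lesssim 1$), and $\|\Gamma\|_{\op}$ is bounded — this is the one place I need to insert an extra ingredient. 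Under Assumption~\ref{cd:subg} we have $\Var(U) = I$ and $\Var(U^\top\delta^0)\lesssim 1$, but more relevantly $\|\Gamma\|_{\op}$ is tied to the spectrum of $\Gamma\Gamma^\top$; combining the bound $\|U_i^\top\Gamma\|_2 \leq \sqrt{U_i^\top\Gamma\Gamma^\top U_i}$ with a sub-Gaussian bound on the quadratic form, or simply using $\lambda_{\max}(\Gamma\Gamma^\top) \lesssim p$ (which follows from the sub-Gaussianity of the $X_j$ and $r_\ell = \lambda_{\min}(\Gamma\Gamma^\top)$ together with scaling assumptions), gives $\|U_i^\top\Gamma\|_2 \lesssim \sqrt{p\log n}$. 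Multiplying the three bounds yields something of order $(1+c_{\lambda_2}^{-1})\sqrt{p\log n}\cdot\frac{1}{\sqrt{r_\ell}}\big(\sqrt{p/(r_\ell\log p)}\vee 1\big)$, which after using Assumption~\ref{cd:pns}(iii) ($r_\ell \gtrsim p^{5/6}$ and $r_\ell \gtrsim p(\log p/n)^{1/6}$) should collapse to $O((1+c_{\lambda_2}^{-1})\sqrt{\log n})$ as claimed.

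The main obstacle I anticipate is the bookkeeping around the factor $\sqrt{p/(r_\ell\log p)}\vee 1$ in \eqref{eq:subgx6} together with the $\sqrt{p}$ from $\|\Gamma^\top U_i\|_2$: one must check that $\frac{\sqrt{p}}{\sqrt{r_\ell}}\cdot\frac{\sqrt{p}}{\sqrt{r_\ell\log p}} = \frac{p}{r_\ell\sqrt{\log p}}$ is $O(1)$, which requires exactly $r_\ell \gtrsim p/\sqrt{\log p}$ — this is weaker than Assumption~\ref{cd:pns}(iii), so it is fine, but the interplay with the $\log n$ versus $\log p$ factors must be tracked carefully (recall $\log p \leq a_n n^{1/7}$, so $\log n$ and $\log p$ are not directly comparable and one should keep $\sqrt{\log n}$ explicit as in the statement). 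A cleaner route that avoids worst-case $\|\Gamma\|_{\op}$ bounds may be to exploit the SVD $\bU\Gamma = QDP^\top$ used in the proof of Lemma~\ref{lem:subgx}: then $U_i^\top\Gamma\bX^\top(\lambda_2\Lambda+\bX\bX^\top)^{-1}\bX_j = (e_i^\top Q) D P^\top \cdot(\text{rest})$, and since $\|e_i^\top Q\|_2 = \|\bU(\bU^\top\bU)^{-1/2}$-type quantity one can try to bring in \eqref{eq:U_proj}. I would first try the crude three-factor bound and only refine via the SVD if the crude bound loses a needed logarithmic or polynomial factor.
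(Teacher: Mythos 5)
Your crude three-factor bound does not close, and it fails precisely in the weak-confounding regime the paper is designed to cover. The product of your three bounds is of order
\[
\sqrt{p\log n}\cdot\frac{1+c_{\lambda_2}^{-1}}{\sqrt{r_\lo}}\left(\sqrt{\frac{p}{r_\lo\log p}}\vee 1\right)
\;=\;(1+c_{\lambda_2}^{-1})\sqrt{\log n}\,\max\left\{\frac{p}{r_\lo\sqrt{\log p}},\;\sqrt{\frac{p}{r_\lo}}\right\},
\]
and both branches are problematic. The first requires $r_\lo\gtrsim p/\sqrt{\log p}$; your claim that this is ``weaker than Assumption~\ref{cd:pns}(iii), so it is fine'' has the implication backwards: Assumption~\ref{cd:pns}(iii) only guarantees $r_\lo\gtrsim\max\{p(\log p/n)^{1/6},p^{5/6}\}$, which permits, e.g., $r_\lo\asymp p^{5/6}=o(p/\sqrt{\log p})$, in which case your bound is of order $p^{1/12}\sqrt{\log n}$ and diverges polynomially. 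The second branch (which you never check) requires $r_\lo\gtrsim p$, i.e.\ pervasiveness, which is exactly what the paper is trying to avoid assuming. The root cause is that factoring through $\|\Gamma\|_{\op}\|U_i\|_2$ and $\|\bX^\top(\lambda_2\Lambda+\bX\bX^\top)^{-1}\|_{\op}$ leaves an uncancelled condition-number factor $\sqrt{\lambda_{\max}(\Gamma\Gamma^\top)/r_\lo}$, which nothing in the assumptions controls.

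The refinement you relegate to a fallback is in fact the essential idea. The paper inserts the projection $P=\bU(\bU^\top\bU)^{-1}\bU^\top$ via $\bU\Gamma\bX^\top=P\bU\Gamma\bX^\top=P(\bX\bX^\top-\bZ\bX^\top)$, splitting the target into $e_i^\top P\bX\bX^\top(\lambda_2\Lambda+\bX\bX^\top)^{-1}\bX_j$ and $e_i^\top P\bZ\bX^\top(\lambda_2\Lambda+\bX\bX^\top)^{-1}\bX_j$. In the first term the large singular values of $\bU\Gamma$ are absorbed into $\bX\bX^\top$ and cancelled against the resolvent, since $\lambda_{\max}\{(\lambda_2 c_d^{-1}I+\bX\bX^\top)^{-1}\bX\bX^\top\}\leq 1$ after an application of Lemma~\ref{lem:Loewner}; the $\sqrt{\log n}$ then comes entirely from $\|Pe_i\|_2\lesssim\sqrt{\log n/n}$ (event \eqref{eq:U_proj}), and no factor of $\|\Gamma\|_{\op}$ ever appears. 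Only the second term invokes \eqref{eq:subgx6}, and there the relevant prefactor is $\|\bZ\|_{\op}\|Pe_i\|_2\lesssim\sqrt{p/n}\cdot\sqrt{\log n/n}$ rather than your $\sqrt{p\log n}$ --- a gain of a factor $n$ that is exactly what makes Assumption~\ref{cd:pns}(iii) sufficient. You would need to carry out this decomposition (or a genuinely equivalent SVD version of it) rather than the operator-norm factorization you propose.
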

\begin{proof}
	Let $c_t > 0$ be given. We will use $a \lesssim b$ to denote that there exists constant $C >0$ potentially depending on $c_t$ and other constants such that $a \leq C b$.
	
	Let us write $P \in \R^{n \times n}$ for the projection onto the column space of $\bU$. Observe that
	\[
	\bX \bX^\top = \bZ \bX^\top+ \bU \Gamma \bX^\top,
	\]
	so
	\[
	\bU \Gamma \bX^\top = P \bU  \Gamma \bX^\top = P(\bX \bX^\top - \bZ \bX^\top).
	\]

	Now let
	\begin{align*}
		\mathrm{I} &:=   e_i^\top P  \bX \bX^\top \left(\lambda_2 \Lambda + \bX \bX^\top\right)^{-1} \bX_j, \\
		\mathrm{II} &:=   e_i^\top P  \bZ \bX^\top \left(\lambda_2 \Lambda + \bX \bX^\top\right)^{-1} \bX_j.
	\end{align*}
	We control each of these in turn.
	For $\mathrm{I}$, observe that
	\begin{align*}
		|\mathrm{I}| & \leq \left\| \left(\lambda_2 \Lambda + \bX \bX^\top \right)^{-1} \bX \bX^\top P e_i \right\|_2 \left\|\bX_j \right\|_2
	\end{align*}
	Then observing that $\lambda_2 \Lambda \succeq \lambda_2 c_d^{-1} I$, it follows from Lemma~\ref{lem:Loewner} that
	\begin{align}
		|\mathrm{I}|  & \leq \left\|  \left(\lambda_2 c_d^{-1} I + \bX \bX^\top \right)^{-1} \bX \bX^\top P e_i \right\|_2 \left\| \bX_j \right\|_2 \nonumber\\
		& \leq \lambda_{\max} \left( \left(\lambda_2 c_d^{-1} I + \bX \bX^\top \right)^{-1} \bX \bX^\top \right) \left\|P e_i\right\|_2 \left\| \bX_j \right\|_2 \label{eq:term1_I_bd}
	\end{align}
	It then follows from the fact that $\lambda_{\max} \{(\lambda_2 c_d^{-1} I + \bX \bX^\top)^{-1}\bX \bX^\top\} \leq 1$, \eqref{eq:U_proj} and~\eqref{eq:subgx1} that $\sqrt{n} |\mathrm{I}| \lesssim \sqrt{\log n}$.
	
	For $\mathrm{II}$, it follows from a similar analysis to that above, that
	\begin{align*}
		|\mathrm{II}| & \leq \left\|  e_i^\top P \bZ \bX^\top \left(\lambda_2 \Lambda + \bX\bX^\top \right)^{-1} \right\|_2 \left\|\bX_j\right\|_2 \\
		&\leq \left\|  P e_i \right\|_2 \|\bZ\|_{\op} \left\|\bX^\top \left(\lambda_2 \Lambda + \bX\bX^\top \right)^{-1} \right\|_{\op} \left\|\bX_j\right\|_2.
	\end{align*}
	Now by applying~\eqref{eq:subgx3},~\eqref{eq:U_proj},~ \eqref{eq:subgx6} and~\eqref{eq:subgx1}, we have that
	\begin{equation}
	\sqrt{n} |\mathrm{II}| \lesssim { (1 + c_{\lambda_2}^{-1})} \max\left\{\frac{p}{r_{\lo} \sqrt{n \log p}}, \sqrt{\frac{p}{r_{\lo} n}}\right\}  \sqrt{\log n}. \label{eq:term1_II_bd}
	\end{equation}
	By combining this with Assumption~\ref{cd:pns}(iii), we have $\sqrt{n} |\mathrm{II}| \lesssim { (1 + c_{\lambda_2}^{-1})} \sqrt{\log n}$. Then the desired result follows as $\sqrt{n} \max\{|\mathrm{I}|, |\mathrm{II}|\} \lesssim {(1 + c_{\lambda_2}^{-1})} \sqrt{\log n}$.
\end{proof}

\begin{lemma} \label{lem:const} Assume Assumptions~\ref{cd:pns}--\ref{cd:subg}. Given $m \in \N$, there exists $c_{t1}>0$ and $c' > 0$ such that 
	\[
	\pr(\cT_1(c_{t1})) \geq 1 - c' n^{-m}.
	\]
\end{lemma}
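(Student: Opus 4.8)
Write $M := (\lambda_2 \Lambda + \bX\bX^\top)^{-1}$. The plan is to work on the event $\mathcal{A}_1(c_t)$ of Lemma~\ref{lem:subgx} (adjoined, if necessary, with an additional event of probability at least $1 - c'n^{-m}$ carrying the sub-Gaussian tail bounds for the leave-one-out quantities used below), which for $c_t$ large enough has probability at least $1 - c'n^{-m}$, and to establish a bound of the required form that holds there uniformly over all $\Lambda \in \cL$, $i \in [n]$ and $j \in [p]$; choosing $c_{t1}$ large enough relative to the resulting constants and invoking Lemma~\ref{lem:subgx} then gives the claim. Decomposing $X_i = Z_i + \Gamma^\top U_i$ yields
\[
X_i^\top \bX^\top M \bX_j = Z_i^\top \bX^\top M \bX_j + U_i^\top \Gamma \bX^\top M \bX_j ,
\]
and Lemma~\ref{lem:term1} already bounds the second summand by $c(1 + c_{\lambda_2}^{-1})\sqrt{\log n}$ on $\mathcal{A}_1(c_t)$, so it suffices to control $Z_i^\top \bX^\top M \bX_j$ by a constant multiple of $(1 + c_{\lambda_2}^{-1})\bigl(\sqrt{\log n} + \sqrt{pn\log p}/r_\lo\bigr)$.

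The naive bound $|Z_i^\top \bX^\top M \bX_j| \le \|\bX Z_i\|_2 \|M\bX_j\|_2$ is too lossy, since $\|Z_i\|_2$ is of order $\sqrt p$; instead I would isolate the $i$-th observation via a leave-one-out argument. Let $\bX_{(-i)}$ be $\bX$ with its $i$-th row set to zero, $M_{(-i)} := (\lambda_2\Lambda + \bX_{(-i)}\bX_{(-i)}^\top)^{-1}$, and $H_{(-i)} := \bX_{-i,\cdot}^\top(\lambda_2 \Lambda_{-i,-i} + \bX_{-i,\cdot}\bX_{-i,\cdot}^\top)^{-1}\bX_{-i,\cdot}$, where $\bX_{-i,\cdot}$ denotes $\bX$ with its $i$-th row deleted; then $0 \preceq H_{(-i)} \preceq I$ and $H_{(-i)}$ depends only on $\{X_l : l \ne i\}$. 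Since $\bX\bX^\top - \bX_{(-i)}\bX_{(-i)}^\top$ is a rank-two perturbation, a Sherman--Morrison--Woodbury expansion combined with the identity $\bX\bX^\top M = I - \lambda_2 \Lambda M$ and a Schur-complement computation (which yields $M_{ii}^{-1} \geq \lambda_2 \Lambda_{ii}$, so $0 \le \lambda_2 \Lambda_{ii} M_{ii} \le 1$) produces an identity of the form
\[
X_i^\top \bX^\top M \bX_j = \bigl(1 - \lambda_2 \Lambda_{ii} M_{ii}\bigr) X_{ij} + \lambda_2 \Lambda_{ii} M_{ii} \, X_i^\top H_{(-i)} e_j .
\]
Here $1 - \lambda_2 \Lambda_{ii} M_{ii} = M_{ii}\, x_i^\top (I - H_{(-i)}) x_i \le \|X_i\|_2^2 /(n \lambda_2 c_d^{-1})$ with $x_i := X_i/\sqrt n$, which by \eqref{eq:subgx2} and the lower bound on $r_\lo$ in Assumption~\ref{cd:pns}(iii) is $o(1)$, so the first term is negligible. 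For the second, the independence of $H_{(-i)}$ from the $i$-th observation lets us condition on $\{X_l : l \ne i\}$: writing $X_i^\top H_{(-i)} e_j = Z_i^\top H_{(-i)} e_j + U_i^\top \Gamma H_{(-i)} e_j$, the $Z$-part is conditionally sub-Gaussian with proxy at most $\|H_{(-i)} e_j\|_2 \le 1$, contributing a $\sqrt{\log(np)}$ term, while the $U$-part, taken together with the subtracted term $U_i^\top \Gamma \bX^\top M \bX_j$, is controlled by writing $H_{(-i)} e_j = \bX_{-i,\cdot}^\top w$ and splitting $\bX_{-i,\cdot} = \bZ_{-i,\cdot} + \bU_{-i,\cdot}\Gamma$, then applying the operator-norm estimate \eqref{eq:subgx6} (the origin of the $r_\lo^{-1}$ term) and its leave-one-out analogue together with Lemma~\ref{lem:Loewner}; uniformity over $\Lambda \in \cL$ is obtained, as in the proof of Lemma~\ref{lem:subgx}, by passing to the comparison $\lambda_2 \Lambda \succeq \lambda_2 c_d^{-1} I$ via Loewner monotonicity of the relevant resolvents.

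\textbf{Main obstacle.} The crux is the uniform control of $Z_i^\top \bX^\top M \bX_j$ over $i$, $j$ and $\Lambda$: a direct Cauchy--Schwarz split pays the prohibitive factor $\|Z_i\|_2 \asymp \sqrt p$, so the leave-one-out identity above (or an equivalent decoupling of the $i$-th row from the remainder of $\bX$) is essential, both to trade $\|Z_i\|_2$ for a $\sqrt{\log}$ factor through a conditional sub-Gaussian tail bound and to extract the $r_\lo^{-1}$ dependence exactly as in \eqref{eq:subgx6}; controlling the $U$-part of $X_i^\top H_{(-i)} e_j$ in concert with $U_i^\top \Gamma \bX^\top M \bX_j$ so that no stray factor of $\|\Gamma\|_{\op}$ survives is the most delicate bookkeeping.
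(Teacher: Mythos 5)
Your overall architecture is sound and the leave-one-out identity you propose is correct: with $M=(\lambda_2\Lambda+\bX\bX^\top)^{-1}$ one has $\bX\bX^\top M=I-\lambda_2\Lambda M$, and the Schur-complement formula for the $i$th row of $M$ gives exactly $\sqrt{n}\,e_i^\top M\bX_j=M_{ii}(X_{ij}-X_i^\top H_{(-i)}e_j)$ with $M_{ii}^{-1}=\lambda_2\Lambda_{ii}+x_i^\top(I-H_{(-i)})x_i$, from which your display follows. This is a genuinely different organisation from the paper, which instead splits $\bX Z_i$ into its $i$th coordinate $a$ (with $a_i=X_i^\top Z_i/\sqrt n\asymp p\log p/\sqrt n$, handled via $a=\lambda_2\Lambda a/(\lambda_2\Lambda_{ii})$) and the remainder $b$ (each entry $\lesssim\sqrt p\log p/\sqrt n$ by \eqref{eq:subgx4}, handled by the crude $\lambda_2^{-1}$ resolvent bound). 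Both routes achieve the same decoupling of the $i$th observation, and your bound on $1-\lambda_2\Lambda_{ii}M_{ii}$ reproduces the paper's estimate $p\log p/(c_{\lambda_2}r_\ell\sqrt n)$ for the diagonal contribution.

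There is, however, a genuine gap in your treatment of the $Z$-part of $X_i^\top H_{(-i)}e_j$. The event $\cT_1$ quantifies over \emph{all} $\Lambda\in\cL$, and $H_{(-i)}=H_{(-i)}(\Lambda)$ depends on $\Lambda$. A conditional sub-Gaussian tail bound controls $|Z_i^\top H_{(-i)}(\Lambda)e_j|$ for a single fixed $\Lambda$; it cannot be union-bounded over the continuum $\cL$, and without exploiting extra structure the uniform statement degenerates to $\sup_{\|w\|_2\le1}|Z_i^\top w|=\|Z_i\|_2\asymp\sqrt p$ --- precisely the loss you set out to avoid. (A secondary issue: even pointwise, the union bound over $i,j$ yields $\sqrt{\log(np)}\asymp\sqrt{\log p}$, which is not dominated by the $\sqrt{\log n}+\sqrt{pn\log p}/r_\ell$ budget in the definition of $\cT_1$ when $r_\ell\gg\sqrt{pn}$.) The repair stays within your framework but must be deterministic and $\Lambda$-uniform: write $Z_i^\top H_{(-i)}e_j=(\bX_{-i,\cdot}Z_i)^\top(\lambda_2\Lambda_{-i,-i}+\bX_{-i,\cdot}\bX_{-i,\cdot}^\top)^{-1}\bX_{-i,\cdot}e_j$, note that $\|\bX_{-i,\cdot}Z_i\|_2\lesssim\sqrt p\log p$ by \eqref{eq:subgx4} (the whole point of leaving out $i$ is that the large diagonal entry $X_i^\top Z_i$ is gone), and bound the resolvent by $c_d/\lambda_2$ using $\Lambda\succeq c_d^{-1}I$ and Lemma~\ref{lem:Loewner}; this gives $\lesssim\sqrt{pn\log p}/(c_{\lambda_2}r_\ell)$, matching the second term in $\cT_1$ and coinciding with the paper's bound for its term $\mathrm{II}$. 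With that substitution, plus the (routine but nontrivial) leave-one-out analogues of \eqref{eq:subgx6} and of the lower bound on $\lambda_{\min}(\bU^\top\bU)$ needed for the $U$-part, your proof goes through.
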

\begin{proof}
Recall that $\cT_1(c_{t1})$ is defined by
\[
	\forall \Lambda \in \cL, i \in [n], j \in [p], \quad 
	\left|X_i^\top \bX^\top (\lambda_2 \Lambda + \bX \bX^\top)^{-1} \bX_j \right| \leq c_{t1} (1 + c_{\lambda_2}^{-1}) \left(\sqrt{\log n} + \frac{\sqrt{pn \log p}}{r_{\lo}} \right).
\]
In the following we work on $\mathcal{A}_1(c_t)$ for $c_t$ such that $\pr(\mathcal{A}_1(c_t)) \geq 1 - c'n^{-m}$ for some $c'>0$. We will use $a \lesssim b$ to denote that there exists constant $C >0$ potentially depending on $c_t$ and other constants such that $a \leq C b$.

	Fix $i$ and $\Lambda \in \mathcal{L}$. Observe that 
	\begin{align}\label{eq:term1}
		X_i^\top \bX^\top  & (\lambda_2 \Lambda + \bX \bX^\top)^{-1} \bX_j \nonumber\\
		& = U_i^\top \Gamma \bX (\lambda_2 \Lambda + \bX \bX^\top)^{-1} \bX_j + Z_i^\top \bX^\top (\lambda_2 \Lambda + \bX \bX^\top)^{-1} \bX_j.
	\end{align}
	Lemma~\ref{lem:term1} bounds the first term in absolute value by a constant times $(1 + c_{\lambda_2}^{-1}) \sqrt{\log n}$.
	
	We now focus on the second term of~\eqref{eq:term1}. Let $a, b \in \R^n$ be given by
	\begin{align*}
		a & :=e_i  \big(\bX  Z_i\big)_i = e_i X_i^{\top}Z_i/\sqrt{n} \\
		b &:= \bX Z_i - a.
	\end{align*}
	Then
	\begin{align*}
		|Z_i^\top \bX^\top (\lambda_2 \Lambda + \bX \bX^\top)^{-1} \bX_j| &\leq |a^\top (\lambda_2 \Lambda + \bX \bX^\top)^{-1} \bX_j| + |b^\top (\lambda_2 \Lambda + \bX \bX^\top)^{-1} \bX_j| \\
		&:= \mathrm{I} + \mathrm{II};
	\end{align*}
	we control these two terms in turn below.

	For $\mathrm{I}$, recall that only the $i$th entry of $a$ is non-zero, so $a = \lambda_2 \Lambda a / (\lambda_2 \Lambda_{ii})$. This gives us that
	\begin{align*}
		\mathrm{I} &= \frac{1}{\lambda_2 \Lambda_{ii}} \left| a^\top  \lambda_2 \Lambda (\lambda_2 \Lambda + \bX \bX^\top)^{-1} \bX_j \right| \\
		& \leq \frac{1}{\lambda_2 \Lambda_{ii}} \left| a^\top \bX_j \right| + \frac{1}{\lambda_2 \Lambda_{ii}}  \left| a^\top \bX \bX^\top \left(\lambda_2 \Lambda + \bX \bX^\top\right)^{-1} \bX_j \right|.
	\end{align*}
	Now applying~\eqref{eq:subgx4}, we have that $|a_i| = |X_i^{\top}Z_i|/\sqrt{n}  \lesssim  p \log p / \sqrt{n}$. By combining this with~\eqref{eq:subgx2}, we have that 
	\[
	\frac{1}{\lambda_2 \Lambda_{ii}} \left| a^\top \bX_j \right|  = \frac{1}{n \lambda_2 \Lambda_{ii}} X_i^{\top}Z_i X_{ij} \lesssim \frac{p (\log p)^{3/2}}{c_d^{-1} n\lambda_2} \lesssim \frac{p \log p}{c_{\lambda_2} r_\lo \sqrt{n}}.
	\]
	
	For the second summand in the decomposition of $\mathrm{I}$, we argue as follows, appealing to Lemmas~\ref{lem:subgx} and~\ref{lem:term1}:
	\begin{align*}
		\phantom{.} & \phantom{\leq}  \frac{1}{\lambda_2 \Lambda_{ii}} \left|a^\top \bX \bX^\top \left(\lambda_2 \Lambda + \bX \bX^\top\right)^{-1} \bX_j \right| \nonumber \\
		\phantom{.} & \lesssim  \frac{p \sqrt{\log p}}{r_{\lo}} \abs{ e_i^\top \bX \bX^\top \left(\lambda_2 \Lambda + \bX \bX^\top\right)^{-1} \bX_j} \nonumber\\
		\phantom{.} & \leq \frac{p \sqrt{\log p}}{r_{\lo}} \left( \abs{ e_i^\top \bZ \bX^\top \left(\lambda_2 \Lambda + \bX \bX^\top\right)^{-1} \bX_j} + \abs{ e_i^\top \bU \Gamma \bX^\top \left(\lambda_2 \Lambda + \bX \bX^\top\right)^{-1} \bX_j}\right) \nonumber\\
		\phantom{.} & \leq \frac{p \sqrt{\log p}}{r_{\lo}} \left( \|\bZ\|_{\op} \norm{\left(\lambda_2 \Lambda + \bX \bX^\top\right)^{-1} \mb X}_{\op} \|\bX_j\|_2 + \abs{ e_i^\top \bU \Gamma \bX^\top \left(\lambda_2 \Lambda + \bX \bX^\top\right)^{-1} \bX_j}\right) \nonumber\\
		\phantom{.} & \lesssim {(1 + c_{\lambda_2}^{-1})} \left(\frac{p^2}{r_{\lo}^2 \sqrt{n}} + \frac{p^{3/2}  \sqrt{\log p}}{r_{\lo}^{3/2} \sqrt{n}} + \frac{p  \sqrt{\log p \log n}}{r_{\lo}\sqrt{n}}\right).
	\end{align*}
	From Assumptions~\ref{cd:pns}(iii) and~(iv), we see that the above is bounded by a constant times $1 + c_{\lambda_2}^{-1}$.
	
	For $\mathrm{II}$, first note that due to~\eqref{eq:subgx4} we have $\|b\|_\infty \lesssim \sqrt{p/n}\log p$ whence $\|b\|_2 \lesssim \sqrt{p}\log p$.
	By combining this with~\eqref{eq:subgx1}, we have
	\begin{align*}
		\mathrm{II} \leq \|b\|_2 \lambda_{\min}^{-1} \left(\lambda_2 \Lambda + \bX \bX^\top\right) \|\bX_j \|_2 \lesssim \frac{\sqrt{pn \log p}}{c_{\lambda_2} c_d^{-1} r_\lo}.
	\end{align*}
	Putting things together then gives the desired bound.
\end{proof}


\subsubsection{Bounds relating to \texorpdfstring{$\cT_2$}{T2}}
\begin{lemma} \label{lem:T_2_first}
	Consider the setup of Lemma~\ref{lem:subgx}. Given $c_t > 0$ there exists constant $c>0$ (independent of $c_{\lambda_2}$), and $n_1 \in \mathbb{N}$ depending on $c_{\lambda_2}$, such that for all $n \geq n_1$ and $\Lambda \in \mathcal{L}$, on the event $\mathcal{A}_1(c_t)$,
	\[
	\left\| (2 \lambda_2 \Lambda + \bX \bX^\top )^{-1} \bU \delta^0 \right\|_2 \leq \frac{c}{r_\ell}.
	\] 
\end{lemma}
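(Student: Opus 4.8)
The plan is to adapt the computation bounding $\|\Gamma^\top \bU^\top(\lambda_2\Lambda + \bX\bX^\top)^{-1}\|_{\op}$ in the proof of \eqref{eq:subgx6}: after dominating $\bX\bX^\top$ from below by a constant multiple of the identity plus $\tfrac12\bU\Gamma\Gamma^\top\bU^\top$, we exploit that $\bU\delta^0$ lies in the column space of $\bU$, so that the relevant conditioning is governed by the smallest singular value of $\bU\Gamma$, which is $\gtrsim\sqrt{r_\ell}$ by \eqref{eq:smallest_diag}. Throughout I would work on $\mathcal{A}_1(c_t)$, fix $\Lambda\in\mathcal{L}$, and recall $\bX = \bZ + \bU\Gamma$ with $\lambda_2 = c_{\lambda_2}r_\ell\sqrt{\log p/n}$.

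First I would reuse the algebraic identity from the proof of \eqref{eq:subgx6},
\[
\bX\bX^\top = -\bZ\bZ^\top + \tfrac12\bU\Gamma\Gamma^\top\bU^\top + \Bigl(\sqrt2\,\bZ + \tfrac1{\sqrt2}\bU\Gamma\Bigr)\Bigl(\sqrt2\,\bZ + \tfrac1{\sqrt2}\bU\Gamma\Bigr)^\top,
\]
to write $2\lambda_2\Lambda + \bX\bX^\top = A + B$ with $A := \lambda_2 c_d^{-1}I + \tfrac12\bU\Gamma\Gamma^\top\bU^\top$ (positive definite) and $B := \bigl(2\lambda_2\Lambda - \lambda_2 c_d^{-1}I - \bZ\bZ^\top\bigr) + \bigl(\sqrt2\bZ + \tfrac1{\sqrt2}\bU\Gamma\bigr)(\cdots)^\top$. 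Since $\Lambda\succeq c_d^{-1}I$ the first bracket is $\succeq \lambda_2 c_d^{-1}I - \bZ\bZ^\top$, and on $\mathcal{A}_1(c_t)$ we have $\|\bZ\bZ^\top\|_{\op} = \lambda_{\max}(\bZ)^2 \leq c_t^2 p/n$ by \eqref{eq:subgx3}; combining $\lambda_2 = c_{\lambda_2}r_\ell\sqrt{\log p/n}$ with Assumption~\ref{cd:pns}(iii) ($r_\ell\gtrsim p(\log p/n)^{1/6}$), the inequality $\lambda_2 c_d^{-1}\geq c_t^2 p/n$ — which rearranges to $n^{1/3}(\log p)^{2/3}\gtrsim c_{\lambda_2}^{-1}$ up to constants — holds for all $n\geq n_1$, with $n_1$ depending on $c_{\lambda_2}$ (and $c_t$, $f$). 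Hence $B\succeq 0$ for $n\geq n_1$, and Lemma~\ref{lem:Loewner} gives
\[
\bigl\|(2\lambda_2\Lambda + \bX\bX^\top)^{-1}\bU\delta^0\bigr\|_2 \leq \bigl\|A^{-1}\bU\delta^0\bigr\|_2 = \Bigl\|\bigl(\lambda_2 c_d^{-1}I + \tfrac12\bU\Gamma\Gamma^\top\bU^\top\bigr)^{-1}\bU\delta^0\Bigr\|_2 .
\]

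It then remains to bound the last quantity by $c/r_\ell$ with $c$ \emph{not depending on} $c_{\lambda_2}$. Taking the SVD $\bU\Gamma = QDP^\top$ as in the proof of \eqref{eq:subgx6}, we have $\bU\Gamma\Gamma^\top\bU^\top = QD^2Q^\top$; moreover, since $\Gamma$ has full row rank (as $r_\ell = \lambda_{\min}(\Gamma\Gamma^\top) > 0$) and $\bU$ has full column rank on $\mathcal{A}_1(c_t)$ (by $\lambda_{\min}(\bU)\geq c_t^{-1}$ in \eqref{eq:subgx3}), the column space of $\bU\Gamma$ equals that of $\bU$, so $QQ^\top\bU = \bU$ and in particular $(I - QQ^\top)\bU\delta^0 = 0$. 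Writing $\lambda_2 c_d^{-1}I + \tfrac12\bU\Gamma\Gamma^\top\bU^\top = Q(\lambda_2 c_d^{-1}I_q + \tfrac12 D^2)Q^\top + \lambda_2 c_d^{-1}(I - QQ^\top)$, whose two summands act on $\mathrm{col}(Q)$ and its orthogonal complement respectively, the inverse applied to $\bU\delta^0$ annihilates the $(I-QQ^\top)$ part and leaves $Q(\lambda_2 c_d^{-1}I_q + \tfrac12 D^2)^{-1}Q^\top\bU\delta^0$, whose norm is at most
\[
\bigl\|(\lambda_2 c_d^{-1}I_q + \tfrac12 D^2)^{-1}\bigr\|_{\op}\,\|\bU\delta^0\|_2 \leq \frac{2}{\lambda_{\min}(D)^2}\,\lambda_{\max}(\bU)\,\|\delta^0\|_2 .
\]
By \eqref{eq:smallest_diag}, $\lambda_{\min}(D)^2\gtrsim r_\ell$; together with $\lambda_{\max}(\bU)\leq c_t$ from \eqref{eq:subgx3} and $\|\delta^0\|_2\lesssim 1$ from Assumption~\ref{cd:subg}, this yields the claimed bound with $c$ depending only on $c_t$ and $f$. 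The one step requiring care — indeed the only real subtlety, since the estimate is otherwise a routine variant of the \eqref{eq:subgx6} computation — is precisely keeping $c$ free of $c_{\lambda_2}$: for this one should retain only the $c_{\lambda_2}$-free term $\lambda_2 c_d^{-1}I$ inside $A$ after dominating $\bZ\bZ^\top$, and then simply discard $\lambda_2 c_d^{-1}\geq 0$ in the final display, so that $c_{\lambda_2}$ enters only through the threshold $n_1$.
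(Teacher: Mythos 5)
Your proof is correct and follows essentially the same route as the paper: the same decomposition of $\bX\bX^\top$ and application of Lemma~\ref{lem:Loewner} to reduce the problem to $\bigl(\lambda_2 c_d^{-1}I + \tfrac12\bU\Gamma\Gamma^\top\bU^\top\bigr)^{-1}\bU\delta^0$ (with the $n\geq n_1(c_{\lambda_2})$ threshold arising identically), followed by a bound via $\lambda_{\min}(\bU)$ and $\lambda_{\min}(\Gamma\Gamma^\top)=r_\ell$. The only cosmetic difference is in the last step, where you use the SVD of $\bU\Gamma$ and the fact that $\bU\delta^0\in\mathrm{col}(\bU\Gamma)$, whereas the paper uses the commutation identity $(\gamma I + \bU\Gamma\Gamma^\top\bU^\top)^{-1}\bU = \bU(\gamma I + \Gamma\Gamma^\top\bU^\top\bU)^{-1}$; both yield the same $c/r_\ell$ bound free of $c_{\lambda_2}$.
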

\begin{proof}
	Given $c_t>0$, let $n_1$ be such that for all $n \geq n_1$, 
	\begin{equation} \label{eq:c_t_choice0}
		\lambda_2 c_d^{-1} \geq c_t \sqrt{\frac{p}{n}}.
	\end{equation}
	We will use $a \lesssim b$ to denote that there exists constant $C >0$ potentially depending on $c_t$ and other constants such that $a \leq C b$. In the following, we work on the event $\mathcal{A}_1(c_t)$.
	
	Let
	\[
	\mathrm{I} := \left\| (2 \lambda_2 \Lambda + \bX \bX^\top )^{-1} \bU \delta^0 \right\|_2.
	\]
	We have
	\begin{align}\label{eq:8b1}
		\mathrm{I} 
		&= \left\| \left(2 \lambda_2 \Lambda + \bZ\bZ^\top + \bZ \Gamma \bU^\top + \bU \Gamma^\top \bZ^\top + \bU \Gamma^\top \Gamma \bU^\top\right)^{-1} \bU \delta^0 \right\|_2 \nonumber\\
		& \leq \left\| \left( 2 \lambda_2 c_d^{-1} I + \bZ\bZ^\top + \bZ \Gamma \bU^\top + \bU \Gamma^\top \bZ^\top + \bU \Gamma^\top \Gamma \bU^\top\right)^{-1} \bU \delta^0 \right\|_2,
	\end{align}
	appealing to Lemma~\ref{lem:Loewner}. 
	
	Now observe that
	\begin{align*}
		& 2 \lambda_2 c_d^{-1} I + \bZ\bZ^\top + \bZ \Gamma \bU^\top + \bU \Gamma^\top \bZ^\top + \bU \Gamma^\top \Gamma \bU^\top \\
		& = \lambda_2 c_d^{-1} I + \left(\lambda_2 c_d^{-1} I - \bZ\bZ^\top\right) + \left(2\bZ\bZ^\top + \bZ \Gamma \bU^\top + \bU \Gamma^\top \bZ^\top + \frac{1}{2}\bU \Gamma^\top \Gamma \bU^\top\right) + \frac{1}{2}\bU \Gamma^\top \Gamma \bU^\top \\
		& = \lambda_2 c_d^{-1} I + \left(\lambda_2 c_d^{-1} I - \bZ\bZ^\top\right) + \left(\sqrt{2} \bZ + \frac{1}{\sqrt{2}} \bU \Gamma^\top\right) \left(\sqrt{2} \bZ + \frac{1}{\sqrt{2}} \bU \Gamma^\top\right)^\top + \frac{1}{2}\bU \Gamma^\top \Gamma \bU^\top \\
		&\preceq  \lambda_2 c_d^{-1} I  + \frac{1}{2}\bU \Gamma^\top \Gamma \bU^\top,
	\end{align*}
	using \eqref{eq:subgx3} and \eqref{eq:c_t_choice0} for the final inequality. Thus from 
	Lemma~\ref{lem:Loewner}, we have that
	\[
	\mathrm{I}  \leq \left\| \left( \lambda_2 c_d^{-1} I + \frac{1}{2}\bU \Gamma^\top \Gamma \bU^\top\right)^{-1} \bU \delta^0 \right\|_2.
	\]
	Now observe that for any $\gamma > 0$, we have
	\begin{align*}
		(\gamma I + \bU \Gamma^\top  \Gamma \bU^\top )\bU &= \bU(\gamma I + \Gamma^\top  \Gamma \bU^\top  \bU) \\
		\bU (\gamma I + \Gamma^\top  \Gamma \bU^\top  \bU)^{-1} &= (\gamma I + \bU \Gamma^\top  \Gamma \bU^\top )^{-1} \bU.
	\end{align*}
Thus
\begin{align*}
	\mathrm{I}  \lesssim \|\mb U\|_{\op} \lambda_{\min}^{-1} (\Gamma^\top  \Gamma) \lambda_{\min}^{-1}( \bU^\top  \bU) \|\delta^0\|_2.
\end{align*}
From \eqref{eq:subgx3} and Assumption~\ref{cd:subg}, we see that $\mathrm{I}  \lesssim 1/r_\ell$ as required.
%
\end{proof}

\begin{lemma}\label{lem:eigmin}
	Consider the setup of Lemma~\ref{lem:subgx}.
	Let $\tau > 0$ be given and define
	\begin{equation} \label{eq:CB_def}
		\cC := \left\{i : |X_i^\top \beta^0 + U_i^\top \delta^0| \leq \tau \right\} \qquad \text{and}\qquad \cB := \left\{i \in \cC :  |X_i^\top \beta^0 + X_i^\top b^0| \leq 2\tau \right\}.
	\end{equation}
Given $c_t >0$ and $c_{\lambda_2}>0$, there exists $c>0$ depending on these and $\tau$, and $n_1$ depending on $c_{\lambda_2}$, such that for all $n \geq n_1$, on the event $\mathcal{A}_1(c_t)$, we have  $|\cC \setminus \cB| \leq c p  \sqrt{n\log p} / r_\lo$.
%
\end{lemma}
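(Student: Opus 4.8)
The plan is to reduce the cardinality bound to an $\ell_2$ bound on the perturbation $\bX b^0 - \bU\delta^0$, and then feed in resolvent estimates already available on $\mathcal{A}_1(c_t)$. First observe that if $i \in \cC \setminus \cB$ then $|X_i^\top\beta^0 + U_i^\top\delta^0| \le \tau$ while $|X_i^\top\beta^0 + X_i^\top b^0| > 2\tau$, so the triangle inequality gives $|X_i^\top b^0 - U_i^\top\delta^0| \ge |X_i^\top\beta^0 + X_i^\top b^0| - |X_i^\top\beta^0 + U_i^\top\delta^0| > \tau$. Hence $\cC \setminus \cB \subseteq \{i : |X_i^\top b^0 - U_i^\top\delta^0| > \tau\}$, and a trivial Markov-type counting argument yields
\[
|\cC \setminus \cB| \;\le\; \frac{1}{\tau^2}\sum_{i=1}^n (X_i^\top b^0 - U_i^\top\delta^0)^2 \;=\; \frac{n}{\tau^2}\,\|\bX b^0 - \bU\delta^0\|_2^2,
\]
the last equality using the rescaling convention ($i$th rows of $\bX$, $\bU$ being $X_i/\sqrt n$, $U_i/\sqrt n$). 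It therefore suffices to show that on $\mathcal{A}_1(c_t)$, for all $n$ large enough, $\|\bX b^0 - \bU\delta^0\|_2^2 \lesssim p\sqrt{\log p}\,/(r_\ell\sqrt n)$.

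The heart of the argument is this $\ell_2$ bound, which is also the main obstacle. Starting from the representation of $b^0$ in \eqref{eq:b^0_decomp} --- equivalently from $(2\lambda_2 I + \bX^\top\bar\Lambda\bX)b^0 = \bX^\top\bar\Lambda\bU\delta^0 - L'$ of \eqref{eq:b^0_decomp_prelim} --- I would split $\bX b^0 - \bU\delta^0$ into a term driven by $L'$ and a term driven by $\bU\delta^0$. The first, of the form $\bX(2\lambda_2 I + \bX^\top\bar\Lambda\bX)^{-1}L'$ with $\bar\Lambda \in \tilde{\cL}$, is bounded by $\lesssim \sqrt{p\log p/(r_\ell n)} + p/(r_\ell\sqrt n)$ via $\|L'\|_2 \lesssim \sqrt{p\log p/n}$ from \eqref{eq:subgx5}, the resolvent operator-norm bound \eqref{eq:subgx6}, and Lemma~\ref{lem:Loewner} to replace $\bar\Lambda$ by $c_d^{-1}I$ inside the resolvent (this is the estimate behind the event $\cT_3$). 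The second, after push-through identities render the relevant resolvent as $(2\lambda_2\bar\Lambda^{-1} + \bX\bX^\top)^{-1}$ --- legitimate since $\bar\Lambda^{-1}\in\mathcal{L}$ as $\bar\Lambda_{ii}=f'(\cdot)\le c_d$ --- is handled by running the argument of Lemma~\ref{lem:T_2_first}: the decomposition $\bX = \bZ + \bU\Gamma$, the spectral gap $\lambda_{\min}(\Gamma\Gamma^\top)\ge r_\ell$, the estimates $c_t^{-1}\le\lambda_{\min}(\bU)\le\lambda_{\max}(\bU)\le c_t$ and $\lambda_{\max}(\bZ)\lesssim\sqrt{p/n}$ of \eqref{eq:subgx3}, together with the prefactor $\lambda_2 = c_{\lambda_2} r_\ell\sqrt{\log p/n}$, give a contribution $\lesssim \sqrt{\log p/n}$. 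Combining, $\|\bX b^0 - \bU\delta^0\|_2 \lesssim \sqrt{p\log p/(r_\ell n)} + p/(r_\ell\sqrt n) + \sqrt{\log p/n}$, which is the bound in the definition of $\cT_2$. The delicacy is that $\bar\Lambda$ has diagonal entries $f'(\cdot)$ not bounded away from $0$, so $\bar\Lambda^{-1}$ has no operator-norm control and every estimate must pass through resolvents in which only the Loewner lower bound $\Lambda \succeq c_d^{-1}I$ enters; one must also ensure the resulting powers of $n$, $p$, $r_\ell$ are sharp enough to survive the factor $n$ from the reduction step.

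Finally, squaring and substituting gives $|\cC \setminus \cB| \lesssim p\log p/r_\ell + p^2/r_\ell^2 + \log p$, and each summand is $\lesssim p\sqrt{n\log p}/r_\ell$: the first since $\sqrt{\log p}\lesssim\sqrt n$ by Assumption~\ref{cd:pns}(ii); the second since $p/r_\ell \lesssim (n/\log p)^{1/6} \lesssim \sqrt{n\log p}$ by Assumption~\ref{cd:pns}(iii); and the third since $r_\ell \lesssim p$ --- as $\sum_k\Gamma_{kj}^2 \le \var(X_j) \lesssim 1$ for each $j$ under Assumption~\ref{cd:subg}, so that $r_\ell = \lambda_{\min}(\Gamma\Gamma^\top) \le q^{-1}\|\Gamma\|_F^2 \lesssim p$ --- together with $\sqrt{\log p/n}\to 0$. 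This yields $|\cC \setminus \cB| \le c\,p\sqrt{n\log p}/r_\ell$, as required.
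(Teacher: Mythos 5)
Your reduction step is fine: for $i \in \cC\setminus\cB$ the triangle inequality does give $|X_i^\top b^0 - U_i^\top\delta^0| > \tau$, so $|\cC\setminus\cB| \le n\tau^{-2}\|\bX b^0 - \bU\delta^0\|_2^2$, and your closing arithmetic (including $r_\ell \lesssim p$) is correct. The gap is in the middle: the bound $\|\bX b^0 - \bU\delta^0\|_2 \lesssim \sqrt{p\log p/(r_\ell n)} + p/(r_\ell\sqrt n) + \sqrt{\log p/n}$ is not available on $\mathcal{A}_1(c_t)$ alone at this point in the development, and invoking it here is circular. In the paper that bound (the second inequality defining $\cT_2$, proved in Lemma~\ref{lem:rsc1}) rests on Lemma~\ref{lem:Gamma_b^0} and on the $\cT_3$-type estimate, both of which require $\lambda_{\min}(\bU^\top\Lambda\bU)\gtrsim 1$ for $\Lambda\in\tilde{\cL}$ --- the event $\mathcal{A}_2$ of Lemma~\ref{lem:eigmin2} --- and the proof of Lemma~\ref{lem:eigmin2} explicitly uses Lemma~\ref{lem:eigmin}, i.e.\ the statement you are proving. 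Your sketch of a direct proof does not escape this: the $\bU\delta^0$-driven term is indeed fine (Lemma~\ref{lem:T_2_first} needs only $\mathcal{A}_1(c_t)$), but for the $L'$-driven term $\bX(2\lambda_2 I + \bX^\top\bar{\Lambda}\bX)^{-1}L'$ the decomposition $\bX = \bZ + \bU\Gamma$ leaves you with $\|\bU\Gamma(2\lambda_2 I + \bX^\top\bar{\Lambda}\bX)^{-1}L'\|_2$, which the paper controls via Lemma~\ref{lem:Gamma_Lambda_bd} --- again through $\lambda_{\min}^{-1}(\bU^\top\Lambda\bU)$, i.e.\ $\mathcal{A}_2$. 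Crude alternatives (e.g.\ $\|\Gamma\|_\op\lesssim\sqrt p$ together with $\|(2\lambda_2 I + \bX^\top\bar{\Lambda}\bX)^{-1}\|_\op\le(2\lambda_2)^{-1}$) give only $p/r_\ell$, which after squaring and multiplying by $n$ is $np^2/r_\ell^2$, far larger than the target $p\sqrt{n\log p}/r_\ell$.

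The paper avoids this by never bounding the unweighted residual at this stage. It bounds the weighted quantity $\|\bar{\Lambda}(\bX b^0 - \bU\delta^0)\|_2$, i.e.\ the vector of function-value differences $f(X_i^\top(\beta^0+b^0)) - f(X_i^\top\beta^0+U_i^\top\delta^0)$. The $\bar{\Lambda}^{1/2}$ prefactor is what makes the $L'$-term tractable on $\mathcal{A}_1(c_t)$ alone: an SVD argument gives $\|\bar{\Lambda}^{1/2}\bX(2\lambda_2 I+\bX^\top\bar{\Lambda}\bX)^{-1}\|_\op\le\lambda_2^{-1/2}$, hence a bound $p^{1/2}(\log p)^{1/4}/(r_\ell^{1/2}n^{1/4})$ --- weaker than the $\cT_2$ rate, but exactly enough for the target after squaring and multiplying by $n$. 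The count is then recovered from the \emph{function} differences using the strict monotonicity of $f$: for $i\in\cC\setminus\cB$ these differences are at least $\min\{f(2\tau)-f(\tau),\,f(-\tau)-f(-2\tau)\}>0$. Since $\bar{\Lambda}\preceq c_d I$ with diagonal entries possibly arbitrarily close to zero, your unweighted bound is genuinely stronger than the weighted one, and I do not see how to obtain it without the $\mathcal{A}_2$ machinery that this lemma is meant to enable.
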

\begin{proof}
	Let $n_1$ be as in Lemma~\ref{lem:T_2_first}, and in the following fix $n \geq n_1$.
	We will use $a \lesssim b$ to denote that there exists constant $C >0$ potentially depending on $c_t, c_{\lambda_2}$ and other constants such that $a \leq C b$. In the following, we work on the event $\mathcal{A}_1(c_t)$.
			
Recall that $\bar{\Lambda}$ \eqref{eq:Lambda_bar_def} satisfies
\[
f(X_i^\top (\beta^0 + b^0)) - f(X_i^\top \beta^0 + U_i^\top \delta^0) = \bar{\Lambda}_{ii} (X_i^\top b^0 - U_i^\top \delta^0).
\]
Using the decomposition in~\eqref{eq:b^0_decomp} we obtain
\begin{align*}
	& \left(\frac{1}{n} \sum_{i=1}^n \left(f\left(X_i^\top \left(\beta^0 + b^0\right)\right) - f\left(X_i^\top \beta^0 + U_i^\top \delta^0\right)\right)^2\right)^{1/2} = \left\|\bar{\Lambda} \left(\bX b^0 - \bU \delta^0\right)\right\|_2 \\
	& \hspace*{1cm} = \left\| - \bar{\Lambda} \bX \left(2 \lambda_2 I + \bX^\top \bar{\Lambda} \bX\right)^{-1} L' + \bar{\Lambda} \bX \left(2 \lambda_2 I + \bX^\top \bar{\Lambda} \bX\right)^{-1} \bX^\top \bar{\Lambda} \bU \delta^0 - \bar{\Lambda} \bU \delta^0\right\|_2 \\
	& \hspace*{1cm} \leq \sqrt{c_d}\left\| \bar{\Lambda}^{1/2} \bX \left(2 \lambda_2 I + \bX^\top \bar{\Lambda} \bX\right)^{-1} L' \right\|_2 + \left\| \bar{\Lambda} \bX \left(2 \lambda_2 I + \bX^\top \bar{\Lambda} \bX\right)^{-1} \bX^\top \bar{\Lambda} \bU \delta^0 - \bar{\Lambda} \bU \delta^0 \right\|_2 \\
	&=: \sqrt{c_d} \mathrm{I} + \mathrm{II}.
\end{align*}

For $\mathrm{I}$, observe that
\begin{align}\label{eq:consteq1}
	\left\| \bar{\Lambda}^{1/2} \bX \left(2 \lambda_2 I + \bX^\top \bar{\Lambda} \bX\right)^{-1} L' \right\|_2 \leq \lambda_{\max}\left( \bar{\Lambda}^{1/2} \bX \left(2 \lambda_2 I + \bX^\top \bar{\Lambda} \bX\right)^{-1}\right) \|L' \|_2.
\end{align}
Then considering the singular value decomposition $\bar{\Lambda}^{1/2} \bX = P D Q^\top$, we have that
\[
\bar{\Lambda}^{1/2} \bX \left( 2 \lambda_2 I + \bX^\top \bar{\Lambda} \bX\right)^{-1} = P \tilde{D} Q^\top,
\]
with $\tilde{D} \in \R^{n \times n}$ a diagonal matrix with $\tilde{D}_{ii} = \frac{D_{ii}}{2 \lambda_2 + D_{ii}^2}$. Returning to~\eqref{eq:consteq1}, we have
\[
\mathrm{I} \leq \max_t \frac{t}{2\lambda_2 + t^2} \cdot \|L'\|_2 \leq \frac{1}{\sqrt{
		\lambda_2}} \|L'\|_2.
\]
By combining this with~\eqref{eq:subgx5},
\begin{equation} \label{eq:a_bd}
	\mathrm{I} \lesssim \frac{p^{1/2}  (\log p)^{1/4}}{r_{\lo}^{1/2} n^{1/4}}.
\end{equation}

To control $\mathrm{II}$, we argue as follows. First observe that for any matrix $A$, we have
\begin{align} \label{eq:kernel_ridge0}
A^\top (AA^\top + \lambda I)  & = (A^\top A + \lambda I)  A^\top.	\nonumber \\
(A^\top A + \lambda I)^{-1} A^\top & = A^\top(AA^\top + \lambda I)^{-1}.
\end{align}
Applying this with $A = \bar{\mb \Lambda}^{1/2}\mb X$, we have
\begin{align}
	(2 \lambda_2 I + \bX^\top \bar{\Lambda} \bX)^{-1} \bX^\top \bar{\Lambda} \bU \delta^0 &= \bX^\top \bar{\Lambda}^{\frac{1}{2}} (2 \lambda_2 I + \bar{\Lambda}^{\frac{1}{2}} \bX \bX^\top \bar{\Lambda}^{\frac{1}{2}} )^{-1} \bar{\Lambda}^{\frac{1}{2}} \bU \delta^0 \notag \\
	&= \bX^\top (2 \lambda_2 \bar{\Lambda}^{-1} +  \bX \bX^\top )^{-1} \bU \delta^0, \label{eq:kernel_ridge}
\end{align}
and so
\begin{align*}
	& \bar{\Lambda} \bX \left(2 \lambda_2 I + \bX^\top \bar{\Lambda} \bX\right)^{-1} \bX^\top \bar{\Lambda} \bU \delta^0 - \bar{\Lambda} \bU \delta^0 = \bar{\Lambda} \bX \bX^\top (2 \lambda_2 \bar{\Lambda}^{-1} + \bX \bX^\top )^{-1} \bU \delta^0 - \bar{\Lambda} \bU \delta^0 \\
	&= \bar{\Lambda} \left(\bX \bX^\top (2 \lambda_2 \bar{\Lambda}^{-1} + \bX \bX^\top )^{-1} - I\right) \bU \delta^0 = -2\lambda_2 (2 \lambda_2 \bar{\Lambda}^{-1} + \bX \bX^\top )^{-1} \bU \delta^0.
\end{align*}
Thus
\[
\frac{1}{2\lambda_2 } \mathrm{II}  = \left\| (2 \lambda_2 \bar{\Lambda}^{-1} + \bX \bX^\top )^{-1} \bU \delta^0 \right\|_2 \lesssim \frac{1}{r_{\ell}},
\]
using Lemma~\ref{lem:T_2_first}. Thus 
\begin{equation} \label{eq:II_bd}
\mathrm{II} \lesssim c_{\lambda_2} \sqrt{\log p / n}.
\end{equation}

From the above and \eqref{eq:a_bd}, we then have
\[
\left( \frac{1}{n} \sum_{i=1}^n \{f(X_i^\top (\beta^0 + b^0)) - f(X_i^\top \beta^0 + U_i^\top b^*)\}^2 \right)^{1/2} \lesssim \frac{p^{1/2} (\log p)^{1/4}}{r_{\lo}^{1/2} n^{1/4}} + \left( \frac{\log p}{n}\right)^{1/2}.
\]
This implies
\begin{align}\label{eq:maxbnd}
\left( \frac{1}{n} \sum_{i \in \cC \setminus \cB} \{f(X_i^\top (\beta^0 + b^0)) - f(X_i^\top \beta^0 + U_i^\top b^*)\}^2\right)^{1/2} \lesssim \frac{p^{1/2} (\log p)^{1/4}}{r_{\lo}^{1/2} n^{1/4}};
\end{align}
note that for all $n$ sufficiently large, we are permitted to omit the term involving $\sqrt{\log p/n}$ due to Assumption~\ref{cd:pns}(ii).

Now let
\begin{equation} \label{eq:c_tau_def}
c_\tau := \min_{|t| \leq 2 \tau} f'(t) > 0.
\end{equation}
Using the non-decreasing property of $f$, we have that $\forall i \in \cC \setminus \cB$, 
\[
|f(X_i^\top (\beta^0 + b^0)) - f(X_i^\top \beta^0 + U_i^\top \delta^0)| \geq \min\{f(2\tau) - f(\tau ), f(-\tau ) - f(-2\tau)\} \geq c_\tau \tau.
\]
Considering this with \eqref{eq:maxbnd}, we see that,
\[
|\cC \setminus \cB| \lesssim  \frac{p \sqrt{n \log p}}{r_\lo}. \qedhere
\]
\end{proof}

\begin{lemma} \label{lem:eigmin2}
	Assume Assumptions~\ref{cd:pns}--\ref{cd:subg}. There exists constant $c > 0$ such that given $m \in \mathbb{N}$ and $c_{\lambda_2}$, there exists $c'$ (depending on these) where on an event $\mathcal{A}_2$ with probability at least $1 - c'n^{-m}$,
	\begin{align}
\label{eq:eigmin}
			\inf_{\Lambda \in \tilde{\mathcal{L}}} \lambda_{\min} (\bU^\top \Lambda \bU) &\geq c, \\
\label{eq:eigmin2}
			\inf_{\Lambda \in \tilde{\mathcal{L}}} \lambda_{\min} (\Lambda \bU) &\geq c.
	\end{align}
\end{lemma}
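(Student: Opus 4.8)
The plan is to exhibit a constant $c_\tau>0$ together with a (random) index set $\cB\subseteq[n]$ with $|\cB|\gtrsim n$ such that, on a suitable high-probability event $\mathcal{A}_2$, (a) every $\Lambda\in\tilde{\mathcal{L}}$ satisfies $\Lambda_{ii}\ge c_\tau$ for all $i\in\cB$, and (b) $\frac1n\sum_{i\in\cB}U_iU_i^\top\succeq\tfrac14 I$. Given these, and using $f'\ge0$ so that $\Lambda_{ii}\ge0$ for every $\Lambda\in\tilde{\mathcal{L}}$, we obtain for all such $\Lambda$
\[
\bU^\top\Lambda\bU=\frac1n\sum_{i=1}^n\Lambda_{ii}U_iU_i^\top\succeq\frac{c_\tau}{n}\sum_{i\in\cB}U_iU_i^\top\succeq\frac{c_\tau}{4}I,
\]
and likewise $\bU^\top\Lambda^2\bU\succeq\frac{c_\tau^2}{4}I$ (since $(\Lambda^2)_{ii}\ge c_\tau^2$ on $\cB$); as $\lambda_{\min}(\Lambda\bU)^2=\lambda_{\min}(\bU^\top\Lambda^2\bU)$, both \eqref{eq:eigmin} and \eqref{eq:eigmin2} then follow with $c=c_\tau/4$.

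For (a), write $\eta_i:=X_i^\top\beta^0+U_i^\top\delta^0$ and $\xi_i:=X_i^\top(\beta^0+b^0)$ and take $\cC,\cB$ as in \eqref{eq:CB_def}, so $\cB=\{i\in\cC:|\xi_i|\le2\tau\}$. I would first fix $\tau$: by the sub-Gaussianity of $\eta$ and of the coordinates of $U$ (Assumption~\ref{cd:subg}), Cauchy--Schwarz gives $\E[\|U\|_2^2\one_{\{|\eta|>\tau\}}]\le(\E\|U\|_2^4)^{1/2}\,\PP(|\eta|>\tau)^{1/2}\to0$ as $\tau\to\infty$, so one can choose $\tau$ (a constant depending only on the sub-Gaussian constants and $q$) large enough that this quantity is at most $1/8$, and set $c_\tau:=\min_{|t|\le2\tau}f'(t)>0$ as in \eqref{eq:c_tau_def}. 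For $i\in\cB$ we have $|\eta_i|\le\tau$ and $|\xi_i|\le2\tau$; the argument of $\bar\Lambda_{ii}$ in \eqref{eq:Lambda_bar_def} equals $t_i\eta_i+(1-t_i)\xi_i$ for some $t_i\in[0,1]$ and so lies in $[-2\tau,2\tau]$, giving $\bar\Lambda_{ii}\ge c_\tau$, while $\Lambda^0_{ii}=f'(\eta_i)\ge c_\tau$ since $|\eta_i|\le\tau$. As each $\Lambda_{ii}$ with $\Lambda\in\tilde{\mathcal{L}}$ is a convex combination of $\bar\Lambda_{ii}$ and $\Lambda^0_{ii}$, this yields $\Lambda_{ii}\ge c_\tau$ for all $i\in\cB$ and all $\Lambda\in\tilde{\mathcal{L}}$.

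For (b), decompose $\frac1n\sum_{i\in\cB}U_iU_i^\top=\frac1n\sum_{i\in\cC}U_iU_i^\top-\frac1n\sum_{i\in\cC\setminus\cB}U_iU_i^\top$. I would bound the first term via $\frac1n\sum_{i\in\cC}U_iU_i^\top\succeq\frac1n\sum_{i=1}^nU_iU_i^\top-\big(\frac1n\sum_{i=1}^n\|U_i\|_2^2\one_{\{|\eta_i|>\tau\}}\big)I$: by \citet[Rem.~5.40]{vershynin_2012} the first piece is $\succeq\tfrac34 I$ with probability $1-c'n^{-m}$ (as $q\lesssim1$ and $\sqrt{\log n/n}\to0$), and by Bernstein's inequality the scalar average concentrates around $\E[\|U\|_2^2\one_{\{|\eta|>\tau\}}]\le1/8$ and is hence at most $1/4$ with probability $1-c'n^{-m}$, so this term is $\succeq\tfrac12 I$ for $n$ large. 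For the second term, on $\mathcal{A}_1(c_t)$ of Lemma~\ref{lem:subgx} (together with the bound $\max_i\|U_i\|_2\lesssim\sqrt{\log n}$ established in its proof) Lemma~\ref{lem:eigmin} gives $|\cC\setminus\cB|\lesssim p\sqrt{n\log p}/r_\ell$, whence
\[
\frac1n\sum_{i\in\cC\setminus\cB}U_iU_i^\top\preceq\frac{|\cC\setminus\cB|\,\max_i\|U_i\|_2^2}{n}\,I\lesssim\frac{p(\log p)^{3/2}}{r_\ell\sqrt n}\,I=O\big(a_n^{4/3}n^{-1/7}\big)\,I,
\]
using $r_\ell\gtrsim p(\log p/n)^{1/6}$ (Assumption~\ref{cd:pns}(iii)) and $\log p\le a_nn^{1/7}$ (Assumption~\ref{cd:pns}(ii)); for $n$ large this is $\preceq\tfrac14 I$, so $\frac1n\sum_{i\in\cB}U_iU_i^\top\succeq\tfrac14 I$. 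Taking $\mathcal{A}_2$ to be the intersection of $\mathcal{A}_1(c_t)$, the two concentration events above, and $\{\max_i\|U_i\|_2\lesssim\sqrt{\log n}\}$, and absorbing the finitely many small $n$ into $c'$, yields $\PP(\mathcal{A}_2)\ge1-c'n^{-m}$; note $c=c_\tau/4$ depends only on $f$ and $\tau$, not on $m$ or $c_{\lambda_2}$.

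The delicate step is the control of $\frac1n\sum_{i\in\cC\setminus\cB}U_iU_i^\top$, which forces the combination of the cardinality estimate of Lemma~\ref{lem:eigmin} with the precise scalings of $r_\ell$, $\log p$ and $n$ in Assumption~\ref{cd:pns}; the remaining ingredients are the convexity structure of $\tilde{\mathcal{L}}$ and standard scalar and matrix concentration.
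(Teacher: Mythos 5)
Your proof is correct and follows essentially the same route as the paper's: restrict to the index set $\cB$ on which every $\Lambda\in\tilde{\cL}$ has diagonal entries bounded below by $c_\tau$, lower-bound $\frac1n\sum_{i\in\cB}U_iU_i^\top$ via matrix concentration combined with the cardinality bound $|\cC\setminus\cB|\lesssim p\sqrt{n\log p}/r_\ell$ from Lemma~\ref{lem:eigmin} and the scalings in Assumption~\ref{cd:pns}. The only (harmless) deviations are that you lower-bound $\frac1n\sum_{i\in\cC}U_iU_i^\top$ by subtracting the complement and using scalar Bernstein concentration of $\frac1n\sum_i\|U_i\|_2^2\one_{\{|\eta_i|>\tau\}}$ rather than centring the truncated vectors $V_i=U_i\one_{A_i}$ and invoking Lemma~\ref{lem:subg_trunc}, and that you obtain \eqref{eq:eigmin2} via $\lambda_{\min}(\Lambda\bU)^2=\lambda_{\min}(\bU^\top\Lambda^2\bU)$ with $(\Lambda^2)_{ii}\geq c_\tau^2$ on $\cB$, which is if anything more precise than the paper's closing remark about $\Lambda^{1/2}\bU$.
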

\begin{proof}
	For $\tau > 0$ (to be chosen), let $A_i$ be the event
	\[
	A_i := \{|X_i^\top \beta^0 + U_i^\top \delta^0| \leq \tau\},
	\] and set $V_i := U_i \one_{A_i}$ and $\mu_V := \E V_i$.  	We will use $a \lesssim b$ to denote that there exists constant $C >0$ potentially depending on $\tau$ and other constants such that $a \leq C b$.
	From Assumption~\ref{cd:subg} and Lemma~\ref{lem:subg_trunc}, we have that for all $\tau > 0$, $V_i-\mu_V$ is a sub-Gaussian random vector with bounded variance proxy $\sigma_V^2$ depending on $\sigma_u^2$. Moreover, as $\Var(U_i) = I$ (from Assumption~\ref{cd:subg}), we have that all $\tau$ sufficiently large, $\lambda_{\min}(\Var(V_i)) > c> 1/2$ for a constant $c>0$; in all that follows, we fix $\tau$ at such a value.
	
	 Then from \citet[Rem.~5.40]{vershynin_2012} we have that there exists a constant $c'$ such that with probability at least $1 - c' n^{-m}$,
	\[
	\lambda_{\min}\left(\frac{1}{n} \sum_{i = 1}^n (V_i-\mu_V) (V_i-\mu_V)^\top \right) \geq 0.5 \quad\text{and}\quad \left\|\frac{1}{n} \sum_{i = 1}^n (V_i-\mu_V)\right\|_\op \leq \sqrt{\frac{\log n}{n}}.
	\]
	Let $\bar{V} = \sum_{i=1}^nV_i/n$ and let $\cC$ and $\cB$ be as in \eqref{eq:CB_def} with the choice of $\tau$ above; note that this choice does not depend on $c_{\lambda_2}$.
	Now
	\begin{align*}
		\frac{1}{n} \sum_{i \in \cC} U_i U_i^\top & = \frac{1}{n} \sum_{i =1}^n V_i V_i^\top \\
		& = \frac{1}{n} \sum_{i = 1}^n (V_i -\mu_V)(V_i-\mu_V)^\top +(\bar{V}-\mu_V)\mu_V^{\top} + \mu_V (\bar{V}-\mu_V)^{\top} + \mu_V\mu_V^\top.
	\end{align*}
	We have that
	\[
	\|(\bar{V}-\mu_V)\mu_V^{\top}\|_{\op} = \|\mu_V (\bar{V}-\mu_V)^{\top}\|_{\op} = \|\bar{V}-\mu_V\|_2 \|\mu_V\|_2.
	\]
	As $\bar{V}-\mu_V$ is a sub-Gaussian random vector with variance proxy $\sigma_V^2/n$, we have that
	\begin{align*}
		\pr((\bar{V}_j-\mu_{V,j})^2 \geq t^2/q) \leq 2\exp(-2nt^2/q\sigma_V^2),
	\end{align*}
	so by a union bound,
	\[
	\pr(\|\bar{V}-\mu_V\|_2^2 \geq t^2) \leq 2q\exp(-2nt^2/q\sigma_V^2),
	\]
	where recall $q \lesssim 1$ (Assumption~\ref{cd:pns}(i)).
	Thus given any $m$, taking $t = c\sqrt{\log n / n}$ for $c$ sufficiently large, we see that there exists constant $c''$ such that with probability at least $1-c''n^{-m}$, we have
	\[
	\|(\bar{V}-\mu_V)\mu_V^{\top}\|_{\op} \lesssim \sqrt{\log n / n},
	\]
	and so on an event with high probability, which we will work on in the following, we have
	\begin{align*}
		\lambda_{\min}\left( \frac{1}{n} \sum_{i \in \cC} U_i U_i^\top\right) &= \lambda_{\min}\left(\frac{1}{n} \sum_{i = 1}^n V_i V_i^\top\right) \\
		&\geq \lambda_{\min}\left(\frac{1}{n} \sum_{i = 1}^n (V_i-\mu_V) (V_i-\mu_V)^\top \right) - 2\|(\bar{V}-\mu_V)\mu_V^{\top}\|_{\op}  \\
		&\gtrsim 1.
	\end{align*}

Next, observe that for $i \in \cB$, we have that
\[
|X_i^\top\beta^0 + \eta U_i^\top \delta^0 + (1-\eta)X_i^\top b^0| \leq 2\tau
\]
for all $\eta \in [0,1]$. Thus for such $i$, recalling \eqref{eq:Lambda_bar_def} and \eqref{eq:Lambda_0_def}, we have that $\Lambda^0_{ii}, \,\bar{\Lambda}_{ii} \geq c_\tau >0$ with $c_\tau$ given by \eqref{eq:c_tau_def}. Thus for all $\Lambda \in \tilde{\mathcal{L}}$ we have $\Lambda_{ii}\geq c_\tau$ whenever $i \in \cB$. Let $c_t$ sufficiently large such that there exists $c > 0$ with $\pr(\mathcal{A}_1(c_t)) \geq 1 - cn^{-m}$ be given.
Now, working on the intersection of $\mathcal{A}_1(c_t)$ and the event above,
we have that for all $\Lambda \in \tilde{\mathcal{L}}$, 
\begin{align*}
	c_\tau^{-1} \lambda_{\min}(\bU^\top \Lambda \bU) & \geq  \lambda_{\min}\left( \frac{1}{n} \sum_{i \in \cB} U_i U_i^\top\right) \\
	&\geq \lambda_{\min}\left( \frac{1}{n} \sum_{i \in \cC} U_i U_i^\top\right) -  \lambda_{\max}\left( \frac{1}{n} \sum_{i \in \cC \setminus \cB} U_i U_i^\top\right)\\
	& \gtrsim 1 - |\cC \setminus \cB| \max_i \|U_i\|_2^2 /n.
\end{align*}

Note that on an event with probability at least $1-cn^{-m}$, $\max_i\|U_i\|_2^2 /n \lesssim \log n / n$, as each $U_{ij}^2$  is sub-exponential and $q \lesssim 1$ (Assumption~\ref{cd:pns}(i)). Working now also on this event and appealing to Lemma~\ref{lem:eigmin}, we see that there exists some $c$ depending on $c_{\lambda_2}$ with
\[
\lambda_{\min}(\bU^\top \Lambda \bU) \gtrsim 1 - c \frac{p \sqrt{\log p} \log n}{r_\lo \sqrt{n}}.
\]
Considering Assumption~\ref{cd:pns}(iii), we see that for all sufficiently large $n$ (depending on $c_{\lambda_2}$), $\lambda_{\min}(\bU^\top \Lambda \bU)$ is greater than a positive constant, on the intersection of the events above. Thus there exists a $c'$ depending on $c_{\lambda_2}$ such that for all smaller $n$, no claim is made in the statement of the result, but for all larger $n$, the probability is at least $1-c'n^{-m}$.

Turning to the second inequality in the statement, note that $\bU^\top \Lambda \bU = (\Lambda^{1/2} \bU)^{\top} (\Lambda^{1/2} \bU)$, and the $q$ singular values of $\Lambda^{1/2} \bU \in \R^{n \times q}$ are precisely the square-roots of the eigenvalues of $\bU^\top \Lambda \bU$.
\end{proof}

\begin{lemma}  \label{lem:Gamma_Lambda_bd} 
	Consider the setup of Lemma~\ref{lem:subgx}. On the event $\mathcal{A}_1(c_t) \cap \mathcal{A}_2$ (defined in Lemmas~\ref{lem:subgx} and \ref{lem:eigmin2}), we have that there exists $c >0$ depending on $c_t$ and quantities designated as constants, such that for all $\Lambda \in \tilde{\mathcal{L}}$,
	\[
	\norm{\Gamma \left( 2 \lambda_2 I + \bX^\top \Lambda \bX \right)^{-1} L' }_2 \leq c (1 + c_{\lambda_2}^{-1})  \left(\sqrt{\frac{p \log p}{r_{\lo} n}} + \frac{p}{r_\lo \sqrt{n}}\right).
	\]
\end{lemma}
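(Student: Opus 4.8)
The plan is to set $M := 2\lambda_2 I + \bX^\top \Lambda \bX$ and first establish, uniformly over $\Lambda \in \tilde{\cL}$, a positive semi-definite lower bound on $M$. Completing the square using $\bX = \bU\Gamma + \bZ$ gives $\bX^\top \Lambda \bX \succeq \tfrac12 \Gamma^\top \bU^\top \Lambda \bU \Gamma - \bZ^\top \Lambda \bZ$; since the diagonal entries of any $\Lambda \in \tilde{\cL}$ lie in $[0, c_d]$, on $\mathcal{A}_1(c_t)$ we have $\|\bZ^\top \Lambda \bZ\|_{\op} \leq c_d \|\bZ\|_{\op}^2 \lesssim p/n$ by \eqref{eq:subgx3}, and Assumption~\ref{cd:pns}(iii) gives $p/n \lesssim \lambda_2$ for $n$ large (depending on $c_{\lambda_2}$), so this term can be absorbed into one copy of $\lambda_2 I$. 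Combining with $\bU^\top \Lambda \bU \succeq cI$ from Lemma~\ref{lem:eigmin2} (valid on $\mathcal{A}_2$), we obtain $M \succeq \lambda_2 I + \tfrac{c}{2}\Gamma^\top \Gamma =: N$ for all $\Lambda \in \tilde{\cL}$, hence $M^{-1} \preceq N^{-1}$. Writing $\Gamma = VD\tilde V^\top$ in reduced SVD form (with the diagonal entries of $D$ at least $\sqrt{r_\ell}$) one checks $\Gamma N^{-1}\Gamma^\top = VD(\lambda_2 I + \tfrac{c}{2}D^2)^{-1}DV^\top$ has operator norm at most $2/c$, so $\|\Gamma M^{-1}\Gamma^\top\|_{\op} \lesssim 1$; similarly, the elementary identity $\langle v, M^{-1}v\rangle = (M^{-1}v)^\top M (M^{-1}v)$ yields $\|\Gamma M^{-1}\|_{\op} \lesssim \lambda_2^{-1/2}$ and $\|\Gamma M^{-1}\bZ^\top\|_{\op} \lesssim \|\bZ\|_{\op}\lambda_2^{-1/2} \lesssim \sqrt{p/n}\,\lambda_2^{-1/2}$.

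Next I would split $L' = -\bX^\top \be = -\Gamma^\top \bU^\top \be - \bZ^\top \be$ using \eqref{eq:L'}, so that $\Gamma M^{-1} L' = -\Gamma M^{-1} \Gamma^\top (\bU^\top \be) - \Gamma M^{-1}(\bZ^\top \be)$. The first term is immediate: $\|\Gamma M^{-1}\Gamma^\top \bU^\top \be\|_2 \leq \|\Gamma M^{-1}\Gamma^\top\|_{\op}\,\|\bU^\top \be\|_2 \lesssim \|\bU^\top \be\|_2$, and since $\be$ is conditionally sub-Gaussian, $\|\bU\|_{\op}$ is bounded on $\mathcal{A}_1(c_t)$ and $q \lesssim 1$, a standard net bound gives $\|\bU^\top \be\|_2 \lesssim \sqrt{\log n / n}$ with probability $1 - c n^{-m}$; as $r_\ell \lesssim p$ this is $\lesssim \sqrt{p \log p /(r_\ell n)}$, well within the claimed bound and with no $c_{\lambda_2}$ dependence.

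The crux is the second term $\Gamma M^{-1}(\bZ^\top \be)$: the naive estimate $\|\Gamma M^{-1}\bZ^\top\|_{\op}\|\be\|_2 \lesssim \sqrt{p/n}\,\lambda_2^{-1/2}$ exceeds the target by a polynomial factor in $n$ once $r_\ell$ is close to $p$, so genuine cancellation must be exploited. Here I would pass to the $n\times n$ regime via the kernel identity $M^{-1}\bX^\top \Lambda^{1/2} = \bX^\top \Lambda^{1/2}(2\lambda_2 I + \Lambda^{1/2}\bX\bX^\top\Lambda^{1/2})^{-1}$ (equivalently, on the indices where $\Lambda$ is bounded below, in terms of $(2\lambda_2 \Lambda^{-1} + \bX\bX^\top)^{-1}$, which falls into the scope of \eqref{eq:subgx6}), and use that $\Gamma M^{-1}\bZ^\top$ has rank at most $q$ while the vector $\bZ^\top\be$ on which it acts does not depend on $\Lambda$. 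Concretely, one decomposes $\bZ^\top\be = \tilde V(\bZ\tilde V)^\top\be + \Pi^\perp(\bZ^\top\be)$ with $\Pi^\perp = I - \tilde V\tilde V^\top$; the first piece has $\ell_2$-norm $\lesssim \sqrt p/n$ by sub-Gaussian concentration of the $q$-dimensional vector $(\bZ\tilde V)^\top\be$ (using $\|\bZ\tilde V\|_{\mathrm{F}} \lesssim \sqrt{p/n}$ on $\mathcal{A}_1(c_t)$), contributing negligibly after multiplication by $\|\Gamma M^{-1}\|_{\op}$, while the $\Pi^\perp$ piece is controlled uniformly over $\Lambda$ by combining the kernel representation with \eqref{eq:subgx6}, a Hanson--Wright-type bound for the quadratic form $\be^\top \bZ M_\Lambda^{-1}\bZ^\top\be$ conditionally on the remaining randomness, and a covering argument over $\tilde{\cL}$. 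Substituting $\lambda_2 = c_{\lambda_2}r_\ell\sqrt{\log p/n}$ into the resulting estimates produces the stated bound with its $(1 + c_{\lambda_2}^{-1})$ factor.

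The main obstacle is precisely this last step. Since $\tilde{\cL}$ is, at worst, an $n$-parameter family (the $t_i$ in \eqref{eq:L_tilde_def} are free in $[0,1]$), a crude union bound over an $\varepsilon$-net of $[0,1]^n$ costs $\Theta(n)$ in the exponent and swamps the $\sqrt{\log p}$-type saving one needs; the resolution is to observe that $\Gamma M_\Lambda^{-1}\bZ^\top$ has rank at most $q = O(1)$, so the stochastic object of interest depends on $\Lambda$ only through a low-dimensional summary and admits a much coarser covering (equivalently, a direct chaining bound with the correct metric entropy), exactly as in the treatment of the surrounding lemmas that also reduce to $n\times n$ quantities. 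Collecting the two contributions then gives the claimed inequality.
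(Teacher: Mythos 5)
Your proposal does not close, and the gap sits exactly where you flag it. The step you call ``the crux'' --- controlling $\Gamma M_\Lambda^{-1}\bZ^\top\be$ uniformly over $\Lambda\in\tilde{\cL}$ via a Hanson--Wright bound plus a covering argument --- is never actually carried out. The assertion that the low rank of $\Gamma M_\Lambda^{-1}\bZ^\top$ reduces the $n$-parameter family $\tilde{\cL}$ to ``a low-dimensional summary'' admitting a coarse net is not a proof: the map $\Lambda\mapsto\Gamma M_\Lambda^{-1}\bZ^\top\in\R^{q\times n}$ still has $qn$ free coordinates and depends on the $t_i$ through the full resolvent, so you would need an explicit Lipschitz/metric-entropy estimate and a chaining bound, none of which is supplied. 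In addition, your argument imports fresh probabilistic events (concentration of $\bU^\top\be$, the Hanson--Wright event, the net union bound), whereas the lemma is a \emph{deterministic} statement holding on $\mathcal{A}_1(c_t)\cap\mathcal{A}_2$; the only randomness of $L'$ that may be used is what is already frozen into that event, namely $\|L'\|_2\lesssim\sqrt{p\log p/n}$ from \eqref{eq:subgx5}.

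The deeper issue is that the detour is unnecessary: the ``naive'' estimate $\|\Gamma M^{-1}L'\|_2\leq\|\Gamma M^{-1}\|_{\op}\|L'\|_2$ does suffice, provided one bounds $\|\Gamma M^{-1}\|_{\op}$ sharply enough. Your bound $\|\Gamma M^{-1}\|_{\op}\lesssim\lambda_2^{-1/2}$ loses a factor of order $(n/\log p)^{1/4}$, which is precisely why you conclude that cancellation is needed. The paper instead writes $\Gamma = (\bU^\top\Lambda\bU)^{-1}\bU^\top\Lambda\,\bU\Gamma$, uses $\lambda_{\min}(\bU^\top\Lambda\bU)\gtrsim 1$ from Lemma~\ref{lem:eigmin2}, decomposes $\bU\Gamma=\bX-\bZ$, and applies the kernel identity \eqref{eq:kernel_ridge2} together with \eqref{eq:subgx6} and \eqref{eq:subgx3} to obtain
$\lambda_{\max}\bigl(\Gamma(2\lambda_2 I+\bX^\top\Lambda\bX)^{-1}\bigr)\lesssim(1+c_{\lambda_2}^{-1})\bigl(r_\ell^{-1/2}+\sqrt{p}/(r_\ell\sqrt{\log p})\bigr)$
deterministically on the event and uniformly over $\Lambda$; multiplying by $\|L'\|_2\lesssim\sqrt{p\log p/n}$ gives exactly the two terms in the claimed bound, with no splitting of $L'$ and no further concentration. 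If you wish to salvage your route, note that your own positive semi-definite lower bound $M\succeq\lambda_2 I+\tfrac{c}{2}\Gamma^\top\Gamma$ already yields $\|\Gamma M^{-1}\Gamma^\top\|_{\op}\lesssim 1$, but to convert this into the needed control of $\|\Gamma M^{-1}\|_{\op}$ (rather than of the sandwiched form) you cannot simply invoke $M^{-1}\preceq N^{-1}$, since $A\succeq B\succ 0$ does not imply $A^{-2}\preceq B^{-2}$; this is exactly the point at which the paper's more careful factorisation is doing real work.
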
 
\begin{proof}
	We work on the event in the statement of the result. We will use $a \lesssim b$ to denote that there exists constant $C >0$ potentially depending on $c_t$ and other constants such that $a \leq C b$. In the following, fix $\Lambda \in \tilde{\mathcal{L}}$.
	
	We have
	\[
	\norm{\Gamma \left( 2 \lambda_2 I + \bX^\top \Lambda \bX \right)^{-1} L' }_2 \leq \lambda_{\max} \left(\Gamma \left(2 \lambda_2 I + \bX^\top \Lambda \bX\right)^{-1}\right) \|L'\|_2.
	\]
	Now
	\begin{align}\label{eq:dcont}
		\lambda_{\max} \bigg( \Gamma & \left( 2 \lambda_2 I + \bX^\top \Lambda \bX \right)^{-1} \bigg) = \lambda_{\max} \left( (\bU^\top \Lambda \bU)^{-1} \bU^\top \Lambda \bU\Gamma \left( 2 \lambda_2 I + \bX^\top \Lambda \bX \right)^{-1} \right) \nonumber\\
		& \leq \lambda_{\max} \left( (\bU^\top \Lambda \bU)^{-1} \right) \lambda_{\max} (\bU) \lambda_{\max} \left( \Lambda \bU\Gamma \left( 2 \lambda_2 I + \bX^\top \Lambda \bX \right)^{-1} \right) \\
		& = \lambda_{\min}^{-1} \left(\bU^\top \Lambda \bU \right) \lambda_{\max} (\bU) \lambda_{\max} \left( \Lambda \bU\Gamma \left( 2 \lambda_2 I + \bX^\top \Lambda \bX \right)^{-1} \right) \nonumber \\
		& \lesssim \lambda_{\max} \left( \Lambda \bU\Gamma \left( 2 \lambda_2 I + \bX^\top \Lambda \bX \right)^{-1} \right), \nonumber
	\end{align}
	where for the last inequality we have used Lemma~\ref{lem:eigmin2} and~\eqref{eq:subgx3}.
	Next,
	\begin{align*}
		\lambda_{\max} \left( \Lambda \bU\Gamma \left( 2 \lambda_2 I + \bX^\top \Lambda \bX \right)^{-1} \right) &= \lambda_{\max} \left( \Lambda (\bX - \bZ) \left( 2 \lambda_2 I + \bX^\top \Lambda \bX \right)^{-1} \right) \\
		&\leq \lambda_{\max} \left(\Lambda \bX \left(2 \lambda_2 I + \bX^\top \Lambda \bX \right)^{-1} \right) + \lambda_{\max} \left(\Lambda \bZ \left(2 \lambda_2 I + \bX^\top \Lambda \bX \right)^{-1} \right).
	\end{align*}
	Now by \eqref{eq:kernel_ridge}, we have that
	\begin{align} \label{eq:kernel_ridge2}
		\Lambda \bX \left(2 \lambda_2 I + \bX^\top \Lambda \bX \right)^{-1} = \Lambda (2\lambda_2 I + \Lambda \mb X \mb X^{\top} )^{-1} \mb X.
	\end{align}
	Further noting that
	\[
	\lambda_{\max} \left(\Lambda \bZ \left(2 \lambda_2 I + \bX^\top \Lambda \bX \right)^{-1} \right) \lesssim \lambda_{\max}(\mb Z) /\lambda_2, 
	\]
	we see from~\eqref{eq:subgx3} and~\eqref{eq:subgx6}, that
	\begin{align*}
		\lambda_{\max} \left(\Gamma \left(2 \lambda_2 I + \bX^\top \Lambda \bX \right)^{-1} \right) \lesssim   \frac{1 + c_{\lambda_2}^{-1}}{\sqrt{r_\lo}} +  (1 + c_{\lambda_2}^{-1}) \frac{\sqrt{p}}{r_{\lo} \sqrt{\log p}}.
	\end{align*}
	Thus from~\eqref{eq:subgx5} we have that
	\[
		\norm{\Gamma \left( 2 \lambda_2 I + \bX^\top \Lambda \bX \right)^{-1} L' }_2 \lesssim (1 + c_{\lambda_2}^{-1}) \left(\sqrt{\frac{p \log p}{r_{\lo} n}} + \frac{p}{r_\lo \sqrt{n}}\right). \qedhere
	\]
\end{proof}

\begin{lemma} \label{lem:Gamma_b^0}
	Consider the setup of Lemma~\ref{lem:subgx}. On the event $\mathcal{A}_1(c_t) \cap \mathcal{A}_2$ (defined in Lemmas~\ref{lem:subgx} and \ref{lem:eigmin2}), we have that there exists $c >0$ depending on $c_t$ and quantities designated as constants, such that
	\[
	\|\Gamma b^0 - \delta^0\|_2 \leq c \left( (1 + c_{\lambda_2}^{-1}) \left(\sqrt{\frac{p \log p}{r_{\lo} n}} + \frac{p}{r_\lo \sqrt{n}}\right) + c_{\lambda_2} \sqrt{\frac{\log p}{n}}\right)
	\]
\end{lemma}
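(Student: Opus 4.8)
The plan is to assemble the bound from the explicit decomposition \eqref{eq:b^0_decomp}, which gives
\[
b^0 = -(2\lambda_2 I + \bX^\top\bar{\Lambda}\bX)^{-1}L' + (2\lambda_2 I + \bX^\top\bar{\Lambda}\bX)^{-1}\bX^\top\bar{\Lambda}\bU\delta^0 ,
\]
so that
\[
\Gamma b^0 - \delta^0 = -\Gamma(2\lambda_2 I + \bX^\top\bar{\Lambda}\bX)^{-1}L' + \Big(\Gamma(2\lambda_2 I + \bX^\top\bar{\Lambda}\bX)^{-1}\bX^\top\bar{\Lambda}\bU\delta^0 - \delta^0\Big).
\]
Since $\bar{\Lambda}\in\tilde{\mathcal{L}}$ (take all $t_i = 1$ in the definition \eqref{eq:L_tilde_def}), Lemma~\ref{lem:Gamma_Lambda_bd} with $\Lambda = \bar{\Lambda}$ immediately bounds the first summand by a constant times $(1 + c_{\lambda_2}^{-1})\big(\sqrt{p\log p/(r_\ell n)} + p/(r_\ell\sqrt n)\big)$. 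It remains to show that the bracketed term has norm of order $(1+c_{\lambda_2}^{-1})p/(r_\ell\sqrt n) + c_{\lambda_2}\sqrt{\log p/n}$.

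For that I would exploit the identity $\bU\Gamma = \bX - \bZ$ to write $\Gamma = (\bU^\top\bar{\Lambda}\bU)^{-1}\bU^\top\bar{\Lambda}\bX - (\bU^\top\bar{\Lambda}\bU)^{-1}\bU^\top\bar{\Lambda}\bZ$; this is legitimate because $\lambda_{\min}(\bU^\top\bar{\Lambda}\bU)\gtrsim 1$ on $\mathcal{A}_2$ by Lemma~\ref{lem:eigmin2} (again using $\bar{\Lambda}\in\tilde{\mathcal{L}}$). Substituting this into the bracketed term and applying the push-through identity \eqref{eq:kernel_ridge0}--\eqref{eq:kernel_ridge}, exactly as in the derivation of the analogous identity in the proof of Lemma~\ref{lem:eigmin}, yields
\[
\bU^\top\bar{\Lambda}\bX(2\lambda_2 I + \bX^\top\bar{\Lambda}\bX)^{-1}\bX^\top\bar{\Lambda}\bU = \bU^\top\bar{\Lambda}\bU - 2\lambda_2\,\bU^\top(2\lambda_2\bar{\Lambda}^{-1} + \bX\bX^\top)^{-1}\bU ,
\]
and hence, after multiplying by $(\bU^\top\bar{\Lambda}\bU)^{-1}$ and subtracting $\delta^0$,
\begin{align*}
\Gamma(2\lambda_2 I + \bX^\top\bar{\Lambda}\bX)^{-1}\bX^\top\bar{\Lambda}\bU\delta^0 - \delta^0 = \;& -2\lambda_2\,(\bU^\top\bar{\Lambda}\bU)^{-1}\bU^\top(2\lambda_2\bar{\Lambda}^{-1}+\bX\bX^\top)^{-1}\bU\delta^0 \\
& - (\bU^\top\bar{\Lambda}\bU)^{-1}\bU^\top\bar{\Lambda}\bZ(2\lambda_2 I + \bX^\top\bar{\Lambda}\bX)^{-1}\bX^\top\bar{\Lambda}\bU\delta^0 .
\end{align*}

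Finally I would bound the two right-hand terms using facts valid on $\mathcal{A}_1(c_t)\cap\mathcal{A}_2$. For the first, $\|(\bU^\top\bar{\Lambda}\bU)^{-1}\|_{\op}\lesssim 1$ (Lemma~\ref{lem:eigmin2}), $\|\bU\|_{\op}\lesssim 1$ by \eqref{eq:subgx3}, and $\|(2\lambda_2\bar{\Lambda}^{-1}+\bX\bX^\top)^{-1}\bU\delta^0\|_2\lesssim 1/r_\ell$ by Lemma~\ref{lem:T_2_first} with $\Lambda = \bar{\Lambda}^{-1}\in\mathcal{L}$, give a bound of order $\lambda_2/r_\ell = c_{\lambda_2}\sqrt{\log p/n}$. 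For the second, after rewriting $(2\lambda_2 I + \bX^\top\bar{\Lambda}\bX)^{-1}\bX^\top\bar{\Lambda}\bU\delta^0 = \bX^\top(2\lambda_2\bar{\Lambda}^{-1}+\bX\bX^\top)^{-1}\bU\delta^0$ via \eqref{eq:kernel_ridge}, one combines $\|\bZ\|_{\op}\lesssim\sqrt{p/n}$ (from \eqref{eq:subgx3}), $\|\bar{\Lambda}\|_{\op}\le c_d$, $\|(\bU^\top\bar{\Lambda}\bU)^{-1}\|_{\op}\lesssim 1$, $\|\bU\|_{\op}\lesssim 1$ with the crude bound $\|\bX\|_{\op}\lesssim\sqrt p$ (which holds because each column of $\Gamma$ has $\ell_2$-norm $\lesssim 1$ by $\var(X_j)\lesssim 1$ in Assumption~\ref{cd:subg}, whence $\lambda_{\max}(\Gamma\Gamma^\top)\le\mathrm{tr}(\Gamma\Gamma^\top)\lesssim p$; alternatively one may use \eqref{eq:subgx6}) together with $\|(2\lambda_2\bar{\Lambda}^{-1}+\bX\bX^\top)^{-1}\bU\delta^0\|_2\lesssim1/r_\ell$ from Lemma~\ref{lem:T_2_first}; this yields a bound of order $(1+c_{\lambda_2}^{-1})p/(r_\ell\sqrt n)$. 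Adding the three contributions gives the stated bound.

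The main obstacle is not any individual estimate, since each reduces to an already-proven lemma, but rather spotting the algebraic rearrangement that produces the cancellation: representing $\Gamma$ through $(\bU^\top\bar{\Lambda}\bU)^{-1}\bU^\top\bar{\Lambda}\bX$ and invoking the push-through identity so that the leading part collapses exactly to $\delta^0$, leaving only an $O(\lambda_2/r_\ell)$ ridge residual and a $\bZ$-cross term. A secondary point requiring care is checking $\bar{\Lambda}\in\tilde{\mathcal{L}}$ and $\bar{\Lambda}^{-1}\in\mathcal{L}$ so that Lemmas~\ref{lem:eigmin2} and~\ref{lem:T_2_first} apply, and tracking the dependence on $c_{\lambda_2}$ through $\lambda_2 = c_{\lambda_2}r_\ell\sqrt{\log p/n}$.
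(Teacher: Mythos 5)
Your proposal is correct and follows essentially the same route as the paper's proof: the decomposition via \eqref{eq:b^0_decomp}, the use of Lemma~\ref{lem:Gamma_Lambda_bd} for the $L'$ term, and the insertion of $(\bU^\top\bar{\Lambda}\bU)^{-1}\bU^\top\bar{\Lambda}\bU\Gamma$ with $\bU\Gamma=\bX-\bZ$ followed by the push-through identity, so that the leading part cancels against $\delta^0$ and the residual is controlled by Lemma~\ref{lem:T_2_first}. The only (immaterial) difference is that for the $\bZ$ cross term you use the crude bound $\|\bX\|_{\op}\lesssim\sqrt{p}$ together with Lemma~\ref{lem:T_2_first}, whereas the paper invokes \eqref{eq:subgx6}; both give the same order.
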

\begin{proof}
	We work on the event in the statement of the result. We will use $a \lesssim b$ to denote that there exists constant $C >0$ potentially depending on $c_t$ and other constants such that $a \leq C b$.
	
	Using~\eqref{eq:b^0_decomp},
	\begin{align}\label{eq:gamma}
		\|\Gamma b^0 - \delta^0\|_2 &\leq \norm{\Gamma \left( 2 \lambda_2 I + \bX^\top \bar{\Lambda} \bX \right)^{-1} L' }_2 + \norm{\Gamma \left( 2 \lambda_2 I + \bX^\top \bar{\Lambda} \bX\right)^{-1} \bX^\top \bar{\Lambda} \bU \delta^0 - \delta^0 }_2\\
		&=: \mathrm{I} + \mathrm{II}.
	\end{align}
	For $\mathrm{I}$, we have from Lemma~\ref{lem:Gamma_Lambda_bd} that
	\begin{align*}
		\mathrm{I} \lesssim  (1 + c_{\lambda_2}^{-1}) \left(\sqrt{\frac{p \log p}{r_{\lo} n}} + \frac{p}{r_\lo \sqrt{n}}\right).
	\end{align*}
	
	For $\mathrm{II}$, by applying the same argument as in~\eqref{eq:dcont}, we see that
	\begin{align*}
		\mathrm{II} & \leq \lambda_{\min}^{-1} (\bU^\top \bar{\Lambda} \bU) \cdot \lambda_{\max}(\bU) \cdot \norm{\bar{\Lambda} \bU \Gamma \left( 2 \lambda_2 I + \bX^\top \bar{\Lambda} \bX \right)^{-1} \bX^\top \bar{\Lambda} \bU \delta^0 - \bar{\Lambda} \bU \delta^0 }_2 \\
		& \lesssim \norm{\bar{\Lambda} (\bX - \bZ) \left( 2 \lambda_2 I + \bX^\top \bar{\Lambda} \bX \right)^{-1} \bX^\top \bar{\Lambda} \bU \delta^0 - \bar{\Lambda} \bU \delta^0 }_2 \\
		& \leq  \norm{\bar{\Lambda} \bX \left( 2 \lambda_2 I + \bX^\top \bar{\Lambda} \bX\right)^{-1} \bX^\top \bar{\Lambda} \bU \delta^0 - \bar{\Lambda} \bU \delta^0 }_2 +  \norm{\bar{\Lambda} \bZ \left( 2 \lambda_2 I + \bX^\top \bar{\Lambda} \bX\right)^{-1} \bX^\top \bar{\Lambda} \bU \delta^0 }_2 \\
		& =: \mathrm{II}_1 + \mathrm{II}_2.
	\end{align*}
	Note that $\mathrm{II}_1$ is the same as $\mathrm{II}$ in Lemma~\ref{lem:eigmin}, which gives the bound
	$\mathrm{II}_1 \lesssim c_{\lambda_2} \sqrt{\frac{\log p}{n}}$ (see e.g. \eqref{eq:II_bd}). For $\mathrm{II}_2$, we note from \eqref{eq:kernel_ridge2} that
	\[
	\mathrm{II}_2 \lesssim \lambda_{\max}(\bar{\Lambda} \bZ) \lambda_{\max}\left( \bX^\top \left( 2 \lambda_2 \bar{\Lambda}^{-1} + \bX\bX^\top\right)^{-1} \right) \lambda_{\max}(\bU).
	\]
	Then by applying~\eqref{eq:subgx3} and~\eqref{eq:subgx6}, we see that $\mathrm{II}_2 \lesssim (1 + c_{\lambda_2}^{-1}) \frac{p}{r_\lo \sqrt{n}}$, and hence
	\[
	\mathrm{II} \lesssim  c_{\lambda_2} \sqrt{\frac{\log p}{n}} +  (1 + c_{\lambda_2}^{-1}) \frac{p}{r_\lo \sqrt{n}}.
	\]
	Putting things together, we obtain the desired result.
\end{proof}

\begin{lemma}\label{lem:rsc1}
	Assume Assumptions~\ref{cd:pns}--\ref{cd:subg}. Given $m \in \N$, there exists $c_{t2}>0$ and $c' > 0$ (with the latter depending on $c_{\lambda_2}$) such that 
	\[
	\pr(\cT_2(c_{t2})) \geq 1 - c' n^{-m}.
	\]
\end{lemma}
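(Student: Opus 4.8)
The plan is to establish the two inequalities defining $\cT_2(c_{t2})$ on the high-probability event $\mathcal{A}_1(c_t)\cap\mathcal{A}_2$ of Lemmas~\ref{lem:subgx} and~\ref{lem:eigmin2}: for $c_t$ sufficiently large a union bound gives $\pr(\mathcal{A}_1(c_t)\cap\mathcal{A}_2)\ge 1-c'n^{-m}$, with $c'$ depending on $c_{\lambda_2}$ through Lemma~\ref{lem:eigmin2}. Since Lemmas~\ref{lem:T_2_first} and~\ref{lem:eigmin2} are only asserted for $n$ past a threshold depending on $c_{\lambda_2}$, for the finitely many smaller $n$ no claim need be made, which only affects $c'$.

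The first inequality of $\cT_2$ is essentially immediate: since $\bar{\Lambda}_{ii}=f'(\cdot)\le\sup_t f'(t)=c_d$, we have $\bar{\Lambda}^{-1}\in\mathcal{L}$, so Lemma~\ref{lem:T_2_first} applied with $\Lambda=\bar{\Lambda}^{-1}$ produces a constant $c$ with $\|(2\lambda_2\bar{\Lambda}^{-1}+\bX\bX^\top)^{-1}\bU\delta^0\|_2\le c/r_\ell$. For the second inequality I would combine the explicit form~\eqref{eq:b^0_decomp} of $b^0$, the identity~\eqref{eq:kernel_ridge}, and the elementary relation $\bX\bX^\top(2\lambda_2\bar{\Lambda}^{-1}+\bX\bX^\top)^{-1}-I=-2\lambda_2\bar{\Lambda}^{-1}(2\lambda_2\bar{\Lambda}^{-1}+\bX\bX^\top)^{-1}$ to obtain the decomposition
\[
\bX b^0-\bU\delta^0=-\bX\bigl(2\lambda_2 I+\bX^\top\bar{\Lambda}\bX\bigr)^{-1}L'-2\lambda_2\bar{\Lambda}^{-1}\bigl(2\lambda_2\bar{\Lambda}^{-1}+\bX\bX^\top\bigr)^{-1}\bU\delta^0 .
\]
The second summand has $\ell_2$ norm at most $2\lambda_2\|\bar{\Lambda}^{-1}\|_{\op}\le 2\lambda_2 c_d$ times the quantity just bounded, hence is of order $\lambda_2/r_\ell=c_{\lambda_2}\sqrt{\log p/n}$. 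For the first summand I would split $\bX=\bZ+\bU\Gamma$: the piece $\bU\Gamma(2\lambda_2 I+\bX^\top\bar{\Lambda}\bX)^{-1}L'$ is controlled using $\|\bU\|_{\op}\lesssim1$ (from~\eqref{eq:subgx3}) together with Lemma~\ref{lem:Gamma_Lambda_bd} with $\Lambda=\bar{\Lambda}\in\tilde{\mathcal{L}}$, contributing the order $(1+c_{\lambda_2}^{-1})\bigl(\sqrt{p\log p/(r_\ell n)}+p/(r_\ell\sqrt n)\bigr)$; the piece $\bZ(2\lambda_2 I+\bX^\top\bar{\Lambda}\bX)^{-1}L'$ is bounded by $\|\bZ\|_{\op}\,(2\lambda_2)^{-1}\|L'\|_2$, which by~\eqref{eq:subgx3}, \eqref{eq:subgx5} and the choice $\lambda_2=c_{\lambda_2}r_\ell\sqrt{\log p/n}$ is of order $c_{\lambda_2}^{-1}p/(r_\ell\sqrt n)$. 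Collecting these estimates and taking $c_{t2}$ large enough yields the claim.

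The bulk of the work is bookkeeping the constants (several of which depend on $c_{\lambda_2}$) and confirming $\bar{\Lambda}^{-1}\in\mathcal{L}$ and $\bar{\Lambda}\in\tilde{\mathcal{L}}$ so that Lemmas~\ref{lem:T_2_first} and~\ref{lem:Gamma_Lambda_bd} apply. The only point requiring genuine care is the term $\bX(2\lambda_2 I+\bX^\top\bar{\Lambda}\bX)^{-1}L'$: a naive operator-norm bound is too weak here, both because the implied factor $\|\bar{\Lambda}^{-1/2}\|_{\op}$ can be large wherever $f'$ is small and because the crude $1/\sqrt{\lambda_2}$ scaling does not beat the target rate, so one must exploit the factor structure, routing the $\bU\Gamma$ part through Lemma~\ref{lem:Gamma_Lambda_bd} and only bounding the genuinely small $\bZ$ part crudely while relying on $\lambda_2$ not being too small relative to $\sqrt{\log p/n}$. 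Beyond this, I do not anticipate a substantive obstacle: the lemma is in effect an assembly of the preceding results.
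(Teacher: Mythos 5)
Your reduction of the first inequality of $\cT_2$ to Lemma~\ref{lem:T_2_first} (noting $\bar{\Lambda}^{-1}\in\mathcal{L}$) is exactly what the paper does, and your algebraic decomposition
\[
\bX b^0-\bU\delta^0=-\bX\bigl(2\lambda_2 I+\bX^\top\bar{\Lambda}\bX\bigr)^{-1}L'-2\lambda_2\bar{\Lambda}^{-1}\bigl(2\lambda_2\bar{\Lambda}^{-1}+\bX\bX^\top\bigr)^{-1}\bU\delta^0
\]
is correct, with the first summand handled essentially as in the paper's $\cT_3$ bound together with Lemma~\ref{lem:Gamma_Lambda_bd}. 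However, there is a genuine gap in your treatment of the second summand: you assert $\|\bar{\Lambda}^{-1}\|_{\op}\leq c_d$, which is backwards. Since $\bar{\Lambda}_{ii}=f'(\cdot)\leq c_d$, one only gets $\bar{\Lambda}^{-1}_{ii}\geq c_d^{-1}$; the entries $1/f'(\cdot)$ have no uniform upper bound because the paper never assumes $\inf_t f'(t)>0$ (for the logistic link, $f'(\eta)\to 0$ as $|\eta|\to\infty$, so $\bar{\Lambda}^{-1}_{ii}$ blows up precisely on observations with large linear predictor). Your bound of the second summand by $2\lambda_2 c_d\cdot\|(2\lambda_2\bar{\Lambda}^{-1}+\bX\bX^\top)^{-1}\bU\delta^0\|_2\lesssim\lambda_2/r_\lo$ therefore does not go through. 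Note that the identity you are implicitly using does appear in the paper (in Lemma~\ref{lem:eigmin}), but there the quantity being bounded is $\|\bar{\Lambda}(\bX b^0-\bU\delta^0)\|_2$, and the outer factor of $\bar{\Lambda}$ exactly cancels the problematic $\bar{\Lambda}^{-1}$; for $\cT_2$ one needs $\|\bX b^0-\bU\delta^0\|_2$ without that helpful factor.

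The paper avoids this by a different decomposition: it writes $\bX b^0-\bU\delta^0=\bZ b^0+\bU(\Gamma b^0-\delta^0)$, bounds $\|b^0\|_2\lesssim(1+c_{\lambda_2}^{-1})\sqrt{p}/r_\lo$ directly from \eqref{eq:b^0_decomp}, and controls $\|\Gamma b^0-\delta^0\|_2$ via Lemma~\ref{lem:Gamma_b^0}. In that lemma the small values of $\bar{\Lambda}_{ii}$ are neutralised by premultiplying by $(\bU^\top\bar{\Lambda}\bU)^{-1}\bU^\top\bar{\Lambda}\bU$ and invoking the lower bound $\lambda_{\min}(\bU^\top\bar{\Lambda}\bU)\gtrsim 1$ from Lemma~\ref{lem:eigmin2} — i.e., one only needs enough of the $\bar{\Lambda}_{ii}$ (those indexed by the set $\cB$) to be bounded below, not all of them. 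You would need to reroute your second summand through an argument of this type rather than a crude operator-norm bound on $\bar{\Lambda}^{-1}$.
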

\begin{proof}
	Recall that $\mathcal{T}_2(c_{t2})$ is defined by the inequalities
	\begin{align*}
		\left\| (2 \lambda_2 \bar{\Lambda}^{-1} + \bX \bX^\top )^{-1} \bU \delta^0 \right\|_2 &\leq c_{t2} / r_\ell \\
		\|\bX b^0 - \bU \delta^0 \|_2 &\leq c_{t2} (1+ c_{\lambda_2}^{-1}) \left(\sqrt{\frac{p \log p}{r_{\lo} n}} + \frac{p}{r_\lo \sqrt{n}}\right) + c_{t2}c_{\lambda_2}\sqrt{\frac{\log p}{n}}.
	\end{align*}
	The first of these is addressed by Lemma~\ref{lem:T_2_first}: for all $c_t>0$, there exists $c_{t2}$ such that the first inequality holds on $\mathcal{A}_1(c_t)$ for all $n$ sufficiently large; by making $c'$ sufficiently large, we may ensure that only such $n$ are considered.
	
	In the following we work on $\mathcal{A}_1(c_t) \cap \mathcal{A}_2$ where $\mathcal{A}_2$ is specified in Lemma~\ref{lem:eigmin2}. We will use $a \lesssim b$ to denote that there exists constant $C >0$ potentially depending on $c_t$ and other constants such that $a \leq C b$.
	
	First note that from \eqref{eq:b^0_decomp}, we have
	\begin{align*}
	\|b^0\|_2 & \leq  \left\|\left(2 \lambda_2 I + \bX^\top \bar{\Lambda} \bX\right)^{-1} L' \right\|_2 +\left\| \left(2 \lambda_2 I + \bX^\top \bar{\Lambda} \bX\right)^{-1} \bX^\top \bar{\Lambda} \bU \delta^0 \right\|_2 \\
	& \leq \lambda_{\max}\left(\left(2 \lambda_2 I + \bX^\top \bar{\Lambda} \bX\right)^{-1}\right) \|L' \|_2 +\lambda_{\max}\left(\bX^\top \left(2 \lambda_2 \bar{\Lambda}^{-1} + \bX\bX^\top\right)^{-1}\right) \lambda_{\max}(\bU) \|\delta^0 \|_2,
	\end{align*}
using \eqref{eq:kernel_ridge} for the final line. 

It then follows from~\eqref{eq:subgx5},~\eqref{eq:subgx6},~\eqref{eq:subgx3} and Assumption~\ref{cd:subg} that $\|b^0\|_2 \lesssim (1 + c_{\lambda_2}^{-1})\frac{\sqrt{p}}{r_{\lo}}$. Using \eqref{eq:subgx3} once more, we obtain
	\begin{align*}
		\|\bX b^0 - \bU \delta^0\|_2 &= \|\mb Z b^0 + \bU(\Gamma b^0 + \delta^0)\|_2  \notag\\
		&\leq \|\mb Z\|_{\op} \|b^0\|_2 + \|\bU\|_{\op} \|\Gamma b^0 + \delta^0\|_2 \notag\\
		& \lesssim (1 + c_{\lambda_2}^{-1}) \frac{p}{r_\lo \sqrt{n}} + \|\Gamma b^0 - \delta^0\|_2 \\ 
		&\lesssim  (1 + c_{\lambda_2}^{-1}) \left(\sqrt{\frac{p \log p}{r_{\lo} n}} + \frac{p}{r_\lo \sqrt{n}}\right) + c_{\lambda_2} \sqrt{\frac{\log p}{n}},
	\end{align*}
using Lemma~\ref{lem:Gamma_b^0} in the final line.
\end{proof}

%

\subsubsection{Bounds relating to \texorpdfstring{$\cT_3$}{T3}}

\begin{lemma}
	Assume Assumptions~\ref{cd:pns}--\ref{cd:subg}. Given $m \in \N, c_{\lambda_2} > 0$, there exist constants $c_{t3}, c' > 0$ such that 
	\[
	\pr(\cT_3(c_{t3})) \geq 1 - c' n^{-m}.
	\]
\end{lemma}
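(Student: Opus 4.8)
The plan is to work on the high-probability event $\mathcal{A}_1(c_t)\cap\mathcal{A}_2$ from Lemmas~\ref{lem:subgx} and~\ref{lem:eigmin2}, which for $c_t$ large enough has probability at least $1-c'n^{-m}$ (with $c'$ allowed to depend on $c_{\lambda_2}$ through $\mathcal{A}_2$), and then to establish the inequality defining $\cT_3(c_{t3})$ deterministically on that event, uniformly over $\Lambda\in\tilde{\cL}$.

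The central step is the decomposition $\bX=\bZ+\bU\Gamma$, which gives
\[
\bX\left(2\lambda_2 I + \bX^\top\Lambda\bX\right)^{-1}L' = \bZ\left(2\lambda_2 I + \bX^\top\Lambda\bX\right)^{-1}L' + \bU\Gamma\left(2\lambda_2 I + \bX^\top\Lambda\bX\right)^{-1}L'.
\]
For the first summand I would use only operator-norm bounds: $\|\bZ\|_{\op}\lesssim\sqrt{p/n}$ from \eqref{eq:subgx3}; the crude bound $\lambda_{\max}\big((2\lambda_2 I+\bX^\top\Lambda\bX)^{-1}\big)\le 1/(2\lambda_2)$, valid for every $\Lambda\in\tilde{\cL}$ since such $\Lambda$ has strictly positive diagonal entries so $\bX^\top\Lambda\bX\succeq0$; and $\|L'\|_2\lesssim\sqrt{p\log p/n}$ from \eqref{eq:subgx5}. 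Multiplying these and substituting $\lambda_2=c_{\lambda_2}r_{\lo}\sqrt{\log p/n}$ yields a bound of order $p/(r_{\lo}\sqrt n)$, exactly the first term allowed in $\cT_3$. For the second summand I would use $\|\bU\|_{\op}\lesssim 1$ from \eqref{eq:subgx3} together with Lemma~\ref{lem:Gamma_Lambda_bd}, which already bounds $\|\Gamma(2\lambda_2 I+\bX^\top\Lambda\bX)^{-1}L'\|_2$ by a constant times $(1+c_{\lambda_2}^{-1})\big(\sqrt{p\log p/(r_{\lo}n)}+p/(r_{\lo}\sqrt n)\big)$, uniformly over $\tilde{\cL}$ on $\mathcal{A}_1(c_t)\cap\mathcal{A}_2$. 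Summing the two contributions and absorbing the $c_{\lambda_2}$-dependence into the constant (permissible since $c_{\lambda_2}$ is given) produces the required $c_{t3}$, and then $\pr(\cT_3(c_{t3}))\ge\pr(\mathcal{A}_1(c_t)\cap\mathcal{A}_2)\ge 1-c'n^{-m}$.

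I do not expect a substantive obstacle: the hard part --- controlling $\Gamma(2\lambda_2 I+\bX^\top\Lambda\bX)^{-1}L'$ uniformly over $\tilde{\cL}$, which requires the lower bound on $\lambda_{\min}(\bU^\top\Lambda\bU)$ and the structural rearrangements of $\bX\bX^\top$ --- has already been carried out in Lemma~\ref{lem:Gamma_Lambda_bd} (and upstream in Lemmas~\ref{lem:eigmin2} and~\ref{lem:subgx}). The only point needing care is that the $\bZ$-term is handled via the crude $\lambda_2^{-1}$ bound rather than anything sharper; one checks this suffices because $\|\bZ\|_{\op}\|L'\|_2\asymp p\sqrt{\log p}/n$ while $\lambda_2\asymp r_{\lo}\sqrt{\log p/n}$, so their ratio is $\asymp p/(r_{\lo}\sqrt n)$, matching the target rate.
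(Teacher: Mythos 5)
Your proposal is correct and follows essentially the same route as the paper's proof: the same decomposition $\bX=\bZ+\bU\Gamma$, the same crude $\|\bZ\|_{\op}\,\lambda_2^{-1}\,\|L'\|_2$ bound for the $\bZ$-term via \eqref{eq:subgx3} and \eqref{eq:subgx5}, and the same appeal to $\|\bU\|_{\op}\lesssim 1$ together with Lemma~\ref{lem:Gamma_Lambda_bd} for the $\bU\Gamma$-term, all on the event $\mathcal{A}_1(c_t)\cap\mathcal{A}_2$. The rate arithmetic checks out, so nothing further is needed.
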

\begin{proof}
	Let $\mathcal{A}_2$ be as in Lemma~\ref{lem:eigmin2}, and let $c_t > 0$ be such that there exists $c'>0$ where $\pr(\mathcal{A}_1(c_t) \cap \mathcal{A}_2) \geq 1 - c'n^{-m}$. In the following we work on the corresponding event. We will use $a \lesssim b$ to denote that there exists constant $C>0$ potentially depending on $c_t$, $c_{\lambda_2}$ and anything designated as a constant, such that $a \leq Cb$.

We have
\begin{align}\label{eq:t3decomp}
\big\|\bX \big( 2 \lambda_2 I &\; + \bX^\top \Lambda \bX \big)^{-1} L'\big\|_2 = \norm{(\bZ + \bU \Gamma) \left( 2 \lambda_2 I + \bX^\top \Lambda \bX \right)^{-1} L'}_2 \nonumber \\
& \leq \norm{\bZ \left( 2 \lambda_2 I + \bX^\top \Lambda \bX \right)^{-1} L'}_2 + \norm{\bU \Gamma \left( 2 \lambda_2 I + \bX^\top \Lambda \bX \right)^{-1} L'}_2 \\
& \leq \|\bZ\|_\op \norm{\left( 2 \lambda_2 I + \bX^\top \Lambda \bX \right)^{-1}}_{\op} \|L'\|_2 + \norm{\bU \Gamma \left( 2 \lambda_2 I + \bX^\top \Lambda \bX \right)^{-1} L'}_2  \nonumber \\
&:= \mathrm{I} + \mathrm{II}.
\end{align}

To control $\mathrm{I}$, note that using~\eqref{eq:subgx3} and \eqref{eq:subgx5}, we have $\mathrm{I} \lesssim p / (r_\ell \sqrt{n})$.

To control $\mathrm{II}$, observe that
\begin{align*}
	\mathrm{II} \leq \|\bU\|_{\op} \norm{\Gamma \left( 2 \lambda_2 I + \bX^\top \Lambda \bX \right)^{-1} L'}_2 \lesssim \sqrt{\frac{p \log p}{r_{\lo} n}} + \frac{p}{r_\lo \sqrt{n}},
\end{align*}
using \eqref{eq:subgx3} and Lemma~\ref{lem:Gamma_Lambda_bd}. Putting things together yields the desired result.
\end{proof}

\subsubsection{Bounds relating to \texorpdfstring{$\cT_4$}{T4}} \label{sec:T_4}

\begin{lemma}
	Assume Assumptions~\ref{cd:pns}--\ref{cd:subg}. Given $m \in \N, c_{\lambda_2} > 0$, there exist constants $c_{t4}, c' > 0$ such that 
	\[
	\pr(\cT_4(c_{t4})) \geq 1 - c' n^{-m}.
	\]
\end{lemma}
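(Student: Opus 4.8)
The plan is to derive each of the four inequalities defining $\cT_4(c_{t4})$ on the event $\mathcal{A}_1(c_t)$ of Lemma~\ref{lem:subgx} (for the last one, after additionally conditioning on $(\bX,\bU)$), taking $c_{t4}$ large enough at the end. Three of the four are essentially available: $\max_{i,j}|X_{ij}|\le c_{t4}\sqrt{\log p}$ and $\|\bX_j\|_2\le c_{t4}$ for all $j$ are exactly \eqref{eq:subgx2} and \eqref{eq:subgx1}, while $\|L'\|_\infty\le c_{t4}\sqrt{\log p/n}$ is the intermediate estimate obtained in the course of establishing \eqref{eq:subgx5} (a Bernstein bound on $\tfrac1n\sum_i\varepsilon_iX_{ij}$ followed by a union bound over $j\in[p]$). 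So only the last inequality needs genuine work.

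Set $M:=(2\lambda_2(\Lambda^0)^{-1}+\bX\bX^\top)^{-1}$ and $N:=\bX^\top\Lambda^0\bX$. Applying the kernel-ridge identity \eqref{eq:kernel_ridge0} to $(\Lambda^0)^{1/2}\bX$ gives $\bX^\top M\bX=(2\lambda_2 I+N)^{-1}N$ and $\bX^\top M(\Lambda^0)^{-1}=(2\lambda_2 I+N)^{-1}\bX^\top$; combining these with $\bX\bX^\top M=I-2\lambda_2(\Lambda^0)^{-1}M$ and $L'=-\bX^\top\be$ (see~\eqref{eq:L'}) gives
\[
\bX^\top M\bX L'=L'+2\lambda_2(2\lambda_2 I+N)^{-1}\bX^\top\be .
\]
On $\mathcal{A}_1(c_t)$ the term $L'$ is $\lesssim\sqrt{\log p/n}$, so it suffices to bound $\bigl\|2\lambda_2(2\lambda_2 I+N)^{-1}\bX^\top\be\bigr\|_\infty$. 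Conditionally on $(\bX,\bU)$ its $j$th coordinate is a centred sub-Gaussian linear functional of $\be$ with variance proxy $\lesssim n^{-1}\|v_j\|_2^2$, where $v_j:=2\lambda_2\,(\Lambda^0)^{-1}M\bX_j=2\lambda_2\bX(2\lambda_2 I+N)^{-1}e_j$; a union bound over $j\in[p]$ then produces a deviation of order $B\lambda_2\sqrt{\log p/n}$ with $B:=\max_j\|(\Lambda^0)^{-1}M\bX_j\|_2$, so everything reduces to the deterministic estimate $B\lesssim\lambda_2^{-1}$ holding on a high-probability event.

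Establishing $\max_j\|(\Lambda^0)^{-1}M\bX_j\|_2\lesssim\lambda_2^{-1}$ is the heart of the argument, and the subtlety is that $\Lambda^0$ is not bounded below, so a careless bound picks up an uncontrolled factor $\|(\Lambda^0)^{-1/2}\|_{\op}=(\min_i f'(\cdot))^{-1/2}$ and the estimate must be routed through $\Lambda^0$ rather than $(\Lambda^0)^{-1}$. I would use the similarity transform carrying $2\lambda_2(\Lambda^0)^{-1}+\bX\bX^\top$ to $2\lambda_2 I+(\Lambda^0)^{1/2}\bX\bX^\top(\Lambda^0)^{1/2}$, split $\bX=\bZ+\bU\Gamma$, and exploit two features guaranteed by $\mathcal{A}_1(c_t)$ and Assumption~\ref{cd:pns}(iii): the ridge level $2\lambda_2\asymp r_\lo\sqrt{\log p/n}$ dominates $\|\bZ\bZ^\top\|_{\op}\lesssim p/n$, so the diffuse "$\bZ$-part" can be absorbed perturbatively with error terms that involve only $\|\Lambda^0\|_{\op}\le c_d$; and the remaining "$\bU\Gamma$-part", supported on an $O(1)$-dimensional subspace, is handled by a Woodbury/Sherman--Morrison expansion exactly as in the proofs of Lemmas~\ref{lem:T_2_first}, \ref{lem:eigmin2} and \ref{lem:Gamma_Lambda_bd}, where $\lambda_{\min}\bigl((\Lambda^0)^{1/2}\bU\Gamma\bigr)\gtrsim\sqrt{r_\lo}$ (cf.\ \eqref{eq:smallest_diag} and Lemma~\ref{lem:eigmin2}) supplies the $r_\lo^{-1}$ factors that make the Woodbury correction well conditioned. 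Several of these estimates can be quoted directly since $(\Lambda^0)^{-1}\in\mathcal{L}$ (its diagonal entries are $\ge c_d^{-1}$), so that \eqref{eq:subgx6}, Lemma~\ref{lem:term1} and the event $\cT_1$ all apply with $\Lambda=2(\Lambda^0)^{-1}$. Adding $c'n^{-m}$ for the failure of $\mathcal{A}_1(c_t)$ and of the deterministic estimate, and taking $c'$ large enough that only those $n$ for which the perturbative step is valid are considered, yields $\pr(\cT_4(c_{t4}))\ge 1-c'n^{-m}$. The main obstacle is exactly the combination just described: controlling a ridge-type resolvent weighted by a matrix $\Lambda^0\notin\mathcal{L}$ while simultaneously keeping the low-rank confounding contribution in check through $\lambda_{\min}(\Gamma)\gtrsim\sqrt{r_\lo}$.
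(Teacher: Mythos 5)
Your handling of the first three conditions of $\cT_4$ is fine and matches the paper, and your probabilistic skeleton for the last condition (conditional sub-Gaussianity of a linear functional of $\be$ given $(\bX,\bU)$, plus a union bound over $j\in[p]$) is also the paper's. The divergence — and the gap — is in the deterministic norm bound that you yourself call the heart of the argument. Note first that your target $2\lambda_2\max_j\|(\Lambda^0)^{-1}M\bX_j\|_2\lesssim 1$ is, via the identity $2\lambda_2(\Lambda^0)^{-1}M = I-\bX\bX^\top M$ and $\|\bX_j\|_2\lesssim 1$, exactly equivalent to bounding $\|\bX\bX^\top M\bX_j\|_2\lesssim\|\bX_j\|_2$, which is what the paper does in a single line: it writes $\|\bX\bX^\top(2\lambda_2(\Lambda^0)^{-1}+\bX\bX^\top)^{-1}\bX_j\|_2^2$ as a quadratic form and invokes the monotonicity Lemma~\ref{lem:Loewner} with the PSD perturbation $2\lambda_2(\Lambda^0)^{-1}$. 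No split of $\bX$ into $\bZ+\bU\Gamma$, no Woodbury step, and no use of $r_\lo$ or Assumption~\ref{cd:pns}(iii) enters at this point; the low-rank structure of the confounding is irrelevant to this particular estimate.

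Your alternative route for that same estimate is only a sketch, and as described it does not close. You correctly diagnose that the obstacle is $\min_i\Lambda^0_{ii}$ not being bounded below, but the proposed fix — conjugating to $2\lambda_2 I+(\Lambda^0)^{1/2}\bX\bX^\top(\Lambda^0)^{1/2}$ — leaves a dangling factor of $(\Lambda^0)^{-1/2}$ when you undo the transform (since $(\Lambda^0)^{-1}M\bX_j=(\Lambda^0)^{-1/2}\bigl(2\lambda_2 I+(\Lambda^0)^{1/2}\bX\bX^\top(\Lambda^0)^{1/2}\bigr)^{-1}(\Lambda^0)^{1/2}\bX_j$), and you never say how that unbounded factor is eliminated. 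The crude versions of your two sub-steps also fail: for the ``diffuse $\bZ$-part,'' the naive bound $2\lambda_2\|\bZ(2\lambda_2 I+N)^{-1}e_j\|_2\leq\|\bZ\|_{\op}\lesssim\sqrt{p/n}$ is $\gtrsim 1$ under Assumption~\ref{cd:pns}(i), so ``absorbing it perturbatively'' requires an argument you have not given; and the Woodbury step for the $\bU\Gamma$-part would have to be carried out for the non-symmetric matrix $2\lambda_2 I+\bX\bX^\top\Lambda^0$, whose resolvent norm is not controlled by its eigenvalues. So while your reduction is correct and the quantity you need is provable (indeed it follows from the paper's one-line Loewner argument plus a triangle inequality), the argument you actually propose for it has a genuine hole precisely at the point you flag as the main difficulty.
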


\begin{proof}
	In view of \eqref{eq:subgx2}, \eqref{eq:subgx1} and \eqref{eq:subgx5}, it suffices to show that there exist constants $c, c' >0$ such that with probability at least $1-c'n^{-m}$,
	\[
	\left\|\bX^\top \left(2 \lambda_2 (\Lambda^0)^{-1} + \bX\bX^\top\right)^{-1} \bX L' \right\|_\infty \leq c \sqrt{\frac{\log p}{n}}.
	\]
	Observe that
	\begin{align*}
		&\norm{\mb X \mb X^\top \left(2 \lambda_2 (\Lambda^0)^{-1} + \bX\bX^\top\right)^{-1} \mb X_j}_2^2  \\
		& \qquad =\bX_j^\top \left( 2 \lambda_2 (\Lambda^0)^{-1} + \bX\bX^\top\right)^{-1} \left( \bX\bX^\top \right)^2 \left( 2 \lambda_2 (\Lambda^0)^{-1} + \bX\bX^\top\right)^{-1} \bX_j \leq \|\mb X_j\|_2^2,
	\end{align*}
using Lemma~\ref{lem:Loewner} for the final inequality. Recalling the definition of $L'$ \eqref{eq:L'}, we see from Assumption~\ref{cd:subg} that $\bX_j^\top \left(2 \lambda_2 (\Lambda^0)^{-1} + \bX\bX^\top\right)^{-1} \bX L'$ is sub-Gaussian conditionally on $(\mb X, \bU)$ with variance proxy $ \sigma_{\varepsilon}^2 \|\mb X_j\|_2^2 / n$. Thus there exists constants $c, c'>0$ such that
\[
\PP\left( \left\{\norm{\bX^\top \left( 2 \lambda_2 (\Lambda^0)^{-1} + \bX\bX^\top\right)^{-1} \bX L' }_\infty \geq c \|\mb X_j\|_2 \sqrt{\frac{\log p}{n}}\right\} \;\bigg|\; \bX, \bU\right) \leq c' p^{-m}.
\]
Then in view of \eqref{eq:subgx1}, we see that for potentially different constants $c,c'>0$, we have the desired result.
\end{proof}

\subsubsection{Proof of Lemma~\ref{lem:beta}} \label{sec:beta}
Let $c_t > 0$ be such that there exists $c'>0$ where $\pr(\cT_1(c_t) \cap \cT_2(c_t) \cap \cT_3(c_t) \cap \cT_4(c_t)) \geq 1 - c'n^{-m}$.
In the following we work on the corresponding event. We will use $a \lesssim b$ to denote that there exists constant $C>0$ potentially depending on $c_t$, $c_{\lambda_2}$ and anything designated as a constant, such that $a \leq Cb$.

Recall the definitions of $Q_1$ and $Q_2$ in \eqref{eq:b^0_decomp}, which sum to give $\lambda_2b^0$.
We first tackle $\|Q_1\|_\infty$.

Recall that $\bar{\Lambda}_{ii} := f' (X_i^\top \beta^0 + t_i U_i^\top \delta^0 + (1 - t_i) X_i^\top b^0)$ where $t_i \in [0,1]$ \eqref{eq:Lambda_bar_def}. Now for $\eta \in [0,1]$, let $\Lambda(\eta) := (1 - \eta) \Lambda^0 + \eta \bar{\Lambda}$. Further define 
\[
Q(\eta) := \lambda_2 \left( 2 \lambda_2 I + \bX^\top \Lambda(\eta) \bX\right)^{-1} L',
\]
where $\hat{\Lambda} := \bar{\Lambda} - \Lambda^0$. Then 
\[
\frac{\partial Q(\eta)}{\partial \eta} = \lambda_2 \left( 2\lambda_2 I + \bX^\top \Lambda(\eta) \bX\right)^{-1} \bX^\top \hat{\Lambda} \bX \left( 2 \lambda_2 I + \bX^\top \Lambda(\eta) \bX\right)^{-1} L'.
\]
Fix $j \in [p]$ and write $e_j \in \R^p$ for the $j$th basis vector. Noting that $Q_1 = Q(1)$, we have from the mean value theorem, that
\begin{align*}
	\frac{1}{\lambda_2} e_j^\top Q_1 &= e_j^\top \underbrace{\left( 2 \lambda_2 I + \bX^\top \Lambda^0 \bX\right)^{-1} L'}_{=:Q_{11} / (2 \lambda_2)} + \underbrace{e_j^\top \left( 2\lambda_2 I + \bX^\top \tilde{\Lambda} \bX\right)^{-1} \bX^\top \hat{\Lambda} \bX \left( 2 \lambda_2 I + \bX^\top \tilde{\Lambda} \bX\right)^{-1} L'}_{=:Q_{12j} / (2 \lambda_2)} , 
\end{align*}
where $\tilde{\Lambda} \in \R^{n \times n}$ is a diagonal matrix such that $\tilde{\Lambda}_{ii} := t_i' \Lambda^0_{ii} + (1 - t_i') \bar{\Lambda}_{ii}$ for some $t_i' \in [0,1]$ (note we have suppressed dependence on $j$). 

To control $Q_{11}$, we argue as follows:
\begin{align} \label{eq:xtransform}
2 \lambda_2 \left( 2 \lambda_2 I + \bX^\top \Lambda^0 \bX\right)^{-1} & = I - \left( 2 \lambda_2 I + \bX^\top \Lambda^0 \bX\right)^{-1} \bX^\top \Lambda^0 \bX \nonumber \\
&= I - \bX^\top (\Lambda^0)^{1/2} \left( 2 \lambda_2 I + (\Lambda^0)^{1/2} \bX \bX^\top (\Lambda^0)^{1/2}\right)^{-1}  (\Lambda^0)^{1/2} \bX \nonumber \\ 
&= I - \bX^\top \left( 2 \lambda_2 (\Lambda^0)^{-1} + \bX\bX^\top\right)^{-1}  \bX,
\end{align}
using \eqref{eq:kernel_ridge0} with $A= (\Lambda^0)^{1/2}\mb X$ in the penultimate line.
This gives us that
\[
\|Q_{11}\|_\infty \leq \|L' \|_\infty + \norm{\bX^\top \left( 2 \lambda_2 (\Lambda^0)^{-1} + \bX\bX^\top\right)^{-1} \bX L' }_\infty \lesssim \sqrt{\frac{\log p}{n}},
\]
using the fact that we are working on  $\cT_4$. Thus by H\"older's inequality,
\begin{equation} \label{eq:Q_11_bd}
	\max_j |e_j^\top Q_{11}| \leq \|Q_{11}\|_\infty \lesssim  \sqrt{\frac{\log p}{n}}.
\end{equation}

We next turn to $Q_{12j}$. Observe that
\begin{align*}
Q_{12j}  &= \bX_j^\top \hat{\Lambda} \bX \left( 2 \lambda_2 I + \bX^\top \tilde{\Lambda} \bX\right)^{-1} L'\\
& \qquad - \bX_j^\top \tilde{\Lambda} \bX \left( 2 \lambda_2 I + \bX^\top \tilde{\Lambda} \bX\right)^{-1} \bX^\top \hat{\Lambda} \bX \left( 2 \lambda_2 I + \bX^\top \tilde{\Lambda} \bX\right)^{-1} L'.
\end{align*}
Then using the fact that (analogously to \eqref{eq:xtransform})
\begin{align*}
\bX^\top \tilde{\Lambda} \bX \left( 2 \lambda_2 I + \bX^\top \tilde{\Lambda} \bX \right)^{-1} &= \bX^\top \tilde{\Lambda}^{1/2} \left( 2 \lambda_2 I + \tilde{\Lambda}^{1/2} \bX \bX^\top \tilde{\Lambda}^{1/2} \right)^{-1} \tilde{\Lambda}^{1/2} \bX^\top \\
&= \bX^\top \left( 2 \lambda_2 \tilde{\Lambda}^{-1} + \bX\bX^\top \right)^{-1} \bX,
\end{align*}
one can further rewrite $Q_{12j}$ as
\begin{align}
Q_{12j} &=  \bX_j^\top \hat{\Lambda} \bX \left( 2 \lambda_2 I + \bX^\top \tilde{\Lambda} \bX\right)^{-1} L' \nonumber\\
&\qquad - \bX_j^\top \left( 2 \lambda_2 \tilde{\Lambda}^{-1} + \bX\bX^\top\right)^{-1} \bX \bX^\top \hat{\Lambda} \bX \left( 2 \lambda_2 I + \bX^\top \tilde{\Lambda} \bX\right)^{-1} L'. \label{eq:2nd_term}
\end{align}
Now by the Cauchy--Schwarz inequality and H\"older's inequality, we have
\begin{align*}
	\bX_j^\top \hat{\Lambda} \bX \left( 2 \lambda_2 I + \bX^\top \tilde{\Lambda} \bX\right)^{-1} L' &\leq \|\hat{\Lambda} \bX_j\|_2 \norm{\bX \left( 2 \lambda_2 I + \bX^\top \tilde{\Lambda} \bX\right)^{-1} L'}_2 \\
	&\leq \frac{1}{\sqrt{n}}\max_i |X_{ij}| \|\hat{\Lambda}\|_{\mathrm{F}} \norm{\bX \left( 2 \lambda_2 I + \bX^\top \tilde{\Lambda} \bX\right)^{-1} L'}_2,
\end{align*}
where $\|\cdot\|_{\mathrm{F}}$ denotes the Frobenius norm of the matrix.
Arguing similarly for \eqref{eq:2nd_term}, we see that as we are working on $\cT_1$ and $\cT_4$, we have
\begin{equation}\label{eq:thmtbeta1}
\max_{j \in [p]} |Q_{12j}| \lesssim \left( \sqrt{\log p} + \frac{\sqrt{pn \log p}}{r_{\lo}}\right)\cdot \frac{1}{\sqrt{n}} \|\hat{\Lambda}\|_{\mathrm{F}}  \cdot \norm{\bX (2 \lambda_2 I + \bX^\top \tilde{\Lambda} \bX)^{-1} L' }_2.
\end{equation}
To control $\|\hat{\Lambda}\|_{\mathrm{F}} / \sqrt{n}$, note that
\begin{align*}
\|\hat{\Lambda}\|_{\mathrm{F}} / \sqrt{n} \lesssim \|\bX b^0 - \bU \delta^0\|_2.
\end{align*}
Thus as we are on $\cT_2$, we have
\begin{equation}\label{eq:thmtbeta2}
\frac{1}{\sqrt{n}} \|\hat{\Lambda}\|_{\mathrm{F}}  \lesssim \sqrt{\frac{p \log p}{r_{\lo} n}} + \frac{p}{r_{\lo}\sqrt{n}} + \sqrt{\frac{\log p}{n}} \lesssim \frac{p \sqrt{\log p}}{r_\lo \sqrt{n}}.
\end{equation}
Finally putting together~\eqref{eq:thmtbeta1},~\eqref{eq:thmtbeta2} and using the fact that we are working on $\cT_3$, we have that
\begin{align*}
\max_{j \in [p]} |Q_{12j}| &\lesssim 
\frac{p^2 \log p}{r_\lo^2 n} + \frac{p^{5/2} \log p}{r_\lo^3 \sqrt{n}} + \frac{p^{3/2}(\log p)^{3 / 2}}{r_\ell^{3/2} n} + \frac{p^2 (\log p)^{3 / 2}}{r_\ell^{5/2} \sqrt{n}} \\
&\lesssim \sqrt{\frac{\log p}{n}},
\end{align*}
using Assumptions~\ref{cd:pns}(i)--(iii). Thus applying H\"older's inequality as in \eqref{eq:Q_11_bd}, we see that $
\|Q_1\|_\infty \lesssim \sqrt{\frac{\log p}{n}}$.

Finally we consider $Q_2$. By \eqref{eq:kernel_ridge},
\[
\|Q_2\|_\infty = \lambda_2 \max_j \bigg| e_j^\top  \left( 2 \lambda_2 I + \bX^\top \bar{\Lambda} \bX\right)^{-1} \bX^\top \bar{\Lambda} \bU \delta^0 \bigg| = \lambda_2 \max_j \bigg| \bX_j^\top  \left( 2 \lambda_2 \bar{\Lambda}^{-1} + \bX\bX^\top\right)^{-1} \bU \delta^0 \bigg|.
\]
Then using the Cauchy--Schwarz inequality and the fact that we are on $\cT_4$, 
\begin{align*}
\|Q_2\|_\infty & \lesssim \norm{ \lambda_2 \left( 2 \lambda_2 \bar{\Lambda}^{-1} + \bX^\top \bX\right)^{-1} \bU \delta^0 }_2.
\end{align*}
Finally using the fact that we are on $\cT_2$, we have $\| Q_2 \|_\infty \lesssim  \sqrt{\frac{\log p}{n}}$. In light of our control of $Q_1$ and $Q_2$ and the decomposition in~\eqref{eq:b^0_decomp}, the desired result follows.

\subsubsection{Proof of Lemma~\ref{lem:rsc}}\label{sec:rsc}

\begin{proof}
		Let $c_t > 0$ be such that there exists $c'>0$ where 
		$\pr(\cT_1(c_t) \cap \cT_2(c_t)) \geq 1 - c'n^{-m}$
		(note that $c'$ may depend on $c_{\lambda_2}$).
	In the following we work on 
	$\cT_1(c_t) \cap \cT_2(c_t) \cap \Omega(\tau, \kappa, c_p)$. We will use $a \lesssim b$ to denote that there exists constant $C>0$ potentially depending on $c_t$ and anything designated as a constant, such that $a \leq Cb$. For convenience, we will write $\tau' = \tau/2$.
	
	Fix $\beta\in \R^p$ such that $\beta -\beta^0 \in C(S)$ and \eqref{eq:ballsize} is satisfied.
For simplicity, throughout this proof we denote $b(\beta)$ as $b$.

Recall that $\rem$ denotes the remainder in a first-order Taylor expansion of $L$ about $\beta^0 + b^0$. Considering the Lagrange form of the remainder, there exists $t \in [0,1]$ such that defining diagonal matrix $\Lambda^{(2)} \in \R^{n \times n}$ by $\Lambda^{(2)}_{ii} := f'( t X_i^\top (\beta^0 + b^0) + (1 - t) X_i^\top (\beta + b))$, we have
\begin{align*}
\rem(\beta + b, \beta^0 + b^0) = \big( (\beta + b) - (\beta^0 + b^0)\big)^\top \bX^\top \Lambda^{(2)} \bX \big( (\beta + b) - (\beta^0 + b^0)\big).
\end{align*}
Let $\Delta := (\beta + b) - (\beta^0 + b^0)$, $\tilde{\Delta}_{i} := X_i^\top b^0 - U_i^\top \delta^0$ and $\eta_i := X_i^\top \beta^0 + U_i^\top \delta^0$. Then for all $i \in [n]$ such that
\[
|X_i^\top \Delta| \leq \tau', \; |\eta_i| \leq 2\tau', \; \text{and } |\tilde{\Delta}_{i}| \leq \tau' ,
\]
we must have
\[
| t X_i^\top (\beta^0 + b^0) + (1 - t) X_i^\top (\beta + b)| \leq |\Delta| + |\tilde{\Delta}_{i}| + |\eta_i| \leq 4 \tau'.
\]
Thus for such $i$, $\Lambda^{(2)}_{ii} \geq \min_{|t| \leq 4 \tau'} f'(t)$, which is a positive constant depending only on $\tau$.
This gives us that
\begin{align*}
\rem( & \beta + b,  \beta^0 + b^0)\\
& \gtrsim  \Delta^\top \frac{1}{n} \sum_{i=1}^n X_i X_i^\top \one_{\left\{|X_i^\top \Delta| \leq \tau', |\eta_i| \leq 2\tau', |\tilde{\Delta}_{i}| \leq \tau' \right\}} \Delta \\
& = \Delta^\top \frac{1}{n} \sum_{i=1}^n X_i X_i^\top \one_{\left\{|X_i^\top \Delta| \leq \tau', |\eta_i| \leq 2\tau'\right\}} \Delta  \\
& \quad -  \Delta^\top \frac{1}{n} \sum_{i=1}^n X_i X_i^\top \one_{\left\{|X_i^\top \Delta| \leq \tau', |\eta_i| \leq 2\tau'\right\}} \one_{\left\{ |\tilde{\Delta}_{i}| > \tau'\right\}} \Delta \\
& \geq \Delta^\top \frac{1}{n} \sum_{i=1}^n X_i X_i^\top \one_{\left\{|X_i^\top \Delta| \leq \tau', |\eta_i| \leq 2\tau'\right\}} \Delta - \Delta^\top \frac{1}{n} \sum_{i=1}^n X_i X_i^\top \one_{\left\{ |\tilde{\Delta}_{i}| > \tau'\right\}} \Delta.
\end{align*}
Now by applying H\"{o}lder's inequality to the second term and using~\eqref{eq:subgx2}, we have
\begin{align}
& \rem(\beta + b, \beta^0 + b^0) \nonumber \\
& \gtrsim \Delta^\top \frac{1}{n} \sum_{i=1}^n X_i X_i^\top \one_{\left\{|X_i^\top \Delta| \leq \tau', |\eta_i| \leq 2\tau'\right\}} \Delta - c_t  \frac{1}{n} \sum_{i=1}^n \one_{\left\{ |\tilde{\Delta}_{i}| > \tau'\right\}} \log(p) \|\Delta\|_1^2. \nonumber
\end{align}

Now observe that
\[
\frac{1}{n} \sum_{i=1}^n \one_{\left\{ \tilde{\Delta}_{i}^2> \tau'^2\right\}} \leq \frac{1}{\tau'^2} \|\mb X b^0 - \mb U \delta^0\|_2^2. 
\]
Thus
as we are on $\mathcal{T}_2(c_t)$, we have that
\[
\frac{1}{n} \sum_{i=1}^n \one_{\left\{ |\tilde{\Delta}_{i}| > \tau'\right\}} \lesssim  (1 + c_{\lambda_2}^{-1})^2\left(\frac{p \log p}{r_{\lo} n} + \frac{p^2}{r_{\lo}^2 n}\right) +  c_{\lambda_2}^2\frac{\log p}{n}, 
\]
which gives us that there exists $c>0$ with
\begin{equation} \label{eq:E_eq}
	\begin{split}
		\rem(\beta + b, \beta^0 + b^0) \gtrsim &   \Delta^\top \frac{1}{n} \sum_{i=1}^n X_i X_i^\top \one_{\left\{|X_i^\top \Delta| \leq \tau', |\eta_i| \leq 2\tau'\right\}} \Delta \\
		&- c \left( (1 + c_{\lambda_2}^{-1})^2 \left(\frac{p (\log p)^2}{r_{\lo} n} + \frac{p^2 \log p}{r_{\lo}^2 n}\right) + c_{\lambda_2}^2\frac{(\log p)^2}{n}\right)  \|\Delta\|_1^2. 
	\end{split}
\end{equation}

We now turn to the first term in the above decomposition. Recall that
\begin{align*}
2 \lambda_2 b^0 &= \frac{1}{n} \sum_{i = 1}^n \left\{Y_i - f\left(X_i^\top (\beta^0 + b^0)\right)\right\} X_i,\\
2 \lambda_2 b &= \frac{1}{n} \sum_{i = 1}^n \left\{Y_i - f\left(X_i^\top (\beta + b)\right)\right\} X_i.
\end{align*}
Subtracting the above two equalities yields
\[
2 \lambda_2 (b^0 - b) = - \frac{1}{n} \sum_{i = 1}^n \left\{f\left(X_i^\top (\beta^0 + b^0)\right) - f\left(X_i^\top (\beta + b)\right)\right\} X_i.
\]
Now applying the mean value theorem, we have that there exists $t_i \in [0,1]$ such that
\begin{align*}
&f\left(X_i^\top (\beta^0 + b^0)\right) - f\left(X_i^\top (\beta + b)\right) \\
&\qquad = f'\left(t_i X_i^\top (\beta^0 + b^0) + (1 - t_i) X_i^\top (\beta + b)\right) \left(X_i^\top (\beta^0 + b^0) - X_i^\top (\beta + b)\right).
\end{align*}
Thus defining $\Lambda^{(1)} \in \R^{n \times n}$ to be a diagonal matrix with $\Lambda^{(1)}_{ii} := f'(t_i X_i^\top (\beta^0 + b^0) + (1 - t_i) X_i^\top (\beta + b))$, we have
\begin{equation}\label{eq:lambda1}
	f\left(X_i^\top (\beta^0 + b^0)\right) - f\left(X_i^\top (\beta + b)\right)  =  \sqrt{n} \left(\Lambda^{(1)} \mb X (\beta - \beta^0 + b - b^0)\right)_{i},
\end{equation}
so
\[
2\lambda_2 (b - b^0) = \mb X^\top \Lambda^{(1)} \mb X (\beta^0 - \beta  + b^0 - b).
\]
This then yields
\begin{equation} \label{eq:b_b^0}
b - b^0 = - \left(2 \lambda_2 I + \bX^\top \Lambda^{(1)} \mb X\right)^{-1} \bX^\top \Lambda^{(1)} \bX(\beta - \beta^0).
\end{equation}

From the above, we have that
\begin{align*}
X_i^\top \Delta & = X_i^\top (\beta - \beta^0) - X_i^\top \left( 2 \lambda_2 I + \bX^\top \Lambda^{(1)} \bX\right)^{-1} \bX^\top \Lambda^{(1)} \bX (\beta - \beta^0) \\
& = X_i^\top (\beta - \beta^0) - X_i^\top \bX^\top \left( 2 \lambda_2 (\Lambda^{(1)})^{-1} + \bX\bX^\top\right)^{-1} \bX (\beta - \beta^0) \\
&\leq \left(\|X_i\|_\infty + \max_j \abs{X_i^\top \bX^\top \left( 2 \lambda_2 (\Lambda^{(1)})^{-1} + \bX\bX^\top\right)^{-1} \bX_j}\right) \|\beta - \beta^0\|_1.
\end{align*}
using a similar argument to \eqref{eq:kernel_ridge} for the second equality, and H\"older's inequality in the final line.
Now as $\beta - \beta^0 \in C(S)$, we have by the Cauchy--Schwarz inequality that
\begin{equation} \label{eq:sparse_ineq}
\|\beta - \beta^0\|_1 \leq 4 \|\beta_S - \beta^0_S\|_1 \leq 4\sqrt{s} \|\beta - \beta^0\|_2.
\end{equation}
Thus using \eqref{eq:subgx2} and that we are on $\cT_1(c_t)$, we have
\[
\max_{i} |X_i^\top \Delta| \lesssim (1 + c_{\lambda_2}^{-1})\left( \sqrt{s \log p} + \frac{\sqrt{spn \log p}}{r_{\lo}}\right) \|\beta - \beta^0\|_2.
\]
Now by denoting the implicit constant in the above by $C>0$, let us set $r := C^{-1}\tau' / 2$. Then for $\beta$ satisfying  \eqref{eq:ballsize}, provided $c_{\lambda_2} \geq 1$, we have that $|X_i^\top \Delta| \leq \tau'$ for all $i$. In what follows, we will enforce \eqref{eq:ballsize}, and assume $c_{\lambda_2} \geq 1$: our choice of $c_{\lambda_2}$ will ensure this latter condition.

Returning to \eqref{eq:E_eq}, we have
\begin{align*}
\rem(\beta + b, \beta^0 + b^0) \gtrsim & \Delta^\top \frac{1}{n} \sum_{i=1}^n X_i^\top X_i \one_{\{|\eta_i| \leq 2\tau'\}} \Delta \\
& - c \left((1 + c_{\lambda_2}^{-1})^2 \left(\frac{p (\log p)^2}{r_{\lo} n} + \frac{p^2 \log p}{r_{\lo}^2 n}\right) + c_{\lambda_2}^2 \frac{(\log p)^2}{n}\right)  \|\Delta\|_1^2.
\end{align*}
Then as we are on the event $\Omega$, it holds that
\begin{align*}
& \rem( \beta + b, \beta^0 + b^0) \gtrsim  \kappa \|\beta - \beta^0\|_2^2 -  c \left((1 + c_{\lambda_2}^{-1})^2 \left(\frac{p (\log p)^2}{r_{\lo} n} + \frac{p^2 \log p}{r_{\lo}^2 n}\right) + c_{\lambda_2}^2 \frac{(\log p)^2 }{n}\right) \|\Delta\|_1^2 \\
&\qquad -  c_p r_{\ell} \sqrt{\frac{\log p}{n}} \|b - b^0\|_2^2\\
& \gtrsim \kappa \|\beta - \beta^0\|_2^2 -  c s \left((1 + c_{\lambda_2}^{-1})^2 \left(\frac{p (\log p)^2}{r_{\lo} n} + \frac{p^2 \log p}{r_{\lo}^2 n}\right) + c_{\lambda_2}^2 \frac{(\log p)^2 }{n}\right) \|\beta - \beta^0\|_2^2 \\
& \quad - c \left((1 + c_{\lambda_2}^{-1})^2 \left(\frac{p^2 (\log p)^2}{r_{\lo} n} + \frac{p^3 \log p}{r_{\lo}^2 n}\right) + c_{\lambda_2}^2 \frac{p (\log p)^2 )}{n}\right) \|b - b^0\|_2^2 - c_p r_{\ell} \sqrt{\frac{\log p}{n}} \|b - b^0\|_2^2.
\end{align*}
where for the last inequality we use that $\beta - \beta^0 \in C(S)$; note the constant $c$ is potentially different in the second line.
By Assumptions~\ref{cd:pns}(i), (ii) and~(iii), we have that
\[
\frac{p (\log p)^2}{n} = o\left( r_{\lo} \sqrt{\frac{\log p}{n}} \right) \qquad \text{ and } \qquad r_{\lo} \sqrt{\frac{\log p}{n}} \gtrsim \frac{p^2 (\log p)^2}{r_{\lo} n} + \frac{p^3 \log p}{r_\ell^2 n}.
\]
Then by choosing the constant $c_{\lambda_2} > 0$ large enough, we have that for $n$ sufficiently large,
\[
c_{\lambda_2} r_{\lo} \sqrt{\frac{\log p}{n}} - c \left((1 + c_{\lambda_2}^{-1})^2 \left(\frac{p^2 (\log p)^2}{r_{\lo} n} + \frac{p^3 \log p}{r_{\lo}^2 n}\right) + c_{\lambda_2}^2 \frac{p (\log p)^2}{n}\right) - c_p r_{\ell} \sqrt{\frac{\log p}{n}} > 0.
\]
Thus for such $n$, 
\begin{align*}
&\rem(\beta + b, \beta^0 + b^0) + \lambda_2 \|b - b^0\|_2^2 \geq \\
& \qquad \kappa \|\beta - \beta^0\|_2^2  - c s \left( (1 + c_{\lambda_2}^{-1})^2 \left(\frac{p (\log p)^2}{r_{\lo} n} + \frac{p^2 \log p}{r_{\lo}^2 n}\right) + c_{\lambda_2}^2 \frac{(\log p)^2}{n}\right) \|\beta - \beta^0\|_2^2.
\end{align*}
The desired result then follows from Assumptions~\ref{cd:pns}(iii) and (iv).
\end{proof}

%

\section{Theoretical analysis of prediction errors} \label{sec:pfinf}
We first introduce some notation used in the section. Note that these notations have also been used in Section~\ref{sec:pfmain}. We write $e_j$ for the $j$th standard basis vector, and write $I$ for the identity matrix, where the dimensions of these will be clear from the context. For symmetric matrices $A, B$ of the same dimensions, we write $A \succeq B$ to indicate that $A-B$ is positive semi-definite. We use $\|A\|_{\op}$ and $\lambda_{\max}(A)$ interchangeably for the maximum singular value of an arbitrary matrix $A$, and write $\lambda_{\min}(A)$ for its minimum singular value.

For notational simplicity, we rewrite $\bX/\sqrt{n}$ as $\bX$, and analogously rescale all the other matrices and vectors $\bU$, $\bZ$, $\be$, $\bY$ relating to the observed data. We retain the original (unscaled) definitions of the random variables $X_i$ etc.\ forming these matrices, and take $X_{ij}$ to be the $j$th component of $X_i$ rather than the $ij$th entry of $\mb X$.

We also set
\[
c_d := \sup_t f'(t).
\]
\subsection{Proof of Theorem~\ref{thm:inpred}}
Let $c_{\lambda_1}$ and $c_{\lambda_2}$ be as in Theorem~\ref{thm:main}. In the following, we work on the intersection of the event given by Theorem~\ref{thm:main}, $\Omega$ and $\mathcal{A}_1(c_t)\cap \mathcal{T}_2(c_t)$ (see Lemma~\ref{lem:subgx}) with $c_t>0$ chosen such that for some $c'>0$, the probability of this intersection is at least $\pr(\Omega) - c'n^{-m}$.
 We will use $a \lesssim b$ to denote that there exists constant $C>0$ potentially depending on $c_t, c_{\lambda_2}, c_{\lambda_1}$ and anything designated as a constant, such that $a \leq Cb$. 
 
Now observe that 
\begin{align}\label{eq:insampledecomp}
& \left(\frac{1}{n} \sum_{i = 1}^n (f(X_i^\top (\hat{\beta} + \hat{b})) - f(X_i^\top \beta^0 + U_i^\top \delta^0))^2 \right)^{1/2} \nonumber\\
&\qquad \leq \left(\frac{1}{n} \sum_{i = 1}^n (f(X_i^\top (\hat{\beta} + \hat{b})) - f(X_i^\top (\beta^0 + b^0))^2\right)^{1/2} + \left(\frac{1}{n} \sum_{i = 1}^n (f(X_i^\top (\beta^0 + b^0)) - f(X_i^\top \beta^0 + U_i^\top \delta^0))^2\right)^{1/2} \nonumber\\
&\qquad =: \mathrm{I} + \mathrm{II}.
\end{align}
We first control $\mathrm{I}$.

Let $\Lambda^{(1)} \in \R^{n \times n}$ be as in~\eqref{eq:lambda1} but where $\beta$ is set to $\hat{\beta}$ (and so $\hat{b}$ is $b$).
We have
\[
\mathrm{I} = \|\Lambda^{(1)} \bX (\hat{\beta} -\beta^0 +\hat{b} - b^0) \|_2.
\]
Now from \eqref{eq:b_b^0} we have
\begin{align} \label{eq:hat_b-b^0}
	\hat{b}-b^0 = - \left(2 \lambda_2 I + \bX^\top \Lambda^{(1)} \mb X\right)^{-1} \bX^\top \Lambda^{(1)} \bX(\hat{\beta} - \beta^0),
\end{align}
so
\[
\mathrm{I} =\lambda_2 \norm{ \Lambda^{(1)} \bX  \left( 2 \lambda_2 I + \bX^\top \Lambda^{(1)} \bX\right)^{-1} (\hat{\beta} - \beta^0)}_2.
\]
Then using \eqref{eq:kernel_ridge2} we have
\begin{align}\label{eq:insampledecomp2}
\mathrm{I}
&= \lambda_2 \norm{\left( 2 \lambda_2 (\Lambda^{(1)})^{-1} + \bX\bX^\top\right)^{-1} \bX (\hat{\beta} - \beta^0)}_2 \nonumber\\
&\leq \lambda_2 \norm{\left( 2 \lambda_2 (\Lambda^{(1)})^{-1} + \bX\bX^\top\right)^{-1} \bZ (\hat{\beta} - \beta^0)}_2 + \lambda_2 \norm{\left( 2 \lambda_2 (\Lambda^{(1)})^{-1} + \bX\bX^\top\right)^{-1} \bU \Gamma (\hat{\beta} - \beta^0)}_2  =: \mathrm{I}_1 + \mathrm{I}_2.\nonumber
\end{align}

To bound $\mathrm{I}_2$, we argue as follows. We have
\[
\mathrm{I}_2 \leq   \lambda_2\norm{ \left( 2 \lambda_2 (\Lambda^{(1)})^{-1} + \bX\bX^\top\right)^{-1} \bU}_{\op} \|\Gamma (\hat{\beta} - \beta^0)\|_2.
\]
Now $(\Lambda^{(1)})^{-1} \succeq c_d^{-1} I$. Thus noting that we are on $\mathcal{A}_1(c_t)$, by Lemma~\ref{lem:T_2_first}, we have that
\[
\mathrm{I}_2 \lesssim \sqrt{\frac{\log p}{n}} \|\Gamma (\hat{\beta} - \beta^0)\|_2 \leq C \sqrt{\frac{\log p}{n}} \sqrt{q} \|\Gamma (\hat{\beta} - \beta^0)\|_\infty \lesssim  \|\Gamma\|_\infty \|\hat{\beta} - \beta^0\|_1 \sqrt{\frac{\log p}{n}},
\]
applying H\"older's inequality. Note that by Assumption~\ref{cd:subg},
\begin{equation} \label{eq:Gamma_bd}
1 \gtrsim \Var(X_{ij}) \geq \E (\Gamma_j^{\top}U_iU_i^{\top}\Gamma_j) = \|\Gamma_j\|_2^2,
\end{equation}
where $\Gamma_j \in \R^q$ denotes the $j$th column of $\Gamma$. Thus $\|\Gamma\|_\infty \lesssim 1$ and so appealing to Theorem~\ref{thm:main}, we have
\begin{equation} \label{eq:pred_I2_bd}
	\mathrm{I}_2 \lesssim \frac{s \log p}{n}.
\end{equation}

Turning to $\mathrm{I}_1$, we have from Lemma~\ref{lem:Loewner} that
\begin{align}
	\mathrm{I}_1^2 &\lesssim \|\mb Z(\hat{\beta} - \beta^0)\|_2^2 \nonumber\\
	&=  (\hat{\beta} - \beta^0)^\top \Sigma (\hat{\beta} - \beta^0) + (\hat{\beta} - \beta^0)^\top (\bZ^\top \bZ - \Sigma) (\hat{\beta} - \beta^0)  \nonumber \\
	&\lesssim \|\hat{\beta} - \beta^0\|_2^2 + \|\hat{\beta} - \beta^0\|_1^2 \|\bZ^\top \bZ - \Sigma\|_\infty \nonumber\\
	&\lesssim s \frac{\log p}{n} +  s^2 \left( \frac{\log p}{n}\right)^{3/2} \lesssim s \frac{\log p}{n}, \label{eq:pred_I1_bd}
\end{align}
using Assumption~\ref{cd:subg} and H\"older's inequality in the penultimate line, and appealing to Theorem~\ref{thm:main} and \eqref{eq:subgx7} in the final line, with the final inequality coming from Assumption~\ref{cd:pns}(iv).

We now turn to $\mathrm{II}$. By the mean-value theorem and as we are on $\mathcal{T}_2(c_t)$, we have
\[
\mathrm{II} \leq \|\bX b^0 - \bU \delta^0\|_2 \lesssim \sqrt{\frac{p \log p}{r_{\lo} n}} + \frac{p}{r_\lo \sqrt{n}} + \sqrt{\frac{\log p}{n}}.
\]
In light of the decomposition~\eqref{eq:insampledecomp}, \eqref{eq:pred_I1_bd}, \eqref{eq:pred_I2_bd} and the above, the desired result follows.

\subsection{Proof of Theorem~\ref{thm:prediction}}
The proof of Theorem~\ref{thm:prediction} uses the following variant of Lemma~\ref{lem:term1}.
\begin{lemma} \label{lem:term1_var}
	Given $c_t>0$ and with $c_{\lambda_2}$ bounded away from $0$, there exists $c>0$ such that on the event $\mathcal{A}_1(c_t)$, we have
	\[
	\norm{\Gamma \bX^\top \left( 2 \lambda_2 (\Lambda^{(1)})^{-1} + \bX\bX^\top\right)^{-1} \bX }_\infty \leq c. 
	\]
\end{lemma}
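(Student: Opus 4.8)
The plan is to reduce the claim, via a kernel--ridge identity, to an \emph{entrywise} bound on a single correction matrix, and then to control that matrix one column at a time.

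\emph{Step 1 (reduction).} Write $D:=\Lambda^{(1)}$ and $A:=D^{1/2}\bX$. Since $2\lambda_2 D^{-1}+\bX\bX^\top=D^{-1/2}(2\lambda_2 I+AA^\top)D^{-1/2}$, applying \eqref{eq:kernel_ridge0} to $A$ (with $\lambda=2\lambda_2$) gives
\[
\bX^\top\!\left(2\lambda_2 D^{-1}+\bX\bX^\top\right)^{-1}\!\bX=A^\top(2\lambda_2 I+AA^\top)^{-1}A=I-2\lambda_2\left(2\lambda_2 I+\bX^\top D\bX\right)^{-1},
\]
so $\Gamma\bX^\top(2\lambda_2 D^{-1}+\bX\bX^\top)^{-1}\bX=\Gamma-2\lambda_2\,\Gamma(2\lambda_2 I+\bX^\top D\bX)^{-1}$. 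By \eqref{eq:Gamma_bd} we have $\|\Gamma\|_\infty\lesssim 1$ (here $\|\cdot\|_\infty$ denotes the largest-magnitude entry of a matrix), so it remains to show $\|C\|_\infty\lesssim 1$ for $C:=2\lambda_2\,\Gamma(2\lambda_2 I+\bX^\top D\bX)^{-1}\in\R^{q\times p}$. It is essential that we bound only $\|C\|_\infty$: the \emph{operator} norm of $C$ is of order $\sqrt{p\log p/n}$ (combine the estimate of Lemma~\ref{lem:Gamma_Lambda_bd} with $\lambda_2\asymp r_{\lo}\sqrt{\log p/n}$ and $r_{\lo}\lesssim p$), which diverges when $p\gg n$.

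\emph{Step 2 (eigenvalue bound and decomposition).} By the truncation argument in the proof of Lemma~\ref{lem:eigmin2} one has $\lambda_{\min}(\bU^\top D\bU)\gtrsim 1$ on the relevant good events: although $\Lambda^{(1)}\notin\tilde{\mathcal{L}}$ in general, the argument of $f'$ defining each $D_{ii}$ has absolute value bounded by a constant multiple of $\tau$ for all $i$ in the set $\cB$ of \eqref{eq:CB_def} (using the bound $\max_i|X_i^\top\Delta|\le\tau'$ from the proof of Lemma~\ref{lem:rsc}), so $D_{ii}$ is bounded below on an index set of size $n(1-o(1))$, and the estimate of Lemma~\ref{lem:eigmin2} goes through. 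Hence $\|(\bU^\top D\bU)^{-1}\|_{\op}\lesssim 1$. Using $\Gamma=(\bU^\top D\bU)^{-1}\bU^\top D(\bX-\bZ)$ (since $\bU\Gamma=\bX-\bZ$) together with the identity underlying \eqref{eq:kernel_ridge} (which gives $\bU^\top D\bX(2\lambda_2 I+\bX^\top D\bX)^{-1}=\bU^\top(2\lambda_2 D^{-1}+\bX\bX^\top)^{-1}\bX$), we obtain
\[
C=2\lambda_2(\bU^\top D\bU)^{-1}\Big[\bU^\top(2\lambda_2 D^{-1}+\bX\bX^\top)^{-1}\bX-\bU^\top D\bZ\left(2\lambda_2 I+\bX^\top D\bX\right)^{-1}\Big].
\]
Since $q\lesssim 1$ and $\|(\bU^\top D\bU)^{-1}\|_{\op}\lesssim 1$, it suffices to show that for each $j\in[p]$ the $\R^q$-vector obtained by applying the bracketed operator to $e_j$ and scaling by $2\lambda_2$ has Euclidean norm $\lesssim 1$.

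\emph{Step 3 (column bounds).} For the first bracketed term, write $\bX e_j=\bU\Gamma_j+\bZ_j$; since $(\Lambda^{(1)})^{-1}\in\mathcal{L}$, Lemma~\ref{lem:T_2_first} yields $\|(2\lambda_2 D^{-1}+\bX\bX^\top)^{-1}\bU v\|_2\lesssim\|v\|_2/r_{\lo}$, and (with $\|\Gamma_j\|_2\lesssim 1$ by \eqref{eq:Gamma_bd}, $\|\bZ_j\|_2\lesssim 1$, $\|\bU\|_{\op}\lesssim 1$ and $\|(2\lambda_2 D^{-1}+\bX\bX^\top)^{-1}\|_{\op}\le c_d/(2\lambda_2)$) this term, after scaling by $2\lambda_2$, is $\lesssim\lambda_2/r_{\lo}\asymp\sqrt{\log p/n}=o(1)$. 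For the second term, split $(2\lambda_2 I+\bX^\top D\bX)^{-1}e_j=\tfrac1{2\lambda_2}e_j+w_j$ with $2\lambda_2 w_j=-(2\lambda_2 I+\bX^\top D\bX)^{-1}\bX^\top D\bX_j$: the $\tfrac1{2\lambda_2}e_j$ part contributes $-\bU^\top D\bZ_j$, which is $\lesssim 1$ by \eqref{eq:subgx3} and \eqref{eq:subgx1}; for the $w_j$ part one further splits $\bX^\top D\bX_j=\Gamma^\top\bU^\top D\bX_j+\bZ^\top D\bX_j$, treating the $\Gamma^\top$-part via $\|(2\lambda_2 I+\bX^\top D\bX)^{-1}\Gamma^\top\|_{\op}\lesssim r_{\lo}^{-1/2}+\sqrt{p}\,r_{\lo}^{-1}(\log p)^{-1/2}$ (from the proof of Lemma~\ref{lem:Gamma_Lambda_bd}, with $c_{\lambda_2}$ bounded away from $0$), $\|\bU^\top D\bZ\|_{\op}\lesssim\sqrt{p/n}$ and $\|\bU^\top D\bX_j\|_2\lesssim 1$, and the $\bZ^\top$-part via $\|\bZ^\top D\bX_j\|_2\lesssim\sqrt{p/n}$ and $\|(2\lambda_2 I+\bX^\top D\bX)^{-1}\|_{\op}\le1/(2\lambda_2)$. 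In each estimate Assumption~\ref{cd:pns}(iii) forces the result to be $o(1)$; here one genuinely needs the bound $r_{\lo}\gtrsim p(\log p/n)^{1/6}$, not merely $r_{\lo}\gtrsim p^{5/6}$. Collecting the pieces gives $\|C\|_\infty=o(1)\lesssim 1$.

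\emph{Main obstacle.} The crux is Step~3: because the operator norm of $C$ diverges, one must work entrywise and expose the cancellation whereby left-multiplication by $\Gamma$ annihilates the leading $\tfrac1{2\lambda_2}$-scaled contribution of any basis vector lying outside the column space of $\Gamma^\top$; the sub-leading contributions are then small, but only after invoking the sharp spectral estimates of Lemmas~\ref{lem:T_2_first} and~\ref{lem:Gamma_Lambda_bd} and the full strength of the scaling condition on $r_{\lo}$. A secondary wrinkle is that $\Lambda^{(1)}$ is not of the form to which Lemma~\ref{lem:eigmin2} applies directly, so the lower bound $\lambda_{\min}(\bU^\top\Lambda^{(1)}\bU)\gtrsim 1$ must be re-derived as in Step~2.
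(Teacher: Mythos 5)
Your route is genuinely different from the paper's, and it has a real gap relative to the statement as given. The lemma asserts a bound \emph{on the event $\mathcal{A}_1(c_t)$}, but both your Step~2 and the $\Gamma^\top$-part of Step~3 hinge on $\lambda_{\min}(\bU^\top \Lambda^{(1)}\bU)\gtrsim 1$ (and on the conclusion of Lemma~\ref{lem:Gamma_Lambda_bd} with $\Lambda=\Lambda^{(1)}$, whose proof uses that same eigenvalue bound). Since $\Lambda^{(1)}\notin\tilde{\cL}$, your proposed fix re-runs the Lemma~\ref{lem:eigmin2} argument using $\max_i|X_i^\top\Delta|\le\tau'$; but that bound is only available once one knows $\hat{\beta}-\beta^0\in C(S)$ and satisfies \eqref{eq:ballsize}, which is the conclusion of Theorem~\ref{thm:main} and requires $\Omega(\tau,\kappa,c_p)$, $\Omega_1$, $\Omega_2$ and the $\cT_k$ events in addition to $\mathcal{A}_1(c_t)$. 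So you have not proved the lemma on the stated event (it would still suffice for its one application inside Theorem~\ref{thm:prediction}, where those events are all in force, but the conditioning would need to be restated). There is also a smaller slip in Step~3: the $\bZ_j$-part of the first bracketed term is only $O(1)$ via the bound $\|(2\lambda_2 D^{-1}+\bX\bX^\top)^{-1}\|_{\op}\le c_d/(2\lambda_2)$ you quote, not $o(\lambda_2/r_\lo)$ as claimed; to get $\lesssim\lambda_2/r_\lo$ you must instead use symmetry and $\|(2\lambda_2 D^{-1}+\bX\bX^\top)^{-1}\bU\|_{\op}\lesssim 1/r_\lo$ — harmless for the final $O(1)$ conclusion, but the stated chain of inequalities does not deliver what you assert.

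The deeper issue is that your ``main obstacle'' diagnosis is mistaken: no entrywise cancellation is needed. The paper bounds $\max_j\|\Gamma\bX^\top(2\lambda_2(\Lambda^{(1)})^{-1}+\bX\bX^\top)^{-1}\bX_j\|_2$ directly, inserting $(\bU^\top\bU)^{-1}\bU^\top\bU$ (crucially with \emph{no} $\Lambda^{(1)}$, so only $\lambda_{\min}(\bU)\ge c_t^{-1}$ from \eqref{eq:subgx3} is needed), writing $\bU\Gamma=\bX-\bZ$, and then using $\|\bX\bX^\top(2\lambda_2(\Lambda^{(1)})^{-1}+\bX\bX^\top)^{-1}\|_{\op}\le 1$ (Lemma~\ref{lem:Loewner}) together with $\|\bZ\|_{\op}\|\bX^\top(2\lambda_2(\Lambda^{(1)})^{-1}+\bX\bX^\top)^{-1}\|_{\op}\lesssim 1$ from \eqref{eq:subgx3}, \eqref{eq:subgx6} and Assumption~\ref{cd:pns}(iii). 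This shows the \emph{operator} norm of $\Gamma\bX^\top(2\lambda_2(\Lambda^{(1)})^{-1}+\bX\bX^\top)^{-1}$ is $O(1)$ on $\mathcal{A}_1(c_t)$, and multiplying by $\|\bX_j\|_2\lesssim 1$ finishes the proof; the only matrix whose operator norm blows up is the full product including $\bX$ on the right, which the column-wise bound sidesteps. If you restructure your argument around this insertion (dropping $D$ from $\bU^\top D\bU$), the dependence on Theorem~\ref{thm:main} disappears and the proof closes on $\mathcal{A}_1(c_t)$ as required.
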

\begin{proof}
	Observe that
		\[
		\norm{\Gamma \bX^\top \left( 2 \lambda_2 (\Lambda^{(1)})^{-1} + \bX\bX^\top\right)^{-1} \bX }_\infty \le \max_{j = 1,\cdots, p}\norm{\Gamma \bX^\top \left( 2 \lambda_2 (\Lambda^{(1)})^{-1} + \bX\bX^\top\right)^{-1} \bX_j }_2.
		\]
		Now for each $j$, we have
		\begin{align*}
			& \norm{\Gamma \bX^\top \left( 2 \lambda_2 (\Lambda^{(1)})^{-1} + \bX\bX^\top\right)^{-1} \bX_j }_2 = \norm{(\bU^\top \bU)^{-1} \bU^\top \bU\Gamma \bX^\top \left( 2 \lambda_2 (\Lambda^{(1)})^{-1} + \bX\bX^\top\right)^{-1} \bX_j }_2^2 \\
			& \le \norm{(\bU^\top \bU)^{-1}}_\op \norm{\bU}_\op \norm{\bU\Gamma \bX^\top \left( 2 \lambda_2 (\Lambda^{(1)})^{-1} + \bX\bX^\top\right)^{-1} }_\op \norm{\bX_j}_2 \\
			& \lesssim \norm{\bU\Gamma \bX^\top \left( 2 \lambda_2 (\Lambda^{(1)})^{-1} + \bX\bX^\top\right)^{-1} }_\op,
		\end{align*}
	where for the last inequality we apply~\eqref{eq:subgx3} and~\eqref{eq:subgx1}. Then using the decomposition $\bU\Gamma = \bX - \bZ$, we further have
	\begin{align*}
		 &\norm{\Gamma \bX^\top \left( 2 \lambda_2 (\Lambda^{(1)})^{-1} + \bX\bX^\top\right)^{-1} \bX_j }_2 \\
		 & \lesssim \norm{\bX \bX^\top \left( 2 \lambda_2 (\Lambda^{(1)})^{-1} + \bX\bX^\top\right)^{-1} }_\op + \norm{\bZ \bX^\top \left( 2 \lambda_2 (\Lambda^{(1)})^{-1} + \bX\bX^\top\right)^{-1} }_\op \\
		 & \lesssim 1 + \|\bZ \|_\op \norm{\bX^\top \left( 2 \lambda_2 (\Lambda^{(1)})^{-1} + \bX\bX^\top\right)^{-1}}_\op,
	\end{align*}
where for the last inequality we apply Lemma~\ref{lem:Loewner} to get that
\[
\norm{\bX \bX^\top \left( 2 \lambda_2 (\Lambda^{(1)})^{-1} + \bX\bX^\top\right)^{-1} }_\op \le \norm{\bX \bX^\top \left( 2 \lambda_2 c_d^{-1} I + \bX\bX^\top\right)^{-1} }_\op \le 1.
\]
From above, and also~\eqref{eq:subgx3},~\eqref{eq:subgx6} and Assumption~\ref{cd:pns}(iii), we obtain the desired result.
\end{proof}
Let us denote by $\E_{\star}$ an expectation over only $(Z_{\star}, U_{\star})$, conditioning on everything else. First note that by the mean value theorem, it suffices to control
\begin{align}\label{eq:preddecomp}
\E_{\star}[\{ X_{\star}^\top(\hat{\beta} + \hat{b}) \;& -  (X_{\star}^\top \beta^0 + U_{\star}^\top \delta^0)\}^2] \nonumber\\
& \lesssim \E_{\star}[\{X_{\star}^\top (\hat{\beta} - \beta^0)\}^2] + \E_{\star}[(X_{\star}^\top \hat{b} - U_{\star}^\top \delta^0)^2] \nonumber\\
& \lesssim \E_{\star}[\{X_{\star}^\top (\hat{\beta} - \beta^0)\}^2] +  \E_{\star}[Z_{\star}^\top \hat{b}] +  \E_{\star}[U_{\star}^\top(\Gamma \hat{b} - \delta^0)^2] \nonumber \\
& \lesssim \E_{\star}[\{X_{\star}^\top (\hat{\beta} - \beta^0)\}^2] +  \E_{\star}[Z_{\star}^\top \hat{b}] +  \E_{\star}[\{U_{\star}^\top(\Gamma b^0 - \delta^0)\}^2] + \E_{\star}[\{U_{\star}^\top\Gamma (b^0 - \hat{b})\}^2]\nonumber \\
& =: \mathrm{I} + \mathrm{II} + \mathrm{III} + \mathrm{IV}.
\end{align}

For $\mathrm{I}$, observe that 
\begin{align}
\mathrm{I} &\lesssim  \E[\{Z_{\star}^\top (\hat{\beta} - \beta^0)\}^2] +  \E[\{U_{\star}^\top \Gamma (\hat{\beta} - \beta^0)\}^2] \nonumber\\
&= (\hat{\beta} - \beta^0)^\top \Sigma (\hat{\beta} - \beta^0) + \|\Gamma(\hat{\beta} - \beta^0)\|_2^2 \nonumber\\
&\lesssim \|\hat{\beta} - \beta^0\|_2^2 + \|\Gamma\|_\infty^2 \|\hat{\beta} - \beta^0\|_1^2 \nonumber\\
&\lesssim s \frac{\log p}{n} + s^2 \frac{\log p}{n}, \label{eq:pred1}
\end{align}
using H\"older's inequality and Assumption~\ref{cd:subg} in the penultimate line, and appealing to Theorem~\ref{thm:main} and \eqref{eq:Gamma_bd} in the final line.

Turning to $\mathrm{II}$, first note that by the KKT conditions of the 
optimisation problem~\eqref{eq:gen_LAVA}, we have that for each $j=1,\ldots,p$,
\begin{align*}
	\left|\frac{1}{n} \sum_{i=1}^n X_{ij} \ell(Y_i, X_i^\top (\hat{\beta} + \hat{b})) \right| &\le \lambda_1 \\
	\frac{1}{n} \sum_{i=1}^n X_{i,} \ell(Y_i, X_i^\top (\hat{\beta} + \hat{b})) &= 2 \lambda_2 \hat{b}_j.
\end{align*}
Thus $|\hat{b}_j| \le \lambda_1 / (2 \lambda_2)$, and so 
$\|\hat{b}\|_2 \leq \sqrt{p} \frac{\lambda_1}{2 \lambda_2}$. Thus
\begin{equation}\label{eq:pred2}
\mathrm{II} = \hat{b}^\top \Sigma \hat{b} \lesssim \|\hat{b}\|_2^2 \leq p \frac{\lambda_1^2}{\lambda_2^2} \lesssim \frac{p}{r_\lo^2}.
\end{equation}

Next note that
\begin{equation}\label{eq:pred3}
\mathrm{III} = \|(\Gamma b^0 - \delta^0)\|_2^2 \lesssim \frac{p \log p}{r_\ell n} + \frac{p^2}{r_\lo^2 n} + \frac{\log p}{n}
\end{equation}
by Lemma~\ref{lem:Gamma_b^0}.

Finally, to control $\mathrm{IV}$, we use \eqref{eq:hat_b-b^0} and argue as follows. We have
\begin{align*}
\mathrm{IV} & = \|\Gamma (\hat{b} - b^0)\|_2^2 \\
&=\norm{\Gamma\left( 2 \lambda_2 I + \bX^\top \Lambda^{(1)} \bX\right)^{-1} \bX^\top \Lambda^{(1)} \bX (\hat{\beta} - \beta^0)}_2^2 \\
& =  \norm{\Gamma \bX^\top \left( 2 \lambda_2 (\Lambda^{(1)})^{-1} + \bX\bX^\top\right)^{-1} \bX (\hat{\beta} - \beta^0)}_2^2 \\
& \leq q \norm{\Gamma \bX^\top \left( 2 \lambda_2 (\Lambda^{(1)})^{-1} + \bX\bX^\top\right)^{-1} \bX (\hat{\beta} - \beta^0)}_\infty^2 \\
& \leq  \norm{\Gamma \bX^\top \left( 2 \lambda_2 (\Lambda^{(1)})^{-1} + \bX\bX^\top\right)^{-1} \bX }_\infty^2 \|\hat{\beta} - \beta^0\|_1^2,
\end{align*}
where for the final equality we appealed to \eqref{eq:kernel_ridge2}. Thus using Lemma~\ref{lem:term1_var} and appealing to Theorem~\ref{thm:main}, we have
\begin{equation}\label{eq:pred4}
\mathrm{IV} \lesssim s^2 \frac{\log p }{n}.
\end{equation}

In light of the decomposition~\eqref{eq:preddecomp}, the desired result follows from putting together~\eqref{eq:pred1},~\eqref{eq:pred2},~\eqref{eq:pred3},~\eqref{eq:pred4} and that we are under Assumption~\ref{cd:pns}(i).

\section{Proof of Theorem~\ref{thm:inference}} \label{sec:pfinference}

We begin by introducing some preliminary lemmas used in the proof. In all of the following, we assume the conditions of Theorem~\ref{thm:inference}.
\begin{lemma}\label{lem:nulldecomp}
	Defining
	\[
	\delta := \frac{1}{n} \sum_{i=1}^{n} \hat{\varepsilon}_i \hat{\varepsilon}_i^W - \frac{1}{n} \sum_{i=1}^{n} \varepsilon_i \varepsilon_i^W,
	\]
	given constants $m \in \N, \kappa, \tau, c_p > 0$, there exists a constant $c' > 0$ depending on $m, \kappa, \tau, c_p$ such that on an event $\cB_1$ with $\pr(\cB_1^c \cap \Omega(\tau, \kappa, c_p)) \le c' n^{-m}$,
	\[
	|\delta| \lesssim s \frac{\log p}{n} + \frac{p^2}{r_\lo^2 n} + \frac{p \log p}{r_\lo n}.
	\]
\end{lemma}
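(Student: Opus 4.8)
The plan is to decompose the fitted residuals as $\hat\varepsilon_i = \varepsilon_i + \Delta_i$ and $\hat\varepsilon_i^W = \varepsilon_i^W + \Delta_i^W$, where $\Delta_i := f(X_i^\top\beta^0 + U_i^\top\delta^0) - f(X_i^\top(\hat\beta + \hat b))$ and $\Delta_i^W := f^W(X_i^\top\beta^W + U_i^\top\delta^W) - f^W(X_i^\top(\hat\beta^W + \hat b^W))$, so that
\[
\delta = \frac1n\sum_{i=1}^n \varepsilon_i\Delta_i^W + \frac1n\sum_{i=1}^n \Delta_i\varepsilon_i^W + \frac1n\sum_{i=1}^n \Delta_i\Delta_i^W.
\]
Observe that $\frac1n\sum_i\Delta_i^2$ is exactly the in-sample prediction risk bounded in Theorem~\ref{thm:inpred}, and the analogous quantity $\frac1n\sum_i(\Delta_i^W)^2$ is controlled by the version of Theorem~\ref{thm:inpred} for the $W$-regression, whose hypotheses are assumed in Theorem~\ref{thm:inference}. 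Hence on an event whose complement intersected with $\Omega(\tau,\kappa,c_p)$ has probability at most $cn^{-m}$, both are $\lesssim s\frac{\log p}{n} + \frac{p\log p}{r_\ell n} + \frac{p^2}{r_\ell^2 n}$, and the product term $\frac1n\sum_i\Delta_i\Delta_i^W$ is then dispatched at once via Cauchy--Schwarz (or $ab\le\tfrac12(a^2+b^2)$), yielding the stated order.

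The real work is in the two cross terms, where the idea is the bias-cancellation underlying the GCM. First I would note that $\hat\beta^W,\hat b^W$ are measurable with respect to $(\bX,\bW)$, hence so is $\Delta_i^W$; and under the null \eqref{eq:null}, $\E[\varepsilon_i \given X_i, U_i, W_i] = \E[\varepsilon_i\given X_i,U_i] = 0$, so conditionally on $\sigma(\bX,\bU,\bW)$ the $\varepsilon_i$ are independent, mean zero and $\sigma_\varepsilon^2$-sub-Gaussian. Therefore $\frac1n\sum_i\varepsilon_i\Delta_i^W$ is, conditionally on that $\sigma$-field, sub-Gaussian with variance proxy $\sigma_\varepsilon^2 n^{-2}\sum_i(\Delta_i^W)^2$; a Chernoff bound, then taking expectations, gives that with probability at least $1-2n^{-m}$,
\[
\Big|\frac1n\sum_{i=1}^n\varepsilon_i\Delta_i^W\Big| \lesssim \sqrt{\frac{\log n}{n}}\,\Big(\frac1n\sum_{i=1}^n(\Delta_i^W)^2\Big)^{1/2}.
\]
Intersecting with the event from Theorem~\ref{thm:inpred} and using $ab\le\tfrac12(a^2+b^2)$ once more turns the right-hand side into $\tfrac{\log n}{n}$ plus the target rate, and $\tfrac{\log n}{n}\le\tfrac{\log p}{n}\le s\tfrac{\log p}{n}$ by Assumption~\ref{cd:pns}(i) and $s\ge1$, so this cross term has the required order. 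The term $\frac1n\sum_i\Delta_i\varepsilon_i^W$ is treated symmetrically, conditioning instead on $\sigma(\bX,\bU,\bY)$ (with respect to which $\Delta_i$ is measurable) and using $\E[\varepsilon_i^W\given X_i,U_i,Y_i]=0$ under the null together with the sub-Gaussianity of $\varepsilon^W$. Taking $\cB_1$ to be the intersection of the events from the two applications of Theorem~\ref{thm:inpred} with the two Chernoff events gives the claim, and $\pr(\cB_1^c\cap\Omega(\tau,\kappa,c_p))$ is then a sum of terms of order $n^{-m}$.

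The main obstacle I expect is exactly this cross-term control: a crude Cauchy--Schwarz against $\varepsilon_i$ only yields $O\big((\tfrac1n\sum_i(\Delta_i^W)^2)^{1/2}\big)$, the square root of the target rate, which is far too weak; one genuinely needs the additional $\sqrt{\log n/n}$ factor from the conditional sub-Gaussianity of the noise. The delicate point, since the procedure uses no sample splitting, is verifying that conditioning on the data driving the \emph{other} regression leaves the relevant noise vector with conditionally independent, mean-zero, sub-Gaussian coordinates --- which is precisely where the null $Y\independent W\given X,U$ is used.
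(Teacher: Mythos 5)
Your proposal is correct and follows essentially the same route as the paper: the identical three-term decomposition of $\delta$, Cauchy--Schwarz plus Theorem~\ref{thm:inpred} for the product term, and a conditional sub-Gaussian tail bound (conditioning on the data determining the \emph{other} regression's fit) to extract the extra $\sqrt{\log n / n}$ factor in the two cross terms. If anything, you are slightly more explicit than the paper about why the null hypothesis is needed to justify that, e.g., $\varepsilon^W$ remains mean-zero sub-Gaussian conditionally on $(\be,\bX,\bU)$ — a point the paper uses implicitly.
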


\begin{proof}
	We have the decomposition 
	\begin{align*}
		\frac{1}{n} \sum_{i=1}^{n} \hat{\varepsilon}_i \hat{\varepsilon}_i^W = & \frac{1}{n} \sum_{i=1}^{n} \varepsilon_i \varepsilon_i^W + \frac{1}{n} \sum_{i=1}^{n} \{f(X_i^\top \beta^0 + U_i^\top \delta^0) - f(X_i^\top (\hat{\beta} + \hat{b}))\} \varepsilon_i^W \\
		& + \frac{1}{n} \sum_{i=1}^{n} \{f^W(X_i^\top \beta^W + U_i^\top \delta^W) - f^W(X_i^\top (\hat{\beta}^W + \hat{b}^W))\} \varepsilon_i \\
		& + \frac{1}{n} \sum_{i=1}^{n} \{f^W(X_i^\top \beta^W + U_i^\top \delta^W) - f^W(X_i^\top (\hat{\beta}^W + \hat{b}^W))\} \{f(X_i^\top \beta^0 + U_i^\top \delta^0) - f(X_i^\top (\hat{\beta} + \hat{b}))\} \\
		& =: \frac{1}{n} \sum_{i=1}^{n} \varepsilon_i \varepsilon_i^W + \mathrm{I} + \mathrm{II} + \mathrm{III}.
	\end{align*}
Note that by the Cauchy--Schwarz inequality,
\begin{align*}
\mathrm{III} &\leq \left(\frac{1}{n} \sum_{i = 1}^n \{f(X_i^\top \beta^0 + U_i^\top \delta^0) - f(X_i^\top (\hat{\beta} + \hat{b}))\}^2 \right)^{1/2} \\
 & \qquad  \left(\frac{1}{n} \sum_{i = 1}^n \{f^W(X_i^\top \beta^W + U_i^\top \delta^W) - f^W(X_i^\top (\hat{\beta}^W + \hat{b}^W))\}^2\right)^{1/2}.
\end{align*}
Thus $\mathrm{III}$ is well controlled on the event
\begin{align*}
	\mathcal{E}_1(c) := \bigg\{ & \frac{1}{n} \sum_{i = 1}^n \{f(X_i^\top \beta^0 + U_i^\top \delta^0) - f(X_i^\top (\hat{\beta} + \hat{b}))\}^2 \le c \left(s \frac{\log p}{n} + \frac{p \log p}{r_{\lo} n} + \frac{p^2}{r_{\lo}^2 n}\right), \; \\ 
	& \frac{1}{n} \sum_{i = 1}^n \{f^W(X_i^\top \beta^W + U_i^\top \delta^W) - f^W(X_i^\top (\hat{\beta}^W + \hat{b}^W))\}^2 \le c \left(s \frac{\log p}{n} + \frac{p \log p}{r_{\lo} n} + \frac{p^2}{r_{\lo}^2 n}\right) \bigg\}.
\end{align*}
From Theorem~\ref{thm:inpred} and its proof we have that there exist constants $c, c' > 0$ such that $\pr(\mathcal{E}_1^c(c) \cap \Omega(\tau, \kappa, c_p)) \le c' n^{-m}$.

	To control $\mathrm{I}$ and $\mathrm{II}$, using the conditional sub-Gaussianity of the $\varepsilon_i$ and $\varepsilon_i^W$'s (Assumption~\ref{cd:subg}), we have that for any $t>0$,
	\begin{align*}
		\pr & \left(\left|\frac{1}{n} \sum_{i=1}^{n} \left(f(X_i^\top \beta^0 + U_i^\top \delta^0) - f(X_i^\top (\hat{\beta} + \hat{b}))\right) \varepsilon_i^W  \right| \geq t \given (\be, \bX, \bU)\right) \\
		& \qquad\qquad\qquad \leq 2 \exp\left(- \frac{\sigma_{\varepsilon} n^2 t^2}{\sum_{i = 1}^n (f(X_i^\top \beta^0 + U_i^\top \delta^0) - f(X_i^\top (\hat{\beta} + \hat{b})))^2}\right),
	\end{align*}
	where $(\hat{\beta}, \hat{b})$ are deterministic given $(\be, \bX, \bU)$; and similarly
	\begin{align*}
		\pr & \left(\left|\frac{1}{n} \sum_{i=1}^{n} \left(f^W(X_i^\top \beta^W + U_i^\top \delta^W) - f^W(X_i^\top (\hat{\beta}^W + \hat{b}^W))\right) \varepsilon_i \right| \geq t \given (\be^W, \bX, \bU) \right) \\
		& \qquad\qquad\qquad \leq 2 \exp\left(- \frac{\sigma_{\varepsilon} n^2 t^2}{\sum_{i = 1}^n (f^W(X_i^\top \beta^W + U_i^\top \delta^W) - f^W(X_i^\top (\hat{\beta}^W + \hat{b}^W)))^2}\right),
	\end{align*}
	where $(\hat{\beta}^W, \hat{b}^W)$ are deterministic given $(\be^W, \bX, \bU)$. In light of the above two sub-Gaussian tail bounds, we have that there exist constants $c, c' > 0$ such that the following event holds with probability at least $1 - c'n^{-m}$:
	\begin{align*}
		\mathcal{E}_2(c) := \bigg\{ & \mathrm{I} \le c \sqrt{\frac{1}{n} \sum_{i = 1}^n \left(f(X_i^\top \beta^0 + U_i^\top \delta^0) - f(X_i^\top (\hat{\beta} + \hat{b}))\right)^2} \cdot \sqrt{\frac{\log n}{n}} ,  \\ 
		& \mathrm{II} \le c \sqrt{\frac{1}{n} \sum_{i = 1}^n \left(f^W(X_i^\top \beta^W + U_i^\top \delta^W) - f^W(X_i^\top (\hat{\beta}^W + \hat{b}^W))\right)^2} \cdot \sqrt{\frac{\log n}{n}}\bigg\}.
	\end{align*}
	
	Working on the intersection of $\mathcal{E}_1$ and $\mathcal{E}_2$, the desired result follows by assigning $\delta := \mathrm{I} + \mathrm{II} + \mathrm{III}$.
\end{proof}

\begin{lemma}\label{lem:nullbnan}
	Given constants $m \in \N, \kappa, \tau, c_p, \epsilon  > 0$, we have that there exists constant $c > 0$ such that on an event $\cB_2$ with $\pr(\cB_2^c \cap \Omega(\tau, \kappa, c_p)) \le c n^{-m}$, it holds that
		\[
		\left|\frac{1}{n} \sum_{i = 1}^n \hat{\varepsilon}_i^2 (\hat{\varepsilon}_i^W)^2 - \left(\frac{1}{n} \sum_{i = 1}^n \hat{\varepsilon}_i \hat{\varepsilon}_i^W\right)^2 - \var(\varepsilon_1 \varepsilon_1^W)\right| \lesssim s\frac{\log p}{\sqrt{n}} + \frac{p \log p}{r_{\ell} \sqrt{n}} + \frac{p^2}{r_{\ell}^2 \sqrt{n}} + n^{-1/2 + \epsilon},
		\]
	and moreover $s \log p / \sqrt{n} + p \log p / (r_{\lo} \sqrt{n}) + p^2 / (r_{\lo}^2 \sqrt{n}) \to 0$.
\end{lemma}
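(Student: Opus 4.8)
The plan is to linearise the residuals: write $\hat\varepsilon_i=\varepsilon_i+r_i$ and $\hat\varepsilon_i^W=\varepsilon_i^W+r_i^W$, where $r_i:=f(X_i^\top\beta^0+U_i^\top\delta^0)-f(X_i^\top(\hat\beta+\hat b))$ and $r_i^W$ is the analogous prediction error for the $W$-regression, whose averaged squares are controlled by Theorem~\ref{thm:inpred}. Expanding the quartic product,
\[
\frac1n\sum_{i=1}^n\hat\varepsilon_i^2(\hat\varepsilon_i^W)^2=\frac1n\sum_{i=1}^n\varepsilon_i^2(\varepsilon_i^W)^2+R_n,
\]
where $R_n$ gathers the terms containing at least one factor $r_i$ or $r_i^W$. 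Under the null $\varepsilon_1\independent\varepsilon_1^W\mid(X_1,U_1)$, so $\varepsilon_1\varepsilon_1^W$ is a (conditionally, hence unconditionally) mean-zero sub-exponential variable with constant parameter; consequently $\E[\varepsilon_1^2(\varepsilon_1^W)^2]=\var(\varepsilon_1\varepsilon_1^W)$ and all moments of $\varepsilon_i,\varepsilon_i^W$ are $O(1)$. It then suffices to (a) concentrate $\frac1n\sum\varepsilon_i^2(\varepsilon_i^W)^2$ about $\var(\varepsilon_1\varepsilon_1^W)$, (b) bound $R_n$, and (c) show $\big(\frac1n\sum\hat\varepsilon_i\hat\varepsilon_i^W\big)^2$ is negligible.

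For (a), $(\varepsilon_i\varepsilon_i^W)^2$ has only sub-Weibull tails (exponent $1/2$), so I would truncate at a level $M\asymp(\log n)^3$: the event $\{\max_i(\varepsilon_i\varepsilon_i^W)^2\le M\}$ has probability $1-o(n^{-m})$, the truncation bias $\E[(\varepsilon_1\varepsilon_1^W)^2\ind\{(\varepsilon_1\varepsilon_1^W)^2>M\}]$ is super-polynomially small, and a Bernstein inequality for the bounded i.i.d.\ variables $(\varepsilon_i\varepsilon_i^W)^2\wedge M$ (variance $O(1)$) gives a deviation $\lesssim\sqrt{\log n/n}\lesssim n^{-1/2+\epsilon}$ with probability at least $1-cn^{-m}$. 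This heavy-tailed concentration — and the unavoidable loss of the factor $n^{\epsilon}$ it incurs — is the conceptual crux of the lemma.

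For (b), every summand of $R_n$ has the form $\frac1n\sum_i a_ib_i$ with $a_i,b_i$ products of $\varepsilon$'s and $r$'s; Cauchy--Schwarz plus extraction of maxima bounds each by a product of factors drawn from $\{\frac1n\sum r_i^2,\frac1n\sum(r_i^W)^2\}$ (each $\lesssim s\tfrac{\log p}{n}+\tfrac{p\log p}{r_\ell n}+\tfrac{p^2}{r_\ell^2 n}$ by Theorem~\ref{thm:inpred}), moment averages $\frac1n\sum\varepsilon_i^4$ etc.\ ($O(1)$ up to polylog on a high-probability event), $\max_i\varepsilon_i^2,\max_i(\varepsilon_i^W)^2$ ($\lesssim\log n$), and $\max_i r_i^2,\max_i(r_i^W)^2$. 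For the last I would use $|r_i|\le c_d\big(|X_i^\top(\hat\beta-\beta^0)|+|X_i^\top\hat b|+|U_i^\top\delta^0|\big)$ together with $\|\hat\beta-\beta^0\|_1\lesssim s\sqrt{\log p/n}$ (Theorem~\ref{thm:main}), the KKT bound $\|\hat b\|_2\lesssim\sqrt p/r_\ell$ (cf.\ the proof of Theorem~\ref{thm:prediction}), $\max_i\|X_i\|_2\lesssim\sqrt{p\log p}$ and $\max_i\|U_i\|_2\lesssim\sqrt{\log n}$, to get $\max_i r_i^2\lesssim(p/r_\ell)^2\,\mathrm{polylog}(np)$. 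Substituting these bounds and using Assumption~\ref{cd:pns}(iii) (which converts each surplus power of $p/r_\ell$ into a negative power of $n$, since $p/r_\ell\lesssim(n/\log p)^{1/6}$), one checks that every term of $R_n$ is $\lesssim s\tfrac{\log p}{\sqrt n}+\tfrac{p\log p}{r_\ell\sqrt n}+\tfrac{p^2}{r_\ell^2\sqrt n}$; this bookkeeping is the lengthiest part of the argument.

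For (c), Lemma~\ref{lem:nulldecomp} gives $\frac1n\sum\hat\varepsilon_i\hat\varepsilon_i^W=\frac1n\sum\varepsilon_i\varepsilon_i^W+\delta$ with $|\delta|\lesssim s\tfrac{\log p}{n}+\tfrac{p^2}{r_\ell^2 n}+\tfrac{p\log p}{r_\ell n}$, while a Bernstein bound yields $\big|\frac1n\sum\varepsilon_i\varepsilon_i^W\big|\lesssim\sqrt{\log n/n}$ with probability $\ge1-cn^{-m}$; hence $\big(\frac1n\sum\hat\varepsilon_i\hat\varepsilon_i^W\big)^2\lesssim\log n/n+\delta^2\lesssim n^{-1/2+\epsilon}$. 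Intersecting all the high-probability events invoked gives an event $\cB_2$ with $\pr(\cB_2^c\cap\Omega(\tau,\kappa,c_p))\le cn^{-m}$ on which (a)--(c) combine to give the stated bound. The ``moreover'' claim is then a short computation from Assumption~\ref{cd:pns}: part (iv) gives $s\tfrac{\log p}{\sqrt n}\le b_n\to0$, and inserting $r_\ell\gtrsim p(\log p/n)^{1/6}$ from part (iii) gives $\tfrac{p\log p}{r_\ell\sqrt n}\lesssim(\log p)^{5/6}n^{-1/3}$ and $\tfrac{p^2}{r_\ell^2\sqrt n}\lesssim(\log p)^{-1/3}n^{-1/6}$, both of which tend to $0$ by part (ii).
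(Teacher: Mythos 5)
Your proposal is correct and follows the same skeleton as the paper's proof: write $\hat\varepsilon_i=\varepsilon_i+r_i$, isolate the pure term $\frac1n\sum\varepsilon_i^2(\varepsilon_i^W)^2$, bound the cross terms by Cauchy--Schwarz using the in-sample prediction bounds and $\max_i\varepsilon_i^2\lesssim\log n$, and dispose of $(\frac1n\sum\hat\varepsilon_i\hat\varepsilon_i^W)^2$ via Lemma~\ref{lem:nulldecomp} plus a Bernstein bound, exactly as the paper does via the decomposition borrowed from \citet{shah2020hardness}. Two localized choices differ. First, for the sub-Weibull concentration of $\frac1n\sum(\varepsilon_i\varepsilon_i^W)^2$ the paper picks $k>m/\epsilon$ and applies a $2k$-th moment Chebyshev bound (via \citet[Ex.~2.20]{Wain19}), which is where the $n^{-1/2+\epsilon}$ term and its dependence on $m$ originate; your truncation-at-$(\log n)^3$ plus Bernstein route is equally valid and in fact delivers the slightly sharper $\sqrt{\log n/n}$, at the cost of an extra truncation-bias step. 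Second, for the quartic remainder $\frac1n\sum r_i^2(r_i^W)^2$ the paper simply uses $\frac1n\sum r_i^2(r_i^W)^2\le\frac1n(\sum r_i^2)(\sum(r_i^W)^2)\lesssim nd_n^2$, avoiding any pointwise control of $\max_i r_i^2$; your route through $\max_i r_i^2\lesssim(p/r_\ell)^2\,\mathrm{polylog}$ also closes, but only because Assumption~\ref{cd:pns}(iii) gives $(p/r_\ell)^2\lesssim n^{1/3}\ll\sqrt n$ — the paper's product-of-sums bound is cleaner. One caution on your bookkeeping: when you allow moment averages such as $\frac1n\sum\varepsilon_i^2(\varepsilon_i^W)^4$ to be ``$O(1)$ up to polylog'', note that the slack available for absorbing logarithms into $\sqrt n\,d_n$ is only $nd_n\ge s\log p\gtrsim\log n$ (this is also what the paper uses, absorbing a single $\sqrt{\log n}$ in its term $\mathrm{IV}$); higher polylog powers would not be covered, so you should concentrate those sixth-moment averages to genuine $O(1)$ rather than hiding factors of $\log n$ there.
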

\begin{proof}
	Let us write
	\begin{align*}
		\hat{\Delta}_i &:= f(X_i^\top (\hat{\beta} + \hat{b})) - f(X_i^\top \beta^0 + U_i^\top \delta^0) \\
		\hat{\Delta}^W_i &:= f^W(X_i^\top (\hat{\beta}^W + \hat{b}^W)) - f^W(X_i^\top \beta^W + U_i^\top \delta^W).
	\end{align*}
Let
\[
d_n := s\frac{\log p}{n} + \frac{p \log p}{r_{\ell}n} + \frac{p^2}{r_{\ell}^2 n}.
\]
In the following, we will write $\Omega = \Omega(\tau, \kappa, c_p)$ for simplicity and any constant $c$ may change from line to line.

Let $\mathcal{B}_{21} := \Omega \cup \tilde{\mathcal{B}}_{21}$ be the event that $\sum_{i=1}^n \hat{\Delta}_i^2 /n \leq c' d_n$ and $\sum_{i=1}^n (\hat{\Delta}^W_i)^2 /n \leq c' d_n$ with constant $c'>0$ chosen such that there exists $c_{21}>0$ with $\pr(\tilde{\mathcal{B}}_{21}^c) \leq c_{21} n^{-m}$; this is permitted by Theorem~\ref{thm:prediction} and its proof.

Let 
\[
\mathcal{B}_{22} := \left\{\abs{\frac{1}{n} \sum_{i=1}^n (\varepsilon_i \varepsilon_i^W)^2 - \Var(\varepsilon_1 \varepsilon^W_1)} \leq c n^{-1/2 + \epsilon}  \right\}.
\]
Now given $\epsilon > 0$, pick $k \in \mathbb{N} > m/\epsilon$. 
Note that by the argument of \eqref{eq:mom_bd},
$\E \{(\varepsilon_i \varepsilon_i^W)^{2} - \Var(\varepsilon_i \varepsilon_i^W)\}^{2k} \lesssim \E \varepsilon_i^{4k} + \E(\varepsilon_i^W)^{4k} \lesssim 1$. Thus by \citet[Ex.\ 2.20]{Wain19}  we have that there exists $c, c'>0$ such that $\pr(\mathcal{B}_{22}^c) \lesssim c'n^{-m}$.

Let $\mathcal{B}_{23} := \{\max_{i=1,\ldots,n} (\varepsilon_i^2 + (\varepsilon_i^W)^2) \leq  c \log n \}$ for constant  $c > 0$ such that $\pr(\mathcal{B}_{23}^c) \lesssim cn^{-m}$ for some constant $c  > 0$. Finally, let $\mathcal{B}_{24} := \{\frac{1}{\sqrt{n}} |\sum_{i = 1}^n \varepsilon_i \varepsilon_i^W | \leq c \sqrt{\log n}\} $. From Lemma~\ref{lem:subgbern}, we have that there exist $c, c'>0$ such that this occurs with probability at least $1 - c'n^{-m}$.

In the following we work on
\[
\mathcal{B}_2 := \mathcal{B}_1 \cap \bigcap_{j=1}^4 \mathcal{B}_{2j},
\]
where $\mathcal{B}_1$ is defined in Lemma~\ref{lem:nulldecomp}.
 
Now arguing as in \citet[D.1 of Supp.]{shah2020hardness}, we have that
	\begin{align*}
		|\hat{\varepsilon}_i^2 (\hat{\varepsilon}^W_i)^2 - \varepsilon_i^2 (\varepsilon^W_i)^2| &\lesssim
		 \underbrace{\hat{\Delta}_i^2 (\hat{\Delta}^W_i)^2}_{\mathrm{I}_i}
		+ \underbrace{|\varepsilon_i \varepsilon^W_i|  |\hat{\Delta}_i \hat{\Delta}^W_i|}_{\mathrm{II}_i}
		+  \underbrace{(\varepsilon^W_i)^2 \hat{\Delta}_i^2 + \varepsilon_i^2 (\hat{\Delta}^W_i)^2}_{\mathrm{III}_{1,i} + \mathrm{III}_{2,i}}
		+ \underbrace{|(\varepsilon^W_i)^2 \varepsilon_i \hat{\Delta}_i| + |\varepsilon_i^2 \varepsilon^W_i \hat{\Delta}^W_i|}_{\mathrm{IV}_{1,i} + \mathrm{IV}_{2,i}}.
	\end{align*}
To control the first term, note that
\begin{align*} 
	\frac{1}{n} \sum_{i=1}^n \mathrm{I}_i \leq \frac{1}{n} \left(\sum_{i=1}^n \hat{\Delta}_i^2 \right) \left( \sum_{i=1}^n (\hat{\Delta}^W_i)^2 \right) \lesssim n d_n^2.
\end{align*}
For the second term, we have by the Cauchy--Schwarz inequality, that
\begin{align*}
	\frac{1}{n} \sum_{i=1}^n \mathrm{II}_i &\leq  \left(\frac{1}{n} \sum_{i=1}^n (\varepsilon_i \varepsilon_i^W)^2\right)^{1/2} \left(\frac{1}{n}\sum_{i=1}^n \mathrm{I}_i \right)^{1/2} \lesssim \sqrt{n} d_n.
\end{align*}
For $\mathrm{III}_{1,i}$, we have
\[
\frac{1}{n}\sum_{i=1}^n \mathrm{III}_{1,i} \lesssim \frac{\log n}{n} \sum_{i=1}^n\hat{\Delta}_i^2 \lesssim d_n \log n.
\]
Turning to $\mathrm{IV}_{1,i}$, we have using the Cauchy--Schwarz inequality that
\begin{align*}
	\frac{1}{n} \sum_{i=1}^n \mathrm{IV}_{1,i}  \leq \left(\frac{1}{n} \sum_{i=1}^n (\varepsilon_i \varepsilon_i^W)^2\right)^{1/2} \left(\frac{1}{n}\sum_{i=1}^n \mathrm{III}_{1,i} \right)^{1/2}  \lesssim \sqrt{d_n \log n}.
\end{align*}
Identical bounds to the above hold for $\mathrm{III}_{2,i}$ and $\mathrm{IV}_{2,i}$.
Thus noting that $d_n \to 0$ and $d_n \gtrsim n/log n$, we have
\[
\abs{\frac{1}{n} \sum_{i = 1}^n \hat{\varepsilon}_i^2 (\hat{\varepsilon}_i^W)^2 - \frac{1}{n} \sum_{i = 1}^n \varepsilon_i^2 (\varepsilon_i^W)^2 } \lesssim \sqrt{n} d_n.
\]

Putting things together, we have that
\begin{align*}
&\left|\frac{1}{n} \sum_{i = 1}^n \hat{\varepsilon}_i^2 (\hat{\varepsilon}_i^W)^2 - \left(\frac{1}{n} \sum_{i = 1}^n \hat{\varepsilon}_i \hat{\varepsilon}_i^W\right)^2 - \var(\varepsilon_1 \varepsilon_1^W)\right| \\
 \leq & \abs{\frac{1}{n} \sum_{i = 1}^n \hat{\varepsilon}_i^2 (\hat{\varepsilon}_i^W)^2 - \frac{1}{n} \sum_{i = 1}^n \varepsilon_i^2 (\varepsilon_i^W)^2 } + \abs{\frac{1}{n} \sum_{i = 1}^n \varepsilon_i^2 (\varepsilon_i^W)^2 - \var(\varepsilon_1 \varepsilon_1^W) } + \left(\frac{1}{n} \sum_{i = 1}^n \hat{\varepsilon}_i \hat{\varepsilon}_i^W\right)^2 \\
\lesssim & \sqrt{n} d_n + n^{-1/2 + \epsilon} + d_n^2 + \frac{\log n}{n} \lesssim \sqrt{n} d_n + n^{-1/2 + \epsilon},
\end{align*}
using Lemma~\ref{lem:nulldecomp} in the final line. 

Moreover, with Assumptions~\ref{cd:pns}(ii)--(iv), we have that $s \log p / \sqrt{n} + p \log p / (r_{\lo} \sqrt{n}) + p^2 / (r_{\lo}^2 \sqrt{n}) \to 0$.
\end{proof}

Equipped with the lemmas, we are now ready to complete the proof of Theorem~\ref{thm:inference}.

\begin{proof}[Proof of Theorem~\ref{thm:inference}]
	We have the decomposition
	\begin{align*}
		T = & \frac{\frac{1}{\sqrt{n}} \sum_{i = 1}^n \varepsilon_i \varepsilon_i^W}{\var^{1/2}(\varepsilon_1 \varepsilon_1^W)} + \underset{=: \mathrm{I}}{\underbrace{\frac{\sqrt{n} \delta}{\sqrt{ \frac{1}{n} \sum_{i = 1}^n \hat{\varepsilon}_i^2 (\hat{\varepsilon}_i^W)^2 - \left(\frac{1}{n} \sum_{i = 1}^n \hat{\varepsilon}_i \hat{\varepsilon}_i^W\right)^2}}}} \\
		 & - \underset{=: \mathrm{II}}{\underbrace{\frac{\frac{1}{\sqrt{n}} \sum_{i = 1}^n \varepsilon_i \varepsilon_i^W \cdot \left(\sqrt{ \frac{1}{n} \sum_{i = 1}^n \hat{\varepsilon}_i^2 (\hat{\varepsilon}_i^W)^2 - \left(\frac{1}{n} \sum_{i = 1}^n \hat{\varepsilon}_i \hat{\varepsilon}_i^W\right)^2} -\var^{1/2}(\varepsilon_1 \varepsilon_1^W)\right)}{\sqrt{ \frac{1}{n} \sum_{i = 1}^n \hat{\varepsilon}_i^2 (\hat{\varepsilon}_i^W)^2 - \left(\frac{1}{n} \sum_{i = 1}^n \hat{\varepsilon}_i \hat{\varepsilon}_i^W\right)^2} \var^{1/2}(\varepsilon_1 \varepsilon_1^W)}}}.
	\end{align*}
Using tail bound in Lemma~\ref{lem:subgbern}, we have that with probability at least $1 - c'n^{-1}$, $\frac{1}{\sqrt{n}} |\sum_{i = 1}^n \varepsilon_i \varepsilon_i^W | \lesssim \sqrt{\log n}$. From this, and Lemmas~\ref{lem:nulldecomp} and \ref{lem:nullbnan} with $m = 1$, we have that there exists constants $c, c' > 0$ such that
\[
\pr\left(|\mathrm{I}| + |\mathrm{II}| \le c e_n\right) \ge 1 - \pr(\Omega^c(\tau, \kappa, c_p)) - c' n^{-1},
\]
where
\[
e_n := s\frac{\log p}{\sqrt{n}} + \frac{p \log p}{r_{\ell} \sqrt{n}} + \frac{p^2}{r_{\ell}^2 \sqrt{n}} + n^{-1/2 + \epsilon}.
\]

Now using the basic inequality that for any two events $\mathcal{A}, \mathcal{B}$, $\pr(\mathcal{A} \cap \mathcal{B}) \geq \pr(\mathcal{A}) - \pr(\mathcal{B}^c)$, we have that for any significance level $\alpha \in (0, 1)$, 
	\begin{align*}
		\pr \left(|T| \leq z_{1 - \alpha/2}\right) \geq &\; \pr\left(\left|\frac{\frac{1}{\sqrt{n}} \sum_{i = 1}^n \varepsilon_i \varepsilon_i^W}{\var^{1/2} (\varepsilon_1 \varepsilon_1^W)}\right| \leq z_{1 - \alpha/2} - c e_n, |\mathrm{I}| + |\mathrm{II}| \leq c e_n\right)\\
		\geq &\; \pr\left(\left|\frac{\frac{1}{\sqrt{n}} \sum_{i = 1}^n \varepsilon_i \varepsilon_i^W}{\var^{1/2} (\varepsilon_1 \varepsilon_1^W)}\right| \leq z_{1 - \alpha/2} - c e_n\right)  - \pr(|\mathrm{I}| + |\mathrm{II}| > c e_n) \\
		\geq &\; \pr\left(\left|\frac{\frac{1}{\sqrt{n}} \sum_{i = 1}^n \varepsilon_i \varepsilon_i^W}{\var^{1/2} (\varepsilon_1 \varepsilon_1^W)}\right| \leq z_{1 - \alpha/2} - c e_n\right) - \pr(\Omega^c(\tau, \kappa, c_p)) - c' n^{-1}.
	\end{align*}
	Using the Berry--Esssen bound \citep{Esseen42}, we have that for some universal constant $c''$,
	\[
	\pr\left(\left|\frac{\frac{1}{\sqrt{n}} \sum_{i = 1}^n \varepsilon_i \varepsilon_i^W}{\var^{1/2} (\varepsilon_i \varepsilon_i^W)}\right| \leq z_{1 - \alpha/2} - c e_n\right) \geq \pr\left(|\zeta| \leq z_{1 - \alpha/2} - c e_n\right) - \frac{c'' \E|\varepsilon_1 \varepsilon_1^W|^3}{\var^{3/2}(\varepsilon_1 \varepsilon_1^W)} \frac{1}{\sqrt{n}}
	\]
	where $\zeta$ denotes a standard Gaussian random variable. Then using that the standard Gaussian density is bounded by $1/\sqrt{2 \pi}$, we obtain the desired result.
\end{proof}

\section{Proof of Theorem~\ref{thm:resubg}}\label{sec:pfrsc}
Throughout the analysis we use the decomposition $X_i^\top \beta^0 = Z_i^\top \beta^0 + U_i^\top \Gamma \beta^0$.
Given $\tau > 0$,
define events
\begin{align*}
	A_i := \{|Z_i^\top \beta^0| \leq \tau/2\} \qquad \text{and}\qquad B_i := \{|U_i^\top (\Gamma \beta^0 + \delta^0)| \leq \tau/2\}.
\end{align*}
Let  $\mu_U := \E[U_i \given B_i]$, $\mu_Z := \E[Z_i \given A_i]$ and
\[
w := \frac{1}{n} \sum_{i=1}^n \one_{A_i \cap B_i}.
\]
Further set
\begin{equation}\label{eq:ziui}
	\tilde{Z}_i := (Z_i - \mu_Z) \one_{A_i}, \quad \tilde{U}_i := (U_i - \mu_U) \one_{B_i}.
\end{equation}
Note that then $\E \tilde{Z}_i = \E\tilde{U}_i = 0$. Let
\[
\tilde{\Sigma} = \E\left[\tilde{Z}_i \tilde{Z}_i^\top \one_{B_i}\right].
\]
Finally define $\mu := \mu_Z + \Gamma^\top \mu_U$ and let
\[
 \qquad \text{and} \qquad \bar{X} := \frac{1}{n} \sum_{i=1}^{n} (X_i - \mu) \one_{A_i \cap B_i}.
\]

The proof of Theorem~\ref{thm:resubg} makes use of the following lemma.

\begin{lemma}\label{lem:regaussian}
	Let $m \in \mathbb{N}$ be given. There exist constants $c, c', \tau > 0$ such that on an event $\mathcal{A}$ with probability at least $1-c'(n^{-m} + p^{-m})$,
 the following inequalities hold:
	\begin{align}
		\lambda_{\max} \left(\frac{1}{n} \sum_{i=1}^n \tilde{Z}_i \tilde{Z}_i^\top \one_{B_i}\right) &\leq c \frac{p}{n}, \label{eq:tildezeigen}\\
		\left\|\frac{1}{n} \sum_{i=1}^n \Gamma^\top \tilde{U}_i \tilde{U}_i^\top \Gamma \one_{A_i}\right\|_\infty &\leq c, \label{eq:sigu} \\
		\left\| \frac{1}{n} \sum_{i=1}^n \Gamma^\top \tilde{U}_i \tilde{Z}_i \right\|_\infty &\leq c \sqrt{\frac{\log p}{n}} ,\label{eq:sigtzu} \\
		\left\|\frac{1}{n} \sum_{i=1}^n \tilde{Z}_i \tilde{Z}_i^\top \one_{B_i} - \tilde{\Sigma} \right\|_\infty &\leq c \sqrt{\frac{\log p}{n}}, \label{eq:tildez} \\
		\|\bar{X}\|_\infty &\leq c \sqrt{\frac{\log p}{n}},  \label{eq:Xbar} \\
		|w| & \geq c^{-1} \label{eq:w}.
	\end{align}
	Moreover, $c^{-1} \leq \lambda_{\min}(\tilde{\Sigma}) \leq \lambda_{\max}(\tilde{\Sigma}) \leq c$.
\end{lemma}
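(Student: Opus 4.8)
The plan is to derive all six displayed inequalities and the two eigenvalue estimates on $\tilde\Sigma$ from two preliminary observations, and then to optimise the choice of $\tau$ at the end. First, since $\var(X^\top\beta^0+U^\top\delta^0)\lesssim 1$ under Assumption~\ref{cd:subg} and (as noted after the statement of Theorem~\ref{thm:resubg}) $\|\beta^0\|_2\lesssim 1$, both $Z_i^\top\beta^0$ and $U_i^\top(\Gamma\beta^0+\delta^0)$ are sub-Gaussian scalars with variance proxy bounded by a constant, so $\pr(A_i^c)\vee\pr(B_i^c)\le 2e^{-c\tau^2}$, and by Lemma~\ref{lem:subg_trunc} the recentred truncations $\tilde Z_i,\tilde U_i$ are mean-zero sub-Gaussian vectors whose variance proxies are bounded by constants not depending on $\tau$. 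Moreover, because $\E Z_i=\E U_i=0$, a Cauchy--Schwarz bound on $\E[Z_i\one_{A_i^c}]$ gives $\|\mu_Z\|_2\vee\|\mu_U\|_2\lesssim e^{-c\tau^2/2}$, which can be made as small as desired by enlarging $\tau$. Second --- and this is the only place the independence of $Z$ and $U$ is used crucially --- since $A_i\in\sigma(Z_i)$, $B_i\in\sigma(U_i)$ and $Z_i\independent U_i$, any expectation of a function of $Z_i$ times $\one_{B_i}$ (or of $U_i$ times $\one_{A_i}$) factorises; in particular $\E[\tilde Z_{ij}\one_{B_i}]=0$, $\E[\Gamma_j^\top\tilde U_i\one_{A_i}]=0$, and $\tilde\Sigma=\pr(B_i)\,\E[\tilde Z_i\tilde Z_i^\top]$.

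Granting these, each claim is a routine concentration estimate. Using $X_i=\Gamma^\top U_i+Z_i$ one checks $(X_i-\mu)\one_{A_i\cap B_i}=\tilde Z_i\one_{B_i}+\Gamma^\top\tilde U_i\one_{A_i}$, so $\bar X$ is an average of i.i.d.\ mean-zero sub-Gaussian vectors with per-coordinate variance proxy $\lesssim 1/n$, whence \eqref{eq:Xbar} follows from a coordinatewise sub-Gaussian tail bound and a union bound over $p$ coordinates. For the entrywise bounds \eqref{eq:sigu}, \eqref{eq:sigtzu} and \eqref{eq:tildez}, the $(j,k)$ entry is an average of i.i.d.\ sub-exponential variables --- respectively $(\Gamma_j^\top\tilde U_i)(\Gamma_k^\top\tilde U_i)\one_{A_i}$, $(\Gamma_j^\top\tilde U_i)\tilde Z_{ik}$ and $\tilde Z_{ij}\tilde Z_{ik}\one_{B_i}$ --- each with sub-exponential parameter $\lesssim 1$ (using $\|\Gamma_j\|_2^2\le\var(X_j)\lesssim 1$ from Assumption~\ref{cd:subg}) and mean $\lesssim 1$; Lemma~\ref{lem:subgbern} with deviation $\asymp\sqrt{\log p/n}$ and a union bound over the $p^2$ entries, together with $\log p/n\to 0$ (Assumption~\ref{cd:pns}(ii)), yields \eqref{eq:sigtzu} and \eqref{eq:tildez}, and \eqref{eq:sigu} after adding the $O(1)$ mean term. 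For \eqref{eq:tildezeigen}, note $\tfrac1n\sum_i\tilde Z_i\tilde Z_i^\top\one_{B_i}\preceq\tfrac1n\sum_i\tilde Z_i\tilde Z_i^\top$ and apply the matrix deviation bound of \citet[Rem.~5.40]{vershynin_2012} to the i.i.d.\ sub-Gaussian vectors $\tilde Z_i\in\R^p$ with $t\asymp\log p$; since $p\gtrsim n$ this gives $\lambda_{\max}\lesssim 1+p/n\lesssim p/n$. For \eqref{eq:w}, $w$ is an average of i.i.d.\ Bernoulli$(\pr(A_i)\pr(B_i))$ variables with mean $\ge 1/2$ once $\tau$ is large, so Hoeffding gives $w\ge 1/4$ off a set of probability $\le cn^{-m}$.

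It remains to bound the eigenvalues of $\tilde\Sigma=\pr(B_i)\,\E[\tilde Z_i\tilde Z_i^\top]$. The upper bound is immediate from the bounded variance proxy of $\tilde Z_i$. For the lower bound, write $\E[\tilde Z_i\tilde Z_i^\top]=\E[(Z_i-\mu_Z)(Z_i-\mu_Z)^\top]-\E[(Z_i-\mu_Z)(Z_i-\mu_Z)^\top\one_{A_i^c}]$; the first term equals $\E[Z_iZ_i^\top]+\mu_Z\mu_Z^\top\succeq c_z I$ by hypothesis, while the operator norm of the second is $\lesssim\sqrt{\pr(A_i^c)}\lesssim e^{-c\tau^2/2}$ by Cauchy--Schwarz and the bounded fourth moments of $(Z_i-\mu_Z)^\top u$. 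Hence for $\tau$ sufficiently large $\E[\tilde Z_i\tilde Z_i^\top]\succeq(c_z/2)I$ and $\lambda_{\min}(\tilde\Sigma)\ge c_z/4$. Fixing a single $\tau$ large enough to validate all of the above, and collecting the failure probabilities by a union bound (each deviation level taken $\asymp\sqrt{m\log p/n}$ or $\asymp\sqrt{m\log n/n}$ as appropriate) produces the overall bound $c'(n^{-m}+p^{-m})$.

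The step I expect to need the most care is not any individual estimate but the verification that every sub-Gaussian and sub-exponential parameter entering the argument is genuinely $O(1)$ rather than growing with $p$ --- this is really the content of the lemma, and it hinges on $\|\beta^0\|_2\lesssim 1$, $\|\Gamma_j\|_2\lesssim 1$ and $\|\Gamma\beta^0+\delta^0\|_2\lesssim 1$, all of which follow from Assumption~\ref{cd:subg} together with the hypothesis $\lambda_{\min}(\E ZZ^\top)\ge c_z$ --- together with choosing one $\tau$ that is simultaneously large enough to make the truncation losses negligible in the lower eigenvalue bound and in $\|\mu_Z\|_2,\|\mu_U\|_2$. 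The remaining difficulty is purely bookkeeping: keeping track of which quantities are constants and which scale with $(n,p)$.
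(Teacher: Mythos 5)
Your proposal is correct and follows essentially the same route as the paper's proof: Lemma~\ref{lem:subg_trunc} for the bounded variance proxies of the truncated recentred variables, Bernstein/Lemma~\ref{lem:subgbern} with union bounds for the entrywise estimates, Vershynin's covariance deviation bound for \eqref{eq:tildezeigen}, the factorisation $\E\ind_{A_i\cap B_i}=\pr(A_i)\pr(B_i)$ from $Z\independent U$ for the mean-zero property behind \eqref{eq:Xbar}, and Hoeffding for \eqref{eq:w}. The only cosmetic difference is that you re-derive the lower bound on $\lambda_{\min}(\tilde\Sigma)$ by hand via the truncation-loss Cauchy--Schwarz argument, whereas the paper simply invokes Lemma~\ref{lem:subg_trunc}; the content is identical.
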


\begin{proof}
	First note that by Lemma~\ref{lem:subg_trunc}, $\tilde{Z}_i \ind_{B_i}$ is a sub-Gaussian random vector with bounded variance proxy. Also $\tilde{V}_{ij} := (\Gamma^\top \tilde{U}_i)_j$ is sub-Gaussian with variance proxy depending only on that of $V_{ij} := (\Gamma^\top U_i)_j$. Note however that $X_{ij} = Z_{ij} + V_{ij}$ with $Z_{ij}$ and $V_{ij}$ independent, so $V_{ij}$ and hence also $\tilde{V}_{ij}$ have bounded variance proxies.
	
	The sub-Gaussianity of $\tilde{Z}_i \ind_{B_i}$ then yields \eqref{eq:tildezeigen} following the same argument used to derive \eqref{eq:subgx3}.
	
	 Next, similarly to~\eqref{eq:subgx1} it follows from Lemma~\ref{lem:subgbern} that with probability at least $1 - c' p^{-m}$,
\[
\max_{1 \leq j,k \leq p} \left|\frac{1}{n} \sum_{i = 1}^n \tilde{V}_{ij} \tilde{V}_{ik} - \E \tilde{V}_{ij} \tilde{V}_{ik}\right| \leq c \sqrt{\frac{\log p}{n}}.
\]
Thus \eqref{eq:sigu} follows from noting that by the Cauchy--Schwarz inequality,
\[
|\E \tilde{V}_{ij} \tilde{V}_{ik}| \leq (\E\tilde{V}_{ij}^2 \E\tilde{V}_{ik}^2)^{1/2} \leq (\E X_{ij}^2 \E X_{ik}^2)^{1/2} \leq \sigma_x^2.
\]
Inequalities \eqref{eq:sigtzu} and \eqref{eq:tildez} follow similarly.

Turning to \eqref{eq:Xbar}, observe that
\begin{align*}
	\E \{(X_i - \mu)\ind_{A_i \cap B_i}\} &= \E\left\{\left( Z_i + \Gamma^\top U_i - \frac{\E(Z_i \ind_{A_i}) }{\pr(A_i)} + \Gamma^\top \frac{\E(U_i \ind_{B_i})}{\pr(B_i)}\right)\ind_{A_i \cap B_i} \right\}.
\end{align*}
Then as $U_i \independent Z_i$, $A_i \independent B_i$ whence $\E \ind_{A_i \cap B_i} = \pr(A_i) \pr(B_i)$. We therefore see that the last display is $0$. Then from Lemma~\ref{lem:subg_trunc}, we have that $(X_i - \mu)\ind_{A_i \cap B_i}$ is a mean-zero sub-Gaussian random variable with bounded variance proxy, and so we obtain \eqref{eq:Xbar}.

For all $\tau$ sufficiently large, we have $\pr(A_i) \pr(B_i) >2 c^{-1}$ for some $c>0$, and so Hoeffding's inequality yields \eqref{eq:w}.

Lemma~\ref{lem:subg_trunc} gives the final claim.
\end{proof}

\subsection{Proof of Theorem~\ref{thm:resubg}}
\begin{proof}
	Let $\Delta_1 \in C(S)$ and $\Delta_2 \in \R^p$
and let $\mu := \mu_Z + \Gamma^\top \mu_U$. In the following we work on the event $\mathcal{A}$ given in Lemma~\ref{lem:regaussian}.

	Recalling the decomposition $X_i^\top \beta^0 + U_i^\top \delta^0 = Z_i^\top \beta^0 + U_i^\top (\Gamma \beta^0 + \delta^0)$, we have that
	\begin{align*}
	\mathrm{I} := & (\Delta_1 + \Delta_2)^\top \frac{1}{n} \sum_{i=1}^{n} X_i X_i^\top \one_{\{|X_i^\top \beta^0 + U_i^\top \delta^0 | \leq \tau\}} (\Delta_1 + \Delta_2)\\
	\geq & (\Delta_1 + \Delta_2)^\top \frac{1}{n} \sum_{i=1}^{n} X_i X_i^\top \one_{A_i \cap B_i} (\Delta_1 + \Delta_2) \\
	= & (\Delta_1 + \Delta_2)^\top \frac{1}{n} \sum_{i=1}^{n} (X_i - \mu) (X_i - \mu)^\top \one_{A_i \cap B_i} (\Delta_1 + \Delta_2) \\
	& + 2 (\Delta_1 + \Delta_2)^\top \frac{1}{n} \sum_{i=1}^{n} (X_i - \mu) \mu^\top \one_{A_i \cap B_i} (\Delta_1 + \Delta_2) \\
	& + (\Delta_1 + \Delta_2)^\top \frac{1}{n} \sum_{i=1}^{n} \mu \mu^\top \one_{A_i \cap B_i} (\Delta_1 + \Delta_2).
	\end{align*}
Then
\begin{align*}
	\mathrm{I} \geq & (\Delta_1 + \Delta_2)^\top \frac{1}{n} \sum_{i=1}^{n} (X_i - \mu) (X_i - \mu)^\top \one_{A_i \cap B_i} (\Delta_1 + \Delta_2) \\
	& \qquad + 2 (\Delta_1 + \Delta_2)^\top \bar{X} \mu^\top (\Delta_1 + \Delta_2)\\
	& \qquad + w \{(\Delta_1 + \Delta_2)^\top \mu\}^2.
\end{align*}
Note that for all $\alpha > 0$, $a, b \in \R$,
\begin{equation} \label{eq:elem}
	\frac{1}{\alpha}a^2 + 2ab + \alpha b^2 = \left(\frac{a}{\sqrt{\alpha}} + \sqrt{\alpha} b \right)^2 \geq 0.
\end{equation}
Applying this with $\alpha = w$, $\bar{X}^\top (\Delta_1 + \Delta_2) = a$ and $\mu^\top (\Delta_1 + \Delta_2) = b$, we have
that
\begin{align*}
	2 (\Delta_1 + \Delta_2)^\top \bar{X} \mu^\top (\Delta_1 + \Delta_2) + w \{(\Delta_1 + \Delta_2)^\top \mu\}^2  
	\geq - \frac{1}{w} \{(\Delta_1 + \Delta_2)^\top \bar{X}\}^2.
\end{align*}
Thus we have
\begin{align*}
	\mathrm{I} &\geq  (\Delta_1 + \Delta_2)^\top \frac{1}{n} \sum_{i=1}^{n} (X_i - \mu) (X_i - \mu)^\top \one_{A_i \cap B_i} (\Delta_1 + \Delta_2)
	 - \frac{1}{w} \{(\Delta_1 + \Delta_2)^\top \bar{X}\}^2 \\
	 &=: \mathrm{I}_1 - \mathrm{I}_2.
\end{align*}
Then recalling the definition in~\eqref{eq:ziui}, we have that
\begin{align*}
	\mathrm{I}_1 = \;& (\Delta_1 + \Delta_2)^\top \frac{1}{n} \sum_{i=1}^{n} \tilde{Z}_i \tilde{Z}_i^\top \one_{B_i} (\Delta_1 + \Delta_2) \\
	& \quad + (\Delta_1 + \Delta_2)^\top \frac{1}{n} \sum_{i=1}^{n} \left(\Gamma^\top \tilde{U}_i \tilde{Z}_i^\top + \tilde{Z}_i \tilde{U}_i^\top \Gamma + \Gamma^\top \tilde{U}_i \tilde{U}_i^\top \Gamma \one_{A_i}\right) (\Delta_1 + \Delta_2) \\
	= \;& \frac{1}{n} \sum_{i=1}^n \Bigg\{\left(\Delta_2^\top \Gamma^\top \tilde{U}_i \tilde{U}_i^\top \Gamma \Delta_2 + 2 \Delta_2^\top \Gamma^\top \tilde{U}_i \tilde{U}_i^\top \Gamma \Delta_1 + \Delta_1^\top \Gamma^\top \tilde{U}_i \tilde{U}_i^\top \Gamma \Delta_1\right) \one_{A_i} \\
	& + 2\Delta_2^\top  \Gamma^\top \tilde{U}_i \tilde{Z}_i^\top \Delta_2 + \Delta_2^\top \tilde{Z}_i \tilde{Z}_i^\top \Delta_2 \one_{B_i}\\
	& + \Delta_1^\top \tilde{Z}_i \tilde{Z}_i^\top \Delta_1 \one_{B_i} + 2\Delta_1^\top \Gamma^\top \tilde{U}_i \tilde{Z}_i^\top \Delta_1 + 2 \Delta_2^\top \tilde{Z}_i \tilde{Z}_i^\top \Delta_1 \one_{B_i} + 2 \Delta_2^\top (\Gamma^\top \tilde{U}_i \tilde{Z}_i^\top + \tilde{Z}_i \tilde{U}_i^\top \Gamma) \Delta_1 \Bigg\}. \nonumber
\end{align*}
Now note that from \eqref{eq:elem}, we have that for $a_1, a_2, b_2 \in \R$ and for all $t \in (0,1)$,
\begin{align*}
a_2^2 + 2a_2 a_1 + a_1^2 + 2a_2 b_2 + b_2^2 &= \{ta_2^2 + 2a_2 a_1 + a_1^2\} + \{(1-t)a_2^2 + 2a_2 b_2 + b_2^2\} \\
&\geq \left(1 - \frac{1}{t}\right)a_1^2 + \left(1 - \frac{1}{1-t}\right)b_2^2.
\end{align*}
Applying this with $a_j = \tilde{U}_i^\top \Gamma \Delta_j \one_{A_i}$ and $b_2 = \tilde{Z}_i^\top \Delta_2 \one_{B_i}$, we have that for any $t \in [0,1]$,
\begin{align*}
	\mathrm{I}_1 \geq \;& \Delta_1^\top \frac{1}{n} \sum_{i=1}^n \tilde{Z}_i \tilde{Z}_i^\top \one_{B_i} \Delta_1 - \frac{1 - t}{t} \Delta_1^\top \frac{1}{n} \sum_{i=1}^n \Gamma^\top \tilde{U}_i \tilde{U}_i^\top \Gamma  \one_{A_i}\Delta_1 \\
	& - \frac{t}{1 - t} \Delta_2^\top \frac{1}{n} \sum_{i=1}^n \tilde{Z}_i \tilde{Z}_i^\top \one_{B_i} \Delta_2  + 2 \Delta_2^\top \frac{1}{n} \sum_{i=1}^n \tilde{Z}_i \tilde{Z}_i^\top \one_{B_i} \Delta_1 \\ 
	& + 2 \Delta_1^\top \frac{1}{n} \sum_{i=1}^n \Gamma^\top \tilde{U}_i \tilde{Z}_i^\top \Delta_1 + 2 \Delta_2^\top \frac{1}{n} \sum_{i=1}^n (\Gamma^\top \tilde{U}_i \tilde{Z}_i^\top + \tilde{Z}_i \tilde{U}_i^\top \Gamma) \Delta_1.
\end{align*}
Now using the fact that we are on $\mathcal{A}$, we have for example by H\"older's inequality that
\begin{align*}
\Delta_2^\top \frac{1}{n} \sum_{i=1}^n \tilde{Z}_i \tilde{Z}_i^\top \one_{B_i} \Delta_1 &= \Delta_2^\top \left(\frac{1}{n} \sum_{i=1}^n \tilde{Z}_i \tilde{Z}_i^\top \one_{B_i} - \tilde{\Sigma}\right) \Delta_1 + \Delta_2^\top \tilde{\Sigma} \Delta_1 \\
&\lesssim \sqrt{\frac{\log p}{n}} \|\Delta_1\|_1 \|\Delta_2\|_1 + \|\Delta_1\|_2 \|\Delta_2\|_2.
\end{align*}
Using such arguments, we see that there exists a constant $c_1 > 0$ such that
\begin{align*}
	\mathrm{I}_1 \geq & \Delta_1^\top \tilde{\Sigma} \Delta_1 -  c_1 \left(\frac{1 - t}{t} \|\Delta_1\|_1^2 +  \frac{t}{1-t} \frac{p}{n} \|\Delta_2\|_2^2 +  \sqrt{\frac{\log p}{n}} \|\Delta_1\|_1 \|\Delta_2\|_1 + \sqrt{\frac{\log p}{n}} \|\Delta_1\|_1^2 + \|\Delta_1\|_2\|\Delta_2\|_2\right) \\
	\gtrsim & \|\Delta_1\|_2^2 - c_1 \left(\frac{1 - t}{t} s \|\Delta_1\|_2^2 +  \frac{t}{1-t} \frac{p}{n} \|\Delta_2\|_2^2 +  \left(\sqrt{s p\frac{\log p}{n}} + 1\right) \|\Delta_1\|_2 \|\Delta_2\|_2 + s \sqrt{\frac{\log p}{n}} \|\Delta_1\|_2^2 \right) \\
\end{align*}
where for the last inequality, we use that $\Delta_1$ is within the cone $C(S)$ and so $\|\Delta_1\|_1 \leq 4\sqrt{s} \|\Delta_1\|_2$, see \eqref{eq:sparse_ineq}.

Note that by \eqref{eq:elem},
\[
\left(\sqrt{s p\frac{\log p}{n}} + 1\right) \|\Delta_1\|_2 \|\Delta_2\|_2 \leq \frac{1}{3c_1} \|\Delta_1\|_2^2 + c_2\left(s p\frac{\log p}{n} + 1\right)  \|\Delta_2\|_2^2,
\]
for some constant $c_2 > 0$.
Now, let $t$ be such that $(1-t)/t = 1 /(3c_1 s)$. Then by Condition~\ref{cd:omesparsity}, we have that for some constant $c_3 >0$ and for all $n$ sufficiently large, 
\[
\mathrm{I}_1 \gtrsim \|\Delta_1\|_2^2
- c_3 r_{\ell} \sqrt{\frac{\log p}{n}} \|\Delta_2\|_2^2.
\]
Turning to $\mathrm{I}_2$, as we are on $\mathcal{A}$, we have
\begin{align*}
\mathrm{I}_2 &\lesssim \|\Delta_1 + \Delta_2\|_1^2 \|\bar{X}\|_2^2 \\
&\lesssim \frac{\log p}{n}  (\|\Delta_1\|_1^2 + \|\Delta_2\|_1^2) \\
&\lesssim \frac{\log p}{n}  (s\|\Delta_1\|_2^2 + p\|\Delta_2\|_2^2),
\end{align*}
using the Cauchy--Schwarz inequality and the fact that $\Delta_1 \in C(S)$ in the final line. Using Condition~\ref{cd:omesparsity} and putting things together, we see that for some constant $c_4 > 0$,
\[
\mathrm{I} = \mathrm{I}_1 - \mathrm{I}_2 \gtrsim \|\Delta_1\|_2^2
- c_4 r_{\ell} \sqrt{\frac{\log p}{n}} \|\Delta_2\|_2^2,
\]
as required.
\end{proof}

\section{Some basic results concerning sub-Gaussian random variables} \label{sec:subG}
\begin{lemma}\label{lem:subgbern}
	Let $(V_1, W_1), \ldots, (V_n, W_n) \in \R^2$ be a sequence of mean-zero independent random vectors that are each sub-Gaussian with variance proxy $\sigma^2$. 
	Then there exist constants $\sigma_g, b > 0$ such that for all $t \geq 0$,
	\[
	\pr\left(\left|\frac{1}{n} \sum_{i = 1}^{n} \{V_i W_i - \E(V_i W_i)\} \right| \geq t\right) \leq 2\exp\left( -\frac{nt^2}{2 (\sigma_g^2 + bt)}\right).
	\]
\end{lemma}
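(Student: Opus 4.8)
The plan is to show that each centred product is a sub-exponential random variable whose parameters depend only on $\sigma$, and then to invoke a standard Bernstein-type bound for normalised sums of independent sub-exponential variables. Set $Y_i := V_iW_i - \E(V_iW_i)$; by hypothesis the $Y_i$ are independent and mean-zero, and since each pair $(V_i,W_i)$ being sub-Gaussian with variance proxy $\sigma^2$ entails that $V_i$ and $W_i$ are individually sub-Gaussian scalars with variance proxy $\sigma^2$, everything reduces to controlling the moment generating function $\E[\exp(\lambda Y_i)]$ for $|\lambda|$ in a fixed neighbourhood of the origin.

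First I would bound the moments of the product. From the standard tail bound $\pr(|V_i| \ge u) \le 2\exp(-u^2/(2\sigma^2))$ one obtains $\E V_i^{2k} \le 2\,k!\,(2\sigma^2)^k$ for every $k \in \N$, and likewise for $W_i$; hence by the Cauchy--Schwarz inequality
\[
\E|V_iW_i|^k \le (\E V_i^{2k})^{1/2}(\E W_i^{2k})^{1/2} \le 2\,k!\,(2\sigma^2)^k .
\]
Since also $|\E(V_iW_i)| \le (\E V_i^2)^{1/2}(\E W_i^2)^{1/2} \lesssim \sigma^2$, the same growth rate, with an adjusted constant, holds for $\E|Y_i|^k$ after centring: there exist constants $c_1, c_2 > 0$ depending only on $\sigma$ with $\E|Y_i|^k \le c_1\,k!\,c_2^k$ for all $k$. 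Expanding the exponential and summing the resulting geometric series then gives $\E[\exp(\lambda Y_i)] \le \exp(\lambda^2 \sigma_g^2 / 2)$ for all $|\lambda| \le 1/b$, for suitable constants $\sigma_g, b > 0$ depending only on $\sigma$.

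Finally I would apply the standard sub-exponential concentration bound to $S := \frac{1}{n}\sum_{i=1}^n Y_i$. By independence, $\E[\exp(\mu S)] \le \exp(\mu^2 \sigma_g^2 / (2n))$ whenever $|\mu| \le n/b$, so a Chernoff bound optimised over $\mu \in (0, n/b]$ yields $\pr(S \ge t) \le \exp\{-nt^2/(2(\sigma_g^2 + bt))\}$; applying the same argument to $-Y_i$ and taking a union bound gives the two-sided statement. This last step is precisely a standard Bernstein inequality for sums of sub-exponential variables (see e.g.\ \citet{Wain19}), so no further work is required there. The only mildly delicate point throughout is keeping explicit track of the constants in the moment bound for $V_iW_i$ and in the centring step, but this is entirely routine and presents no genuine obstacle.
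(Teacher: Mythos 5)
Your proposal is correct and follows exactly the route the paper takes: the paper's proof simply observes that products of sub-Gaussians are sub-exponential, hence satisfy Bernstein's condition, and cites \citet[Chap.~3]{Wain19} for the resulting Bernstein inequality. You have merely filled in the standard moment-growth and Chernoff-bound details that the paper leaves to the reference, and those details are accurate.
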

\begin{proof}
	This follows from the facts that products of sub-Gaussian random variables are sub-exponential random variables, and sub-exponential random variables satisfy Bernstein's condition and hence Bernstein's inequality applies; see for example \citet[Chap.~3]{Wain19}.
\end{proof}

\begin{lemma} \label{lem:subg_trunc}
	Let $V$ be a mean-zero sub-Gaussian random vector with variance proxy $\sigma^2_V$ and suppose $\lambda_{\min}(\Var(V)) \geq c > 0$. Let $\{A_\tau\}_{\tau \geq 0}$ be a family of events, potentially depending on $V$, such that $\pr(A_\tau) \to 1$ as $\tau \to \infty$. Then for all $\tau$, defining
	$W := V \ind_{A_\tau}$ and $\mu_W := \E(V \ind_{A_\tau})$, $W-\mu_W$ is a sub-Gaussian vector with variance proxy depending only on $\sigma_V^2$. Moreover, for all $\rho <1$, there exists $\tau$ depending only on $\sigma^2_V$ such that $\lambda_{\min}(\Var(W)) \geq \rho c$.
\end{lemma}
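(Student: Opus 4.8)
The plan is to prove the two assertions separately; neither requires anything beyond elementary properties of sub-Gaussian variables together with Weyl's eigenvalue perturbation inequality, and the only conceptual point is that multiplying by an indicator (even one depending on $V$) cannot enlarge a linear functional of $V$.

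\textbf{Sub-Gaussianity of $W-\mu_W$.} Fix a unit vector $u$ and write $S:=u^\top V$, so $S$ is mean-zero with $\E e^{tS}\le e^{t^2\sigma_V^2/2}$ for all $t$; set $Y:=u^\top W = S\ind_{A_\tau}$ and $M:=\E Y=u^\top\mu_W$. The key observation is the pointwise bound $|Y|\le|S|$, which holds regardless of whether $A_\tau$ depends on $V$; hence all moments of $Y$ are dominated by those of $S$, and in particular $\pr(|Y|\ge t)\le\pr(|S|\ge t)\le 2e^{-t^2/(2\sigma_V^2)}$ and $|M|\le\E|S|\le(\E S^2)^{1/2}\le\sigma_V$. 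Therefore $|Y-M|\le|S|+\sigma_V$ also has a sub-Gaussian tail with parameter a universal multiple of $\sigma_V$. I would then invoke the standard equivalence, for \emph{centred} random variables, between the tail, moment and moment-generating-function characterisations of sub-Gaussianity (see e.g.\ \citet[Chap.~2]{Wain19}) to obtain a universal constant $C$ with $\E e^{t(Y-M)}\le e^{Ct^2\sigma_V^2}$ for all $t$. As $u$ was an arbitrary unit vector, $W-\mu_W$ is a sub-Gaussian vector with variance proxy $2C\sigma_V^2$, depending only on $\sigma_V^2$. (One can also see the MGF bound more directly: $\E e^{tY}=\E[\ind_{A_\tau}e^{tS}]+\pr(A_\tau^c)\le e^{t^2\sigma_V^2/2}+1\le 2e^{t^2\sigma_V^2/2}$, and then remove the multiplicative constant by the usual centring lemma for mean-zero variables.)

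\textbf{Lower bound on $\lambda_{\min}(\Var(W))$.} Since $V$ is mean-zero, $\Var(V)=\E[VV^\top]$, while $\Var(W)=\E[VV^\top\ind_{A_\tau}]-\mu_W\mu_W^\top$, so
\[
\Var(V)-\Var(W)=\E[VV^\top\ind_{A_\tau^c}]+\mu_W\mu_W^\top\ \succeq\ 0 .
\]
I would bound each summand in operator norm. For a unit vector $u$, Cauchy--Schwarz and the fourth-moment bound for sub-Gaussian variables give $\E[(u^\top V)^2\ind_{A_\tau^c}]\le(\E(u^\top V)^4)^{1/2}\pr(A_\tau^c)^{1/2}\le C_1\sigma_V^2\,\pr(A_\tau^c)^{1/2}$. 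For the second term, $u^\top\mu_W=\E[(u^\top V)\ind_{A_\tau}]=-\E[(u^\top V)\ind_{A_\tau^c}]$ (using $\E u^\top V=0$), whence $|u^\top\mu_W|\le(\E(u^\top V)^2)^{1/2}\pr(A_\tau^c)^{1/2}\le\sigma_V\pr(A_\tau^c)^{1/2}$ and $\|\mu_W\mu_W^\top\|_{\op}=\|\mu_W\|_2^2\le\sigma_V^2\pr(A_\tau^c)$. Since $\pr(A_\tau^c)\le 1$, combining gives $\|\Var(V)-\Var(W)\|_{\op}\le C_2\sigma_V^2\,\pr(A_\tau^c)^{1/2}$. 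By Weyl's inequality,
\[
\lambda_{\min}(\Var(W))\ \ge\ \lambda_{\min}(\Var(V))-\|\Var(V)-\Var(W)\|_{\op}\ \ge\ c-C_2\sigma_V^2\,\pr(A_\tau^c)^{1/2}.
\]
As $\pr(A_\tau^c)\to 0$ when $\tau\to\infty$, given $\rho<1$ one may take $\tau$ large enough that $C_2\sigma_V^2\,\pr(A_\tau^c)^{1/2}\le(1-\rho)c$, yielding $\lambda_{\min}(\Var(W))\ge\rho c$; how large $\tau$ must be depends on $\sigma_V^2$ (through $C_2$), on $\rho$ and $c$, and on the rate at which $\pr(A_\tau)\to 1$.

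\textbf{Main obstacle.} The only step that is not automatic is the first claim: because $A_\tau$ is allowed to depend on $V$, the map $V\mapsto V\ind_{A_\tau}$ is neither linear nor Lipschitz, so closure of sub-Gaussian vectors under linear maps does not apply. The resolution is the elementary pointwise domination $|u^\top W|\le|u^\top V|$, which preserves tails and moments; the remaining work is the routine passage from a tail bound to a moment-generating-function bound after centring, and the eigenvalue assertion is then standard perturbation theory.
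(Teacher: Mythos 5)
Your proposal is correct and follows essentially the same route as the paper: both parts rest on the pointwise domination $|u^\top W|\le|u^\top V|$ (passed to an MGF bound via a standard equivalence of sub-Gaussian characterisations after centring) and on a Cauchy--Schwarz/fourth-moment bound for the truncation error, with $\tau$ chosen large. The only cosmetic difference is that you phrase the eigenvalue step via an operator-norm bound and Weyl's inequality, while the paper compares the quadratic forms $\var(u^\top W)$ and $\var(u^\top V)$ directly and takes an infimum over unit vectors; these are equivalent.
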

\begin{proof}
	Fix unit vector $u$. We have, writing $\tilde{W} := u^\top W$, $\tilde{V} := u^\top V$ and $\mu_{\tilde{W}} := \E \tilde{W}$, we have
	\begin{equation} \label{eq:mom_bd}
		\begin{split}
					\E (\tilde{W}-\mu_{\tilde{W}})^{2k} &= 2^{2k} \E (\tilde{W}/2-\mu_{\tilde{W}}/2)^{2k} \\
			&\leq 2^{2k} \E (|\tilde{W}|/2 + |\mu_{\tilde{W}}|/2)^{2k} \\
			&\leq 2^{2k-1} \{\E \tilde{W}^{2k} + (\E \tilde{W})^{2k}\} \\
			&\leq 2^{2k}\E \tilde{W}^{2k} \leq 2^{2k} \E \tilde{V}^{2k},
		\end{split}
	\end{equation}
	using Jensen's inequality twice. Thus by \citet[Thm.~2.6]{Wain19}, the first claim is proved.
	
	Next observe that
	\begin{align*}
		\var(\tilde{W}) & = \E[\tilde{V}^2 \one_{A_\tau}] - (\E[ \tilde{V} \one_{A_\tau}])^2 \\
		& = \Var(\tilde{V}) - \E[(\tilde{V})^2 \one_{A_\tau^c}]  - (\E[\tilde{V} \one_{A_\tau^c}])^2 \\
		& \geq \Var(\tilde{V}) - \sqrt{\E[\tilde{V}^4] \pr(A_\tau^c) } - \Var(\tilde{V})\pr(A_\tau^c)
	\end{align*}
	where for the second equality, we used the fact that $\E \tilde{V} = 0$, and with the final line following from the Cauchy--Schwarz inequality.
	Now $\E[\tilde{V}^4] \lesssim 1$ (where the implicit constant is independent of $u$), so putting things together, we then see that for a constant $\tau$ sufficiently large (such that $\pr(A_\tau^c)$ is sufficiently small), for all unit vectors $u$,  $\var(u^\top W) \geq  \rho \Var(u^\top V)$. Taking infima over $u$, we obtain the final claim.
\end{proof}

\section{Discussion of Theorem~\ref{thm:resubg}} \label{sec:bound_comments}
It is interesting to compare Theorem~\ref{thm:resubg} with the bound on the restricted eigenvalue of \citet[Prop.~1]{GCB22}, which studies estimation and inference in a linear regression setting with confounding. To facilitate this comparison, observe that writing $\tilde{\bX} \in \R^{n \times p}$ for the matrix with $i$th row $\tilde{X}_i := X_i \one_{\{|X_i^\top \beta^0 + U_i^\top \delta^0 | \leq \tau \}}$, we have that $\Omega(\tau, \kappa, c_p)$ occurring is equivalent to having that for any $\Delta \in C(S)$,
\[
\frac{\|Q^{\frac{1}{2}} \tilde{\bX} \Delta \|_2^2}{n} \ge \kappa \|\Delta \|_2^2,
\]
where $Q := \bs{I} - \tilde{\bX} \left(c_p r_\lo \sqrt{n \log p} I + \tilde{\bX}^\top \tilde{\bX}\right)^{-1} \tilde{\bX}^\top$. The restricted eigenvalue in \citet{GCB22} takes this form but with $\mb X$ in place of $\tilde{\mb X}$ and $Q$ given by a spectral transformation referred to as the trim transform in \citet{CBM20}. Compared to \citet[Prop.~1]{GCB22}, our result has a stronger sparsity requirement. On the other hand, \citet[Prop.~1]{GCB22} requires  that at least a constant proportion of the non-zero eigenvalues of the matrix $\bX^\top \bX / n$ are of order at least $p / n \vee 1$. 

 To further rationalise this assumption, in \citet[Appendix~B, Lemma~7]{GCB22}, it is shown that a sufficient condition for this is that all components in the random vector $\Cov^{1/2}(X) \cdot X$ are independent sub-Gaussian random variables. This independence would be particularly implausible in our context as our analogue of $X_i$, given by $\tilde{X}_i$, is constructed through a truncation that involves the whole vector $X_i$.

\section{Additional simulation results} \label{sec:simu}

\subsection{Estimation of the number of factors} \label{sec:fac_num}
In this section, we present additional simulation results concerning the factor-based approach of \citet{ouyang2023high} in the settings described in Section~\ref{sec:estimation}. Specifically, Figures~\ref{fig:logitoepfactor1} and \ref{fig:logiexpfactor1} show the number of factors selected by the approach in the various settings. In particular, we see how when the confounding strength $\nu$ is low, accurate estimation of the number of factors is challenging.

\begin{figure}[t!]
	\subfigure[Toeplitz, $s = 5, q = 5$]{\includegraphics[width=0.33\textwidth]{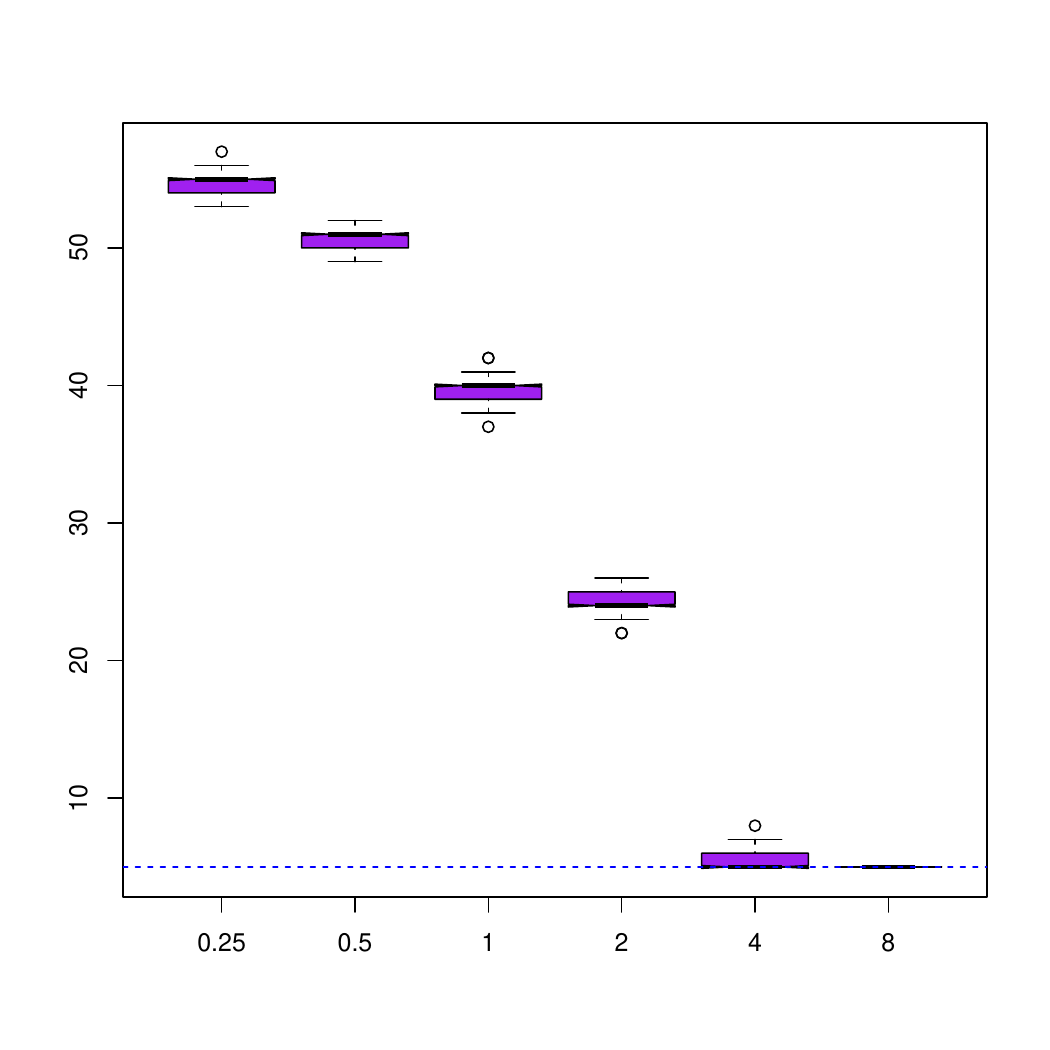}}
	\subfigure[Toeplitz, $s = 5, q = 10$]{\includegraphics[width=0.33\textwidth]{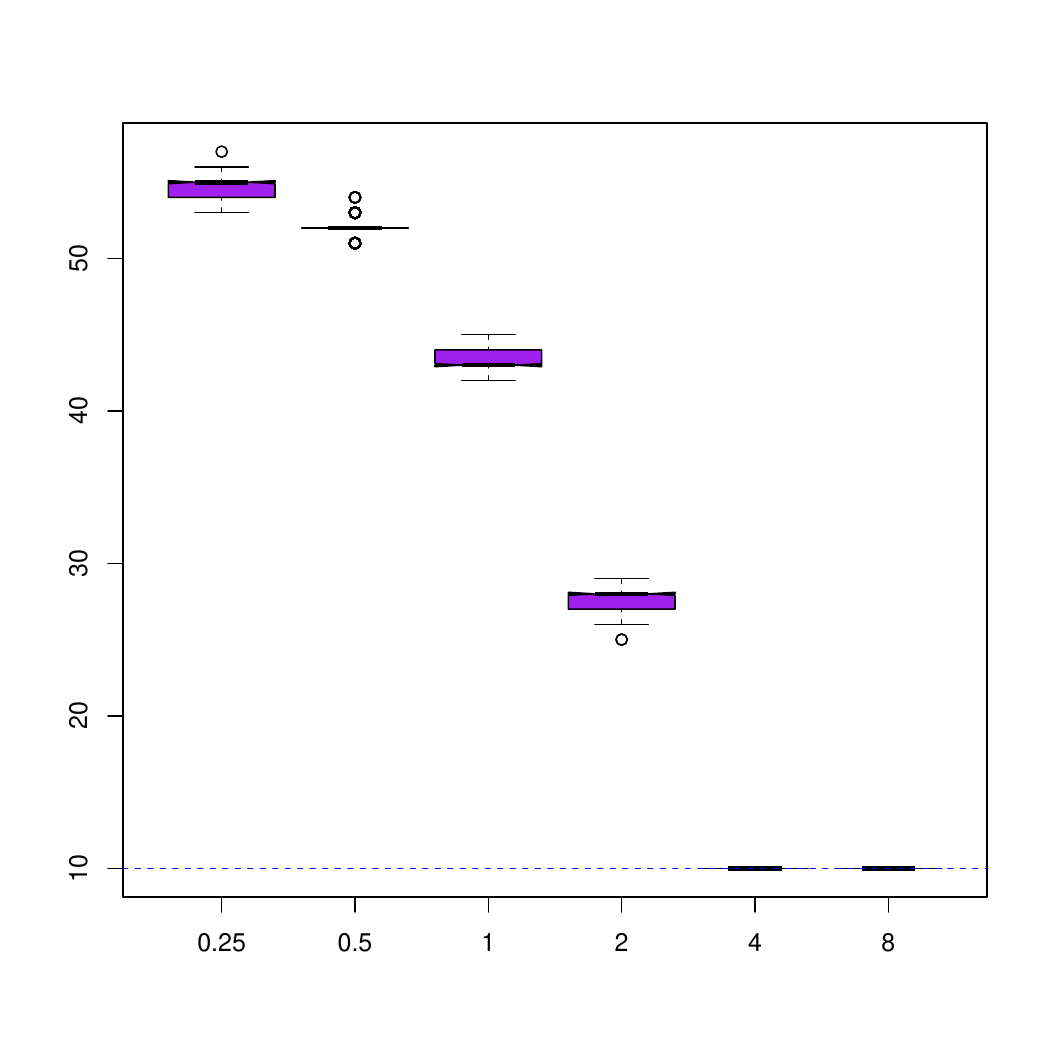}}
	\subfigure[Toeplitz, $s = 5, q = 20$]{\includegraphics[width=0.33\textwidth]{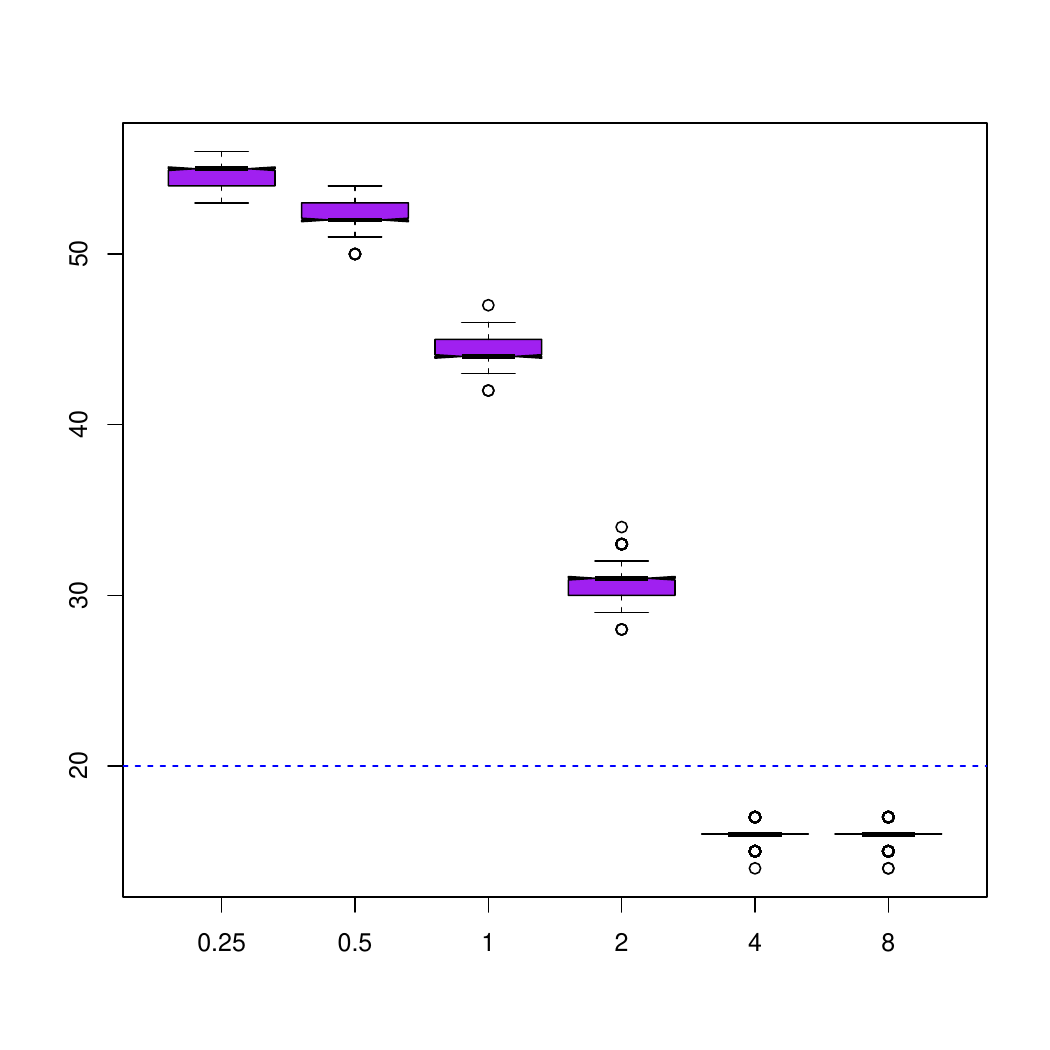}}
 \subfigure[Toeplitz, $s = 10, q = 5$]{\includegraphics[width=0.33\textwidth]{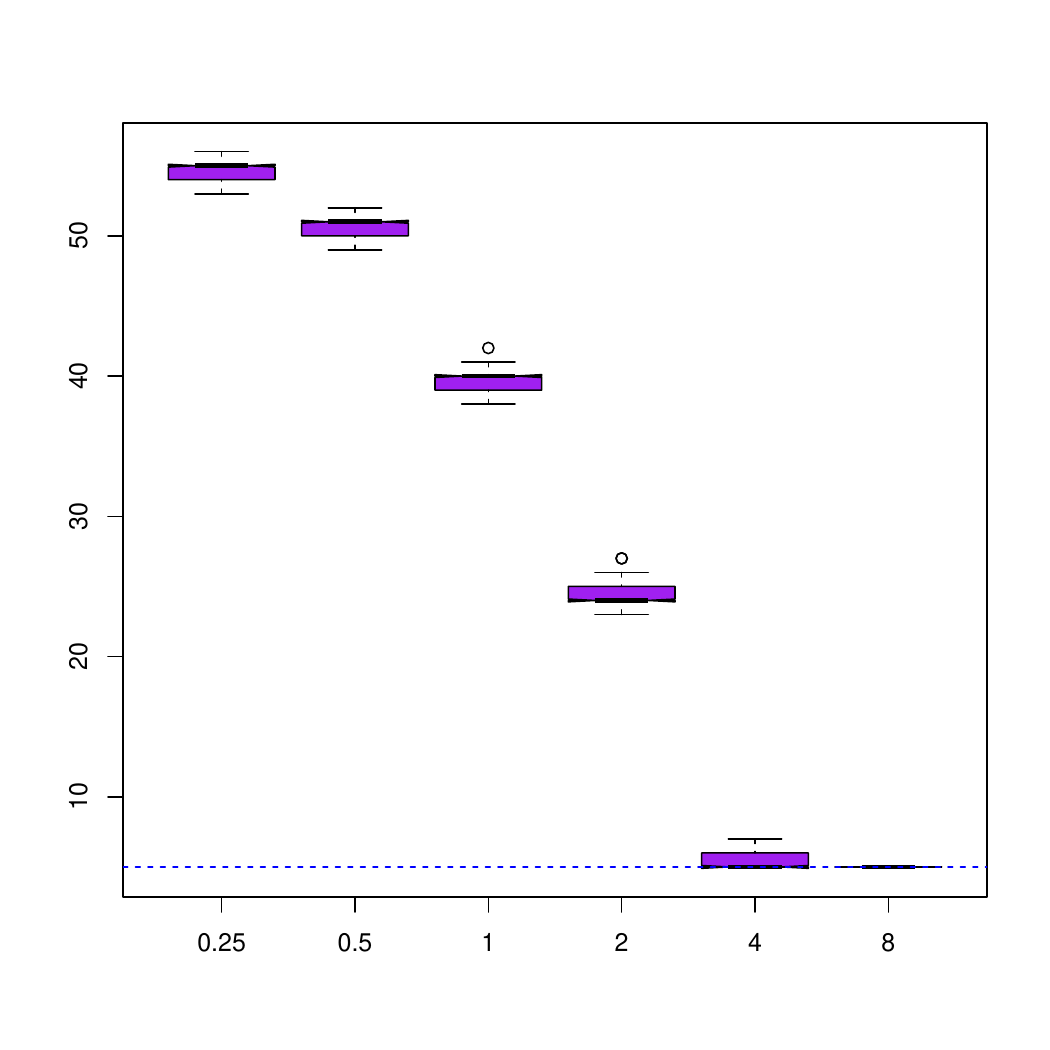}}
	\subfigure[Toeplitz, $s = 10, q = 10$]{\includegraphics[width=0.33\textwidth]{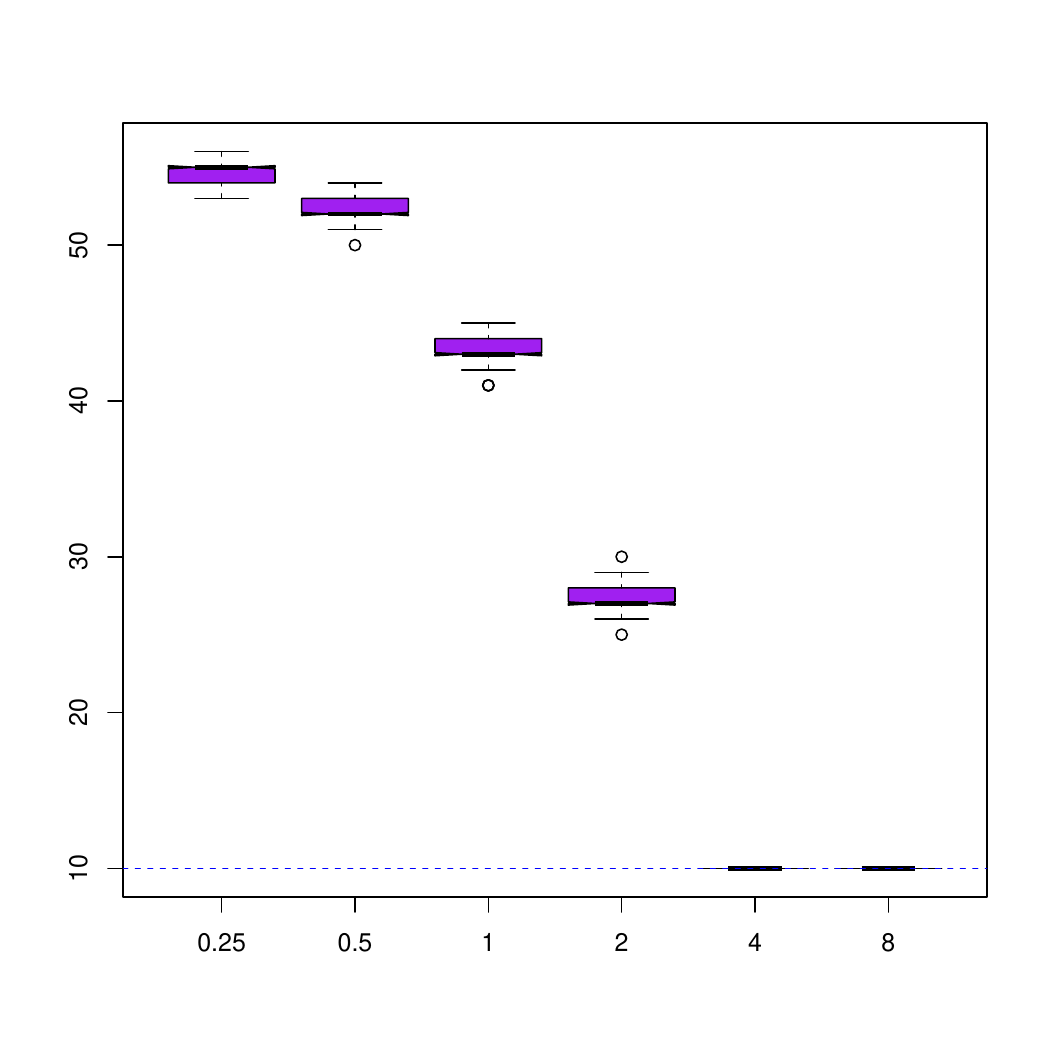}}
	\subfigure[Toeplitz, $s = 10, q = 20$]{\includegraphics[width=0.33\textwidth]{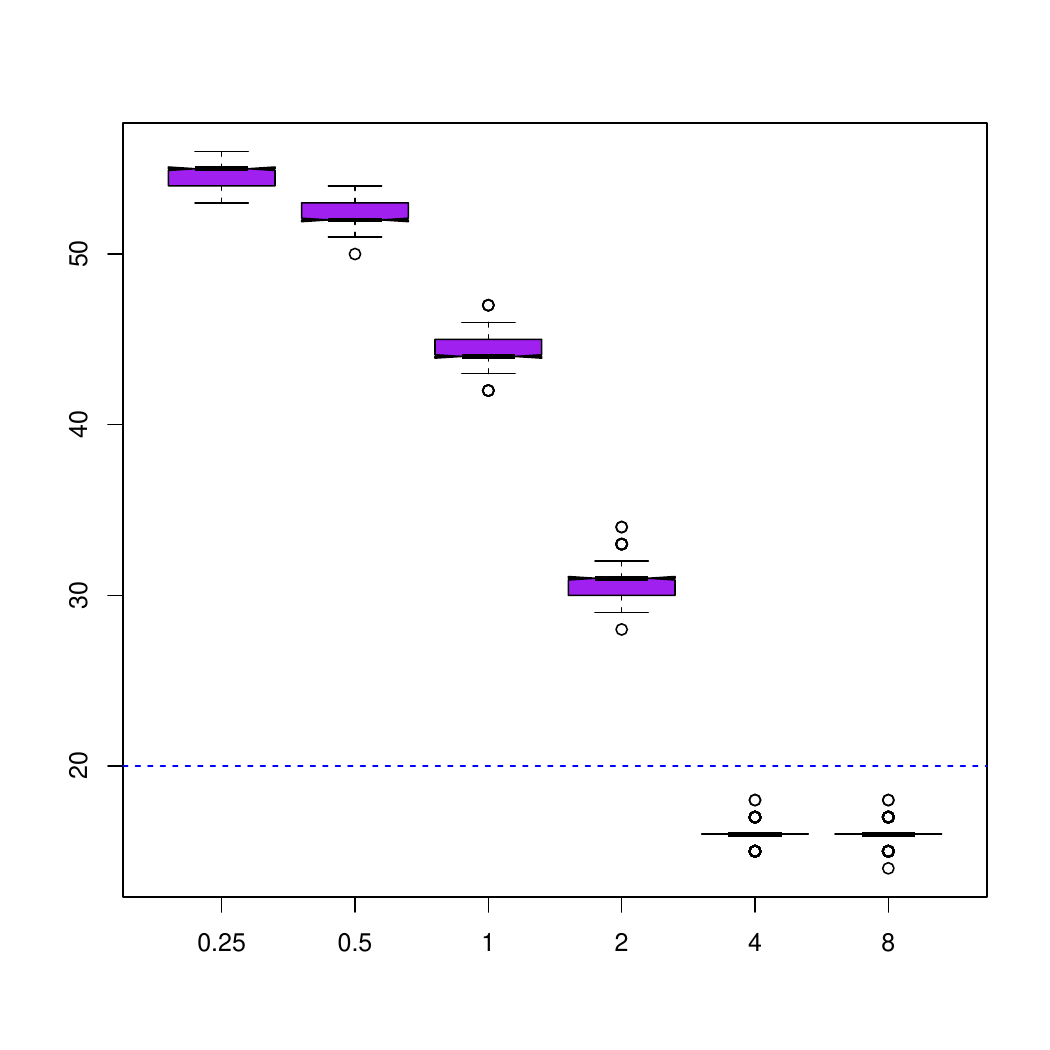}}
	\caption{Numbers of factors selected by the factor-based method in the Toeplitz simulation examples for the different confounding strengths $\nu$ on the $x$-axis.}\label{fig:logitoepfactor1}
\end{figure}


\begin{figure}[t!]
	\subfigure[Exp. decaying, $s = 5, q = 5$]{\includegraphics[width=0.33\textwidth]{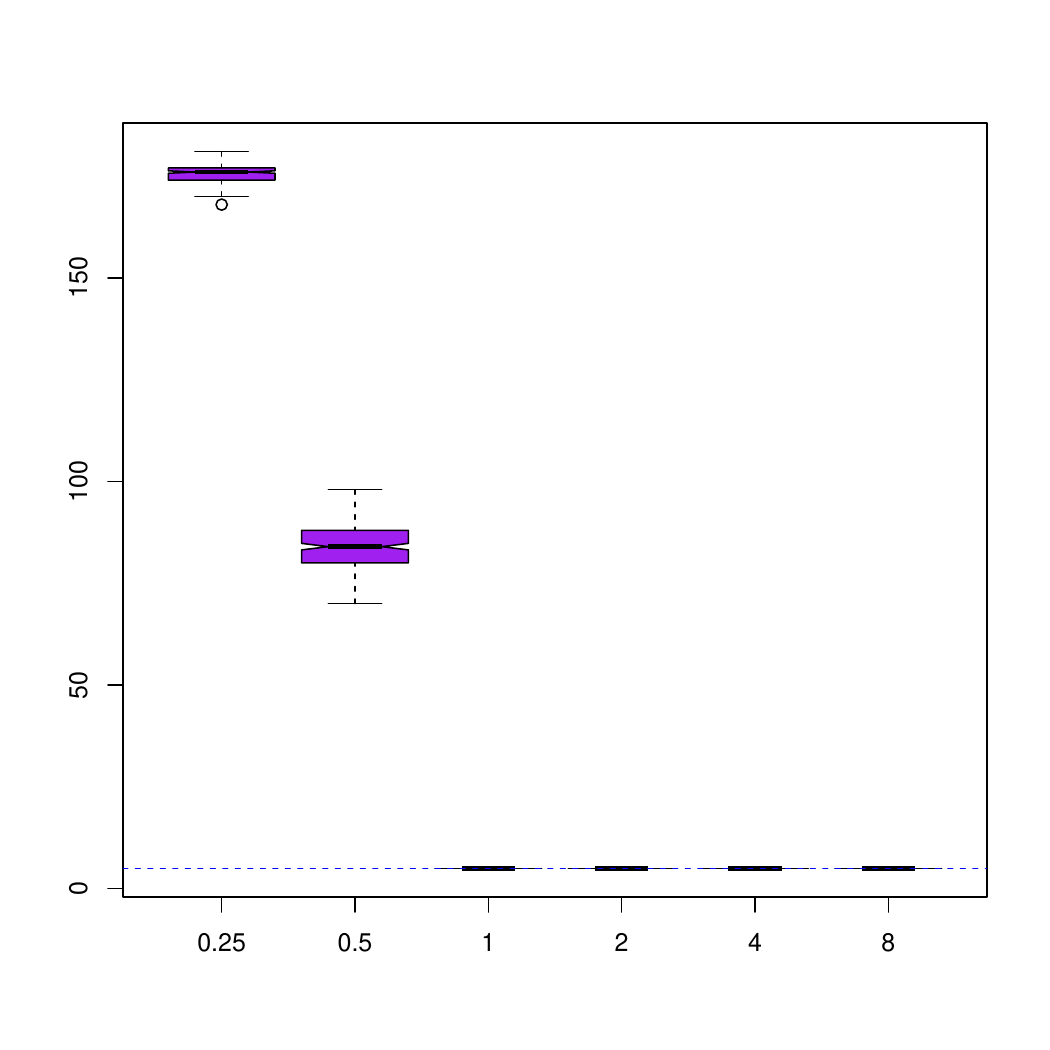}}
 \subfigure[Exp. decaying, $s = 5, q = 10$]{\includegraphics[width=0.33\textwidth]{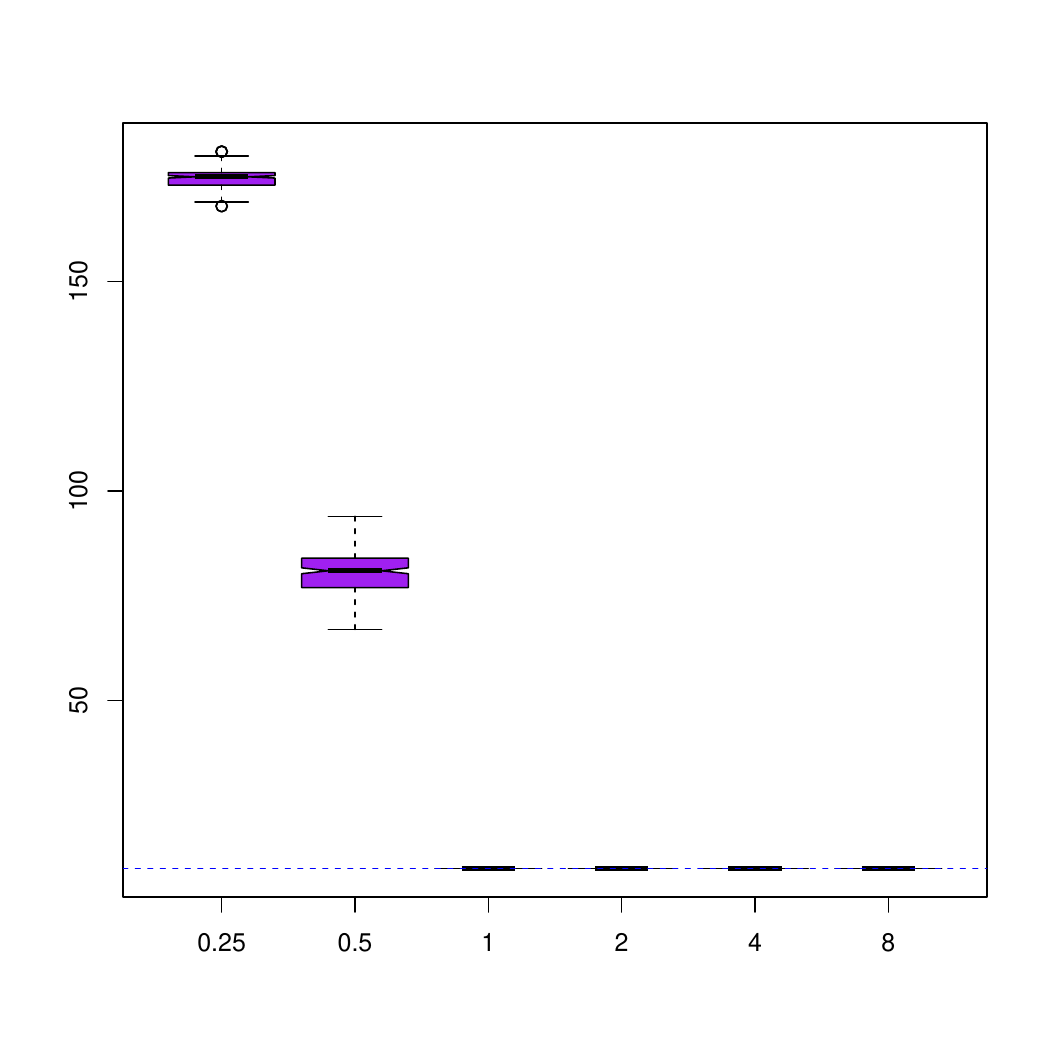}}
	\subfigure[Exp. decaying, $s = 5, q = 20$]{\includegraphics[width=0.33\textwidth]{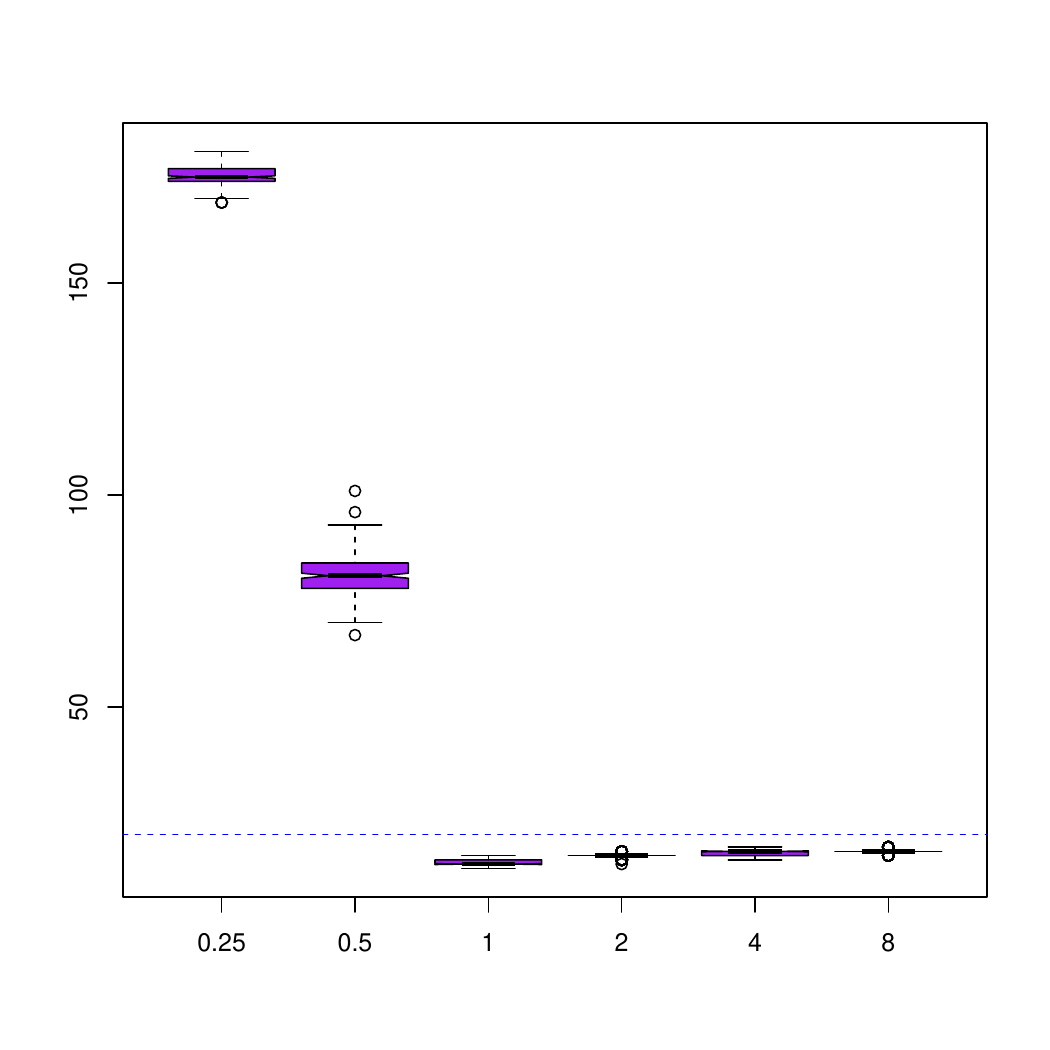}}
	\subfigure[Exp. decaying, $s = 10, q = 5$]{\includegraphics[width=0.33\textwidth]{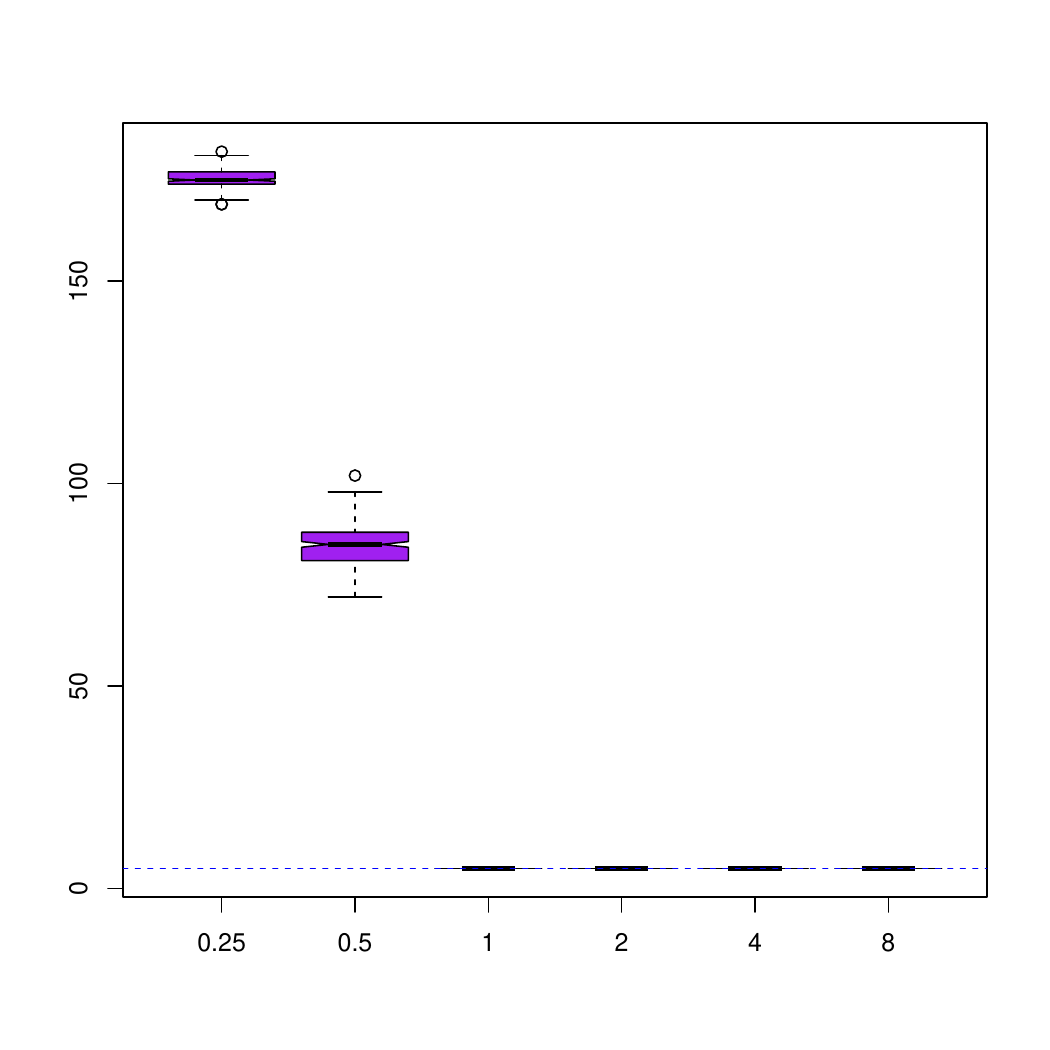}}
	\subfigure[Exp. decaying, $s = 10, q = 10$]{\includegraphics[width=0.33\textwidth]{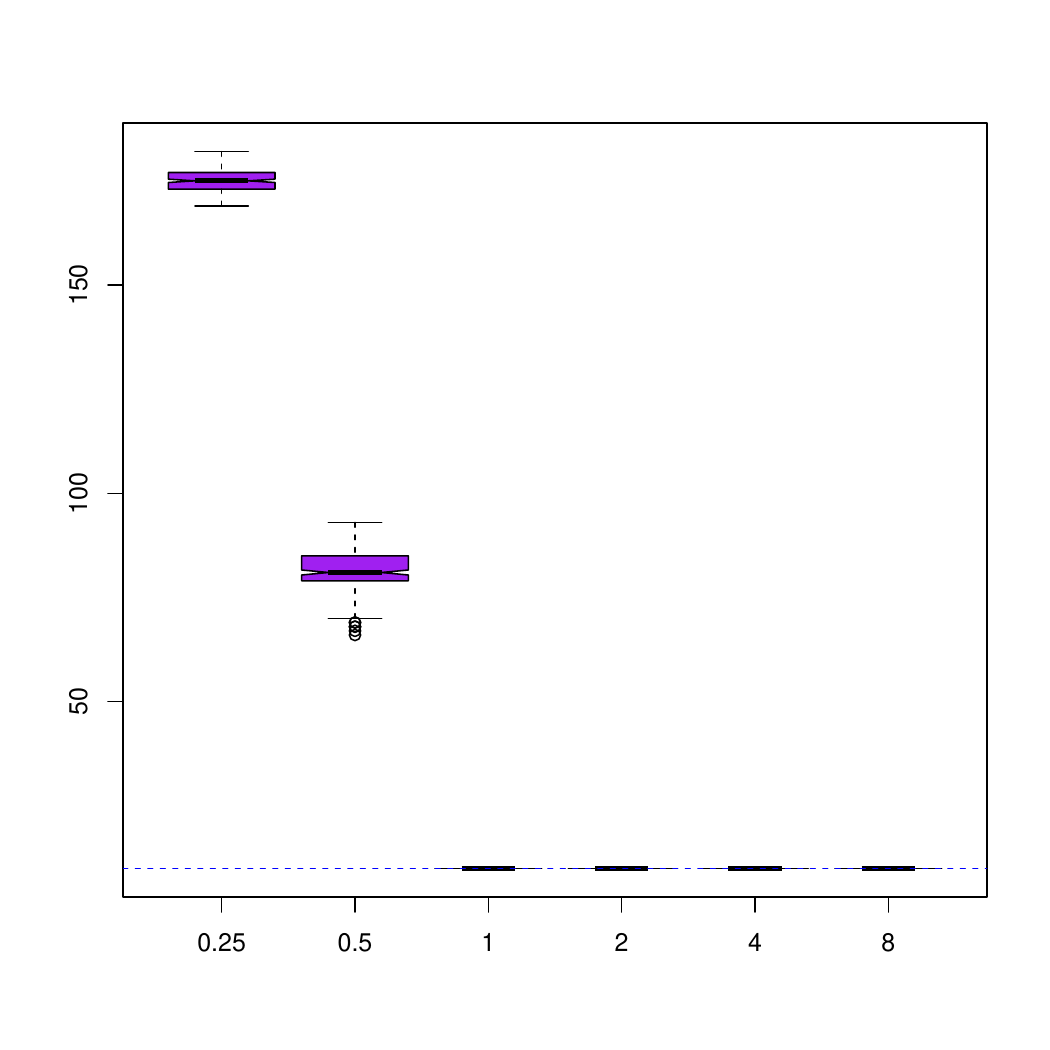}}
	\subfigure[Exp. decaying, $s = 10, q = 20$]{\includegraphics[width=0.33\textwidth]{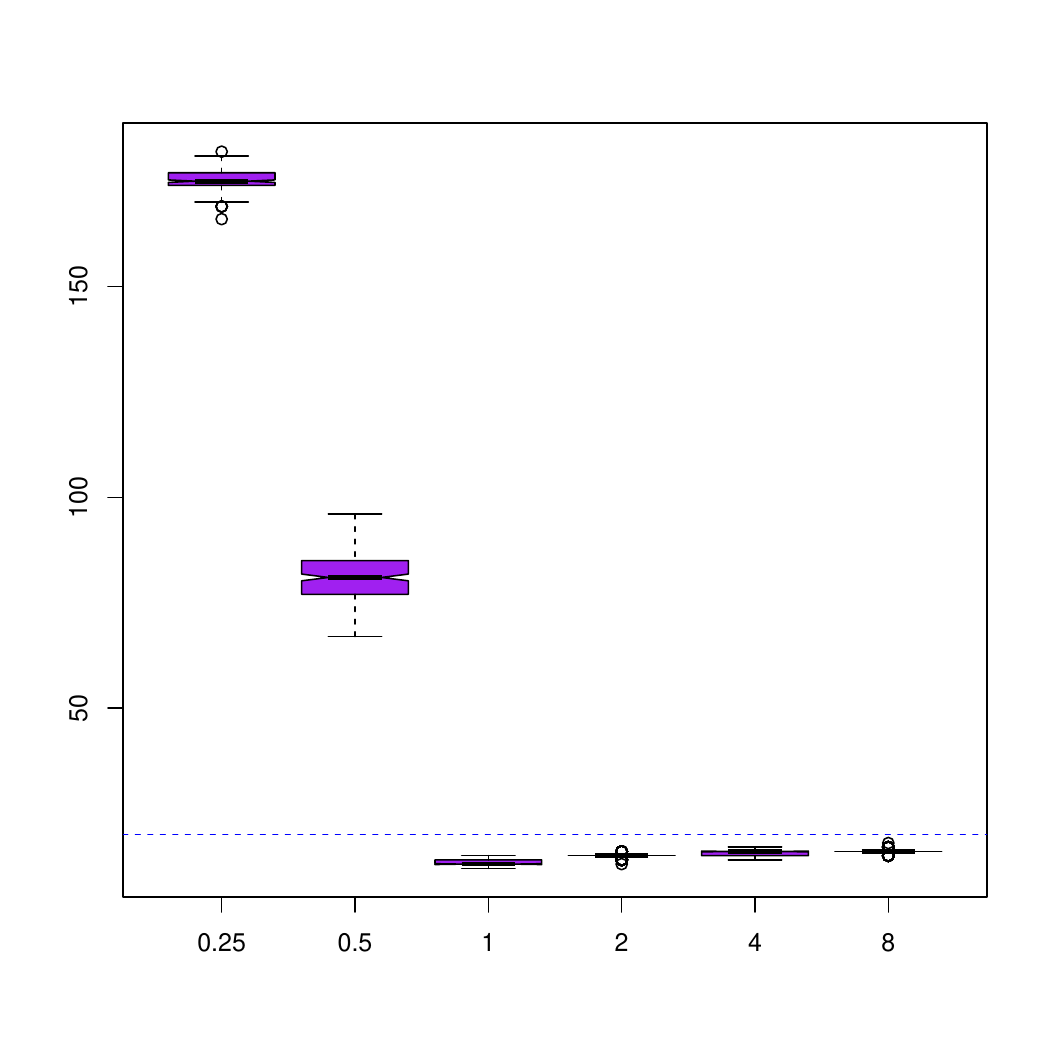}}
	\caption{Numbers of factors selected by the factor-based method in the exponential simulation examples for the different confounding strengths $\nu$ on the $x$-axis.}\label{fig:logiexpfactor1}
\end{figure}

\subsection{Inference} \label{sec:inf_exp2}
In Figure~\ref{fig:power1} we present results analogous to those of Section~\ref{sec:inf_exp} but for the case where $\delta^W$ has components generated as i.i.d.\ Rademacher variables. As can be seen, the conclusions are similar to those in Section~\ref{sec:inf_exp}.

\begin{figure}[t!]
    \centering
    \subfigure[$p = 200, s = 5$]{\includegraphics[width=0.45\textwidth]{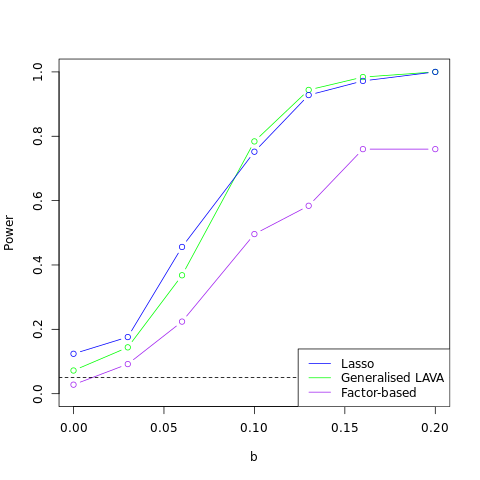}}
    \subfigure[$p = 200, s = 10$]{\includegraphics[width=0.45\textwidth]{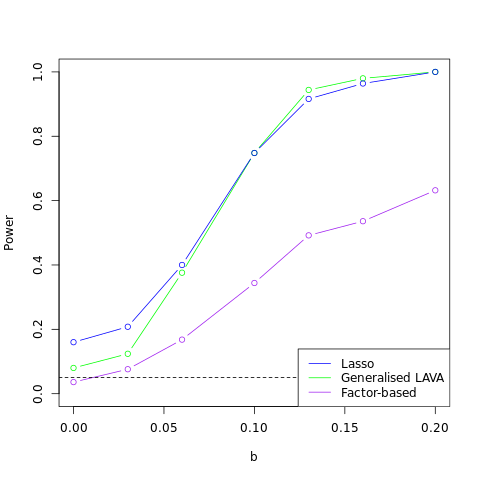}}
    \subfigure[$p = 400, s = 5$]{\includegraphics[width=0.45\textwidth]{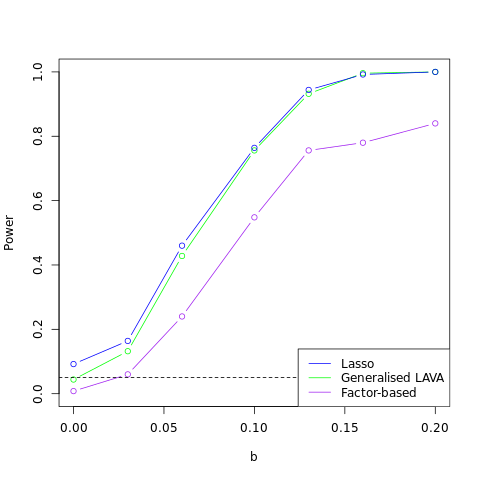}}
    \subfigure[$p = 400, s = 10$]{\includegraphics[width=0.45\textwidth]{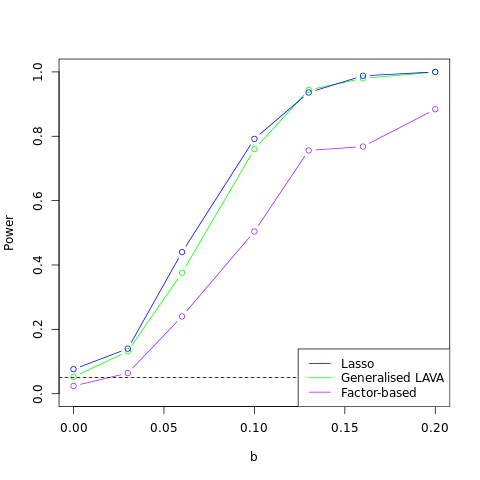}}
    \caption{Average power of the different methods for $b$ ranging from $0$ to $0.2$. The dashed line signifies $0.05$.}
    \label{fig:power1}
\end{figure}
\end{document}